\setlist{nosep}
\newcommand{\rfrak}{\mathfrak{r}}
\newcommand{\Bcal}{\mathcal{B}}
\newcommand{\Ccal}{\mathcal{C}}
\newcommand{\Dcal}{\mathcal{D}}
\newcommand{\Ecal}{\mathcal{E}}
\newcommand{\Fcal}{\mathcal{F}}
\newcommand{\Gcal}{\mathcal{G}}
\newcommand{\Ical}{\mathcal{I}}
\newcommand{\Pcal}{\mathcal{P}}
\newcommand{\Rcal}{\mathcal{R}}
\newcommand{\Tcal}{\mathcal{T}}
\newcommand{\Nbb}{\mathbb{N}}
\newcommand{\Qbb}{\mathbb{Q}}
\newcommand{\Rbb}{\mathbb{R}}
\newcommand{\Tbb}{\mathbb{T}}
\newcommand{\Zbb}{\mathbb{Z}}
\newcommand{\Set}{\mathbf{Set}}
\newcommand{\FinSet}{\mathbf{FinSet}}
\newcommand{\Top}{\mathbf{Top}}
\newcommand{\Hom}{\mathrm{Hom}}
\newcommand{\TOP}{\mathfrak{Top}}
\newcommand{\op}{^{\mathrm{op}}}
\newcommand{\too}{\twoheadrightarrow}
\DeclareMathOperator{\id}{id}
\DeclareMathOperator{\PSh}{PSh}
\DeclareMathOperator{\Sh}{Sh}
\DeclareMathOperator{\Geom}{Geom}
\DeclareMathOperator{\EssGeom}{EssGeom}
\DeclareMathOperator{\Ind}{Ind}
\DeclareMathOperator{\Cont}{Cont}
\DeclareMathOperator{\Sub}{Sub}
\DeclareMathOperator{\Aut}{Aut}
\DeclareMathOperator{\End}{End}
\newtheorem{thm}{Theorem}[section]
\newtheorem{prop}[thm]{Proposition}
\newtheorem{lemma}[thm]{Lemma}
\newtheorem{crly}[thm]{Corollary}
\newtheorem{schl}[thm]{Scholium}
\newtheorem{conj}{Conjecture}
\theoremstyle{definition}
\newtheorem{dfn}[thm]{Definition}
\newtheorem{xmpl}[thm]{Example}
\theoremstyle{remark}
\newtheorem{rmk}[thm]{Remark}
\begin{document}

\title{Toposes of Topological Monoid Actions}
\date{}
\author{Morgan Rogers}
\email{mrogers@uninsubria.it}
\orcid{0000-0002-0277-8217}
\affiliation{Universit\`a degli Studi dell{'}Insubria, Via Valleggio n. 11, 22100 Como, Italy}

\maketitle{}

\abstract{We demonstrate that categories of continuous actions of topological monoids on discrete spaces are Grothendieck toposes. We exhibit properties of these toposes, giving a solution to the corresponding Morita-equivalence problem. We characterize these toposes in terms of their canonical points. We identify natural classes of representatives with good topological properties, `powder monoids' and then `complete monoids', for the Morita-equivalence classes of topological monoids. Finally, we show that the construction of these toposes can be made ($2$-)functorial by considering geometric morphisms induced by continuous semigroup homomorphisms.}

\tableofcontents

\section*{Introduction}

In a previous article \cite{TDMA}, we investigated properties of presheaf toposes of the form $\PSh(M) := [M\op,\Set]$ for a monoid $M$, whose objects are sets equipped with a right action of $M$. A natural direction to generalize this study is to view sets as discrete spaces and to consider the actions of a topological monoid on them. In order to analyze this case, we parallel the analogous categories for topological groups, which are well-studied.

For a topological group $(G,\tau)$, the category $\Cont(G,\tau)$ of continuous $G$-actions on discrete topological spaces is a Grothendieck topos. One way to prove this is to observe that there is a canonical geometric morphism $\PSh(G) \to \Cont(G,\tau)$ which is a surjection, see \cite[A4.2.4(a)]{Ele}. The inverse image functor of this morphism is the forgetful functor which sends a continuous $(G,\tau)$-set to its underlying $G$-set. The direct image functor is constructed explicitly by Mac Lane and Moerdijk in \cite[\S VII.3]{MLM}: it sends a $G$-set $X$ to the subset consisting of those elements whose `isotropy subgroup' is open; it follows that the counit of this morphism is monic and so (by \cite[A4.6.6]{Ele}, say) that the geometric morphism is moreover hyperconnected.

In this article we begin by extending these observations to categories of continuous actions of monoids. We take a rather classical approach at first: rather than considering genuine topological monoids (that is, monoids in the category of topological spaces), we consider endowing the underlying set of a monoid $M$ with an arbitrary topology $\tau \subseteq \Pcal(M)$. This approach is justified even in the group-theoretic setting: no part of the description of a continuous $(G,\tau)$-set relies on the fact that the topology $\tau$ makes $G$ a topological group, and we shall indeed see that the argument is valid even when this fails. \textit{A reader critical of this decision should be reassured by the fact that, as we shall eventually see in Theorem \ref{thm:tau} and Proposition \ref{prop:ctsx}, any `monoid with a topology' is in any case Morita-equivalent to a genuine topological monoid.}

\subsection*{Overview}

In Section \ref{ssec:necessary}, we exhibit the necessary data to establish that the forgetful functor from the category $\Cont(M,\tau)$ of continuous actions of a monoid with respect to an arbitrary topology $\tau$ to the topos $\PSh(M)$ is left exact and comonadic (Proposition \ref{prop:hyper}). The adjoint can be expressed using either clopen subsets of $(M,\tau)$ or open relations. From the existence of this adjunction we conclude that $\Cont(M,\tau)$ is an elementary topos, so that the forgetful functor is the inverse image of a hyperconnected geometric morphism, just as in the group case. In Section \ref{ssec:sgtrecap}, we recall theoretical results from our paper \cite{SGT} on supercompactly generated toposes, applying them in Section \ref{ssec:apply} to conclude that any topos of the form $\Cont(M,\tau)$ is moreover a supercompactly generated Grothendieck topos, which brings us to an intuitive Morita-equivalence result in terms of the (essentially small) category of continuous principal $M$-sets, Corollary \ref{crly:Morita}. Finally, in \ref{ssec:jcp} we show another property of the categories of principal continuous $M$-sets which has not yet been covered, indicating that our characterization of toposes of the form $\Cont(M,\tau)$ is not yet complete.

In Section \ref{sec:montop}, we examine the question of how much is recoverable about a topology $\tau$ on a monoid $M$ from the hyperconnected morphism $\PSh(M) \to \Cont(M,\tau)$. To do this, we construct the classical powerset $\Pcal(M)$ of $M$ as a right $M$-set in Section \ref{ssec:inverse}, and from this object recover in Section \ref{ssec:action} a canonical topology $\tilde{\tau}$, contained in $\tau$, making $(M,\tilde{\tau})$ a genuine topological monoid with an equivalent category of actions; given that Morita-equivalence for topological monoids is non-trivial, this is as good a result as we could have hoped for. In Section \ref{ssec:powder}, we show that we can further reduce this topological monoid to obtain a Hausdorff monoid $(\tilde{M},\tilde{\tau})$, still retaining the same topos of actions. The resulting class of representative topological monoids for toposes of the form $\Cont(M,\tau)$, which we call \textit{powder monoids}, have many special properties. In Section \ref{ssec:prodiscrete}, we show that this class includes, but is not limited to, the classes of prodiscrete monoids and nearly discrete groups; indeed, the reduction of a monoid to a powder monoid is analogous of the reduction of a group to a nearly discrete group in \cite[Example A2.1.6]{Ele}.

In Section \ref{sec:surjpt}, we consider the canonical surjective point of $\Cont(M,\tau)$, which is the composite of the canonical essential surjective point of $\PSh(M)$ and the hyperconnected morphism obtained in Section \ref{sec:properties}. Our aim is to characterize this class of toposes in terms of the existence of a point of this form, just as Caramello does for topological groups in \cite{TGT}. First, in Section \ref{ssec:EqRel}, we obtain a canonical small site for $\PSh(M)$ whose objects are right congruences, equivalent to the site of principal $M$-sets, and show that hyperconnected morphisms out of $\PSh(M)$ correspond to suitable subsites of this one. In particular, this provides a small site for $\Cont(M,\tau)$ (Scholium \ref{schl:Morita2}). By taking a limit indexed over such a site, we show in Section \ref{ssec:complete} that, analogously to the case of groups in \cite{TGT}, we can recover a presentation for the codomain of a hyperconnected geometric morphism out of $\PSh(M)$ as a topos of topological monoid actions. This presentation is obtained by topologizing the monoid of endomorphisms of the canonical point. Thus, the existence of a point factorizing as an essential surjection followed by a hyperconnected morphism characterizes this class of toposes (Theorem \ref{thm:characterization}). Moreover, the resulting \textit{complete monoids} are powder monoids (Proposition \ref{prop:Lpowder}), and any powder monoid presenting the same topos (equipped with the same canonical point) admits a dense injective monoid homomorphism to the canonical representative (Corollary \ref{crly:extend}). Paralleling the introduction of (algebraic) bases for topological groups, we show in Section \ref{ssec:base} that we can re-index the limit defining a complete monoid over a \textit{base of open congruences} in order to obtain a simpler expression for it and in certain cases deduce further properties. We briefly consider the topologies on the original monoid $M$ induced by hyperconnected morphisms out of $\PSh(M)$ in Section \ref{ssec:factor}.

Finally, since in \cite{TDMA} we saw that semigroup homomorphisms correspond to essential geometric morphisms between toposes of discrete monoid actions, in Section \ref{sec:homomorphism} we show that continuous semigroup homomorphisms between topological monoids induce geometric morphisms between the corresponding toposes of continuous actions (Lemma \ref{lem:cts}). As such, we show that $\Cont(-)$ defines a $2$-functor extending the presheaf construction for discrete monoids in \cite{TDMA}, which we may restrict to the class of complete monoids. In Section \ref{ssec:intrinsic} we record some intrinsic properties of the hyperconnected geometric morphism $\PSh(M) \to \Cont(M,\tau)$ when $M$ is a powder monoid or complete monoid, enabling us in Sections \ref{ssec:id} and \ref{ssec:monhom} to examine how, when a geometric morphism $g$ is induced by a continuous semigroup homomorphism $\phi$ between complete monoids, the properties of $g$ are reflected as properties of $\phi$. We show that the surjection--inclusion factorization of $g$ is canonically represented by the factorization of $\phi$ into a monoid homomorphism followed by an inclusion of a subsemigroup (Theorem \ref{thm:inccomplete}). Moreover, the hyperconnected--localic factorization of $g$ can be identified with the dense--closed factorization of $\phi$ (Theorem \ref{thm:locextend}). In both cases, the intermediate monoid is complete. Finally, in \ref{ssec:monads} we show that the classes of monoids we have been working with throughout assemble into reflective sub-($2$-)categories of the ($2$-)category of topological monoids.

In the conclusion, Section \ref{sec:conclusion}, we summarize the unresolved problems we have encountered along the way, address some questions about the constructiveness of our results including discussion of localic monoids, and suggest some future directions this research might proceed. Proposition \ref{prop:densegroup} hints at how properties of topological monoids lift to their toposes of actions.

While some knowledge of topos theory is (unsurprisingly) required for several parts of this paper, we believe that no more than a basic knowledge is needed to appreciate the key results we obtain. For readers experienced in topos theory, we note that we work exclusively with Grothendieck toposes over $\Set$, and some non-constructive arguments involving complementation are important in our developments. We discuss the resulting limitations on the generality of our results in Section \ref{ssec:localic}.

\subsection*{Acknowledgements}
The author would like to thank Olivia Caramello for her support, guidance and suggestions, and Riccardo Zanfa for his patient proof-reading.

This work was supported by INdAM and the Marie Sklodowska-Curie Actions as a part of the INdAM Doctoral Programme in Mathematics and/or Applications Cofunded by Marie Sklodowska-Curie Actions.

\section{Properties of categories of continuous monoid actions}
\label{sec:properties}

\subsection{Necessary clopens}
\label{ssec:necessary}

Throughout, we will refer to pairs $(M,\tau)$ where $M$ is a monoid (in $\Set$) and $\tau \subseteq \Pcal(M)$ is a topology on it. The multiplication on $M$ will be largely left implicit, but for expressing inverse images we shall denote it by $\mu$. There is no assumption here that $\tau$ makes $\mu$ continuous; when it makes makes $\mu$ continuous in its first (resp. second) argument, we say that \textit{$\tau$ makes the multiplication of $M$ left (resp. right) continuous}.

\begin{rmk}
As mentioned in the introduction, the motivation for using these `monoids with topologies' rather than genuine topological monoids is that the definition of continuous $M$-set to follow applies without modification to this larger class of objects, and because it shall turn out to be useful to think of the topology as equipped (rather than intrinsic) structure. We reassure the reader that we shall eventually be able to reduce any `monoid with a topology' to a Morita-equivalent topological monoid.
\end{rmk}

Consider a (right) $M$-set, expressed in the form of a set $X$ equipped with a right action $\alpha: X \times M \to X$ subject to the usual conditions. We say this is an \textbf{$(M,\tau)$-set} if the action $\alpha$ is continuous when $X \times M$ is endowed with the product topology of the discrete topology on $X$ and the topology $\tau$ on $M$. An $(M,\tau)$-set will be referred to simply as a \textbf{continuous $M$-set} when the topology $\tau$ is understood.

We begin by exhibiting necessary and sufficient conditions for an $M$-set to be continuous.
\begin{lemma}
\label{lem:Inx}
Let $M$ be a monoid equipped with a topology $\tau$ and $X$ an $M$-set. Then $X$ is an $(M,\tau)$-set if and only if for each $x \in X$ and $p \in M$, the set
\[\Ical_x^p := \{m \in M \mid xm = xp\}  \]
is open in $\tau$. We call the collection of all such $\Ical_x^p$ the \textbf{necessary clopens} for $X$.
\end{lemma}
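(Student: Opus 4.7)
The plan is to reduce the continuity of the action $\alpha\colon X \times M \to X$ to a pointwise condition, exploiting the fact that $X$ carries the discrete topology both as the codomain and as a factor of the domain. As the codomain is discrete, $\alpha$ is continuous if and only if $\alpha^{-1}(\{y\})$ is open in $X \times M$ for every $y \in X$. As the $X$-factor of the domain is discrete, a subset $W \subseteq X \times M$ is open in the product topology if and only if $W = \bigcup_{x \in X} \{x\} \times W_x$ for some family of sets $W_x \in \tau$. Combining these two observations, continuity of $\alpha$ amounts to the demand that $\{m \in M \mid xm = y\}$ is open in $\tau$ for every pair $x, y \in X$.

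Given this reformulation, the forward direction is immediate: for any $x \in X$ and $p \in M$, take $y := xp$ to see that $\Ical_x^p = \{m \mid xm = xp\}$ coincides with $\{m \mid xm = y\}$, which is open. For the backward direction, fix arbitrary $x, y \in X$. If $y$ lies outside the orbit $xM$, then $\{m \mid xm = y\}$ is empty, hence open in $\tau$. Otherwise, choose any $p \in M$ with $xp = y$; then $\{m \mid xm = y\} = \Ical_x^p$, which is open by hypothesis. In either case the pointwise condition holds, so $\alpha$ is continuous and $X$ is an $(M,\tau)$-set.

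Finally, to justify the terminology \textbf{necessary clopens}, I would record that $M$ partitions as $\bigsqcup_{y \in xM} \{m \mid xm = y\}$, so the complement of $\Ical_x^p$ in $M$ is the union of the remaining blocks, each of which is of the form $\Ical_x^q$ for some $q$ with $xq \neq xp$, and hence open by what has already been proved. Thus each $\Ical_x^p$ is closed as well as open in $\tau$. There is no genuine obstacle in this argument; the only care required is correctly unpacking the product topology with one discrete factor.
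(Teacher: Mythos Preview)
Your proof is correct and follows essentially the same approach as the paper: both reduce continuity to the openness of the slices $\{m \in M \mid xm = y\}$ by exploiting discreteness of $X$ on both sides, then split into the cases $y \notin xM$ (empty) and $y = xp$. Your added justification of the ``clopen'' terminology via the partition of $M$ into the $\Ical_x^q$ is exactly the observation the paper makes immediately after the proof.
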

\begin{proof}
For continuity we precisely require that for each open subset $U \subseteq X$, its preimage under the action is open. Since $X$ is discrete, without loss of generality we may assume $U = \{x'\}$ for some $x' \in X$. A subset of $X \times M$ is open if and only if its intersection with each open subspace of the form $\{x\} \times M$ with $x \in X$ is open.

Thus we require $\{m \in M \mid xm = x'\}$ to be open for each pair $x, x' \in X$. However, if $x' \neq xp$ for every $p \in M$, the corresponding set is empty and so automatically open. Otherwise, $x' = xp$ for some $p$, which gives the result.
\end{proof}

To justify the name `necessary clopens' rather than merely `necessary opens', note that for each fixed $x$, the sets $\Ical_x^p$ partition $M$, so $\Ical_x^p$ being open for every $p$ forces each such set to also be closed. 

An $M$-set $X$ being continuous requires the `stabilizer submonoids' $\Ical_x^1$ to be both open and closed for every $x$. When $M$ is a topological group we know that this condition is actually sufficient, since the other subsets in the partition are simply the right cosets of $\Ical_x^1$ (which are open because a topological group acts on itself by homeomorphisms) but this is not the case for monoids in general.

While necessary clopens are the most direct generalization of (the right cosets of) the stabilizer subgroups for the action of a group on a set, we can avoid the additional need to index over these by working with equivalence relations:
\begin{crly}
\label{crly:Rnx}
Let $M$ be a monoid equipped with a topology $\tau$ and $X$ an $M$-set. Then $X$ is an $(M,\tau)$-set if and only if for each $x \in X$, the equivalence relation
\[\rfrak_x := \{(p,q) \in M \times M \mid xp = xq\}  \]
is open in the product topology $\tau \times \tau$ on $M \times M$.
\end{crly}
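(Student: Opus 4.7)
The plan is to derive this corollary directly from Lemma \ref{lem:Inx} by translating between the family $\{\Ical_x^p\}_{p \in M}$ of necessary clopens for a fixed $x$ and the single equivalence relation $\rfrak_x$, exploiting the definition of the product topology on $M \times M$.

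For the ``only if'' direction, I assume each $\Ical_x^p$ is open. The key observation is that $\rfrak_x$ decomposes as a disjoint union of rectangles indexed by the equivalence classes: explicitly, if $xp = xq$ then $p, q \in \Ical_x^p$ (which equals $\Ical_x^q$), and conversely any $(p',q') \in \Ical_x^p \times \Ical_x^p$ satisfies $xp' = xp = xq'$, so lies in $\rfrak_x$. Thus $\rfrak_x = \bigcup_{p \in M} \Ical_x^p \times \Ical_x^p$, which is open in $\tau \times \tau$.

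For the ``if'' direction, I assume $\rfrak_x$ is open and fix $p \in M$. To show $\Ical_x^p$ is open, I take an arbitrary $m \in \Ical_x^p$, so $(m, p) \in \rfrak_x$. By definition of the product topology, there exist $U, V \in \tau$ with $m \in U$, $p \in V$ and $U \times V \subseteq \rfrak_x$. Then for every $u \in U$, the pair $(u, p) \in U \times V \subseteq \rfrak_x$, giving $xu = xp$, so $u \in \Ical_x^p$. Hence $U$ is a neighbourhood of $m$ contained in $\Ical_x^p$, and $\Ical_x^p$ is open. Invoking Lemma \ref{lem:Inx} in either direction closes the argument.

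I do not expect any real obstacle: the statement is a book-keeping reformulation, and the only slightly non-trivial ingredient is recognizing that an open rectangle trapped in $\rfrak_x$ automatically projects into a single $\Ical_x^p$, which is immediate once one fixes the second coordinate. The merit of the corollary lies not in the proof but in the fact that one no longer needs to index over the base point $p$, which will be convenient for the later constructions.
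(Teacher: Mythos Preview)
Your proof is correct and follows essentially the same approach as the paper: both directions use the decomposition $\rfrak_x = \bigcup_{p} \Ical_x^p \times \Ical_x^p$ for the forward implication, and for the converse trap a point of $\rfrak_x$ in an open rectangle and project onto one coordinate to exhibit an open neighbourhood inside $\Ical_x^p$. The only cosmetic difference is that the paper fixes the first coordinate and varies the second (writing $\Ical_x^p = \bigcup_{(p,q)\in\rfrak_x} V_q$), whereas you fix the second coordinate and vary the first; by symmetry of $\rfrak_x$ these are interchangeable.
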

\begin{proof}
If $X$ is an $(M,\tau)$-set, by Lemma \ref{lem:Inx} we have $\Ical_x^n \in \tau$ for each $n \in M$, and $\rfrak_x$ is precisely $\bigcup_{n \in M} \Ical_x^n \times \Ical_x^n$, so is open in $\tau \times \tau$. On the other hand, if $\rfrak_x$ is open, for fixed $p\in M$ each $(p,q) \in \rfrak_x$ is contained in an open rectangle $U_q \times V_q \subseteq \rfrak_x$. Thus $\Ical_x^p = \bigcup_{(p,q)\in \rfrak_x}V_q$ is open, as required.
\end{proof}

Observe that when $M$ is a group, $\rfrak_x$ is the relation that partitions $M$ into the right cosets of the stabilizer subgroup of $x$.

Working concretely with an equivalence relation from Corollary \ref{crly:Rnx} is equivalent to working with all of the clopens in a partition at once. For each result to follow we can therefore give an expression in terms of either the necessary clopens or the open relations.

\begin{prop}
\label{prop:hyper}
Suppose a monoid $M$ is equipped with a topology $\tau$. Then the forgetful functor $V : \Cont(M,\tau) \to \PSh(M)$ is left exact and comonadic; its right adjoint $R$ sends an $M$-set $X$ to:
\begin{align*}
R(X) & := \{x \in X \mid \forall p,q \in M, \, \Ical^p_{xq} \in \tau \} \\
& = \{x \in X \mid \forall q \in M, \, \rfrak_{xq} \in \tau \times \tau \}.
\end{align*}
Moreover, if $\tau$ makes the multiplication of $M$ left continuous then the expression for $R(X)$ simplifies to
\begin{align*}
R(X) & := \{x \in X \mid \forall p \in M, \, \Ical^p_x \in \tau \}\\
& = \{x \in X \mid \rfrak_{x} \in \tau \times \tau\}.
\end{align*}
\end{prop}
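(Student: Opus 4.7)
My plan hinges on the fact that $\Cont(M,\tau)$ is simply the full subcategory of $\PSh(M)$ spanned by those $M$-sets whose action happens to be continuous: morphisms carry no additional constraint since both domain and codomain are discrete. The proof thus decomposes into four steps: constructing $R$ as a coreflection, verifying left exactness via finite limits, invoking Beck's comonadicity theorem, and simplifying $R$ under the extra hypothesis.

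For the adjunction, I would first check that $R(X)$ is closed under the $M$-action: if $x \in R(X)$ and $m \in M$, then $\Ical^p_{(xm)q} = \Ical^p_{x(mq)}$ is open by the defining condition applied to the pair $(p, mq)$. Continuity of $R(X)$ then follows from Lemma~\ref{lem:Inx} by specializing to $q = 1$. The heart of the matter is the universal property: for any $M$-equivariant $f : Y \to X$ with $Y$ continuous, I must show $f(y) \in R(X)$, i.e.\ that $\Ical^p_{f(y)q} = \{m \in M \mid f(yqm) = f(yqp)\}$ lies in $\tau$ for all $p,q$. The key observation is that for each such $m$, the necessary clopen $\Ical^m_{yq}$ is open by continuity of $Y$, contains $m$, and is contained in $\Ical^p_{f(y)q}$ (any $n$ with $yqn = yqm$ satisfies $f(yqn) = f(yqm) = f(yqp)$); so $\Ical^p_{f(y)q}$ is a union of open sets. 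The equivalence-relation reformulation then follows from Corollary~\ref{crly:Rnx}.

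For left exactness, a finite limit $L$ in $\PSh(M)$ has elements that are compatible tuples $(x_i)_i$ and satisfies $\Ical^p_{(x_i)} = \bigcap_i \Ical^p_{x_i}$, a finite intersection of opens, so $L$ is continuous and $V$ is left exact. For comonadicity, $V$ has right adjoint $R$ and hence preserves all colimits; moreover, essentially the same union-of-necessary-clopens argument as in the preceding paragraph shows that a coequalizer of continuous $M$-sets computed in $\PSh(M)$ remains continuous, so that $\Cont(M,\tau)$ has coequalizers and $V$ preserves them. Since $V$ is fully faithful it reflects isomorphisms, so Beck's theorem applies and yields comonadicity.

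Finally, for the simplification, I would observe that $\Ical^p_{xq} = \lambda_q^{-1}(\Ical^{qp}_x)$, where $\lambda_q : M \to M$ is the multiplication operator whose continuity is ensured by the stated hypothesis on $\tau$; openness of $\Ical^{p'}_x$ for every $p' \in M$ then transfers to openness of every $\Ical^p_{xq}$, yielding the simplified formula. The $\rfrak_x$ version follows analogously by pre-composition with $\lambda_q \times \lambda_q$. The principal obstacle throughout is the universal property of the second paragraph: once one has the insight that the necessary clopens $\Ical^m_{yq}$ of $Y$ cover $\Ical^p_{f(y)q}$ in $X$, the rest of the proposition is a routine consequence of standard categorical machinery.
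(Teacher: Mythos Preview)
Your construction of $R$, the verification that $R(X)$ is an $M$-subset and continuous, the universal property argument, and the simplification under left continuity are all essentially identical to the paper's proof. Your left-exactness argument via the intersection formula $\Ical^p_{(x_i)} = \bigcap_i \Ical^p_{x_i}$ is correct and slightly more explicit than the paper's one-liner (``a finite limit of discrete spaces is discrete'').

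There is, however, a genuine slip in the comonadicity step. Beck's \emph{co}monadicity theorem requires that the domain category have, and $V$ preserve, certain \emph{equalizers} (of $V$-split pairs, or all equalizers in the crude form); you have written \emph{coequalizers}, which is the hypothesis for monadicity of a right adjoint, not comonadicity of a left adjoint. The fix is immediate and already in your hands: you have just shown that $V$ is left exact, hence preserves all equalizers, and you have noted that $V$ is fully faithful, hence conservative. Those two facts, together with the existence of the right adjoint $R$, are exactly what the crude comonadicity theorem needs. The paper's proof proceeds in precisely this way. Your digression into coequalizers of continuous $M$-sets is unnecessary (and, while the claim that such coequalizers remain continuous is true, it plays no role here).
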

\begin{proof}
The definition ensures that $R(X)$ is closed under the action of $M$, since for any $x \in R(X)$ and $q \in M$, $\Ical_{xq}^p$ is open for every $p \in M$ by assumption, ensuring $xq \in R(X)$. Taking $q = 1$ for each $x \in R(X)$ demonstrates (by Lemma \ref{lem:Inx}) that $R(X)$ is a continuous $M$-set.

The inclusion $R(X) \hookrightarrow X$ is the universal morphism from a continuous $M$-set into $X$. Indeed, suppose $f : Y \to X$ is an $M$-set homomorphism with $Y$ a continuous $M$-set. Given $m \in \Ical^p_{f(y)q}$, there is an inclusion of subsets $\Ical_{yq}^m \subseteq \Ical_{f(y)q}^m$ since each $m' \in \Ical_{yq}^m$ has $f(y)qm = f(yqm) = f(yqm') = f(y)qm'$. So every $\Ical_{f(y)q}^p$ is open and the image of $f$ is contained in $R(X)$. It follows that $X \mapsto R(X)$ is a right adjoint for the forgetful functor, as required.

Since $V$ is full and faithful, it is conservative. A finite limit of discrete spaces is discrete, so a finite limit of continuous $(M,\tau)$-sets is precisely the limit of the corresponding $M$-sets. Thus $V$ is left exact, in particular preserving all equalizers. By any version of the comonadicity theorem, it follows that $V$ is comonadic.

Finally, observe that for $x \in R(X)$, $p,q \in M$, we have:
\[\Ical_{xq}^p = \{m \in M \mid xqm = xqp\} = \{m \in M \mid qm \in \Ical_{x}^{qp}\} = q^*(\Ical_{x}^{qp}),\]
where $q^*$ is the inverse image of multiplication on the left by $q$ (which shall be described in more detail in Section \ref{ssec:inverse}). Thus if $\tau$ makes multiplication by $q$ continuous then $\Ical_{xq}^p$ is open whenever $\Ical_{x}^{qp}$ is, whence we obtain the simplified expressions.
\end{proof}

We call $R(X)$ the subset of \textbf{continuous elements} of $X$ with respect to $\tau$ (even when multiplication is not left continuous with respect to $\tau$).

\begin{crly}
\label{crly:topos}
$\Cont(M,\tau)$ is an elementary topos.
\end{crly}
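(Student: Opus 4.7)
The plan is to read off this corollary from Proposition \ref{prop:hyper} together with the standard fact that coalgebras for a left-exact comonad on a topos form a topos.

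First, I would invoke Proposition \ref{prop:hyper}: the forgetful functor $V : \Cont(M,\tau) \to \PSh(M)$ is left exact and comonadic with right adjoint $R$. Comonadicity gives a canonical equivalence
\[ \Cont(M,\tau) \;\simeq\; \PSh(M)_G \]
where $G = VR$ is the induced comonad on $\PSh(M)$. This reduces the problem to exhibiting $\PSh(M)_G$ as an elementary topos.

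Next I would verify that $G$ is left exact. The functor $V$ preserves finite limits by Proposition \ref{prop:hyper}, and $R$ preserves all limits because it is a right adjoint; hence their composite $G = VR$ preserves finite limits. Meanwhile $\PSh(M) = [M\op,\Set]$ is a presheaf topos, hence an elementary topos.

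I would then conclude by appealing to the standard theorem that, for any elementary topos $\Ecal$ and any left-exact comonad $G$ on $\Ecal$, the category of $G$-coalgebras $\Ecal_G$ is again an elementary topos (see e.g.\ \cite[A4.2.1]{Ele}). Applying this with $\Ecal = \PSh(M)$ gives that $\PSh(M)_G \simeq \Cont(M,\tau)$ is an elementary topos.

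There is no real obstacle; all the substantive content has already been placed in Proposition \ref{prop:hyper}. The only point that deserves a sentence of care is the direction of the adjunction: since $V \dashv R$, the comonad lives on the target $\PSh(M)$ of $V$, which is the side where we already know we have a topos, so the cited theorem applies directly rather than requiring any dual monadic variant.
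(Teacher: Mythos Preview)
Your proposal is correct and follows essentially the same approach as the paper: invoke Proposition~\ref{prop:hyper} to get a left-exact (cartesian) comonad $VR$ on $\PSh(M)$, then apply \cite[Theorem A4.2.1]{Ele} to conclude that the category of coalgebras, and hence $\Cont(M,\tau)$, is an elementary topos. The paper's proof is just a one-line version of what you wrote.
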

\begin{proof}
We have shown that $\Cont(M,\tau)$ is equivalent to the category of algebras for a cartesian comonad on $\PSh(M)$, which by \cite[Theorem A4.2.1]{Ele} makes $\Cont(M,\tau)$ an elementary topos.
\end{proof}

\begin{rmk}
\label{rmk:sgrp}
One might wonder what can be said of the continuous actions of a \textbf{semigroup} endowed with a topology. In \cite[Section 2]{TDMA} we observed that an action of a semigroup $S$ extends canonically to an action of the monoid $S_1$ obtained by adjoining a unit element (which must act as the identity). Similarly, given a topology on $S$, we may extend it to a topology on $S_1$ with an equivalent category of actions by making the singleton consisting of the adjoined unit an open subset, and extending this to a topology by taking unions with the existing opens. Thus once again, no generality is lost by considering only monoids equipped with topologies rather than arbitrary semigroups. 
\end{rmk}

\subsection{Recap on supercompactly generated toposes}
\label{ssec:sgtrecap}

Here we recall results which are proved in our previous theoretical paper, \cite{SGT}. That paper was motivated by this one, and we shall see in the next subsection how the general results we collect here apply to the special case of toposes of topological monoid actions. For each statement, we give the relevant reference to that paper; it should be noted that while the majority of the results quoted here are original to that paper, the definitions and some basic results appear elsewhere. For example, Definition \ref{dfn:scompact} can be found in \cite[Remark D3.3.10]{Ele}, and Lemma \ref{lem:pshf} appears as a comment before Proposition 4.3 of Bridge's thesis \cite{TAC}.

\begin{dfn}[{\cite[Definition 1.1.1]{SGT}}]
\label{dfn:scompact}
An object $C$ of a category $\Ecal$ is \textbf{supercompact} if any jointly epic family $\{A_i \to C \mid i \in I\}$ contains an epimorphism.
\end{dfn}

\begin{lemma}[{\cite[Lemma 1.2.1]{SGT}}]
\label{lem:screp}
Let $\Ecal \simeq \Sh(\Ccal,J)$ be a Grothendieck topos of sheaves on a small cite $(\Ccal,J)$. Then the supercompact objects of $\Ecal$ are quotients of the representable sheaves $\ell(C)$ for $C \in \Ccal$. In particular, the category of all supercompact objects is essentially small.
\end{lemma}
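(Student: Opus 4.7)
The plan is to exploit the standard fact that the representable sheaves $\{\ell(C) \mid C \in \Ccal\}$ form a small generating family for $\Ecal$. For any object $E \in \Ecal$, this gives a canonical jointly epimorphic family of morphisms $\ell(C) \to E$, and the supercompactness hypothesis will single out one of these as a single epimorphism.

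First I would recall that every sheaf $E$ is the canonical colimit of representables indexed by its category of elements $\int_\Ccal E$ (whose objects are pairs $(C,x)$ with $x \in E(C)$, equivalently by Yoneda, morphisms $\bar{x}: \ell(C) \to E$). The induced map $\coprod_{(C,x) \in \int_\Ccal E} \ell(C) \to E$ is then the projection from a colimit of a diagram to itself and so is in particular an epimorphism. Hence the family $\{\bar{x} : \ell(C) \to E\}$ is jointly epimorphic. If $E$ is supercompact, then by Definition \ref{dfn:scompact} some $\bar{x}$ in this family is itself an epimorphism, exhibiting $E$ as a quotient of $\ell(C)$ for that $C$, as claimed.

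For the essential smallness claim, I would argue as follows: the collection $\{\ell(C) \mid C \in \Ccal\}$ is essentially small since $\Ccal$ is small by assumption. For each fixed $C$, the isomorphism classes of quotients of $\ell(C)$ correspond to equivalence relations on $\ell(C)$, i.e. to certain subobjects of $\ell(C) \times \ell(C)$, via the kernel pair construction (valid because epimorphisms are effective in a topos). Since every Grothendieck topos is well-powered, this forms a set. Taking the (small) union over $C \in \Ccal$ then yields an essentially small class of supercompact objects up to isomorphism.

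The only point that deserves a moment's care is the first one: that the family $\{\bar{x}: \ell(C) \to E\}$ really is jointly epimorphic for arbitrary $E$. This is essentially the density-of-representables theorem in sheaf form, and once it is in hand the rest of the argument is essentially bookkeeping, so I do not anticipate any substantive obstacle.
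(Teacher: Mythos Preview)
Your argument is correct and is the standard one: the representable sheaves generate, so every object receives a jointly epic family from them, supercompactness extracts a single epimorphism, and well-poweredness bounds the quotients. The paper does not supply its own proof of this lemma but simply cites it from \cite[Lemma 1.2.1]{SGT}, so there is nothing to compare against; your writeup would serve perfectly well as the omitted proof.
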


\begin{dfn}[{\cite[Definition 1.2.2]{SGT}}]
We say a topos is \textbf{supercompactly generated} if its (essential) set of supercompact objects is separating. We write $\Ccal_s$ for the category of supercompact objects.
\end{dfn}

In the special case of presheaf toposes, the representable presheaves are themselves supercompact objects, whence we conclude that:
\begin{lemma}[{\cite[Proposition 1.2.4(iii)]{SGT}}]
\label{lem:pshf}
Any presheaf topos is a supercompactly generated topos with enough points.
\end{lemma}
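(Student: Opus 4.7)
The plan is to break the statement into its two assertions: first that representable presheaves are supercompact (which, combined with the standard fact that representables separate a presheaf topos, yields supercompact generation), and second that each object of the indexing category gives rise to a point of the presheaf topos, with these jointly reflecting isomorphisms.

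For supercompactness of the representable $\yon(C)$ in $\PSh(\Ccal) = [\Ccal\op,\Set]$, I would use the Yoneda lemma directly. A jointly epic family $\{F_i \to \yon(C) \mid i \in I\}$ in a presheaf topos is one whose components are pointwise jointly surjective, since epimorphisms of presheaves are computed pointwise. Evaluating at $C$ and considering the identity morphism $\id_C \in \yon(C)(C)$, some member of the family must cover it: there exists $i \in I$ and $x \in F_i(C)$ mapping to $\id_C$. By Yoneda, $x$ corresponds to a morphism $\yon(C) \to F_i$ which, by construction, is a section of $F_i \to \yon(C)$. Hence that component is a split epimorphism, showing $\yon(C)$ is supercompact. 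Since representables form a separating family in $\PSh(\Ccal)$, the supercompact objects separate, so the topos is supercompactly generated by Definition \ref{dfn:scompact}.

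For enough points, I would invoke the standard fact that for each $C \in \Ccal$ the evaluation functor $\ev_C : \PSh(\Ccal) \to \Set$, sending $F \mapsto F(C)$, preserves all limits and colimits (being computed pointwise) and so is in particular the inverse image of a geometric morphism $\Set \to \PSh(\Ccal)$ (its right adjoint being constructed via the usual Kan-extension or Hom-formula). A morphism $f : F \to G$ of presheaves is an isomorphism precisely when each component $f_C = \ev_C(f)$ is a bijection, so the family $\{\ev_C\}_{C \in \ob \Ccal}$ jointly reflects isomorphisms, which is exactly the condition that $\PSh(\Ccal)$ has enough points.

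I do not expect any genuine obstacle here: both halves are essentially immediate consequences of the pointwise computation of (co)limits and epimorphisms in a presheaf topos together with Yoneda. The only mild subtlety worth flagging is ensuring one is working with the correct notion of point (geometric morphism from $\Set$) rather than merely a separating set-valued functor, but this is automatic from the adjoint functor theorem applied to $\ev_C$, or more concretely from the explicit description of its right adjoint.
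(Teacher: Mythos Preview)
Your proposal is correct and matches the paper's approach: the text immediately preceding the lemma notes that representable presheaves are themselves supercompact, which together with the fact that representables separate gives supercompact generation, and the existence of enough points is treated as standard. Your Yoneda argument for supercompactness and evaluation-functor argument for enough points are precisely the expected elaborations of these remarks.
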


\begin{dfn}[{\cite[\S 1.8]{SGT}}]
\label{dfn:hype}
Recall that a geometric morphism $f:\Fcal \to \Ecal$ is \textbf{hyperconnected} if its inverse image functor is full and faithful (so expresses $\Ecal$ as a coreflective subcategory of $\Fcal$), and moreover $\Ecal$ is closed in $\Fcal$ under subobjects and quotients.
\end{dfn}

See \cite[\S A4.6]{Ele} for some background on hyperconnected (and localic) morphisms. The main result of interest to us presently is the following:

\begin{prop}[{\cite[Theorem 1.8.5]{SGT}}]
\label{prop:hype3}
Let $f:\Fcal \to \Ecal$ be a hyperconnected geometric morphism between elementary toposes. If $\Fcal$ is:
\begin{itemize}
 	\item a Grothendieck topos, or
 	\item a Grothendieck topos with enough points, or
 	\item a two-valued topos (having exactly two subterminal objects), or
 	\item a supercompactly-generated topos,
\end{itemize}
then so is $\Ecal$.
\end{prop}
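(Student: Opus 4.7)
My plan is to deduce each of the four bulleted conclusions for $\Ecal$ from the corresponding property of $\Fcal$, using two standing features of a hyperconnected morphism: (a) $f^*$ is fully faithful, hence conservative, so it reflects monomorphisms, epimorphisms and isomorphisms; (b) the essential image of $f^*$ is closed in $\Fcal$ under subobjects and quotients. I shall also use that $f^*$, being a geometric inverse image, preserves finite limits and all colimits, so in particular preserves $0$, $1$, images, and joint epimorphicity.

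The two-valued and enough-points cases are immediate. For two-valued, a subterminal $S \hookrightarrow 1_\Ecal$ is sent by $f^*$ to a subterminal of $1_\Fcal$, which must be $0$ or $1$; by conservativity, $S$ is already $0$ or $1$ in $\Ecal$. For enough points, composing a jointly conservative family of points of $\Fcal$ with $f$ yields a family of points of $\Ecal$, and joint conservativity survives because $f^*$ is itself conservative.

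The Grothendieck and supercompactly generated cases I would handle in parallel. Pick a small separating family $\{G_i\}_{i\in I}$ for $\Fcal$ (chosen supercompact in the latter case). For each $E \in \Ecal$, the morphisms $G_i \to f^*E$ in $\Fcal$ are jointly epic onto $f^*E$; factoring each through its image produces a jointly epic family of subobjects $\{f^*S_k \hookrightarrow f^*E\}$, where closure under subobjects identifies each such image as $f^*S_k$ for some $S_k \hookrightarrow E$ in $\Ecal$, and closure under quotients presents $f^*S_k$ as a quotient of some $G_i$. Conservativity of $f^*$ then forces $\{S_k \hookrightarrow E\}$ to be jointly epic in $\Ecal$. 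Letting $E$ vary, the full collection of the $S_k$ is essentially small because each $f^*S_k$ is among an essentially small collection of quotients of the $G_i$ in $\Fcal$, and $f^*$ is fully faithful. In the supercompactly generated case, each $f^*S_k$ is a quotient of a supercompact object, hence itself supercompact, and I would finally verify that $S_k$ is supercompact in $\Ecal$ by pushing a jointly epic family into $S_k$ through $f^*$, extracting an epimorphism into the supercompact $f^*S_k$, and reflecting it back via full faithfulness.

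The main obstacle, I anticipate, is the combined Grothendieck and supercompactly generated argument: specifically, arranging the interplay between closure under subobjects (to know $f^*S_k$ comes from $\Ecal$), closure under quotients (to present $f^*S_k$ as a quotient of $G_i$), and full faithfulness (to transfer essential smallness and joint epimorphicity across $f$) in the right order. Once this machinery is set up, the two simpler cases are essentially one-line observations.
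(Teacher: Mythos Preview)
Your proposal is correct. Note, however, that this paper does not itself prove the proposition: it is quoted verbatim as a recap result from \cite{SGT} (Theorem 1.8.5 there), so there is no proof in the present paper to compare against line by line. The only local elaboration is Remark~\ref{rmk:atom}, which sketches the supercompactly-generated case in exactly the way you do: take the separating set in $\Fcal$, pass to those quotients of its members that lie in the essential image of $f^*$, and use preservation/reflection of the relevant property (here supercompactness) by $f^*$ together with closure of $\Ecal$ under subobjects and quotients. Your handling of the two-valued and enough-points cases via conservativity of $f^*$, and of the Grothendieck case via essential smallness of quotients of a small generating family, is the standard argument and matches what one finds in \cite{SGT}.
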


\begin{rmk}
\label{rmk:atom}
The final point in Proposition \ref{prop:hype3} can be expanded as follows. Suppose that $P$ is a property of objects of a topos which descends along epimorphisms, in the sense that given an epimorphism $A \too B$, if $A$ satisfies $P$ then $B$ must also; several such properties appear in \cite[\S 4]{SCGI}, including the property of being an atom (having no non-trivial subobjects). Suppose moreover that objects with property $P$ are preserved and reflected by the inverse image of a hyperconnected geometric morphism $f:\Fcal \to \Ecal$. Then if $\Fcal$ has a separating set of objects with property $P$, so does $\Ecal$; explicitly, the latter set may be taken to be the collection of quotients of objects in the separating set for $\Fcal$ which lie in $\Ecal$. We apply this observation to atomic toposes in Proposition \ref{prop:densegroup} later on.
\end{rmk}

In order to understand supercompactly generated toposes, we studied their full subcategories of supercompact objects. We present the properties of these categories $\Ccal_s$ in a different order here than we did in \cite{SGT}, starting with a categorical characterization of them as `reductive' categories, and then presenting their more general properties.

\begin{dfn}[{\cite[Definition 1.3.3]{SGT}}]
\label{dfn:funnel}
A small indexing category $\Dcal$ is a \textbf{funnel} if it has a weakly terminal object. A \textbf{funneling diagram} in an arbitrary category $\Ccal$ is a functor $F: \Dcal \to \Ccal$ with $\Dcal$ a funnel. We shall denote the image of the weakly terminal object of $\Dcal$ by $D$; for example:
\[\begin{tikzcd}[row sep = small]
A_i \ar[dr, "f_i", shift left] \ar[dr, "f'_i"', shift right] & \\
\vdots & D. \\
A_j \ar[ur, "f_j", shift left] \ar[ur, "f'_j"', shift right] &
\end{tikzcd}\]
The colimit of $F$, if it exists, is an object $C$ of $\Ccal$ equipped with an epimorphism $f: F(D) \too C$ through which all legs of the colimit cone factor. Such a colimit will be called \textbf{funneling}. Any epimorphism obtainable in this way for some funneling diagram $F$ is a \textbf{strict epimorphism}, equivalently expressible as the universal coequalizer of all pairs it coequalizes. Notably, these include isomorphisms and regular epimorphisms.
\end{dfn}

\begin{dfn}[{\cite[Definition 2.1.1]{SGT}}]
\label{dfn:stable}
Let $\Ccal$ be a small category. We say a class $\Tcal$ of morphisms in $\Ccal$ is called \textbf{stable} if it satisfies the following three conditions:
\begin{enumerate}
	\item $\Tcal$ contains all identities;
	\item $\Tcal$ is closed under composition;
	\item For any $f : C \to D$ in $\Tcal$ and any morphism $g$ in $\Ccal$ with codomain $D$, there exists a commutative square
	\[\begin{tikzcd}
		A \ar[r, "f'"] \ar[d, "g'"'] & B \ar[d, "g"]\\
		C \ar[r, "f"'] & D
	\end{tikzcd}\]
	in $\Ccal$ with $f' \in \Tcal$.
\end{enumerate}
\end{dfn}

As we remarked in \cite{SGT}, this definition is precisely what is needed for the class $J_{\Tcal}$ of sieves containing $\Tcal$-morphisms to be a well-defined Grothendieck topology on $\Ccal$; we employ such a Grothendieck topology in Proposition \ref{prop:algbase} below.

\begin{dfn}[{\cite[Definitions 2.4.2, 2.4.10]{SGT}}]
\label{dfn:prereg}
A category $\Ccal$ is called \textbf{reductive} if it has all funneling colimits and its class of strict epimorphisms is stable. We call the Grothendieck topology generated by the class of strict epimorphisms the \textbf{reductive topology}, and denote it $J_r$.

A reductive category $\Ccal$ is called \textbf{effectual} if, for every funneling diagram $F: \Dcal \to \Ccal$ with colimit expressed by $\lambda: F(D_0) \too C_0$ and any object $C$ in $\Ccal$ admitting morphisms $g_1, g_2:C \rightrightarrows F(D_0)$ such that $\lambda \circ g_1 = \lambda \circ g_2$, there is a strict epimorphism $t: C' \too C$ such that $g_1 \circ t$ and $g_2 \circ t$ lie in the same connected component of $(C' \downarrow F)$.
\end{dfn}

\begin{prop}[{\cite[Theorem 2.4.12, Corollary 1.4.5]{SGT}}]
\label{prop:correspond}
Up to equivalence, there is a one-to-one correspondence between supercompactly generated toposes and effectual, reductive categories. The correspondence sends a topos to its category of supercompact objects and an effectual, reductive category to the topos of sheaves for the reductive topology on that category.

In particular, if $\Ecal$ and $\Ecal'$ are supercompactly generated toposes and $\Ccal_s$, $\Ccal'_s$ are their respective categories of supercompact objects, then $\Ecal \simeq \Ecal$ if and only if $\Ccal_s \simeq \Ccal_s$.
\end{prop}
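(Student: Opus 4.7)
The plan is to establish that the two assignments $\Ecal \mapsto \Ccal_s$ and $\Ccal \mapsto \Sh(\Ccal, J_r)$ are well-defined and mutually quasi-inverse, and then note that the final sentence is a formal consequence. I would divide the work into four verifications: (A) $\Ccal_s$ is effectual and reductive, (B) $\Sh(\Ccal,J_r)$ is supercompactly generated when $\Ccal$ is, (C) the round trip $\Ccal \to (\Sh(\Ccal,J_r))_s$ is an equivalence, and (D) the round trip $\Ecal \to \Sh(\Ccal_s, J_r)$ is an equivalence.

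For (A), funneling colimits computed in the ambient topos $\Ecal$ stay inside $\Ccal_s$ because a quotient of a supercompact object remains supercompact (the colimiting map out of the weakly terminal vertex is a strict epi by construction), so $\Ccal_s$ admits all funneling colimits computed in $\Ecal$. Strict epimorphisms in $\Ccal_s$ then coincide with the strict epimorphisms in $\Ecal$ between supercompact objects; stability follows from the stability of strict epimorphisms in the topos $\Ecal$ and the fact that pullbacks of supercompact-to-supercompact epis are covered (in the factorization sense) by strict epis of supercompact objects, using again that $\Ccal_s$ is separating. Effectuality encodes, in the language of $\Ccal_s$ alone, the statement that two maps into $F(D)$ that coequalize the colimit can, after pulling back along a suitable strict epi, be connected by a zigzag along the diagram $F$; this is a reformulation of effectiveness of kernel-pair-type coequalizers in $\Ecal$, translated through the separating family.

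For (B), I would check first that $J_r$ is subcanonical, so that the Yoneda embedding lands in $\Sh(\Ccal,J_r)$: a strict epimorphism $f : F(D) \too C_0$ is the universal coequalizer of the pairs it coequalizes, which means representables send it to an equalizer, and this suffices to verify the sheaf condition on the generating covers of $J_r$. Each $\ell(C)$ is then supercompact in $\Sh(\Ccal,J_r)$ because any jointly epic family on it refines (by local character of epimorphy in a sheaf topos) to a $J_r$-cover, and $J_r$-covers are generated by single strict epimorphisms, yielding a single epi in the family. Since representables separate any sheaf topos on a small site, $\Sh(\Ccal,J_r)$ is supercompactly generated.

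For (C), the restricted Yoneda $\ell : \Ccal \to (\Sh(\Ccal,J_r))_s$ is fully faithful by subcanonicity and essentially surjective by Lemma \ref{lem:screp} together with the observation that quotients of representables along strict epis lie in the image of $\ell$ up to iso, which is precisely what effectuality delivers: it rules out spurious identifications in the sheafification. For (D), restricted Yoneda along the separating inclusion $\Ccal_s \hookrightarrow \Ecal$ defines a geometric morphism $\Ecal \to \Sh(\Ccal_s, J_r)$; its inverse image is the identity on $\Ccal_s$, and both source and target are generated by $\Ccal_s$ under $J_r$-covers, so the induced morphism is an equivalence. The concluding biconditional about Morita-equivalence of toposes of supercompact objects is then immediate. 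The main obstacle will be pinning down effectuality as exactly the condition that prevents the sheafification in (C) from identifying too much while still letting $\Ccal$ sit inside $\Sh(\Ccal,J_r)$ as its full subcategory of supercompact objects; in both directions, the non-trivial content is matching the colimit-quotient structure of $\Ccal_s$ with the cover structure of $J_r$.
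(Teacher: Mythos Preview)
The paper does not prove this proposition: it is stated in Section~\ref{ssec:sgtrecap} as a recall of results from \cite{SGT} (Theorem~2.4.12 and Corollary~1.4.5 there), with no argument given in the present paper. So there is no in-paper proof to compare against directly; your four-step decomposition (A)--(D) is the natural one and is essentially what the cited paper carries out.

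That said, your sketch has one genuine soft spot, which you yourself flag at the end: the role of effectuality in (C). Knowing from Lemma~\ref{lem:screp} that every supercompact object of $\Sh(\Ccal,J_r)$ is a quotient of some $\ell(D)$ is not enough for essential surjectivity; you must show that every such quotient is already (isomorphic to) a representable. The kernel of an epimorphism $\ell(D) \twoheadrightarrow Q$ in the sheaf topos is not in general induced by a single parallel pair in $\Ccal$, only by a family of them, so the relevant colimit in $\Ccal$ is a genuine funneling colimit. The precise content of effectuality (Definition~\ref{dfn:prereg}) is exactly that $\ell$ preserves such funneling colimits: given $g_1,g_2$ coequalized by $\lambda$, one can, after passing to a $J_r$-cover, connect $g_1$ and $g_2$ through the diagram, which is what the sheaf-colimit verification requires. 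Your phrase ``rules out spurious identifications in the sheafification'' is the right intuition, but as written it is not yet an argument. A smaller point: in (A) your stability sketch speaks of pullbacks, but $\Ccal_s$ need not have them; stability here is the weak-square condition of Definition~\ref{dfn:stable}, and producing those squares inside $\Ccal_s$ uses that supercompact objects separate $\Ecal$.
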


\begin{thm}[{\cite[Corollary 1.3.10, Lemma 1.3.12, Corollary 1.3.14, Lemma 1.5.5, Scholium 1.5.6, Proposition 1.8.1, Corollary 1.8.2]{SGT}}]
\label{thm:Cs}
Let $\Ccal_s$ be the category of supercompact objects in a supercompactly generated, two-valued Grothendieck topos $\Ecal$. Then beyond $\Ccal_s$ being reductive and effectual, we have that:
\begin{enumerate}
	\item All monomorphisms in $\Ccal_s$ are regular and coincide with those in $\Ecal$;
	\item All epimorphisms in $\Ccal_s$ are strict and coincide with those in $\Ecal$;
	\item The classes of epimorphisms and monomorphisms in $\Ccal_s$ form an orthogonal factorization system;
	\item $\Ccal_s$ has a terminal object $1$, and every object is well-supported;
	\item $\Ccal_s$ has cokernels, which is to say pushouts along the unique morphism to the terminal object:
	\[\begin{tikzcd}
		A \ar[d,"!_A"',two heads] \ar[r, "f"] \ar[dr, phantom, "\lrcorner", very near start] & B \ar[d, two heads] \\
		1 \ar[r] & B/f.
	\end{tikzcd}\]
\end{enumerate}
\end{thm}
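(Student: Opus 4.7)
The plan is to prove the five items in the bootstrapping order (4), (5), (2), (1), (3), repeatedly using two facts: the full embedding $\Ccal_s \hookrightarrow \Ecal$ is closed under quotients (since supercompactness descends along epimorphisms) and $\Ccal_s$ separates $\Ecal$. For (4), any supercompact $C$ is nonzero (else the empty family contradicts the definition), so in a two-valued topos its support, being a nonzero subterminal, equals $1$, making $C \to 1$ epi and giving well-supportedness. The terminal $1$ is itself supercompact because any jointly epic family $\{A_i \to 1\}$ must include a nonzero $A_i$, and its map to $1$ is then epi.

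For (5), I would form the pushout $B/f$ of $f: A \to B$ along $A \to 1$ in $\Ecal$; since $A \to 1$ is epi by (4), so is the pushout leg $B \to B/f$, placing $B/f$ in $\Ccal_s$, with the universal property inherited via the full embedding. For (2), $\Ecal$-epis between supercompact objects are trivially $\Ccal_s$-epis. Conversely, given an epi $f: A \to B$ in $\Ccal_s$, factor in $\Ecal$ as $f = m \circ e$ with $m: I \hookrightarrow B$ mono and $I$ supercompact (a quotient of $A$); form the cokernel $q: B \to B/I$ using (5) together with the constant map $c := \iota \circ !_B: B \to B/I$ via $\iota: 1 \to B/I$. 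The pushout identity $q \circ m = \iota \circ !_I = c \circ m$ yields $q \circ f = c \circ f$, so the $\Ccal_s$-epi property of $f$ forces $q = c$; this makes $\iota$ epi (as $q$ is epi), hence iso in the balanced topos $\Ecal$, so $B/I \cong 1$ and $m$ must be iso. Therefore $f$ is epi in $\Ecal$. Strictness of a $\Ccal_s$-epi then follows by covering its $\Ecal$-kernel pair by supercompact objects and realising the epi as the corresponding funneling colimit in $\Ccal_s$.

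For (1), $\Ecal$-monos between supercompact objects are trivially $\Ccal_s$-monos. Conversely, a $\Ccal_s$-mono $m: A \hookrightarrow B$ is an $\Ecal$-mono via separation: any pair $u, v: X \rightrightarrows A$ in $\Ecal$ equalized by $m$ can be tested against a jointly epic family of supercompact objects $C_i \to X$, reducing equality of $u$ and $v$ to the $\Ccal_s$-mono property of $m$. Regularity is witnessed by representing $m$ in $\Ecal$ as the equalizer of the pair $(q, c): B \rightrightarrows B/A$ built from (5), both components of which lie in $\Ccal_s$; full faithfulness of the embedding makes this also the equalizer in $\Ccal_s$. Finally, (3) restricts the $\Ecal$-epi/mono factorization: the image of any $f: A \to B$ in $\Ccal_s$ is a quotient of $A$, hence supercompact, and orthogonality transfers through the full embedding. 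The main technical obstacle is the converse direction of (2), specifically the step from $q = c$ to $m$ being iso, which relies in an essential way on (4), (5), and the balanced nature of $\Ecal$ operating in concert.
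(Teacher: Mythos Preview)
The paper does not actually prove this theorem; it is a compilation of results quoted from \cite{SGT}, with only the citations given. So there is no ``paper's own proof'' to compare against, and your self-contained argument is a genuine reconstruction rather than a paraphrase.

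Your argument is essentially correct and well-organised. The bootstrapping order $(4)\to(5)\to(2)\to(1)\to(3)$ is sound, and the key device---using the cokernel $B/A$ to manufacture the parallel pair $(q,c)$ that detects whether a mono is an iso and simultaneously exhibits it as a regular mono---is exactly the right idea. One point deserves to be made explicit: in (2) you pass from $\iota:1\to B/I$ being an isomorphism to $m:I\hookrightarrow B$ being an isomorphism, and in (1) you assert that $m$ is the equaliser in $\Ecal$ of $(q,c)$. Both steps rest on the standard topos-theoretic fact that the pushout square of a monomorphism is also a pullback square (equivalently, that $m$ is recovered as the pullback of $\iota$ along $q$). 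You use this silently; it would be worth naming it, since without it the inference ``$B/I\cong 1$, hence $m$ iso'' is not immediate in an arbitrary category. Once that fact is granted, your argument goes through cleanly, including the strictness step in (2), where covering the $\Ecal$-kernel pair by supercompact objects and checking the universal property in $\Ccal_s$ via fullness is the right move.
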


\subsection{Corollaries for toposes of topological monoid actions}
\label{ssec:apply}

In light of Corollary \ref{crly:topos}, we see that the adjunction $(V \dashv R)$ is a hyperconnected geometric morphism $\PSh(M) \to \Cont(M,\tau)$. Recalling from \cite{TDMA} that $\PSh(M)$ is two-valued, we may apply Lemma \ref{lem:pshf} and Proposition \ref{prop:hype3} to conclude that:

\begin{crly}
\label{crly:conttop}
Any topos of the form $\Cont(M,\tau)$ is a supercompactly generated, two-valued Grothendieck topos with enough points.
\end{crly}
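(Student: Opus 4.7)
The plan is to apply Proposition \ref{prop:hype3} to the hyperconnected geometric morphism $f : \PSh(M) \to \Cont(M,\tau)$ whose existence has just been recorded, transporting each of the four desired properties from $\PSh(M)$ down to $\Cont(M,\tau)$. Since Corollary \ref{crly:topos} has already supplied the standing hypothesis of Proposition \ref{prop:hype3} --- that both sides of $f$ are elementary toposes --- the task reduces to verifying those properties for the presheaf topos.

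First I would dispatch the four verifications for $\PSh(M)$. That $\PSh(M) = [M\op,\Set]$ is a Grothendieck topos is immediate, being the topos of presheaves on the one-object category associated to $M$. Lemma \ref{lem:pshf} hands us both supercompact generation and the existence of enough points simultaneously; in particular the representable presheaf itself is supercompact and generates. The two-valuedness was established in \cite{TDMA}, where the only subterminal objects of $\PSh(M)$ are seen to be the empty $M$-set and the singleton $M$-set.

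Invoking Proposition \ref{prop:hype3} once for each bullet then delivers the conclusions in turn: $\Cont(M,\tau)$ is a Grothendieck topos, has enough points, is two-valued, and is supercompactly generated. No genuine obstacle arises at this stage --- the corollary is essentially a bookkeeping consequence of the machinery already assembled, with the real work having gone into Proposition \ref{prop:hyper} (establishing that $V$ is left exact and comonadic with right adjoint $R$, hence that $f$ is hyperconnected) and the recapitulated descent results of \cite{SGT}.
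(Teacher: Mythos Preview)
Your proposal is correct and follows essentially the same route as the paper: apply Lemma \ref{lem:pshf} and the two-valuedness of $\PSh(M)$ from \cite{TDMA} to see that $\PSh(M)$ enjoys all four properties, then transport them along the hyperconnected morphism $\PSh(M) \to \Cont(M,\tau)$ via Proposition \ref{prop:hype3}. The paper's proof is the short paragraph immediately preceding the corollary and contains nothing beyond what you have written.
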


The fact that $\PSh(M)$ is supercompactly generated is implicitly important in Hemelaer's work in \cite{TGRM}: when identifying those toposes of $G$-equivariant sheaves on a space $X$ which are equivalent to one of the form $\PSh(M)$, they arrive at the definition of a \textit{minimal basis}, which corresponds to a base of supercompact open sets.

Having reached Corollary \ref{crly:conttop} abstractly, it may not be immediately obvious what the supercompact objects are in this case.

\begin{dfn}
We shall call an object $N$ in $\PSh(M)$ a \textbf{principal}\footnote{Some readers might prefer the term \textit{cyclic}.} \textbf{right $M$-set} if it is a quotient of $M$, in that there exists an epimorphism $M \too N$. Such an $M$-set is generated by a single element, the image of $1 \in M$ under the given epimorphism. Similarly, given a topology $\tau$ on $M$, we say an $(M,\tau)$-set $N$ is \textbf{principal} if $V(N)$ is a principal right $M$-set.
\end{dfn}

\begin{prop}
\label{prop:prince}
The supercompact objects of $\Cont(M,\tau)$ are precisely the principal $M$-sets. As such, these form an effectual, reductive category, with all the properties of Theorem \ref{thm:Cs}.
\end{prop}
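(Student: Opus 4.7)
The plan is to prove both inclusions in the equality \{supercompact objects of $\Cont(M,\tau)$\} = \{principal $(M,\tau)$-sets\}, exploiting at every turn the fact recorded in Proposition \ref{prop:hyper} and Corollary \ref{crly:conttop} that $V : \Cont(M,\tau) \to \PSh(M)$ is the inverse image of a hyperconnected geometric morphism: in particular $V$ is fully faithful, preserves arbitrary colimits (and thus epimorphisms), and preserves finite limits.

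First I would show that every principal $(M,\tau)$-set $N$ is supercompact in $\Cont(M,\tau)$. Note that $VN$, being a quotient of the representable $M$, is supercompact in $\PSh(M)$: a direct Yoneda-style argument shows that any jointly epic family $\{B_j \to VN\}$ in $\PSh(M)$ must contain a map hitting the generator of $VN$, which is then forced to be an epimorphism. Now given a jointly epic family $\{f_i : A_i \to N\}_{i \in I}$ in $\Cont(M,\tau)$, the family $\{Vf_i\}$ is jointly epic in $\PSh(M)$ because $V$ preserves colimits, so some $Vf_i$ is epic. Since $V$ is faithful, the usual cancellation argument (given $g f_i = h f_i$, apply $V$, cancel $Vf_i$, then use faithfulness) shows $f_i$ itself is epic, and $N$ is supercompact.

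For the converse, let $N$ be a supercompact object of $\Cont(M,\tau)$. For each $x \in N$, the orbit $x \cdot M := \{xm \mid m \in M\} \subseteq N$ is closed under the action, and the identifications
\[ \{m' \in M \mid (xm) m' = (xm) p\} = \Ical^p_{xm} \]
exhibit the necessary clopens of $x \cdot M$ as necessary clopens of $N$, which are open by Lemma \ref{lem:Inx}. Thus $x \cdot M$ is a principal $(M,\tau)$-set, via $m \mapsto xm$. The family of inclusions $\{x \cdot M \hookrightarrow N\}_{x \in N}$ is jointly epic in $\Cont(M,\tau)$: any two morphisms $g, h : N \rightrightarrows Z$ agreeing on every $x \cdot M$ in particular satisfy $g(x) = g(x \cdot 1) = h(x \cdot 1) = h(x)$. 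Supercompactness of $N$ forces some inclusion $x \cdot M \hookrightarrow N$ to be epic; as $\Cont(M,\tau)$ is an elementary topos by Corollary \ref{crly:topos} and hence balanced, this monic epimorphism is an isomorphism, so $N \cong x \cdot M$ is principal.

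The final assertion of the proposition is then immediate from Theorem \ref{thm:Cs} combined with Corollary \ref{crly:conttop}, since $\Cont(M,\tau)$ has now been identified as a two-valued supercompactly generated Grothendieck topos whose category of supercompact objects is the category of principal $(M,\tau)$-sets. The main obstacle is largely bookkeeping in the second half: one must confirm that each orbit $x \cdot M$ genuinely inherits continuity (rather than merely being a sub-$M$-set) and that the family of such inclusions is jointly epic in $\Cont(M,\tau)$ itself and not merely in $\PSh(M)$, both of which reduce to unpacking Lemma \ref{lem:Inx}.
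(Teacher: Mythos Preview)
Your proof is correct. The paper takes a more abstract shortcut: having identified the supercompact objects of $\PSh(M)$ as exactly the principal $M$-sets (quotients of the representable $M$, via Lemma~\ref{lem:screp}), it simply invokes the general fact that the inverse image of a hyperconnected morphism preserves and reflects supercompact objects, whence the identification in $\Cont(M,\tau)$ follows immediately from the definition of principal $(M,\tau)$-set.

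Your approach unpacks this in both directions. In the forward direction you essentially reprove that $V$ reflects supercompactness via its faithfulness and colimit-preservation; this is the abstract argument specialized to the case at hand. For the converse you give a genuinely more elementary argument: rather than invoking preservation of supercompactness by $V$ and then appealing to the $\PSh(M)$ classification, you exhibit the concrete orbit cover $\{x \cdot M \hookrightarrow N\}_{x \in N}$ directly inside $\Cont(M,\tau)$ and use balancedness to turn the resulting monic epimorphism into an isomorphism. This buys independence from the cited preservation/reflection result in \cite{SGT}, at the cost of the small bookkeeping verifications (continuity of orbits, joint epicness) that you correctly flag. The paper's version is terser but leans on having the general hyperconnected machinery from \cite{SGT} already available.
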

\begin{proof}
Clearly this is true in $\PSh(M)$, since by definition the principal $M$-sets are exactly the quotients of the representable $M$-set $M$. It follows that the supercompact objects of $\Cont(M,\tau)$ are the continuous principal $M$-sets, since the inverse image of a hyperconnected morphism preserves and reflects supercompact objects.
\end{proof}

By an observation made in \cite[following Lemma 1.21]{SGT}, we recover the intuitive fact that every $(M,\tau)$-set is the union of its principal sub-$M$-sets. Indeed, this fact could have been proved directly; we felt that the route via abstraction highlighted that this property was not particular to toposes of monoid actions. From Proposition \ref{prop:correspond}, we have:

\begin{crly}
\label{crly:Morita}
Let $\Ccal_s$ be the category of continuous principal $(M,\tau)$-sets. Then there is an equivalence $\Cont(M,\tau) \simeq \Sh(\Ccal_s,J_r)$. In particular, topological monoids $(M,\tau)$ and $(M',\tau')$ are \textbf{Morita equivalent}, which is to say that $\Cont(M,\tau) \simeq \Cont(M',\tau')$, if and only if they have equivalent categories of continuous principal $M$-sets.
\end{crly}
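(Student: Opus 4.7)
The plan is to deduce this corollary as an essentially immediate application of the general correspondence between supercompactly generated toposes and effectual, reductive categories (Proposition \ref{prop:correspond}), once the hypotheses of that correspondence have been verified in the present situation. No significant new work should be needed; this is why Corollary \ref{crly:Morita} is stated as a corollary.

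First, I would invoke Corollary \ref{crly:conttop} to record that $\Cont(M,\tau)$ is a supercompactly generated Grothendieck topos, so that the correspondence of Proposition \ref{prop:correspond} applies to it. Next, I would invoke Proposition \ref{prop:prince} to identify the category of supercompact objects of $\Cont(M,\tau)$ with $\Ccal_s$, the category of continuous principal $(M,\tau)$-sets, and to note that $\Ccal_s$ is effectual and reductive. Plugging these two facts into the first half of Proposition \ref{prop:correspond} immediately produces the equivalence $\Cont(M,\tau)\simeq \Sh(\Ccal_s,J_r)$, where $J_r$ is the reductive topology on $\Ccal_s$.

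For the ``in particular'' clause, I would use the bijective nature of the correspondence. In the forward direction, if $\Ccal_s \simeq \Ccal'_s$ as categories then the reductive topologies correspond under this equivalence (since $J_r$ is determined by the internal notion of strict epimorphism, which is preserved by any equivalence of categories), so taking sheaves yields $\Cont(M,\tau)\simeq \Sh(\Ccal_s,J_r) \simeq \Sh(\Ccal'_s,J_r) \simeq \Cont(M',\tau')$. In the converse direction, an equivalence $\Cont(M,\tau)\simeq \Cont(M',\tau')$ restricts to an equivalence between their full subcategories of supercompact objects, since supercompactness is defined purely in terms of the ambient topos and is therefore invariant under equivalence; this restriction yields $\Ccal_s \simeq \Ccal'_s$, which is the remaining implication.

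The only mildly delicate point is checking the invariance of the reductive topology and of the notion of supercompactness under an equivalence of categories, but both are evident from Definitions \ref{dfn:scompact} and \ref{dfn:prereg}, since they are formulated entirely in terms of jointly epic families, funneling colimits and strict epimorphisms. Thus I do not foresee any real obstacle; the substance of the result has already been done in obtaining Corollary \ref{crly:conttop} and Proposition \ref{prop:prince}, and this corollary is essentially a book-keeping consequence of those together with Proposition \ref{prop:correspond}.
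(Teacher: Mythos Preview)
Your proposal is correct and follows essentially the same approach as the paper, which simply states ``From Proposition \ref{prop:correspond}, we have:'' before the corollary. You have spelled out in detail precisely the reasoning the paper leaves implicit: apply Corollary \ref{crly:conttop} and Proposition \ref{prop:prince} to verify the hypotheses, then invoke the correspondence of Proposition \ref{prop:correspond}.
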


\begin{xmpl}
\label{xmpl:infeq}
This result can be practically applied. For example, it shows that any monoid endowed with a topology for which there are infinitely many distinct isomorphism classes of continuous principal actions cannot be Morita-equivalent to any finite monoid. Of course, when the monoids involved are large enough, even the categories of principal actions can be hard to work with, so some alternative ways of generating Morita equivalences are desirable; we shall see some in subsequent sections.
\end{xmpl}

\begin{xmpl}
\label{xmpl:zero}
To present a more categorical example, we recall that a \textbf{zero element} of a monoid $M$ is an element $z \in M$ such that $mz = z = zm$ for all $m \in M$.

Let $(M,\tau)$ be a topological monoid with a zero element and $(M',\tau')$ another topological monoid. Then if $\Cont(M,\tau) \simeq \Cont(M',\tau')$, it must be that every principal $(M',\tau')$-set has a unique fixed point, since this is true in $\PSh{M}$ and the category of principal $(M,\tau)$-sets is a full subcategory containing $1$. In particular, if $M'$ is a group and $M$ is as above, then $\Cont(M,\tau) \simeq \Cont(M',\tau')$ if and only if both $\tau$ and $\tau'$ are indiscrete topologies.
\end{xmpl}

In Section \ref{ssec:EqRel}, we shall provide an alternative presentation of the site $\Ccal_s$ of continuous principal $M$-sets in terms of right congruences.

\begin{rmk}
\label{rmk:compact}
In \cite{SGT}, we also treat the broader class of \textbf{compactly generated toposes}. Without going into extraneous detail, the compact objects of $\Cont(M,\tau)$ are the \textit{finitely generated continuous $M$-sets}, and the category of these provides a larger site presenting $\Cont(M,\tau)$, as well as an alternative Morita equivalence condition. We felt that there was not sufficient added theoretical value to cover this perspective in detail in this paper.
\end{rmk}

A feature of hyperconnected morphisms which was not covered in \cite{SGT} is that they provide a way to compute exponential objects in the codomain topos using those in the domain topos.

\begin{lemma}
\label{lem:expo}
Let $h:\Fcal \to \Ecal$ be a (hyper)connected geometric morphism and let $X$, $Y$ be objects of $\Ecal$. Then the exponential object $Y^X$ in $\Ecal$ can be computed as $h_*\left(h^*(Y)^{h^*(X)}\right)$.
\end{lemma}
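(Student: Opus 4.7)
The plan is to verify the formula via a Yoneda argument: for any object $Z$ of $\Ecal$, I would exhibit a natural isomorphism
\[
\Hom_\Ecal(Z, h_*(h^*(Y)^{h^*(X)})) \cong \Hom_\Ecal(Z \times X, Y),
\]
which by the universal property of the exponential in $\Ecal$ identifies $h_*(h^*(Y)^{h^*(X)})$ with $Y^X$.

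To produce the isomorphism, I would chain together four natural bijections. First, applying the adjunction $h^* \dashv h_*$ rewrites the left-hand side as $\Hom_\Fcal(h^*(Z), h^*(Y)^{h^*(X)})$. Second, the exponential adjunction in $\Fcal$ gives $\Hom_\Fcal(h^*(Z) \times h^*(X), h^*(Y))$. Third, since $h^*$ is the inverse image of a geometric morphism, it preserves finite products, so this becomes $\Hom_\Fcal(h^*(Z \times X), h^*(Y))$. Finally, I would invoke the full faithfulness of $h^*$ --- which is precisely the content of $h$ being (hyper)connected --- to identify this last hom-set with $\Hom_\Ecal(Z \times X, Y)$.

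Each step is natural in $Z$, and the composite then matches the defining universal property of $Y^X$, so there is no serious obstacle to overcome; the whole verification is essentially just chasing adjunctions. It is worth noting that the argument only uses connectedness of $h$ (full faithfulness of $h^*$) together with the preservation of finite products automatic for any geometric morphism; the full strength of hyperconnectedness is not needed, which explains the parenthetical in the statement.
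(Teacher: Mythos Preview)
Your proposal is correct and follows exactly the same approach as the paper: a Yoneda argument chaining the $h^* \dashv h_*$ adjunction, the exponential adjunction in $\Fcal$, preservation of products by $h^*$, and full faithfulness of $h^*$. The paper compresses the last two steps into one line, but the argument is identical.
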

\begin{proof}
We check the universal property:
\begin{align*}
& \Hom_{\Ecal}\left(Z,h_*\left(h^*(Y)^{h^*(X)}\right)\right) \\ \cong \, &
\Hom_{\Fcal}(h^*(Z),h^*(Y)^{h^*(X)})\\ \cong \, &
\Hom_{\Fcal}(h^*(Z) \times h^*(X),h^*(Y))\\ \cong \, &
\Hom_{\Ecal}(Z \times X,Y),
\end{align*}
where the latter isomorphism is obtained from full faithfulness and preservation of products by $h^*$.
\end{proof}

\begin{crly}
\label{crly:expo}
Let $X$, $Y$ be $(M,\tau)$-sets. Then the exponential object $Y^X$ in $\Cont(M,\tau)$ is $R\left(\Hom_{\PSh(M)}(M \times V(X),V(Y))\right)$, which consists of the continuous elements of the exponential object $V(Y)^{V(X)}$ in $\PSh(M)$.
\end{crly}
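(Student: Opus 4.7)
The plan is to apply Lemma \ref{lem:expo} to the hyperconnected (hence connected) geometric morphism $h \colon \PSh(M) \to \Cont(M,\tau)$ furnished by Corollary \ref{crly:topos}, whose inverse image is $V$ and whose direct image is $R$ (Proposition \ref{prop:hyper}). Hyperconnected morphisms have fully faithful inverse image, which is precisely the property invoked in the proof of Lemma \ref{lem:expo}, so that lemma yields $Y^X \cong R\bigl(V(Y)^{V(X)}\bigr)$ in $\Cont(M,\tau)$.

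The second step is to identify $V(Y)^{V(X)}$ in $\PSh(M)$ explicitly. Since $\PSh(M) = [M\op, \Set]$ is a presheaf topos on a one-object category whose unique representable is the right-regular action $M$, the standard exponential formula for presheaves together with the Yoneda lemma gives
\[V(Y)^{V(X)} \;\cong\; \Hom_{\PSh(M)}(M \times V(X), V(Y)),\]
where $M \times V(X)$ carries the diagonal right $M$-action and the right action on the hom-set is precomposition with left multiplication, $(f \cdot m)(n, x) := f(mn, x)$.

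Finally, Proposition \ref{prop:hyper} describes $R$ on any $M$-set as the subset of continuous elements. Substituting this description into the display above immediately yields the stated expression for $Y^X$. The only minor piece of bookkeeping is checking that the $M$-action on the hom-set matches the convention under which Proposition \ref{prop:hyper} defines continuous elements, after which the corollary is a direct consequence of the two cited ingredients. I do not expect any genuine obstacle here: all the real work has been done in Lemma \ref{lem:expo} and in the construction of $R$.
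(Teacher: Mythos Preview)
Your proposal is correct and follows essentially the same approach as the paper: apply Lemma \ref{lem:expo} to the hyperconnected morphism $(V \dashv R)$, then identify $V(Y)^{V(X)}$ in $\PSh(M)$ via Yoneda as $\Hom_{\PSh(M)}(M \times V(X), V(Y))$ with the action $(f \cdot m)(n,x) = f(mn,x)$. The paper's proof is slightly terser but makes exactly the same two moves.
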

\begin{proof}
Applying Lemma \ref{lem:expo}, it suffices to compute $V(Y)^{V(X)}$ in $\PSh(M)$. The underlying set is given by $\Hom_{\PSh(M)}(M,Q^P) \cong \Hom_{\PSh(M)}(M \times P,Q)$, by the universal property of exponentials. $M$ acts by multiplication in the first component, so that given $h:M \times P \to Q$, $h \cdot m$ is the mapping $(n,p) \mapsto h(mn,p)$.
\end{proof}

\subsection{The joint covering property}
\label{ssec:jcp}

One might wonder if the properties of $\Cont(M,\tau)$ identified in Corollary \ref{crly:conttop} are enough to characterize toposes of this form. For comparison, in the work of Caramello in \cite{TGT}, it is shown that a topos is equivalent to the topos of actions of a topological group if and only if it is an atomic, two-valued topos admitting a special surjective point\footnote{The inverse image of this point is an extension of the $J_{at}$-flat functor represented by a $\Ccal$-universal and $\Ccal$-ultrahomogeneous object $u$ in $\mathrm{Ind}$-$\Ccal$; see \cite[Theorem 3.5]{TGT}.}. These conditions look a lot like the properties in Corollary \ref{crly:conttop}, except we have replaced `atomic' by `supercompactly generated' and have weakened the existence of a special point to the mere existence of \textit{enough} points.

Of course, we also know that toposes of the form $\Cont(M,\tau)$ have a special surjective point, obtained as the composite of the canonical point of $\PSh(M)$ and the hyperconnected morphism $\PSh(M) \to \Cont(M,\tau)$. Here we observe an additional property of categories of principal $M$-sets and an example of a topos having all of the properties of Corollary \ref{crly:conttop} but whose category of supercompact objects fails to have this additional property.

\begin{dfn}
\label{dfn:jcp}
We say a small category $\Ccal$ has the \textbf{joint covering property} if for any pair of objects $A,B$ of $\Ccal$ there exists an object $N$ of $\Ccal$ admitting epimorphisms to $A$ and $B$.
\end{dfn}

If $\Ccal$ is a poset, the joint covering property is equivalent to $\Ccal$ having a lower bound for any pair of elements. If $\Ccal$ has binary products, it corresponds to the property that the projection maps from any binary product should be epimorphisms. The category of non-empty sets has this property; more generally, the category of well-supported objects of a topos always has this property. In contrast, any non-trivial category with a strict initial object must fail to have the joint covering property.

\begin{lemma}
\label{lem:Mjcp}
Consider the topos $\PSh(M)$; let $\Ccal_s$ be its subcategory of supercompact objects. Then $\Ccal_s$ has the joint covering property.
\end{lemma}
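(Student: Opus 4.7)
The plan is to observe that this is essentially immediate once one unpacks the definition of supercompact object in $\PSh(M)$. By Lemma \ref{lem:pshf}, $\PSh(M)$ is a supercompactly generated topos; by Lemma \ref{lem:screp} applied to the canonical site with a single object, the supercompact objects of $\PSh(M)$ are precisely the quotients of the representable $M$-set $M$, i.e.\ the principal $M$-sets in the sense of the definition preceding Proposition \ref{prop:prince}. Crucially, $M$ itself is such a quotient (via the identity), and hence $M$ itself is an object of $\Ccal_s$.

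The idea is then simply to take $N = M$ for any pair $A, B \in \Ccal_s$. Indeed, since each of $A$ and $B$ is principal, by definition there exist epimorphisms $M \twoheadrightarrow A$ and $M \twoheadrightarrow B$ in $\PSh(M)$, and these are equally epimorphisms in the full subcategory $\Ccal_s$. This yields the required common cover $N$ with epimorphisms to both $A$ and $B$, establishing the joint covering property.

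There is no substantive obstacle here; the content of the lemma lies entirely in the fact that $\PSh(M)$ admits a single representable object which already covers every supercompact object. The interest of the statement is presumably comparative: in $\Cont(M,\tau)$, the analogous object $M$ with its regular action need not be a continuous $M$-set (the regular action fails to be continuous unless $\tau$ makes right multiplication continuous), so the same argument cannot be run verbatim, and one would expect the joint covering property to correspond to a nontrivial condition on $(M,\tau)$. This would fit with the remark, anticipated in the text preceding Definition \ref{dfn:jcp}, that the joint covering property is an additional property needed to characterize toposes of the form $\Cont(M,\tau)$ beyond the list in Corollary \ref{crly:conttop}.
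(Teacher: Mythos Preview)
Your proof is correct and in fact simpler than the paper's: rather than taking $M$ itself as the common cover, the paper considers the product $N_1 \times N_2$ of the two principal $M$-sets and takes as joint cover the principal sub-$M$-set generated by the pair of generators $(n_1,n_2)$. Your argument exploits the special feature of $\PSh(M)$ that a single object ($M$ itself) already covers every supercompact object; the paper's construction instead produces a joint cover sitting inside the product, which is what is needed in the subsequent Proposition~\ref{prop:hypejcp} when transferring the property along a hyperconnected morphism (there one passes to the image of the induced map into $A \times B$). So the paper's version is tailored to the generalization, while yours is the most direct proof of the lemma as stated.

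One remark on your closing speculation: the joint covering property does \emph{not} turn out to be a nontrivial condition on $(M,\tau)$. It holds for every $\Cont(M,\tau)$, precisely because it transfers along hyperconnected morphisms from $\PSh(M)$ (this is the content of Proposition~\ref{prop:hypejcp} and its corollary). Its role in the paper is rather to rule out certain supercompactly generated, two-valued toposes which are \emph{not} of the form $\Cont(M,\tau)$ for any topological monoid; see Example~\ref{xmpl:nonjcp}.
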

\begin{proof}
Given principal $M$-sets $N_1,N_2$ with generators $n_1,n_2$, consider the product $N_1 \times N_2$. The principal sub-$M$-set $N$ of this product generated by $(n_1,n_2)$ clearly admits the desired epimorphisms to $N_1$ and $N_2$.
\end{proof}

By applying a topological argument, we could directly extend the proof of Lemma \ref{lem:Mjcp} to the corresponding result for $\Cont(M,\tau)$. However, in the spirit of Proposition \ref{prop:hype3}, we once again give a more general argument for hyperconnected morphisms.

\begin{prop}
\label{prop:hypejcp}
Let $\Fcal$ be a topos and $\Ccal'_s$ its subcategory of supercompact objects. Suppose $\Ccal'_s$ has the joint covering property and $f:\Fcal \to \Ecal$ is a hyperconnected geometric morphism. Then the corresponding subcategory $\Ccal_s$ of $\Ecal$ also has the joint covering property.
\end{prop}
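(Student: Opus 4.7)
The plan is to transport the joint covering problem from $\Ecal$ to $\Fcal$ via the inverse image $f^*$, apply the joint covering property there, and then use hyperconnectedness to pull the resulting covering object back to $\Ecal$. Concretely, given $A,B \in \Ccal_s$, I would first form $f^*(A), f^*(B)$, which lie in $\Ccal'_s$: the inverse image of a hyperconnected morphism preserves and reflects supercompact objects, as already used in the proof of Proposition \ref{prop:prince} (and justified by the observation of Remark \ref{rmk:atom}, since supercompactness descends along epimorphisms).

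Invoking the joint covering property of $\Ccal'_s$ then yields some $N' \in \Ccal'_s$ together with epimorphisms $\alpha : N' \twoheadrightarrow f^*(A)$ and $\beta : N' \twoheadrightarrow f^*(B)$ in $\Fcal$. Pairing these produces a morphism $(\alpha,\beta) : N' \to f^*(A) \times f^*(B) = f^*(A \times B)$, and I would take its epi--mono factorization $N' \twoheadrightarrow I \hookrightarrow f^*(A \times B)$ inside $\Fcal$. By Definition \ref{dfn:hype}, $\Ecal$ is closed in $\Fcal$ under subobjects, so $I \cong f^*(J)$ for a (unique up to isomorphism, by full faithfulness of $f^*$) object $J \in \Ecal$.

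To conclude, I would check that $J$ covers both $A$ and $B$ and is supercompact. The projections from $f^*(A) \times f^*(B)$ restrict along $I \hookrightarrow f^*(A \times B)$ to maps $I \to f^*(A)$ and $I \to f^*(B)$ whose composites with $N' \twoheadrightarrow I$ recover the epimorphisms $\alpha$ and $\beta$, so these restrictions are themselves epimorphisms in $\Fcal$. Under $f^*$ they correspond to morphisms $J \to A$ and $J \to B$ in $\Ecal$; these are epimorphisms because $f^*$, being faithful, reflects epimorphisms. Finally, $I$ is supercompact in $\Fcal$ as a quotient of the supercompact object $N'$, and reflection of supercompactness by $f^*$ gives that $J$ is supercompact in $\Ecal$, so $J \in \Ccal_s$ jointly covers $A$ and $B$.

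The main subtle point is ensuring that the covering object $I$ actually descends to $\Ecal$: this rests on the closure of $\Ecal$ under subobjects in $\Fcal$, which is exactly the content of hyperconnectedness beyond mere connectedness, and is what makes the argument strictly stronger than what one would get from a generic surjective geometric morphism. The secondary ingredient, preservation and reflection of supercompactness by $f^*$, is the same general property of hyperconnected morphisms already exploited throughout Section \ref{ssec:apply}.
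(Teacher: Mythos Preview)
Your argument is correct and is essentially identical to the paper's: both take the image of the induced map from the joint cover into $f^*(A)\times f^*(B)$ and use closure of $\Ecal$ under subobjects (and products) to descend. The one point the paper makes explicit and you gloss over is that the joint covering property only guarantees that $\alpha,\beta$ are epimorphisms in $\Ccal'_s$, not in $\Fcal$; the paper handles this by noting that full faithfulness of $\Ccal_s \hookrightarrow \Ccal'_s$ ensures any epimorphism in $\Ccal'_s$ between objects of $\Ccal_s$ is still an epimorphism in $\Ccal_s$, so the conclusion survives without needing the epis to live in $\Fcal$.
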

\begin{proof}
Since $f$ is hyperconnected, $\Ecal$ is closed in $\Fcal$ under products and subobjects. Given a joint cover with domain $X$ in $\Ccal'_s$ of a pair of objects $A,B$ in $\Ccal_s$, we may take the image of the canonical morphism $X \to A \times B$ to obtain a joint cover which is a subobject of the product, and hence also lies in $\Ccal_s$. Note that since the functor $\Ccal_s \to \Ccal'_s$ is full and faithful, we do not need to worry whether epimorphisms in $\Ccal'_s$ coincide with those in $\Fcal$: any epimorphism in $\Ccal'_s$ will also be one in $\Ccal_s$ (see Remark \ref{rmk:strict} below).
\end{proof}

\begin{crly}
The category of principal $(M,\tau)$-sets in $\Cont(M,\tau)$ has the joint covering property. 
\end{crly}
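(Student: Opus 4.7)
The plan is to obtain this corollary as an immediate application of the machinery already developed, with essentially no new work. The key ingredients are already in place: we have the hyperconnected geometric morphism $\PSh(M) \to \Cont(M,\tau)$ coming from Proposition \ref{prop:hyper} and Corollary \ref{crly:topos}, the identification of supercompact objects of $\Cont(M,\tau)$ with continuous principal $(M,\tau)$-sets via Proposition \ref{prop:prince}, the joint covering property for the supercompact objects of $\PSh(M)$ from Lemma \ref{lem:Mjcp}, and the transfer of the joint covering property along hyperconnected morphisms from Proposition \ref{prop:hypejcp}.

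Concretely, I would simply chain these together: the subcategory $\Ccal'_s$ of supercompact objects of $\PSh(M)$ coincides with the category of principal $M$-sets and has the joint covering property by Lemma \ref{lem:Mjcp}; applying Proposition \ref{prop:hypejcp} to the hyperconnected morphism $\PSh(M) \to \Cont(M,\tau)$ yields the joint covering property for the corresponding category $\Ccal_s$ of supercompact objects of $\Cont(M,\tau)$; and by Proposition \ref{prop:prince} this $\Ccal_s$ is precisely the category of continuous principal $(M,\tau)$-sets.

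There is no real obstacle here, since the content of the result has been factored out into the hyperconnectedness of the forgetful adjunction and the abstract Proposition \ref{prop:hypejcp}. The only point worth checking is the one already flagged in the proof of Proposition \ref{prop:hypejcp}, namely that joint covers in $\Ccal'_s$ do descend to joint covers in $\Ccal_s$ after taking images inside $\PSh(M)$; this is handled by the fact that $\Cont(M,\tau)$ is closed under subobjects and products in $\PSh(M)$ as a hyperconnected subtopos, so the image of a joint cover $X \to A \times B$ with $X$ principal and $A,B$ already continuous is itself a continuous principal $(M,\tau)$-set jointly covering $A$ and $B$.
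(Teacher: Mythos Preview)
Your proposal is correct and matches the paper's approach exactly: the corollary is stated without proof in the paper, being immediate from Lemma~\ref{lem:Mjcp} and Proposition~\ref{prop:hypejcp} applied to the hyperconnected morphism $\PSh(M) \to \Cont(M,\tau)$, together with the identification of supercompact objects in Proposition~\ref{prop:prince}.
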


\begin{xmpl}
\label{xmpl:nonjcp}
At this point we can present an example of a two-valued, supercompactly generated topos with enough points which is not equivalent to $\Cont(M,\tau)$ for any topological monoid $(M,\tau)$. Consider the following category, $\Ccal$:
\[\begin{tikzcd}
	X \ar[r, shift left, two heads] \ar[loop left] &
	1 \ar[l, shift left, hook] \ar[r, shift left, hook] &
	Y \ar[l, shift left, two heads] \ar[loop right, "{,}"]
\end{tikzcd}\]
where identity morphisms are omitted and the outside loops are the idempotent endomorphisms whose splitting gives the terminal object. We can check directly that this is a reductive category: there are relatively few colimits that need to be checked, and the only non-trivial non-identity strict epimorphisms (marked with double-headed arrows) are stable thanks to the following rectangles: 
\[\begin{tikzcd}
	X \ar[d, equal] \ar[r, two heads] & 1 \ar[r, hook] &
	Y \ar[d, two heads] \\
	X \ar[rr, two heads] & & 1
\end{tikzcd}
\hspace{10pt}
\begin{tikzcd}
	Y \ar[d, equal] \ar[r, two heads] & 1 \ar[r, hook] &
	X \ar[d, two heads] \\
	Y \ar[rr, two heads] & & 1.
\end{tikzcd}\]
Since all of the epimorphisms split, the reductive topology coincides with the trivial topology, as noted in \cite[Remark 2.4]{SGT}.

Therefore, let $\Ecal$ be the presheaf topos $\Sh(\Ccal,J_r) \simeq \PSh(\Ccal)$, which is supercompactly generated and, being a presheaf topos, has enough points. One can compute the subterminal objects of this topos directly to verify that this topos is two-valued.

We can also compute directly that the category $\Ccal_s$ of supercompact objects of $\Ecal$ is equivalent to $\Ccal$ (so $\Ccal$ is an effective reductive category), and so does not have the joint covering property. Thus $\Ecal$ is not equivalent to a topos of the form $\Cont(M,\tau)$.

For a family of related examples, we can let $M$ and $M'$ be non-trivial monoids each having a zero element (see Example \ref{xmpl:zero} above). Then their idempotent-completions each have a terminal object; we may construct a category $\Ccal$ by gluing these idempotent completions along their respective terminal objects. The category of presheaves on this category will have the properties of Corollary \ref{crly:conttop}, but $\Ccal_s$ will not have the joint covering property (because there can be no joint covering of $M$ and $M'$). The above is the case where $M = M'$ is the two-element monoid with both elements idempotent.
\end{xmpl}

\begin{rmk}
\label{rmk:strict}
The category of supercompact objects in $\Cont(M,\tau)$ has the even more restrictive property that the covering morphisms in Definition \ref{dfn:jcp} may be chosen to be \textit{strict} epimorphisms. In a general supercompactly generated topos, a morphism in $\Ccal_s$ is epimorphic in the ambient topos if and only if it is a strict epimorphism in $\Ccal_s$, by \cite[Corollary 1.15]{SGT}. Incidentally, this `strict joint covering property' for supercompact objects implies two-valuedness of a supercompactly generated topos. The ordinary joint covering property does not have this implication, since the category of supercompact objects in the topos of presheaves on any meet semi-lattice has the joint covering property, and any non-trivial such topos is not two-valued.
\end{rmk} 

Even including the joint covering property to the list of properties derived previously, it is not clear whether we obtain a complete characterization of toposes of the form $\Cont(M,\tau)$, since there is no canonical way of reconstructing a topological monoid given only the reductive category of principal $(M,\tau)$-sets and no additional data (such as their underlying sets). In particular, we have not yet arrived at a complete answer to the question of when a supercompactly generated, two-valued Grothendieck topos $\Ecal$ is equivalent to one of the form $\Cont(M,\tau)$. We shall return to this question in Section \ref{sec:surjpt}.

\section{Monoids with topologies}
\label{sec:montop}

In this section we examine the extent to which the topology on the monoid $(M,\tau)$ can be recovered from the hyperconnected geometric morphism $\PSh(M) \to \Cont(M,\tau)$.

\subsection{Powersets and inverse image actions}
\label{ssec:inverse}

If $M$ acts on a set $X$ on the \textit{left}, then $M$ has a corresponding \textit{right} action on its powerset $\Pcal(X)$ via the `inverse image' action, $A \mapsto g^*(A) = \{x \in X \mid gx \in A\}$; it is easily checked that $(gh)^* = h^*g^*$. Note that if $M$ is a group then $g^*$ is simply (element-wise) left multiplication by $g^{-1}$.

If $t : X \to Y$ is a homomorphism of left $M$-sets, so $t(g \cdot x) = g\cdot t(x)$ for every $x \in X$, then we can define $t^{-1} : \Pcal(Y) \to \Pcal(X)$ sending $B$ to $t^{-1}(B)$, since
\begin{align*}
g^*(t^{-1}(B)) & = \{x \in X \mid g \cdot x \in t^{-1}(B)\}\\
&= \{x \in X\mid t(g \cdot x) \in B\}\\
&= \{x \in X\mid g \cdot t(x) \in B\}\\
&= \{x \in X\mid t(x) \in g^*(B)\} = t^{-1}(g^*(B)).
\end{align*}
Thus we obtain a functor $\Pcal : [M,\Set]\op \to [M\op,\Set]$, which is self-adjoint: the dual functor $\Pcal\op: [M\op,\Set] \to [M, \Set]\op$ is left adjoint to $\Pcal$. This adjunction is, by construction, a lifting of the powerset adjunction on $\Set$ along the forgetful functor from $[M\op,\Set]$, in the sense that the following diagram commutes:
\[\begin{tikzcd}
\Set \ar[d,bend right,"\Pcal\op"'] \ar[d, phantom, "\dashv"] & {[M\op,\Set]} \ar[l,"U"'] \ar[d,bend right,"\Pcal\op"'] \ar[d, phantom, "\dashv"]\\
\Set\op \ar[u,bend right,"\Pcal"'] & {[M,\Set]}\op \ar[l,"U"'] \ar[u,bend right,"\Pcal"'].
\end{tikzcd}\]
The purpose of introducing this adjunction is to identify some special $M$-sets. First and foremost, the action of $M$ on itself by left multiplication gives a canonical right $M$-action on $\Pcal(M)$ which (even \textit{a priori}) seems a good starting point from which to recover a topology.

In our previous work \cite{TDMA}, we were able to identify a representing monoid $M$ for $\PSh(M)$ as the representing object for the forgetful functor $U$ in the diagram above. We can do something very similar here:
\begin{lemma}
\label{lem:represent}
$\Pcal(M)$ represents the composite functor $\Pcal\op \circ U : \PSh(M) \to \Set\op$. In particular, it is uniquely determined as an object of $\PSh(M)$ by the choice of representing monoid $M$.
\end{lemma}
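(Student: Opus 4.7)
The plan is to establish the representability directly by producing a natural bijection
\[
\Hom_{\PSh(M)}(X,\Pcal(M)) \;\cong\; \Pcal(U(X))
\]
for every right $M$-set $X$, this being the concrete meaning of $\Pcal(M)$ representing the functor $\Pcal\op \circ U$ once the reversal of direction is absorbed into $\Set\op$. The pattern parallels how $M$ itself represents $U$ via $f \mapsto f(1)$: here the trick is to extract, from an equivariant map into $\Pcal(M)$, the subset of elements of $X$ whose image contains the unit of $M$.

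First I would define the two assignments. To a morphism $f\colon X \to \Pcal(M)$ of right $M$-sets, associate the subset
\[
S_f := \{ x \in X \mid 1 \in f(x) \} \;\subseteq\; U(X).
\]
Conversely, to a subset $S \subseteq U(X)$, associate the function
\[
g_S\colon X \to \Pcal(M), \qquad g_S(x) := \{ m \in M \mid xm \in S \}.
\]
The only nontrivial verification here is that $g_S$ is $M$-equivariant for the right action $A \cdot n = n^*(A)$ on $\Pcal(M)$ described in the preceding paragraph; this falls out of the direct computation $g_S(xn) = \{ m \mid x(nm) \in S \} = \{ m \mid nm \in g_S(x) \} = n^*(g_S(x))$.

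Next I would check that these assignments are mutually inverse. The round trip $S \mapsto g_S \mapsto S_{g_S}$ yields $\{ x \mid 1 \in g_S(x) \} = \{ x \mid x \cdot 1 \in S \} = S$. In the other direction, $f \mapsto S_f \mapsto g_{S_f}$ gives
\[
g_{S_f}(x) = \{m \mid xm \in S_f\} = \{m \mid 1 \in f(xm)\} = \{m \mid 1 \in m^*(f(x))\} = \{m \mid m \in f(x)\} = f(x),
\]
using equivariance of $f$ to pass $m$ across the action. Naturality in $X$ is immediate: given $h\colon Y \to X$ in $\PSh(M)$, one has $S_{f \circ h} = \{ y \mid 1 \in f(h(y)) \} = h^{-1}(S_f)$, which matches the contravariant action on $\Pcal(U(-))$ by preimage.

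The only obstacle is mild bookkeeping around the direction of the inverse-image action $n^*$ and the $\Set/\Set\op$ conventions, both of which have been set up in the paragraphs above. The final clause on uniqueness is then a direct application of the Yoneda lemma: any other object of $\PSh(M)$ representing the same functor is canonically isomorphic to $\Pcal(M)$; and since the functor $\Pcal\op \circ U$ is built from the abstract monoid $M$ with no reference to a topology $\tau$, the representing $M$-set $\Pcal(M)$ likewise depends only on the choice of $M$.
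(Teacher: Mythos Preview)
Your proof is correct. You establish the natural bijection $\Hom_{\PSh(M)}(X,\Pcal(M)) \cong \Pcal(U(X))$ by writing down explicit mutually inverse maps and checking equivariance and naturality by hand; the computations are all fine.

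The paper takes a slightly different, more abstract route: it passes through the lifted adjunction $\Pcal\op \dashv \Pcal$ between $[M\op,\Set]$ and $[M,\Set]\op$ that was set up in the preceding paragraphs, together with the Yoneda observation that $M$ (as a left $M$-set) represents the forgetful functor on $[M,\Set]$, to obtain the chain
\[
\Hom_{\PSh(M)}(X,\Pcal(M)) \cong \Hom_{[M,\Set]}(M,\Pcal\op(X)) \cong U(\Pcal\op(X)) \cong \Pcal\op(U(X)).
\]
Your approach is essentially an unpacking of this composite isomorphism into concrete formulas, and has the advantage of being self-contained (you never invoke the adjunction explicitly). The paper's version is shorter and makes clearer why the result is formal once the square of forgetful and powerset functors has been established to commute; in particular, naturality comes for free rather than needing a separate check. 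Either argument suffices.
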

\begin{proof}
Passing around the square and applying Yoneda, we obtain natural isomorphisms:
\[\Hom_{\PSh(M)}(X,\Pcal(M)) \cong \Hom_{[M,\Set]}(M,\Pcal\op(X)) \cong U(\Pcal\op(X)) \cong \Pcal\op(U(X)),\]
as required. 
\end{proof}
We can in fact deduce that this composite functor is comonadic, so that $\PSh(M)$ is comonadic over $\Set\op$, but since the existing tools for comparing toposes with cotoposes (beyond those used to show the existence of colimits in toposes) are not well-developed to the author's knowledge, we shall take a different route to derive further properties of $\Pcal(M)$.

Note that the two-element set $2$ represents $\Pcal: \Set \to \Set\op$. By passing through the available adjunctions, we find that for every right $M$-set $X$,
\begin{align*}
\Hom_{\PSh(M)}(X,\Pcal(M)) & \cong \Pcal\op(U(X))\\
&\cong \Hom_{\Set}(U(X),2)\\
&\cong \Hom_{\PSh(M)}(X,\Hom_{\Set}(M,2)).
\end{align*}
That is, $\Pcal(M) \cong \Hom_{\Set}(M,2)$ as right $M$-sets, which is clear at the level of underlying sets, but the fact that the actions coincide was not apparent \textit{a priori}.

\begin{rmk}
Localic geometric morphisms over a topos $\Ecal$ correspond to internal locales in $\Ecal$, by \cite[Lemma 1.2]{factorizationI}, say. The correspondence sends a morphism $f: \Fcal \to \Ecal$ to the internal locale $f_*(\Omega_{\Fcal})$, where $\Omega_{\Fcal}$ is the subobject classifier of $\Fcal$.

Recalling that $2$ is the subobject classifier for $\Set$, we have just shown that $\Pcal(M)$ is (the underlying object of) the internal locale corresponding to the canonical point of $\PSh(M)$; this provides another way to deduce the second statement in Lemma \ref{lem:represent}, and endows $\Pcal(M)$ with a canonical order relation (which coincides with the usual inclusion ordering).
\end{rmk}

\begin{lemma}
\label{lem:Pcal}
$\Pcal(M)$ is an internal Boolean algebra in $\PSh(M)$. In particular, it has a distinguished non-trivial automorphism, complementation. There are exactly two morphisms $1 \to \Pcal(M)$. Also, $\Pcal(M)$ has the subobject classifier $\Omega$ of $\PSh(M)$ as a subobject. Finally, $\Pcal(M)$ is a coseparator. 
\end{lemma}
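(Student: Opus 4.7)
The plan is to handle the five assertions in sequence, exploiting the concrete description of $\Pcal(M)$ with its inverse-image action and the identification from Lemma \ref{lem:represent}. For the Boolean algebra structure and the distinguished automorphism (items 1 and 2), the key observation is that inverse images under any function preserve all Boolean operations: for any $f:X \to Y$, $f^{-1}$ is a Boolean algebra homomorphism. Specializing to $f = L_g: h \mapsto gh$ for each $g \in M$ shows that $\cap$, $\cup$, complementation, $\emptyset$, and $M$, viewed as structure maps on $\Pcal(M)$, are $M$-equivariant and hence morphisms in $\PSh(M)$. The axioms of an internal Boolean algebra are then inherited from those of the ordinary powerset, since they hold at the level of underlying sets. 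Complementation $A \mapsto M \setminus A$ is an involution, hence an automorphism in $\PSh(M)$, and it is non-trivial because it swaps the two subsets appearing in the next step.

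For item 3, a morphism $1 \to \Pcal(M)$ in $\PSh(M)$ is exactly a fixed point of the action, namely a subset $A \subseteq M$ with $g^*(A) = A$ for every $g \in M$, or equivalently $x \in A \iff gx \in A$ for all $g, x \in M$. Specializing to $x = 1$ yields $g \in A \iff 1 \in A$ for every $g$, forcing $A \in \{\emptyset, M\}$; both are manifestly fixed, so there are exactly two global elements.

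For item 4, I would recall that in $\PSh(M) = [M\op, \Set]$ the subobject classifier $\Omega$ has as its underlying set the sieves on the unique object of $M$, which are precisely the right ideals of $M$; the restriction along $n$ sends a sieve $R$ to $\{m \in M \mid nm \in R\}$, which equals $n^*(R)$. Right ideals are closed under the inverse-image action (since the inverse image of a right ideal under left multiplication is again a right ideal), and the action formulas on $\Omega$ and on $\Pcal(M)$ agree on the nose, so the inclusion of right ideals into $\Pcal(M)$ realizes $\Omega$ as a subobject of $\Pcal(M)$. For item 5, since $\Omega$ is a coseparator in any topos, postcomposition with the monomorphism $\Omega \hookrightarrow \Pcal(M)$ from the previous step turns any $\Omega$-valued morphism distinguishing a pair $f \neq g:X \rightrightarrows Y$ into a $\Pcal(M)$-valued one, so $\Pcal(M)$ inherits the coseparator property.

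The step requiring the most care is item 4: one has to verify that the $M$-action on $\Omega$ by sieve pullback coincides on the nose with the restriction to right ideals of the inverse-image action on $\Pcal(M)$, which is sensitive to the handedness conventions fixed earlier. Once that identification is pinned down, the remaining claims are essentially formal consequences of well-known facts about Boolean operations on powersets and about subobject classifiers.
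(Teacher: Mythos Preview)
Your arguments for the first four claims are correct and, while more hands-on than the paper's, entirely valid. The paper instead transports the Boolean algebra structure from $2$ along the direct image of the canonical point (since Boolean algebra structure is a finite-limit theory), reads off $\Gamma(\Pcal(M)) \cong 2$ from the fact that the composite of the canonical point with the global sections morphism is the identity, and obtains $\Omega \hookrightarrow \Pcal(M)$ from the standard description of the subobject classifier of a category of coalgebras as an equalizer inside the cofree coalgebra on $\Omega_{\Set}$. Your direct verifications (equivariance of Boolean operations under inverse image, the fixed-point calculation with $x=1$, and matching the sieve-restriction action with the inverse-image action on right ideals) achieve the same conclusions with less machinery.

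However, your argument for the coseparator claim contains a genuine error: it is \emph{not} true that $\Omega$ is a coseparator in every topos. Take $M = \Zbb/2$. Then $\Omega$ in $\PSh(\Zbb/2)$ is the two-element set with trivial action (the only right ideals of a group are $\emptyset$ and the whole group), so maps $Y \to \Omega$ are exactly characteristic functions of sub-$G$-sets. The two distinct $G$-equivariant maps $G \rightrightarrows G$ given by the identity and by left multiplication by the nontrivial element cannot be separated by any map $G \to \Omega$, since the only sub-$G$-sets of $G$ are $\emptyset$ and $G$. Thus your deduction ``$\Omega$ coseparates, hence so does anything containing it'' fails at the premise.

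The paper's route avoids this entirely: by Lemma~\ref{lem:represent}, $\Pcal(M)$ represents the functor $\Pcal\op \circ U : \PSh(M) \to \Set\op$, and this functor is faithful because both $U$ and $\Pcal\op$ are (the latter since $2$ coseparates in $\Set$). A representing object for a faithful $\Set$-valued contravariant functor is precisely a coseparator. You could repair your proof by adopting this argument, or by arguing directly that $\Hom_{\PSh(M)}(X,\Pcal(M)) \cong \Pcal(U(X))$ and that distinct $M$-maps already differ on underlying sets.
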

\begin{proof}
The structure of a Boolean algebra involves only finite limits, so Boolean algebras are preserved by both direct and inverse image functors; thus $\Pcal(M)$ inherits the Boolean algebra structure from $2$. The two morphisms $1 \to \Pcal(M)$ correspond to the empty set and the whole of $M$; these are the only two since composing the canonical point with the global sections morphism must give the identity geometric morphism on $\Set$, which means $\Gamma(\Pcal(M)) = \Hom_{\PSh(M)}(1,\Pcal(M)) \cong 2$.

The usual argument showing that the category of coalgebras for a left exact comonad is a topos (see \cite[\S V.8]{MLM}) exhibits the subobject classifier as an equalizer of two endomorphisms of the free coalgebra on the subobject classifier; this free algebra is precisely $\Pcal(M)$. More specifically, the endomorphisms are the identity and the morphism sending a subset $A$ to those $m \in M$ for which $m^*(A) = M$. From these expressions we recover the fact that $\Omega \hookrightarrow \Pcal(M)$ is the collection of right ideals of $M$. Since the subobject classifier of a topos is always injective, we in fact can conclude that $\Omega$ is a retract of $\Pcal(M)$; a canonical retraction map sends a subset of $M$ to the right ideal it generates.

Finally, the functor $\Pcal \circ U$ is a composite of faithful functors so it is faithful, meaning its representing object must be a coseparator.
\end{proof}

Having established some key properties of $\Pcal(M)$ as an object of $\PSh(M)$, we examine how the necessary clopens from Lemma \ref{lem:Inx} behave as elements of $\Pcal(M)$.

\begin{lemma}
\label{lem:InA}
Given $A \in \Pcal(M)$, $p \in M$, we have $\Ical_{A}^{p} = \Ical_{M \backslash A}^{p}$; moreover,
\[\Ical_{A}^{p} \subseteq
\begin{cases}
A & \text{if } p \in A\\
M \backslash A & \text{if } p \notin A.
\end{cases}\]
\end{lemma}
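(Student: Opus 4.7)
The plan is to unpack the inverse image action on $\Pcal(M)$ described in Section \ref{ssec:inverse} and then read off both statements directly. Concretely, for $A \in \Pcal(M)$ and $m \in M$, the action gives $A \cdot m = m^*(A) = \{n \in M \mid mn \in A\}$, so by definition
\[
\Ical_A^p = \{m \in M \mid m^*(A) = p^*(A)\}.
\]

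For the first equality, I would observe that the inverse image operation $m^*$ commutes with complementation: $m^*(M \setminus A) = M \setminus m^*(A)$, since $n \in m^*(M \setminus A)$ iff $mn \notin A$ iff $n \notin m^*(A)$. Consequently $m^*(A) = p^*(A)$ if and only if $m^*(M \setminus A) = p^*(M \setminus A)$, which gives $\Ical_A^p = \Ical_{M \setminus A}^p$ immediately.

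For the inclusion, the key trick is simply to evaluate the equality $m^*(A) = p^*(A)$ at the identity element $1 \in M$. Note that $1 \in m^*(A)$ iff $m \cdot 1 = m \in A$, and similarly $1 \in p^*(A)$ iff $p \in A$. Hence for any $m \in \Ical_A^p$, membership of $m$ in $A$ is equivalent to membership of $p$ in $A$. This yields both cases at once: if $p \in A$ then every $m \in \Ical_A^p$ satisfies $m \in A$, so $\Ical_A^p \subseteq A$; and if $p \notin A$ then every $m \in \Ical_A^p$ satisfies $m \notin A$, so $\Ical_A^p \subseteq M \setminus A$.

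There is no significant obstacle here: the entire argument is an unpacking of the inverse image action together with the observation that $m^*$ preserves complements. The only point to be careful about is not to confuse the left action on $M$ (used to define $m^*$) with the right action on $\Pcal(M)$, and to remember that $1 \in M$ provides a free ``probe'' that detects membership in $A$.
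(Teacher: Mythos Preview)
Your proof is correct and follows essentially the same route as the paper: both use that $m^*$ preserves complements for the first equality, and both prove the inclusion by testing the equality $m^*(A)=p^*(A)$ at $1\in M$. The only cosmetic difference is that the paper handles the second part by treating the case $p\in A$ and invoking the first equality for the other case, whereas you write out both cases explicitly.
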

\begin{proof}
By definition, $\Ical_{A}^{p} = \{m \in M \mid m^*(A) = p^*(A)\}$. Since inverse images respect complementation, we have $m^*(A) = p^*(A)$ if and only if $m^*(M \backslash A) = p^*(M \backslash A)$, and hence $\Ical_{A}^{p} = \Ical_{M \backslash A}^{p}$, as claimed.

Now, without loss of generality, suppose $p \in A$, else we may exchange $A$ and $M \backslash A$. Then $1 \in p^*(A)$. Given $m \in \Ical_{A}^{p}$, it follows that $1 \in m^*(A)$ which forces $m \in A$. Thus $\Ical_{A}^{p} \subseteq A$.
\end{proof}

\begin{lemma}
\label{lem:InA2}
Suppose $X$ is any $M$-set, $x \in X$ and $p \in M$. Let $A = \Ical_{x}^{p} \in \Pcal(M)$. Then for any $p' \in A$, the inclusion in Lemma \ref{lem:InA} holds with equality: $\Ical_{A}^{p'} = A$.
\end{lemma}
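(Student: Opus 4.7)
The plan is to prove the equality by two inclusions. The inclusion $\Ical_{A}^{p'} \subseteq A$ follows immediately from Lemma \ref{lem:InA} applied at $p'$, using the hypothesis $p' \in A$ (note that since $p \in \Ical_x^p = A$ we are in the ``$p' \in A$'' case of that lemma). So the real content is the reverse inclusion $A \subseteq \Ical_{A}^{p'}$.

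To establish this, I would take an arbitrary $m \in A$ and show directly that $m^*(A) = (p')^*(A)$, which by definition of $\Ical_{A}^{p'}$ places $m$ in $\Ical_{A}^{p'}$. The key is to unfold the inverse image action from Section \ref{ssec:inverse}: for any $g \in M$,
\[
g^*(A) \;=\; \{n \in M \mid gn \in A\} \;=\; \{n \in M \mid x(gn) = xp\} \;=\; \{n \in M \mid (xg)n = xp\},
\]
where the second equality uses the definition $A = \Ical_x^p$. Now the hypotheses $m \in A$ and $p' \in A$ tell us precisely that $xm = xp = xp'$; substituting $g = m$ and $g = p'$ respectively into the display above, both $m^*(A)$ and $(p')^*(A)$ collapse to the same set $\{n \in M \mid (xp)n = xp\}$. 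Thus $m \in \Ical_{A}^{p'}$, completing the inclusion.

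There is no genuine obstacle here; the proof is a short unwinding of definitions, and the main care needed is simply to keep straight the two layers of action: the original left $M$-action on $X$ and the induced right inverse image $M$-action on $\Pcal(M)$ through which $\Ical_A^{p'}$ is defined. Once the reformulation $g^*(A) = \{n \mid (xg)n = xp\}$ is in hand, the conclusion is a one-line substitution.
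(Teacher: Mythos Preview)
Your proof is correct and follows essentially the same approach as the paper: both argue the reverse inclusion by taking $m \in A$, using $xm = xp = xp'$ to rewrite $m^*(A)$ and $(p')^*(A)$ as the same set, and invoking Lemma~\ref{lem:InA} for the forward inclusion. Your version simply unfolds the intermediate step $g^*(A) = \{n \mid (xg)n = xp\}$ more explicitly than the paper does.
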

\begin{proof}
Suppose $m \in A$ so that $xm = xp = xp'$. Then $m^*(A) = \{m' \in M \mid xmm' = xp\} = \{m' \in M \mid xp'm' = xp\} = p'{}^*(A)$, so $m \in \Ical_{A}^{p'}$. This proves the reverse inclusion to that in Lemma \ref{lem:InA}.
\end{proof}

Note that the complement of $A$ in Lemma \ref{lem:InA2} may split into multiple sets of the form $\Ical_{A}^{p}$ for $p \notin A$, but we at least retain that $\Ical_{x}^{p} \subseteq \Ical_{A}^{p}$ for each $p$.

\subsection{Action topologies}
\label{ssec:action}

We have by now developed sufficient tools to reconstruct a topology from the hyperconnected morphism $\PSh(M) \to \Cont(M,\tau)$.

\begin{thm}
\label{thm:tau}
Suppose $M$ is a monoid equipped with a topology $\tau$, and $V,R$ are as in Proposition \ref{prop:hyper}. Consider $\Pcal(M)$ equipped with the inverse image action. Then the underlying set of
\[T := VR(\Pcal(M)) = \{A \subseteq M \mid \forall p, q \in M, \, \Ical_{q^*(A)}^{p} \in \tau \}\]
is a base of clopen sets for a topology $\tilde{\tau} \subseteq \tau$ such that $\Cont(M,\tilde{\tau}) = \Cont(M,\tau)$ as sub-categories of $\PSh(M)$. Moreover, $\tilde{\tau}$ is the coarsest topology on $M$ with this property.
\end{thm}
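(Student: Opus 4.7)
The plan is to verify four claims in sequence: (a) $T$ is closed under finite Boolean operations, so forms a base of clopen sets for a topology $\tilde{\tau}$; (b) $\tilde{\tau} \subseteq \tau$; (c) $\Cont(M,\tilde{\tau})$ and $\Cont(M,\tau)$ coincide as subcategories of $\PSh(M)$; and (d) minimality. The driving observation for (a) is that $R$, being right adjoint to $V$, preserves all limits, and therefore sends the internal Boolean algebra $\Pcal(M)$ from Lemma \ref{lem:Pcal} to an internal Boolean algebra in $\Cont(M,\tau)$. Composing with the left exact, fully faithful embedding $V$ then exhibits $T = VR(\Pcal(M))$ as a sub-Boolean-algebra of $\Pcal(M)$, hence closed in $\Pcal(M)$ under finite intersections, unions, and complements. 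Since $M \in T$ and $T$ is closed under binary intersection it is a base, and closure under complements ensures that every element is clopen in the generated topology $\tilde{\tau}$.

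For (b), I would unpack the defining condition of $T$ at $q = 1$ to obtain $\Ical^p_A \in \tau$ for every $A \in T$ and $p \in M$. Lemma \ref{lem:InA} yields $\Ical^p_A \subseteq A$ whenever $p \in A$, while $p \in \Ical^p_A$ by definition, so $A = \bigcup_{p \in A} \Ical^p_A$ displays $A$ as a union of $\tau$-opens. Closure of $\tau$ under unions then gives $\tilde{\tau} \subseteq \tau$, from which one direction of (c), namely $\Cont(M,\tilde{\tau}) \subseteq \Cont(M,\tau)$, follows immediately from Lemma \ref{lem:Inx}.

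The reverse inclusion in (c) is where I expect the real content to lie. The crucial step is to recognise each necessary clopen $\Ical_x^p$ as the image of $x$ under a canonical $\PSh(M)$-morphism $X \to \Pcal(M)$. Concretely, the representability of $\Pcal(M)$ recorded in Lemma \ref{lem:represent} associates to the singleton $\{xp\} \subseteq X$ a morphism $t_{xp} : X \to \Pcal(M)$ given by $t_{xp}(y) = \{m \in M \mid ym = xp\}$, so that $t_{xp}(x) = \Ical_x^p$ by definition. When $X$ is a $\tau$-continuous $M$-set, the universal property of the coreflection $R(\Pcal(M)) \hookrightarrow \Pcal(M)$ proved in Proposition \ref{prop:hyper} forces $t_{xp}$ to factor through $T$, whence $\Ical_x^p \in T \subseteq \tilde{\tau}$ for every $x \in X$ and $p \in M$; Lemma \ref{lem:Inx} then gives $X \in \Cont(M,\tilde{\tau})$.

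For (d), I would observe that both the coreflector $R$ and the set $T$ depend only on the subcategory $\Cont(M,\tau) \subseteq \PSh(M)$: if another topology $\sigma$ on $M$ produces the same subcategory, then the inclusion $V$ is the same, its (essentially unique) right adjoint is unchanged, and so $T$ together with its generated topology $\tilde{\sigma}$ coincides with $\tilde{\tau}$. Applying step (b) to $\sigma$ then yields $\tilde{\tau} = \tilde{\sigma} \subseteq \sigma$, which is the claimed minimality.
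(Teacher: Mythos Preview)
Your proof is correct and complete, but parts (a) and the reverse inclusion in (c) proceed along a different, more categorical, route than the paper's proof.

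For (a), the paper argues by direct computation: closure under complementation is read off from Lemma~\ref{lem:InA} (since $\Ical^p_{M\setminus A} = \Ical^p_A$ and $q^*(M\setminus A) = M\setminus q^*(A)$), and closure under binary intersection is checked by exhibiting, for any $p' \in \Ical^p_{q^*(A\cap B)}$, the open neighbourhood $\Ical^{p'}_{q^*(A)} \cap \Ical^{p'}_{q^*(B)}$ inside it. Your argument instead invokes preservation of the Boolean-algebra structure by the left-exact composite $VR$ and monicity of the counit; this is exactly the reasoning the paper deploys later in Scholium~\ref{schl:Tseparator} and again in Theorem~\ref{thm:factor}, so it is certainly legitimate here.

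For the hard direction of (c), the paper takes $A := \Ical^p_x$ and verifies by an explicit element chase that $\Ical^{p_1}_{xq}$ is an open neighbourhood of any $p_1 \in \Ical^{p}_{q^*(A)}$, concluding that $A \in T$. Your argument replaces this computation with the universal property: the singleton $\{xp\}\subseteq U(X)$ corresponds under Lemma~\ref{lem:represent} to a morphism $t_{xp}\colon X \to \Pcal(M)$ with $t_{xp}(x) = \Ical^p_x$, and since $X$ lies in $\Cont(M,\tau)$ this morphism must factor through the coreflection $T \hookrightarrow \Pcal(M)$. This is precisely the mechanism behind the diagram~\eqref{eq:factorize} in the later Theorem~\ref{thm:factor}, so again you have anticipated the paper's eventual viewpoint. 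The trade-off is that the paper's proof of Theorem~\ref{thm:tau} is self-contained at the level of elements, whereas yours leans more heavily on the adjunction and on Lemma~\ref{lem:represent}; either way the content is the same.

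Parts (b) and (d) match the paper's argument essentially verbatim.
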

\begin{proof}
We extracted the expression for $T$ from the construction of $R$ in Proposition \ref{prop:hyper}. By Lemma \ref{lem:InA}, every $A \subseteq M$ is a union over its elements $t$ of the sets $\Ical_{A}^{t}$, so if $A \in T$ then $A$ is necessarily open. Similarly, Lemma \ref{lem:InA} guarantees that $M \backslash A \in T$ whenever $A \in T$, since $\Ical_{M \backslash A}^{p} = \Ical_{A}^{p}$ and $q^*(M \backslash A) = M \backslash q^*(A)$. It follows that each $A \in T$ is clopen with respect to $\tau$.

To show that $T$ is a base for a topology it suffices to show that $A \cap B$ is in $T$ whenever $A$ and $B$ are. Directly,
\begin{equation}
\label{eq:intersection}
\Ical_{q^*(A \cap B)}^{p} = \{m \in M \mid (qm)^*(A) \cap (qm)^*(B) = (qp)^*(A) \cap (qp)^*(B)\};
\end{equation}
if $p'$ is any element of this set, then by inspection $\Ical_{q^*(A)}^{p'} \cap \Ical_{q^*(B)}^{p'} \subseteq \Ical_{q^*(A \cap B)}^{p'} = \Ical_{q^*(A \cap B)}^{p}$ is an open neighbourhood of $p'$ contained in it, ensuring that the latter is open. We conclude $A \cap B \in T$, as required.

If $X$ is an $M$-set which is continuous with respect to the generated topology $\tilde{\tau}$, then $\Ical_{x}^{p} \in \tilde{\tau} \subseteq \tau$ for every $x \in X$, $p \in M$ so $X$ is continuous with respect to $\tau$.

Conversely, if $X$ is continuous with respect to $\tau$, so $\Ical_{x}^{p} \in \tau$ for all $x \in X$ and $p \in M$, we want to show that each $\Ical_{x}^{p} \in \tilde{\tau}$. Writing $A = \Ical_{x}^{p}$, this is equivalent to showing that $\Ical_{q^*(A)}^{p} \in \tau$ for each $p, q \in M$. Given $p_1 \in \Ical_{q^*(A)}^{p}$, consider the open set $\Ical_{xq}^{p_1}$. We have $p_2 \in \Ical_{xq}^{p_1}$ if and only if $xqp_2 = xqp_1$. Consequently,
\[p_2^*q^*(A) = \{m \in M \mid xqp_2m = xp\} = \{m \in M \mid xqp_1m = xp\} = p_1^*q^*(A).\]
But $\Ical_{q^*(A)}^{p_1}$ is precisely $\{m \in M \mid m^*q^*(A) = p_1^*q^*(A)\}$ so we conclude $\Ical_{xq}^{p_1} \subseteq \Ical_{q^*(A)}^{p_1} = \Ical_{q^*(A)}^{p}$, and hence the latter is open as required.

Finally, to show that $\tilde{\tau}$ is the coarsest such topology, suppose $\tau'$ is some topology on $M$ such that any $M$-set $X$ is continuous with respect to $\tau'$ if and only if it is continuous with respect to $\tau$. Then the respective inclusions of $\Cont(M,\tau)$ and $\Cont(M,\tau')$ into $\PSh(M)$ are isomorphic. Thus $T$ is computed in the same way with respect to either topology, and by repeating the above argument, we have $\tilde{\tau} \subseteq \tau'$, as claimed.
\end{proof}

\begin{rmk}
Note that the caveat `as subcategories of $\PSh(M)$' in Theorem \ref{thm:tau} likely cannot be removed in full generality, since a sufficiently large monoid could admit two topologies with distinct categories of continuous $M$-sets which happen to be equivalent. We have not constructed such an example, since in this paper we are primarily interested in examining $\Cont(M,\tau)$ as a topos under $\PSh(M)$.
\end{rmk}

\begin{dfn}
\label{dfn:action}
The topology $\tilde{\tau}$ derived in Theorem \ref{thm:tau} will be called the (right) \textbf{action topology induced by $\tau$}. By the final statement of Theorem \ref{thm:tau}, the construction of $\tilde{\tau}$ is idempotent (see Lemma \ref{lem:G3} below for a deeper exploration of this). As such, we say $\tau$ is an \textbf{action topology} if $\tilde{\tau} = \tau$.
\end{dfn}

We will continue to employ the notation $T:=VR(\Pcal(M))$ for the Boolean algebra of necessary clopens when the topology $\tau$ is understood. Rather than considering the full action topology $\tilde{\tau}$, it will sometimes be more convenient to work directly with $T$, since this is an object residing in the toposes we are studying.

\begin{schl}\label{schl:Tseparator}\hspace{-0.2cm}\footnote{Taking after Johnstone in \cite[pp. xiv, footnote 7]{Ele}, we call a result a `scholium' if it is a consequence of preceding proofs, as opposed to a `corollary' which is a consequence of preceding result statements.} Considering the Boolean algebra $T$ as an object of $\Cont(M,\tau)$, it inherits all of the properties we observed in $\Pcal(M)$ in Lemmas \ref{lem:represent} and \ref{lem:Pcal}: it represents $\Pcal\op \circ U \circ V: \Cont(M,\tau) \to \Set\op$, is a complete internal Boolean algebra with exactly two global sections, and is a coseparator which contains the subobject classifier of $\Cont(M,\tau)$ as an (order-inheriting) subobject. Explicitly, the subobject classifier of $\Cont(M,\tau)$ consists of the left ideals of $M$ lying in $T$.
\end{schl}
\begin{proof}
For the first part, we extend the proof of Lemma \ref{lem:represent} with the observation that
\[\Hom_{\Cont(M,\tau)}(X,R(\Pcal(M))) \cong \Hom_{\PSh(M)}(V(X),\Pcal(M)),\]
where $R(\Pcal(M))$ is $T$ viewed as an object of $\Cont(M,\tau)$.

For the second part, all of the arguments in the proof of Lemma \ref{lem:Pcal} carry over with $T$ in place of $\Pcal(M)$.
\end{proof}

\begin{schl}
\label{schl:Tsuff}
Let $X$ be a right $M$-set continuous with respect to $\tau$, and let $T$ be as in Theorem \ref{thm:tau}. Then for every $x \in X$, $p \in M$, we have $\Ical_{x}^{p} \in T$. In particular, we do not need to generate $\tilde{\tau}$ in order to verify continuity.
\end{schl}
\begin{proof}
Consider the construction in the proof of Theorem \ref{thm:tau}; in it, we showed that $\Ical_{q^*(\Ical_{x}^{p})}^{p'} \in \tau$ for each $x \in X$ and $p, p', q \in M$. But this is exactly the condition needed for $\Ical_{x}^{p}$ to be in $T$, since it ensures that the action of $M$ on $\Pcal(M)$ is continuous on the sub-$M$-set generated by $\Ical_{x}^{p}$.
\end{proof}

\begin{schl}
\label{schl:base}
The clopen sets of the form $\Ical_A^p$ for $A \in T$ (or more generally, the necessary clopens of all $(M,\tau)$-sets) also form a base for $\tilde{\tau}$.
\end{schl}
\begin{proof}
Given $a \in A$, we have $a \in \Ical_A^a \subseteq A$, so each member of $T$ is a union of members of the given form, as required.
\end{proof}

\subsection{Powder monoids}
\label{ssec:powder}

Action topologies have much more convenient properties than arbitrary topologies. Most notably:
\begin{prop}
\label{prop:ctsx}
The multiplication on $M$ is continuous with respect to $\tilde{\tau}$ for any starting topology $\tau$.
\end{prop}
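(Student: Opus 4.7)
The plan is to verify continuity of $\mu \colon M \times M \to M$ by checking that the preimage of each member of a conveniently chosen base for $\tilde\tau$ is open in the product topology. Scholium~\ref{schl:base} supplies such a base: the necessary clopens $\Ical_x^r$, as $X$ ranges over all $(M,\tau)$-sets, $x$ over $X$, and $r$ over $M$. Fixing one such $B = \Ical_x^r$ and a point $(p_0,q_0) \in \mu^{-1}(B)$, so that $x p_0 q_0 = xr$, it suffices to produce a product of open neighbourhoods of $p_0$ and $q_0$ contained in $\mu^{-1}(B)$.

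My choice would be
\[ U \times V := \Ical_x^{p_0} \times \Ical_{x p_0}^{q_0}. \]
Both factors are necessary clopens of the single $(M,\tau)$-set $X$---the first at the element $x$ with index $p_0$, the second at the intermediate element $x p_0 \in X$ with index $q_0$---so Scholium~\ref{schl:Tsuff} places each of them in $T$, and hence in $\tilde\tau$. The containment $U \times V \subseteq \mu^{-1}(B)$ is then a one-line verification: for $(p,q) \in U \times V$, the defining equalities $xp = xp_0$ and $(xp_0)q = (xp_0)q_0$ combine to give $xpq = xp_0 q_0 = xr$, so $pq \in B$.

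The only mild subtlety is the choice of the second factor: instead of a necessary clopen of $X$ at $x$, one must use a necessary clopen at the shifted element $xp_0$, reflecting the asymmetric way the two arguments of $\mu$ interact with the action. Beyond this choice of base elements, all the real work is already packaged into Scholium~\ref{schl:base} (identifying a workable base of $\tilde\tau$) and Scholium~\ref{schl:Tsuff} (guaranteeing openness of necessary clopens in $\tilde\tau$), so I do not anticipate any further obstacle.
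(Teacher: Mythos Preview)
Your proof is correct and follows essentially the same strategy as the paper's: exhibit, around each $(p_0,q_0)\in\mu^{-1}(B)$, a product rectangle of $\tilde\tau$-opens mapping into $B$. The paper phrases things in terms of $A\in T$ and uses the rectangle $\Ical_A^{a}\times a^*(A)$, while you work with the base of necessary clopens from Scholium~\ref{schl:base} and use $\Ical_x^{p_0}\times\Ical_{xp_0}^{q_0}$; but when $B=\Ical_x^r$ these rectangles coincide (since $\Ical_x^{p_0}=\Ical_x^r=B=\Ical_B^{p_0}$ by Lemma~\ref{lem:InA2}, and $p_0^*(B)=\Ical_{xp_0}^{q_0}$), so the two arguments are really the same computation viewed through different bases.
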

\begin{proof}
Given $A \in \tilde{\tau}$ and a pair $(a,b) \in \mu^{-1}(A)$, we have $a \in \Ical_{A}^{a}$ and $b \in a^*(A)$ by inspection. Since the inverse image action commutes with arbitrary unions and the generating set $T$ of $\tilde{\tau}$ is closed under the action of $M$ on $\Pcal(M)$, we deduce that $a^*(A) \in \tilde{\tau}$. Given any $m \in \Ical_{A}^{a}$, $n \in a^*(A)$, we have $m^*(A) = a^*(A)$ and hence $n \in m^*(A)$ and $mn \in A$. Thus $\Ical_{A}^{a} \times a^*(A) \subseteq \mu^{-1}(A)$. It follows that $\mu^{-1}(A) \in \tilde{\tau} \times \tilde{\tau}$, as required.
\end{proof}

Thus, almost miraculously, $(M,\tilde{\tau})$ is a topological monoid. That is, from the perspective of continuous actions on discrete sets, there is no loss in generality in assuming that the topology on the monoid makes its multiplication continuous, which shall come as a relief to the modern algebraist.

Passing to the action topology also sheds the extraneous local richness of the original topology which the discrete sets being acted on are oblivious of.
\begin{lemma}
\label{lem:connected}
Let $(M,\tau)$ be a locally connected topological monoid. Then $\tilde{\tau}$ is generated by the connected components with respect to $\tau$. In particular, if $(M,\tau)$ is connected, $(M,\tilde{\tau})$ is indiscrete.
\end{lemma}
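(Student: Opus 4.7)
The plan is to identify the Boolean algebra $T = VR(\Pcal(M))$ of Theorem \ref{thm:tau} with the Boolean algebra of unions of $\tau$-connected components of $M$. Since $T$ is a base of clopens for $\tilde{\tau}$, the main statement follows immediately, and the ``in particular'' clause is the case in which there is only one component, so that this base collapses to $\{M\}$ and forces $\tilde{\tau} = \{\emptyset, M\}$.

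For the inclusion of $T$ into the collection of unions of connected components, I would invoke Theorem \ref{thm:tau} to recall that every $A \in T$ is $\tau$-clopen. In a locally connected space the connected components are themselves $\tau$-open, and any clopen subset is the disjoint union of the components it contains; hence every $A \in T$ is such a union.

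For the reverse inclusion, I need to check that each connected component $C$ lies in $T$, i.e.\ that $\Ical^p_{q^*(C)}$ is $\tau$-open for every $p, q \in M$. The key observation is that, for each fixed $n \in M$, the self-map $m \mapsto qmn$ of $M$ is continuous (a composite of left and right translations, which are continuous because $(M,\tau)$ is a topological monoid), so it carries each connected component into a single component of $M$. Consequently, the truth value of ``$qmn \in C$'' depends only on the component of $m$, and therefore so does the subset $(qm)^*(C) = \{n \mid qmn \in C\}$. The set
\[ \Ical^p_{q^*(C)} \;=\; \{m \in M \mid (qm)^*(C) = (qp)^*(C)\} \]
is then saturated for the partition of $M$ into connected components, hence a union of components, and thus open by local connectedness.

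I expect the main obstacle to be pinning down this last step cleanly: once one recognises that continuity of the auxiliary maps $m \mapsto qmn$ forces $(qm)^*(C)$ to depend only on the component of $m$, the rest is routine. With both inclusions in hand, the connected components (which are pairwise disjoint) constitute a base for $\tilde{\tau}$, and the ``in particular'' clause follows as indicated.
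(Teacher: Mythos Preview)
Your proposal is correct and follows essentially the same route as the paper: both show that each connected component lies in $T$ (hence in $\tilde{\tau}$) by arguing that the relevant sets $\Ical^p_{-}$ are unions of components, and both note that since every member of $T$ is $\tau$-clopen it must be a union of components, so these generate $\tilde{\tau}$.

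The only difference is a small economy in the paper's version: because $(M,\tau)$ is a genuine topological monoid, Proposition~\ref{prop:hyper} gives the simplified description $T = \{A \mid \forall p,\ \Ical_A^p \in \tau\}$, so the paper checks only $\Ical_{C}^p$ (using right-continuity of multiplication) rather than the full $\Ical_{q^*(C)}^p$ you verify via the two-sided continuous map $m \mapsto qmn$. Your version works just as well; it simply does a bit more than strictly necessary.
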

\begin{proof}
If $\tau$ makes $M$ locally connected, the connected components of $M$ are clopen. For each $x \in M$ let $C_x$ be the connected component containing $x$. We claim each $C_x$ is a member of $\tilde{\tau}$. Indeed, given $p \in M$, $p^*(C_x)$ is clopen (since $(M,\tau)$ \textit{is} a topological monoid here), and hence is some union of connected components. Observe that $\Ical_{C_x}^{p} = \{m \in M \mid m^*(C_x) = p^*(C_x)\}$ contains $C_p$: multiplication on the right is continuous, so whenever $py \in C_x$, we have $my \in C_x$ for every $m$ in $C_p$. It follows that $\Ical_{C_x}^{p}$ is a union over $q \in \Ical_{C_x}^{p}$ of components $C_q$, and so is open. Thus $C_x \in \tilde{\tau}$ and since these are the minimal clopen sets we are done.

If $(M,\tau)$ is connected, the only clopen subsets of $M$ are $\emptyset$ and $M$, so $\tilde{\tau}$ contains only these.
\end{proof}

Lemma \ref{lem:connected} means that, for example, $\Rbb$ with its usual topology goes from being Hausdorff (or even stronger, normal) to being indiscrete upon passing to $\tilde{\tau}$. On the other hand, other properties of a topology $\tau$ are preserved by passing to $\tilde{\tau}$. For example:
\begin{lemma}
\label{lem:compactau}
Suppose $\tau$ is a compact topology on a monoid $M$. Then $\tilde{\tau}$, being a coarser topology than $\tau$, is compact too.
\end{lemma}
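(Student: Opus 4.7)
The statement itself basically telegraphs the proof, so my plan is essentially to unpack the parenthetical remark. The key ingredient is already in place: Theorem \ref{thm:tau} gives us the inclusion $\tilde{\tau} \subseteq \tau$, and from this the conclusion is a standard point-set topology observation.

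Concretely, the plan is as follows. First, I would invoke Theorem \ref{thm:tau} to recall that $\tilde{\tau} \subseteq \tau$, so every $\tilde{\tau}$-open set is in particular $\tau$-open. Next, given any open cover $\{U_i \mid i \in I\}$ of $M$ by members of $\tilde{\tau}$, I would observe that this is also an open cover of $M$ by members of $\tau$. By hypothesis, $(M,\tau)$ is compact, so there is a finite subset $I_0 \subseteq I$ with $\{U_i \mid i \in I_0\}$ still covering $M$. Since the $U_i$ were already in $\tilde{\tau}$, this finite subcover witnesses compactness of $(M,\tilde{\tau})$.

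There is no real obstacle here; the only thing worth noting is that I am using the usual (non-Hausdorff) notion of compactness, which makes the argument immediate. If one wished to prove that $(M,\tilde{\tau})$ were quasi-compact or compact Hausdorff under further hypotheses, more work would be needed, but the statement as given requires nothing beyond the coarser-topology argument above.
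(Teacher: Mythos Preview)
Your proposal is correct and matches the paper's approach exactly: the paper gives no separate proof for this lemma, treating the parenthetical ``being a coarser topology than $\tau$'' as the entire argument, and you have simply unpacked that remark.
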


By definition, action topologies are \textbf{zero-dimensional}, since they have a base of clopen sets. See \cite[Section I.4, Figure 9]{Counter} for a helpful diagram of how this property interacts with some basic separation properties. We list some of them here:
\begin{lemma}
\label{lem:T0T2}
Suppose $\tau = \tilde{\tau}$ is an action topology and $(M,\tau)$ is Kolmogorov (satisfies the $T_0$ separation axiom). Then $(M,\tau)$ has the properties (listed in order of decreasing strength) of being totally separated and regular, totally disconnected and Urysohn, and Hausdorff ($T_2$).
\end{lemma}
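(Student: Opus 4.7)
The plan is to exploit zero-dimensionality, which has already been established: by Theorem \ref{thm:tau} and Scholium \ref{schl:base}, the action topology $\tilde{\tau}$ has the set $T$ of necessary clopens as a base, so $(M,\tau)$ is zero-dimensional. Given this, each of the listed separation properties is a routine consequence of $T_0$, and the diagram \cite[Figure 9]{Counter} cited just before the statement already organises the logical dependencies among them.

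First, I would promote $T_0$ to totally separated. Given distinct $x, y \in M$, the Kolmogorov axiom produces an open set containing exactly one of them; passing to a basic clopen $U \in T$ inside that open set yields a clopen partition $M = U \sqcup (M \setminus U)$ with $x$ and $y$ in opposite parts. This single construction simultaneously establishes three of the remaining properties: Hausdorff, because $U$ and $M \setminus U$ are disjoint open neighbourhoods of $x$ and $y$; Urysohn, because those same two sets are also disjoint closed neighbourhoods (and moreover the characteristic function of $U$ is a continuous map $M \to \{0,1\} \hookrightarrow [0,1]$ separating $x$ from $y$); and totally disconnected, because no connected subset of $M$ can meet both parts of such a clopen partition, so connected components are singletons.

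For regularity, I would start from a point $x \in M$ and a closed set $C \subseteq M$ with $x \notin C$. Then $M \setminus C$ is an open neighbourhood of $x$, so by Scholium \ref{schl:base} there exists a basic clopen $U \in T$ with $x \in U \subseteq M \setminus C$. The clopen pair $U$, $M \setminus U$ is then a pair of disjoint open sets separating $x$ from $C$, as required.

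I do not anticipate any real obstacle: the content of the lemma is essentially the observation that a $T_0$ zero-dimensional space enjoys these standard separation properties, and all the nontrivial topological work has been done in proving that $T$ is a clopen base for $\tilde{\tau}$. The only care needed is to cite the earlier results (Theorem \ref{thm:tau} and Scholium \ref{schl:base}) in the correct place so that zero-dimensionality is not taken for granted.
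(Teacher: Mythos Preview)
Your proposal is correct and fills in the details the paper omits: the paper states this lemma without proof, relying on the preceding remark that action topologies are zero-dimensional and the citation of \cite[Section I.4, Figure 9]{Counter} for how zero-dimensionality plus $T_0$ yields the listed separation properties. Your argument is exactly the standard unpacking of that implication.
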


\begin{dfn}
\label{dfn:powder}
A topological monoid $(M,\tau)$ which is $T_0$ and such that $\tau$ is a right action topology (that is, such that $\tau$ has a basis of clopen sets $U$ such that $\Ical_U^p = \{q \mid q^*(U) = p^*(U)\} \in \tau$ for every $p$) shall be called a (right) \textbf{powder monoid}; the name is motivated the separation properties exhibited in Lemma \ref{lem:T0T2}. We have avoided the name `action monoid' since it conflicts with the terminology `monoid actions'.
\end{dfn}

Note that there is implicit asymmetry in Definition \ref{dfn:powder}, and indeed we may define a \textit{left} powder monoid to be a topological monoid $(M,\tau)$ which is Hausdorff and such that $\tau$ is a right action topology on $M\op$. We shall discuss these in more detail in Sections \ref{ssec:monads} and \ref{ssec:moremonoid}. For the time being, we write `powder monoid' to mean `right powder monoids' unless otherwise stated.

\begin{rmk}
\label{rmk:Q}
The properties of Lemma \ref{lem:T0T2} do not characterize powder monoids. For example, $\Qbb$ with its usual topology is a $T_0$ and zero-dimensional topological group, but we find that, just like $\Rbb$, its corresponding action topology is trivial.
\end{rmk}

\begin{thm}
\label{thm:Hausd}
Given a monoid with an arbitrary topology $(M,\tau)$, there is a canonical (right) powder monoid, which we shall by an abuse of notation denote by $(\tilde{M},\tilde{\tau})$, such that $\Cont(M,\tau) \simeq \Cont(\tilde{M},\tilde{\tau})$ and the canonical points of these toposes coincide. 
\end{thm}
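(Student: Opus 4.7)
The plan is to perform two reductions. First, by Theorem \ref{thm:tau} and Proposition \ref{prop:ctsx}, I may replace $\tau$ by $\tilde{\tau}$ without changing the topos of continuous actions or the canonical point (the latter because the inverse image of the canonical point is the forgetful functor $\Cont(M,\tau)\to\Set$, which is unaltered by the reduction). This leaves a genuine topological monoid $(M,\tilde{\tau})$ whose topology is already an action topology. Second, I would collapse topological indistinguishability: define $x\sim y$ iff $x$ and $y$ lie in exactly the same members of $\tilde{\tau}$, put $\tilde{M}:=M/{\sim}$ with the quotient topology $\tilde{\tau}_q$, and aim to show that this is the required powder monoid.

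To begin, I would check that $\sim$ is a monoid congruence. Since $\tilde{\tau}$ makes multiplication jointly (hence separately) continuous, each map $m\mapsto am$ and $m\mapsto ma$ is continuous, so for $U\in\tilde{\tau}$ the preimages $\{m\mid am\in U\}$ and $\{m\mid ma\in U\}$ are open and contain $x$ iff they contain $y$, yielding $ax\sim ay$ and $xa\sim ya$. Hence $\tilde{M}$ inherits a monoid structure and $q:M\to\tilde{M}$ is a continuous semigroup homomorphism. Moreover, every $A\in\tilde{\tau}$ is $\sim$-saturated (indistinguishable points share their opens), so $A\mapsto q(A)$ is an order isomorphism $\tilde{\tau}\cong\tilde{\tau}_q$ preserving clopens.

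Next I would establish an equivalence $\Cont(M,\tilde{\tau})\simeq\Cont(\tilde{M},\tilde{\tau}_q)$ over $\Set$. Pulling back along $q$ clearly sends continuous $\tilde{M}$-sets to continuous $M$-sets. Conversely, given a continuous $M$-set $X$, Scholium \ref{schl:Tsuff} places each $\Ical_z^x$ in $\tilde{\tau}$, hence in the saturated opens, so every $y\sim x$ satisfies $zy=zx$; the action therefore factors uniquely through $\tilde{M}$. The necessary clopens of the factored $\tilde{M}$-action are $\Ical_z^{q(x)}=q(\Ical_z^x)$, which are open in $\tilde{\tau}_q$ by saturatedness of $\Ical_z^x$ and the quotient-topology property. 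These mutually inverse constructions preserve underlying sets, so they match the canonical points.

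Finally I would confirm that $(\tilde{M},\tilde{\tau}_q)$ is a powder monoid: it is $T_0$ by construction of the Kolmogorov quotient, and the images of the clopen base $T$ from Theorem \ref{thm:tau} provide a clopen base for $\tilde{\tau}_q$, each satisfying $\Ical_{q(A)}^{q(p)}=q(\Ical_A^p)\in\tilde{\tau}_q$, so $\tilde{\tau}_q$ is an action topology. The main obstacle I anticipate is verifying joint continuity of multiplication on $\tilde{M}$, since products of quotient maps need not be quotient maps; this is resolved by checking the preimage condition only on the clopen base, where each $\mu_M^{-1}(A)$ is $({\sim}\times{\sim})$-saturated by saturatedness of $A$, and hence its image under $q\times q$ is open in the product topology $\tilde{\tau}_q\times\tilde{\tau}_q$. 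Canonicity of the resulting construction follows from the uniqueness clauses of Theorem \ref{thm:tau} and of the Kolmogorov quotient.
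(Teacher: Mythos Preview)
Your proposal is correct and follows essentially the same two-step plan as the paper: pass to the action topology $\tilde{\tau}$ via Theorem~\ref{thm:tau}, then take the Kolmogorov quotient. The only notable difference is in the congruence argument: the paper verifies that $\sim$ is a two-sided congruence using the action-topology opens $\Ical_U^{m_1}$ directly (so that $m_1\sim m_1'$ forces $m_1^*(U)={m_1'}^*(U)$), whereas you deduce it from separate continuity of multiplication, which is available once Proposition~\ref{prop:ctsx} is in hand. Both are valid; the paper's route is self-contained from the action-topology structure, while yours is the standard Kolmogorov-quotient-of-a-topological-monoid argument. You are also more explicit than the paper about two points it leaves to the reader: that the quotient topology is again an action topology, and that multiplication on $\tilde{M}$ is jointly continuous despite $q\times q$ not being a quotient map in general (your saturation argument for this is correct, since every $\tilde{\tau}$-open is $\sim$-saturated and hence $\mu_M^{-1}(A)$ is a union of rectangles of saturated opens).
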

\begin{proof}
We first construct the action topology $\tilde{\tau}$ corresponding to $\tau$ from Theorem \ref{thm:tau}. We shall show that the equivalence relation on $M$ relating points which are topologically indistinguishable with respect to $\tilde{\tau}$ is a two-sided congruence on $M$.

Suppose $m_1,m'_1$ and $m_2,m'_2$ are two pairs of topologically indistinguishable points in the sense that every open set of $\tilde{\tau}$ containing $m_i$ also contains $m'_i$ and vice versa. Then given an open set $U \in \tilde{\tau}$ containing $m_1m_2$, we have $m_2 \in m_1^*(U)$ and hence $m'_2 \in m_1^*(U)$. Moreover, $m_1 \in \Ical_U^{m_1}$ so $m'_1 \in \Ical_U^{m_1}$, which is to say that $m_1^*(U) = {m'_1}^*(U)$ and so $m'_1m'_2 \in U$.

Moreover, the actions of any pair of topologically indistinguishable points $m,m'$ of $(M,\tilde{\tau})$ on any $(M,\tilde{\tau})$-set are forced to be equal: if we had $xm \neq xm'$ we would have $\Ical_x^m$ containing $m$ but not $m'$ and therefore not open. Thus the continuous actions restrict to the quotient $\tilde{M}$ of $M$ by this relation.

The monoid $\tilde{M}$ inherits its topology from $(M,\tilde{\tau})$; we abuse notation and call the inherited topology $\tilde{\tau}$ too, since the frames of open sets of the two topologies are isomorphic. It is easily checked that $\tilde{\tau}$ is still an action topology on $\tilde{M}$, and $(\tilde{M},\tilde{\tau})$ is Kolmogorov by construction, as required.

Since we have not modified the forgetful functor (the underlying sets of the actions remain the same), this construction depends only on $M$ and the canonical point of $\Cont(M,\tau)$.
\end{proof}

While convenient, it is unavoidable that the construction of Theorem \ref{thm:Hausd} relies on our original representing monoid $(M,\tau)$. In Section \ref{ssec:complete}, we shall construct a representing powder monoid for a topos of the form $\Cont(M,\tau)$, in general different from the one constructed above, which depends only on the canonical point.

\subsection{Prodiscrete monoids, nearly discrete groups}
\label{ssec:prodiscrete}

There are plenty of nontrivial examples of powder monoids.

\begin{dfn}
Recall that a \textbf{prodiscrete monoid} is a topological monoid $(M,\tau)$ obtained as a (projective) limit of discrete monoids,
\[M = \varprojlim_{i\in I} M_i,\]
with $\tau$ the coarsest topology making each projection map continuous, which has a base of opens of the form $\pi_i^{-1}(\{m_i\})$ with $m_i \in M_i$. Often the limit is taken to be filtered or such that all of the monoid homomorphisms involved are surjections, but we do not require these restrictions.
\end{dfn}

\begin{xmpl}
\label{xmpl:idemclosed}
For those readers unfamiliar with prodiscrete monoids, we construct an example now which will be useful later. Consider the `truncated addition' monoids $N_{a,1}$, indexed by integers, $a \geq 0$ consisting of the integers $\{0,\dotsc,a\}$ equipped with the operation $\mu(p,q) = \min\{p+q,a\}$. For each $a \leq a'$ we have a surjective monoid homomorphism $N_{a',1} \too N_{a,1}$. The projective limit of the resulting sequence of monoids can be identified with $\Nbb \cup \{\infty\}$, equipped with addition extended in the obvious way, and the topology on it coincides with the one-point (Alexandrov) compactification of $\Nbb$ as a discrete topological space.
\end{xmpl}

\begin{prop}
\label{prop:prodisc}
Any prodiscrete monoid is a (right) powder monoid.
\end{prop}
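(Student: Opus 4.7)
The plan is to verify the two conditions in Definition \ref{dfn:powder} directly from the construction of a prodiscrete monoid, using the basis of opens $B = \{\pi_i^{-1}(\{m_i\}) \mid i \in I, m_i \in M_i\}$. Each such basic open is clopen, since singletons in the discrete monoid $M_i$ are clopen and $\pi_i$ is continuous; and $(M,\tau)$ is $T_0$ (in fact Hausdorff), because if $m \neq m'$ then some projection $\pi_i$ separates them and the corresponding basic opens do too.

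The heart of the argument is the key computation that $p^*(U)$ factors through a single projection whenever $U$ does. Explicitly, for $U = \pi_i^{-1}(\{m_i\})$ and $p \in M$, since $\pi_i$ is a monoid homomorphism,
\[
p^*(U) = \{n \in M \mid \pi_i(p)\pi_i(n) = m_i\} = \pi_i^{-1}\bigl(\pi_i(p)^*(\{m_i\})\bigr),
\]
so $p^*(U)$ depends only on $\pi_i(p)$. Consequently, if $\pi_i(q) = \pi_i(p)$ then $q^*(U) = p^*(U)$, whence $\pi_i^{-1}(\{\pi_i(q)\}) \subseteq \Ical_U^p$ for every $q \in \Ical_U^p$. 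This exhibits
\[
\Ical_U^p = \bigcup_{q \in \Ical_U^p} \pi_i^{-1}(\{\pi_i(q)\})
\]
as a union of basic opens, hence open.

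This shows $B$ is a basis of clopen sets satisfying the condition in Definition \ref{dfn:powder}, so $\tau$ is a right action topology on $M$, and combined with $T_0$-ness this makes $(M,\tau)$ a powder monoid. I do not anticipate any real obstacle: the only subtlety is keeping track of the fact that the $\pi_i$ are monoid homomorphisms (so that $p^*$ commutes with pullback along $\pi_i$), and that no surjectivity or filteredness hypothesis is needed on the diagram defining the prodiscrete monoid, since the argument works pointwise for each basic open.
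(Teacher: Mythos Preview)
Your proof is correct and follows essentially the same approach as the paper's: both identify that $p^*(\pi_i^{-1}(A))$ depends only on $\pi_i(p)$, whence $\Ical_U^p \supseteq \pi_i^{-1}(\{\pi_i(p)\})$ is open. The only minor difference is that the paper works with $U = \pi_i^{-1}(A)$ for arbitrary $A \subseteq M_i$ and explicitly checks that $q^*(U)$ is again of this form (so as to verify the unrestricted condition $\Ical_{q^*(U)}^p \in \tau$ from Theorem~\ref{thm:tau}), whereas you use singletons and invoke the simplified criterion in Definition~\ref{dfn:powder}, which is legitimate since multiplication in a prodiscrete monoid is continuous; you also make the $T_0$ verification explicit where the paper leaves it implicit.
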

\begin{proof}
It suffices to show that open sets of the form $U = \pi_i^{-1}(A)$ with $A \subseteq M_i$ are continuous elements of the topology. Indeed, given $\alpha = (a_i)_{i \in I} \in M$,
\begin{align*}
\Ical_U^{\alpha} &=
\{\beta = (b_i)_{i \in I} \in M \mid \alpha^*(U) = \beta^*(U)\}\\
&= \{\beta \in M \mid \forall c \in M_i, \, a_ic \in A \Leftrightarrow b_ic \in A \}\\
&\supseteq \{\beta \in M \mid b_i = a_i\} = \pi_i^{-1}(\{a_i\})
\end{align*}
contains an open neighbourhood of $\alpha$ and hence is open in the prodiscrete topology $\tau$, and by a similar argument, for any $\alpha' = (a'_i)_{i \in I}$,
\[\alpha'{}^*(U) = \pi_i^{-1}({a'_i}^*(A))\]
is of the same form, so is open in $\tau$ as required.
\end{proof}

Given the motivation of the present work, it is natural to wonder what happens when we apply the construction of Theorem \ref{thm:Hausd} to groups.

\begin{dfn}
A topological group is said to be \textbf{nearly discrete} if the intersection of all open subgroups contains only the identity element; see Johnstone \cite[Example A2.1.6]{Ele} or Caramello \cite[comments following Proposition 2.4]{TGT}.
\end{dfn}

\begin{lemma}
\label{lem:powdergrp}
A topological group is a (right) powder monoid if and only if it is nearly discrete and has a neighbourhood base of open subgroups at the identity; we accordingly call such groups \textbf{powder groups}.
\end{lemma}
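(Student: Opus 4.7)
The plan is to prove both implications using the direct characterization of $\Ical_U^p$ in a group in terms of translation and conjugation. In a group, the inverse image action is given by $q^*(A) = q^{-1}A$, so $\Ical_U^p = \{q \mid q^{-1}U = p^{-1}U\}$; when $U$ is a left coset of a subgroup, this set takes a particularly clean form, which makes the equivalence transparent.

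For the forward direction, suppose $(G,\tau)$ is a powder group. Let $U$ be a basic clopen containing $1$ with $\Ical_U^p \in \tau$ for all $p$. A short computation shows that $\Ical_U^1 = \{q \mid q^{-1}U = U\}$ is a subgroup of $G$: closure under multiplication and inversion follows from $(q_1q_2)^{-1}U = q_2^{-1}q_1^{-1}U$ and from the fact that $q^{-1}U = U$ implies $qU = U$. Moreover, evaluating $q^{-1}U = U$ at $1 \in U$ gives $q \in U$, so $\Ical_U^1 \subseteq U$. Hence $\Ical_U^1$ is an open subgroup sitting inside $U$, which shows open subgroups form a neighbourhood base at $1$. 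For near-discreteness, invoke Lemma \ref{lem:T0T2} so that $(G,\tau)$ is Hausdorff; then for any $g \neq 1$ separate by opens, shrink to a basic open $U$ with $1 \in U$ and $g \notin U$, and obtain an open subgroup $\Ical_U^1 \subseteq U$ missing $g$.

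For the converse direction, assume $G$ is nearly discrete with a neighbourhood base of open subgroups at $1$. The $T_0$ axiom is immediate from near-discreteness. Translation by group elements is a homeomorphism, so the left cosets $gH$ of open subgroups $H$ form a base for $\tau$, and each such coset is clopen (its complement is a union of other cosets, hence open). It remains to check the key condition $\Ical_{gH}^p \in \tau$. The plan is to compute directly: $q^{-1}(gH) = p^{-1}(gH)$ iff $g^{-1}pq^{-1}g \in H$ iff $pq^{-1} \in gHg^{-1}$, which rearranges to $q \in gHg^{-1}p$. Thus $\Ical_{gH}^p = gHg^{-1}p$.

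The main (minor) obstacle is then recognising that $gHg^{-1}p$ is automatically open: in any topological group, conjugation by $g$ is a homeomorphism (so $gHg^{-1}$ is an open subgroup), and right translation by $p$ is a homeomorphism (so $gHg^{-1}p$ is open). This completes the verification that the cosets of open subgroups form a base satisfying Definition \ref{dfn:powder}, so $(G,\tau)$ is a powder group. The whole argument is essentially a bookkeeping exercise once one observes that conjugates of open subgroups remain open, a property which fails badly for monoids in general and is the reason the powder condition is strictly stronger than zero-dimensionality plus Hausdorffness.
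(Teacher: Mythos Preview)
Your proof is correct and follows essentially the same approach as the paper's. The only stylistic difference is that in the converse direction you compute $\Ical_{gH}^p = gHg^{-1}p$ explicitly, whereas the paper reduces to the case $g=p=1$ by noting that left and right translations are homeomorphisms and then observes $\Ical_U^1 = U$ for an open subgroup $U$; both arguments rest on the same fact that conjugates and translates of open subgroups remain open.
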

\begin{proof}
Given such a topological group $(G,\tau)$, $g \in G$ and a neighbourhood $U \in \tau$ of $g$, since multiplication by any element of $G$ on either side preserves opens, we may without loss of generality suppose that $g = 1 \in U$, and so that $U$ is an open subgroup.

Then $\Ical_U^1$, being the set of $h \in G$ such that $h^{-1}U = U$, contains (and so is equal to) $U$ and in particular is open in $G$, ensuring that $U$ is in the action topology corresponding to $\tau$, whence $\tau$ is an action topology and hence $(G,\tau)$, being Hausdorff (since given any element distinct from the identity we can find an open subgroup which does not contain it), is a powder monoid.

Conversely, a powder group has a basis of the identity consisting of the isotropy subgroups $\Ical_U^1$ with $U$ varying over the open neighbourhoods of the identity, and being Hausdorff forces such a group to be nearly discrete.
\end{proof}

\begin{xmpl}
\label{xmpl:notpro}
Lemma \ref{lem:powdergrp} allows us to present an example of a powder monoid which is not a prodiscrete monoid. Consider the group of automorphisms of $\Nbb$ with the stabilizers of finite subsets defined to be open subgroups (as suggested in \cite[Example A2.1.6]{Ele}). Any prodiscrete group is the limit of its quotients by normal open subgroups, but this group has no proper open normal subgroups. Incidentally, this nearly discrete group is one of the many powder monoids representing the Schanuel topos, see Example \ref{xmpl:Schanuel} below.
\end{xmpl}

Of course, the class of prodiscrete groups forms a subclass of the class of powder groups. An even more refined class is the following:
\begin{dfn}
A group is \textbf{profinite} if it is (expressible as) a directed projective limit of finite groups. They can alternatively be characterized as compact, totally disconnected groups; see \cite[Proposition 1.1.7]{Profinite}.
\end{dfn}

\begin{schl}
\label{schl:Hausgrp}
If $(G,\tau)$ is a group with an arbitrary topology, then the corresponding powder monoid $(\tilde{G},\tilde{\tau})$ is a powder group. If $(G,\tau)$ is compact, then $(\tilde{G},\tilde{\tau})$ is profinite.
\end{schl}
\begin{proof}
The proof that the equivalence relation identifying $\tilde{\tau}$-indistinguishable points respects multiplication also demonstrates that the equivalence class containing the inverse $g^{-1}$ of an element $g$ is an inverse for the equivalence class of $g$ in the quotient $\tilde{M}$. Thus $(\tilde{G},\tilde{\tau})$ is a group, and hence a powder group, as claimed. Adding Lemma \ref{lem:compactau} gives the profinite result.
\end{proof}

Thus applying Theorem \ref{thm:Hausd} to any topological group brings us naturally to the class of groups described by Johnstone in \cite[Example A2.1.6]{Ele} as canonical representives for toposes of topological group actions, which are precisely the powder groups. This is not the end of the story, however: see Remark \ref{rmk:completegrp}.

\section{The canonical surjective point}
\label{sec:surjpt}

In Section \ref{sec:properties}, we made extensive use of the hyperconnected morphism $f: \PSh(M) \to \Cont(M,\tau)$ which we constructed to demonstrate that $\Cont(M,\tau)$ is a topos. In this section, we show that the existence of a hyperconnected morphism with domain $\PSh(M)$ entirely characterizes toposes of this form.

From \cite{TDMA}, we know that the (discrete) monoids presenting $\PSh(M)$ correspond to the surjective essential points of that topos. There may be multiple possible presentations, but there is a unique one for each point. Without loss of generality, we fix a presentation of $\PSh(M)$ and its corresponding canonical point (whose inverse image functor is the forgetful functor), and examine equivalent presentations in the next section.

Suppose we are given a topos $\Ecal$ having a point $p$ which factorizes through the canonical point of $\PSh(M)$ via a hyperconnected morphism $h$:
\begin{equation}
\label{eqn:p}
\begin{tikzcd}
\Set \ar[r, bend left=50, "- \times M"] \ar[r, bend right=50, "\Hom_{\Set}(M{,}-)"']
\ar[r, phantom, shift left=6, "\bot", near end] \ar[r, phantom, shift right=4, "\bot", near end]
& {\PSh(M)} \ar[l, "U"', near start] \ar[r, shift right = 2, "h_*"'] \ar[r, phantom, "\bot"]
& \Ecal, 
\ar[l, shift right = 2, "h^*"']
\end{tikzcd}
\end{equation}
where $U$ is the usual forgetful functor. This point is surjective and localic. It follows from Propositions \ref{prop:hype3} and \ref{prop:hypejcp} that $\Ecal$ is a supercompactly generated, two-valued Grothendieck topos whose category of supercompact objects has the joint covering property.

In order to have a complete picture of what $\Ecal$ can look like, our first task is to classify the hyperconnected morphisms under $\PSh(M)$. For this task, we re-express the canonical site of principal $M$-sets in terms of relations on the monoid.

\subsection{Equivariant relations and congruences}
\label{ssec:EqRel}

We can use the powerset adjunction of Section \ref{ssec:inverse} to construct another canonical object of $\PSh(M)$. Since $M \times M$ is naturally equipped with a left $M$-action, we obtain an inverse image action on $\Pcal(M \times M)$, the set of all relations on $M$ \textit{viewed as a set}. However, this contains more relations than we need!

When we consider $M \times M$ as an object of $\PSh(M)$, its subobjects (which correspond to the relations on $M$ \textit{as an $M$-set}) are its sub-right-$M$-sets. We call such subobjects (right) \textbf{equivariant relations} on $M$, because they are the relations $r$ with the property that $(p,q) \in r$ implies $(pm,qm) \in r$ for every $m \in M$.\footnote{We have chosen to consistently use a lower case $r$ for relations to avoid a clash of notation with the right adjoint functor $R$ constructed earlier.}

\begin{lemma}
\label{lem:relations}
The sets of equivariant relations, reflexive relations, symmetric relations and transitive relations are sub-$M$-sets of $\Pcal(M\times M)$ with the inverse image action, each inheriting the ordering from $\Pcal(M \times M)$. Thus their intersection, the collection $\Rcal$ of \textbf{right congruences}, is an ordered sub-$M$-set of $\Pcal(M \times M)$.
\end{lemma}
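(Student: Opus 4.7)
The plan is to verify that each of the four relational properties (equivariance, reflexivity, symmetry, transitivity) is preserved under the inverse image action of an arbitrary $m \in M$, and that the ordering by inclusion on $\Pcal(M \times M)$ is respected by the action. Since $M$ acts on $M \times M$ diagonally by left multiplication, the inverse image action on a relation $r \in \Pcal(M \times M)$ is given by
\[ m^*(r) = \{(p,q) \in M \times M \mid (mp,mq) \in r\}. \]
Once closure under this action is established for each of the four properties, the final claim about $\Rcal$ follows because an intersection of sub-$M$-sets is again a sub-$M$-set.

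First I would handle reflexivity and symmetry, which are immediate: if $(p,p) \in r$ for all $p$, then in particular $(mp,mp) \in r$ for all $p$, so every diagonal pair lies in $m^*(r)$; and the symmetric condition transports through the action because swapping coordinates commutes with left multiplication on both components. Next I would handle transitivity analogously, using that if $(mp,mq), (mq,ms) \in r$ and $r$ is transitive then $(mp,ms) \in r$, hence $(p,s) \in m^*(r)$. None of these arguments is more than a line, and the ordering by inclusion is trivially preserved by $m^*$, since $m^*$ is defined via preimage.

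The only step that requires a moment's thought is equivariance: here we need to distinguish the \emph{left} action of $M$ on $M \times M$ (which we used to define $m^*$) from the \emph{right} action defining sub-right-$M$-sets of $M \times M$ in $\PSh(M)$. Given $r$ equivariant and $(p,q) \in m^*(r)$, so that $(mp,mq) \in r$, applying right equivariance with any $n \in M$ yields $(mpn,mqn) = (m \cdot pn, m \cdot qn) \in r$, which gives $(pn,qn) \in m^*(r)$. The key observation is simply that left and right multiplication commute with one another, so the two $M$-actions on $M \times M$ are compatible. I expect this commutation to be the only conceptually substantive point of the proof.

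Finally, the collection $\Rcal$ of right congruences is by definition the intersection of the four sub-$M$-sets just identified inside $\Pcal(M \times M)$, and intersections of sub-objects in $\PSh(M)$ are computed pointwise, hence are again sub-$M$-sets, which concludes the lemma.
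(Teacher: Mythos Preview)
Your proof is correct and follows essentially the same approach as the paper's own proof: both verify closure under the inverse image action for each of the four relational properties by direct element-chasing, with the equivariance case resting on the commutativity of left and right multiplication. Your treatment is slightly more explicit (you spell out the formula for $m^*(r)$, the preservation of the ordering, and the closure under intersection), while the paper handles reflexivity via $\Delta \subseteq k^*(\Delta)$ combined with order-preservation of $k^*$, but these are cosmetic differences.
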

\begin{proof}
Equivariance of a relation is unaffected by composition on the left; that is, if $r$ is an equivariant relation, then so is $k^*(r)$, so these form a sub-$M$-set.

The diagonal relation $\Delta : M \hookrightarrow M \times M$ clearly satisfies $k^*(\Delta) \supseteq \Delta$ for every $k \in M$, and the inverse image action preserves containment, so reflexive relations form a sub-$M$-set. 

Given a symmetric relation $r \hookrightarrow M \times M$ and $(m,m') \in k^*(r)$, we have $(km, km') \in r$ and hence $(km', km) \in r$ and $(m',m) \in k^*(r)$, so symmetric relations form a sub-$M$-set.

By a similar argument, if $(m,m'), (m',m'') \in k^*(r)$ then $(m,m'') \in k^*(r)$, so transitive relations form a sub-$M$-set.
\end{proof}

\begin{rmk}
Let $\Omega$ be the subobject classifier of $\PSh(M)$. The \textbf{internal power-object} of $M$, the exponential object $\Omega^M$, has an underlying set which coincides with the collection of equivariant relations. Indeed, the elements of $\Omega^M$ correspond to elements of
\[\Hom_{\PSh(M)}(M,\Omega^M) \cong \Hom_{\PSh(M)}(M \times M,\Omega) \cong \Sub_{\PSh(M)}(M\times M).\]
However, the action of $M$ on $\Omega^M$ is by inverse images \textit{only in the first component}, so $\Omega^M$ does not coincide with the first sub-$M$-set of $\Pcal(M \times M)$ described in Lemma \ref{lem:relations}. In fact, for $M$ a non-trivial monoid, the subsets of $\Omega^M$ on the reflexive, symmetric or transitive relations are typically not even sub-$M$-sets.
\end{rmk}

Now observe that the relations $\rfrak_x$ of Corollary \ref{crly:Rnx} are always right congruences, so we can examine their behaviour as elements of $\Rcal$, just as we considered the behaviour of necessary clopens as elements of $\Pcal(M)$ in the last section.

\begin{lemma}
\label{lem:rinrfrak}
Let $r \in \Rcal$ and $p \in M$. Then, as elements of $\Rcal$ with the restriction of the action and ordering from $\Pcal(M \times M)$, we have $r \subseteq \rfrak_r$ and $p^*(\rfrak_r) = \rfrak_{p^*(r)}$. That is, $\rfrak_{(-)}$ is an order-increasing $M$-set endomorphism of $\Rcal$.
\end{lemma}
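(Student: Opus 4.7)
Proof proposal. I would begin by unpacking what $\rfrak_r$ means when $r$ is viewed as an element of the right $M$-set $\Rcal$. Since the action on $\Rcal \subseteq \Pcal(M \times M)$ is the inverse-image action, Corollary \ref{crly:Rnx} gives
\[ \rfrak_r = \{(p,q) \in M \times M \mid p^*(r) = q^*(r)\}. \]
Before proving the two stated identities, I would quickly verify that $\rfrak_r$ really is an element of $\Rcal$: it is manifestly an equivalence relation on $M$, and if $p^*(r) = q^*(r)$ then for any $m \in M$ we get $(pm)^*(r) = m^*(p^*(r)) = m^*(q^*(r)) = (qm)^*(r)$ using $(gh)^* = h^*g^*$, so $\rfrak_r$ is equivariant.

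For the inflationary property $r \subseteq \rfrak_r$, I would fix $(m,n) \in r$ and show $m^*(r) = n^*(r)$ by a three-step chain using the defining conditions of a right congruence. If $(a,b) \in m^*(r)$, i.e.\ $(ma,mb) \in r$, then equivariance of $r$ applied to the pair $(m,n)$ gives $(ma,na) \in r$ and $(mb,nb) \in r$; symmetry of $r$ turns the first into $(na,ma) \in r$; then two applications of transitivity along $(na,ma)$, $(ma,mb)$, $(mb,nb)$ yield $(na,nb) \in r$, so $(a,b) \in n^*(r)$. The reverse inclusion is symmetric in the roles of $m$ and $n$, so $m^*(r) = n^*(r)$ and hence $(m,n) \in \rfrak_r$.

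For the equivariance identity $p^*(\rfrak_r) = \rfrak_{p^*(r)}$, I would compute both sides directly. Unwinding definitions,
\[ p^*(\rfrak_r) = \{(a,b) \mid (pa,pb) \in \rfrak_r\} = \{(a,b) \mid (pa)^*(r) = (pb)^*(r)\}, \]
while
\[ \rfrak_{p^*(r)} = \{(a,b) \mid a^*(p^*(r)) = b^*(p^*(r))\}. \]
The contravariance identity $(pa)^*(r) = a^*(p^*(r))$ and $(pb)^*(r) = b^*(p^*(r))$ from Section \ref{ssec:inverse} show the two conditions coincide pointwise, proving equality. Together with the first item, this exhibits $\rfrak_{(-)}$ as a self-map of $\Rcal$ that commutes with the right $M$-action and sends each $r$ above itself in the containment order, which is exactly the content of the concluding sentence.

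I do not expect any real obstacle here: the work is entirely formal manipulation, with the only subtlety being the bookkeeping of left versus right actions when applying $(gh)^* = h^*g^*$ and keeping straight that the "elements" of $\Rcal$ being tested are themselves subsets of $M \times M$, so one must not confuse the relation $\rfrak_r \subseteq M \times M$ on the outside with the element $p^*(r) \in \Rcal$ on the inside.
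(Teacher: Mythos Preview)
Your proof is correct and follows essentially the same route as the paper's: show $r \subseteq \rfrak_r$ by taking $(m,n) \in r$, using equivariance to get $(ma,na),(mb,nb) \in r$, and then symmetry/transitivity to conclude $(ma,mb)\in r \Leftrightarrow (na,nb)\in r$; then obtain $p^*(\rfrak_r) = \rfrak_{p^*(r)}$ by unwinding both sides via $(pa)^* = a^*p^*$. The only difference is that you include the (harmless, redundant) verification that $\rfrak_r \in \Rcal$, which the paper leaves implicit.
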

\begin{proof}
We must show that $(x,y) \in r$ implies that $x^*(r) = y^*(r)$. Indeed, since $r$ is equivariant, for any $(p,q) \in M \times M$, $(xp,yp)$ and $(xq,yq)$ are in $r$, whence $(xp,xq) \in r$ if and only if $(yp,yq) \in r$, which gives the desired equality. Both $p^*(\rfrak_r)$ and $\rfrak_{p^*(r)}$ are equal to the set $\{(x,y) \in M \times M \mid x^*(p^*(r)) = y^*(p^*(r))\}$.
\end{proof}

\begin{rmk}
\label{rmk:order}
While $\rfrak$ may be order-increasing, there is no reason for it to be order-preserving: $r \subseteq s$ does not imply $\rfrak_r \subseteq \rfrak_s$ in general, since given $(m,n)$ such that $(mu,mv) \in r$ if and only if $(nu,nv) \in r$, we could still have $(mu,mv) \in s$ with $(nu,nv) \notin s$ if the former pair of elements are related in $s$ but not $r$. However, by expanding the definitions we at least find that $\rfrak_{r \cap s} \supseteq \rfrak_r \cap \rfrak_s$, in analogy with \eqref{eq:intersection}.
\end{rmk}

\begin{rmk}
\label{rmk:tauhat}
Suppose $M$ is a monoid equipped with a topology $\tau$, and $V,R$ are as in Proposition \ref{prop:hyper}. Consider $\Rcal$ equipped with the inverse image action. In light of Theorem \ref{thm:tau}, we might consider the continuous right congruences, which is to say those lying in $\Tcal:= VR(\Rcal)$, and construct a topology $\hat{\tau}$ on $M$ by taking the equivalence classes with respect to congruences lying in $\Tcal$ as a base of clopen sets. We might then wonder if $\Cont(M,\tau) \simeq \Cont(M,\hat{\tau})$; this turns out not to be the case in general.

It follows from Remark \ref{rmk:order} that the base of $\hat{\tau}$ is closed under intersections. We can also show that $\tilde{\tau} \subseteq \hat{\tau}$. Let $A \in T$ be a clopen set in $\tilde{\tau}$ and consider the congruence $\rfrak_A$, which we know to be open in $\tau \times \tau$. Then the inclusion $\rfrak_A \subseteq \rfrak_{\rfrak_A}$ from Lemma \ref{lem:rinrfrak} ensures that the latter is also in $\tau \times \tau$, and similarly stability under the inverse image action is guaranteed, so the equivalence classes $\Ical_A^p$ (and by extension $A$) are open in $\hat{\tau}$.

However, there is no reason that the opposite inclusion should hold, or even that $\hat{\tau}$ should be contained in $\tau$, since the congruence classes of $r$ need not be in $\tau$ when those of $\rfrak_r$ are. Indeed, consider a monoid $M$ with two distinct right-absorbing elements $x,y$, so that $xm = x$ and $ym = y$ for all $m \in M$. Let $\tau$ be the topology on $M$ generated by asserting that every singleton except $\{x\}$ and $\{y\}$ is open, and also $\{x,y\}$ is open. Then the diagonal relation $\Delta: M \to M \times M$ has $\rfrak_{\Delta} = \{(p,q) \mid p^*(\Delta) = q^*(\Delta)\}$, which is open in $\tau \times \tau$ since $x^*(\Delta) = M \times M = y^*(\Delta)$, so $(x,y)$ and $(y,x) \in \rfrak_{\Delta}$. As such, $\hat{\tau}$ is the discrete topology in this case. In particular, while continuity with respect to $\tau$ requires $x$ and $y$ to act identically, continuity with respect to $\hat{\tau}$ does not, so $\Cont(M,\tau) \not\simeq \Cont(M,\hat{\tau})$.

Moreover, the construction of $\hat{\tau}$ is not obviously idempotent, and we have not even been able to prove that $(M,\hat{\tau})$ is a topological monoid (recall Proposition \ref{prop:ctsx}), let alone that $\hat{\tau}$ is an action topology.
\end{rmk}

Instead, the reason we introduced the object of right congruences was to provide a canonical indexing of principal $M$-sets.

\begin{lemma}
\label{lem:prel}
The quotients of $M$ in $\PSh(M)$ correspond precisely to the right congruences on $M$ (which are internal equivalence relations on $M$ in this topos).
\end{lemma}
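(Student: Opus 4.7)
The plan is to invoke the fact that $\PSh(M)$, being a topos and in particular Barr-exact, admits a canonical bijection between internal equivalence relations on any object $X$ and (isomorphism classes of) regular quotients of $X$. Since every epimorphism in a topos is regular, this becomes a bijection between equivalence relations on $M$ and quotients of $M$. So the task reduces to identifying internal equivalence relations on $M$ in $\PSh(M)$ with right congruences in the sense of Lemma \ref{lem:relations}.

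First I would unpack what an internal equivalence relation on $M$ in $\PSh(M)$ is: a subobject $r \hookrightarrow M \times M$ equipped with the reflexivity, symmetry and transitivity factorizations in the topos. A subobject of $M \times M$ is already an equivariant relation (as recorded in Lemma \ref{lem:relations}); and because finite limits and colimits in $\PSh(M)$ are computed pointwise (on underlying sets), the internal reflexivity/symmetry/transitivity diagrams reduce to the usual set-theoretic reflexivity, symmetry and transitivity of the underlying relation. Thus internal equivalence relations on $M$ coincide exactly with the right congruences defined in Lemma \ref{lem:relations}.

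Next I would spell out the two directions of the correspondence explicitly. Given an epimorphism $q: M \twoheadrightarrow N$ in $\PSh(M)$, its kernel pair is the right congruence
\[ r_q := \{(p_1,p_2) \in M \times M \mid q(p_1) = q(p_2)\}, \]
which is plainly equivariant (since $q$ is an $M$-set map) and is an equivalence relation in the ordinary sense. Conversely, given a right congruence $r$, the quotient $M/r$ formed by set-theoretic equivalence classes inherits a right $M$-action precisely because $r$ is equivariant, and the projection $M \twoheadrightarrow M/r$ is an epimorphism in $\PSh(M)$. Exactness of $\PSh(M)$ (every equivalence relation is effective, and every epi is the coequalizer of its kernel pair) then ensures $r_{M \twoheadrightarrow M/r} = r$ and $M/r_q \cong N$ canonically, so the two assignments are mutually inverse up to the canonical identification of quotients.

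There is no real obstacle here: the only substantive step is the observation that the internal equivalence-relation axioms in $\PSh(M)$ reduce to pointwise axioms, and the remainder is a standard application of exactness of the topos. The statement essentially records a special case of the exactness property of $\PSh(M)$, phrased in terms convenient for later use as an alternative site.
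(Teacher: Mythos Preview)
Your proposal is correct. The underlying correspondence---kernel pair of an epimorphism versus quotient by a congruence---is exactly what the paper uses, but the paper's argument is more concrete and less categorical: it simply observes that a right congruence $r$ yields $M \twoheadrightarrow M/r$, and conversely sends a quotient $q: M \twoheadrightarrow N$ to the congruence $\rfrak_{q(1)} = \{(p,p') \mid q(1)p = q(1)p'\}$ (in the notation of Corollary~\ref{crly:Rnx}), checking by inspection that these are mutually inverse. You instead invoke Barr-exactness of the topos and the fact that all epimorphisms are regular, which packages the same content as an instance of a general theorem; you also explicitly justify the parenthetical claim that internal equivalence relations on $M$ coincide with right congruences, which the paper leaves implicit. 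Your route is cleaner if one is comfortable citing exactness; the paper's route has the advantage of tying the correspondence directly to the notation $\rfrak_x$ used throughout the surrounding sections.
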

\begin{proof}
Any right congruence $r$ on $M$ gives a quotient $M \too M/r$. Conversely, given a quotient $q:M \too N$, let $r := \rfrak_{q(1)}$; then we clearly have $N \cong M/r$, and this operation is an inverse to the preceding one by inspection.
\end{proof}

Note that we specifically refer to quotients of $M$ in Lemma \ref{lem:prel}, rather than principal $M$-sets, since distinct congruences can give isomorphic principal $M$-sets: each generator of a principal $M$-set presents it as a quotient in a distinct way. Nonetheless, we do get all principal $M$-sets at least once in this way.

Our next task is to recover the categorical structure on these objects. For a start, the natural ordering on the collection of congruences $\Rcal$ is reflected in the subcategory $\Ccal_s$ of supercompact objects of $\Ecal$. Indeed, if $r \subseteq r'$ there is a corresponding quotient map $M/r \too M/r'$.

\begin{lemma}
\label{lem:factor}
Let $\Ccal$ be the full subcategory of $\PSh(M)$ on objects of the form $M/r$. Then any morphism $g: M/r_1 \to M/r_2$ in $\Ccal$ factors uniquely as a quotient map of the form described above followed by an inclusion of the form $M/m^*(r_2) \hookrightarrow M/r_2$.
\end{lemma}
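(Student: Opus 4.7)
The plan is to exploit the fact that $M/r_1$ is a cyclic right $M$-set generated by the class $[1]_{r_1}$, so any morphism $g : M/r_1 \to M/r_2$ in $\PSh(M)$ is determined by a single element $g([1]_{r_1}) \in M/r_2$. Pick any representative $m \in M$ with $g([1]_{r_1}) = [m]_{r_2}$; then equivariance forces $g([n]_{r_1}) = [mn]_{r_2}$ for every $n \in M$.

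The crucial observation is that well-definedness of this rule on equivalence classes amounts exactly to the containment $r_1 \subseteq m^*(r_2)$, since $(p,q) \in m^*(r_2)$ unpacks as $[mp]_{r_2} = [mq]_{r_2}$. Since $m^*(r_2)$ is itself a right congruence (it is reflexive, symmetric, transitive and equivariant, by Lemma \ref{lem:relations} applied to $\Rcal$ as a sub-$M$-set of $\Pcal(M\times M)$), this yields the factorization
\[ M/r_1 \xrightarrow{\ \pi\ } M/m^*(r_2) \xrightarrow{\ q\ } M/r_2, \]
where $\pi$ is the canonical quotient induced by the inclusion of congruences $r_1 \subseteq m^*(r_2)$, and $q$ sends $[n]_{m^*(r_2)} \mapsto [mn]_{r_2}$.

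I would then verify that the two factors have the stated forms. The first $\pi$ is manifestly a quotient map of the kind described before the statement. The map $q$ is injective, since by definition $(n,n') \in m^*(r_2)$ holds iff $[mn]_{r_2} = [mn']_{r_2}$; thus $q$ realises $M/m^*(r_2)$ as the sub-$M$-set of $M/r_2$ generated by $[m]_{r_2}$, which is a genuine inclusion.

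For uniqueness, suppose we instead chose another representative $m'$ of $g([1]_{r_1})$, so that $(m,m') \in r_2$. Then Lemma \ref{lem:rinrfrak} gives $m^*(r_2) = (m')^*(r_2)$, so the intermediate object is independent of the choice; and the two candidates for $q$ coincide because $[mn]_{r_2} = [m'n]_{r_2}$ for every $n$ by equivariance of $r_2$. The quotient $\pi$ is then uniquely pinned down by the requirement $q \circ \pi = g$. I do not expect a serious obstacle: the whole argument is bookkeeping organised around the identity that $m^*(r_2)$ is the kernel pair of $n \mapsto [mn]_{r_2}$, and the only delicate point is recognising that the ambiguity in $m$ is absorbed precisely by Lemma \ref{lem:rinrfrak}.
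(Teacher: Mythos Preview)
Your existence argument is essentially identical to the paper's: both pick a representative $m$ of $g([1]_{r_1})$ and factor through $M/m^*(r_2)$, identifying the second factor with the image of $g$.

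For uniqueness the approaches diverge slightly. The paper invokes the orthogonal factorization system (epimorphisms, monomorphisms) in $\Ccal$ from Theorem~\ref{thm:Cs}.3 to conclude that the intermediate object is unique up to isomorphism, and then observes that an isomorphism commuting with the quotient maps must preserve canonical generators and hence identify the congruences. Your argument is more elementary and hands-on: you show directly via Lemma~\ref{lem:rinrfrak} that any two representatives $m,m'$ of $g([1]_{r_1})$ satisfy $m^*(r_2)=(m')^*(r_2)$, so the factorization is independent of the choice. One small point worth making explicit in your write-up: you phrase uniqueness as independence from the choice of $m$, but strictly speaking you should start from an \emph{arbitrary} factorization $M/r_1 \twoheadrightarrow M/(m')^*(r_2) \hookrightarrow M/r_2$ of the stated shape and observe that composing sends $[1]_{r_1}$ to $[m']_{r_2}$, forcing $m'$ to be a representative of $g([1]_{r_1})$; then your argument applies. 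This is immediate, but worth stating so that the reader sees you have proved uniqueness of the factorization and not merely well-definedness of your construction.
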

\begin{proof}
Consider the canonical generator $[1]$ of $M/r_1$. Let $m$ be any representative of $g([1])$. Then the image part of $g$ is precisely the inclusion described in the statement of the Lemma, and the factoring quotient map is of the desired form.

For uniqueness, note that epimorphisms are orthogonal to monomorphisms in $\Ccal$ by Theorem \ref{thm:Cs}.3, so any such factorization has an isomorphic intermediate object. But the isomorphism commutes with the quotient maps, thus preserving the canonical generator. Hence the corresponding congruences are equal.
\end{proof}

As such, we extend the partial order $\Rcal$ to a category $\underline{\Rcal}$ as follows.

\begin{dfn}
The objects of $\underline{\Rcal}$ are the right congruences in $\Rcal$. A morphism $r_1 \to r_2$ is an equivalence class $[m]$ of $r_2$ such that $r_1 \subseteq m^*(r_2)$, and composition is given by multiplication in $M$, which is easily seen to be compatible with the containment condition.
\end{dfn}

That this is well-defined follows from Lemma \ref{lem:rinrfrak}, since $(m,m') \in r$ implies $m^*(r) = {m'}^*(r)$. By inspection of Lemma \ref{lem:factor}, the resulting category is isomorphic to the full subcategory $\Ccal$ appearing there, and \textit{we identify $\underline{\Rcal}$ with that category}. In particular, since that category is equivalent to the category $\Ccal_s$ of all principal $M$-sets, we have the following.

\begin{crly}
\label{crly:Rsite}
The topos $\PSh(M)$ is equivalent to $\Sh(\underline{\Rcal},J_r)$. 
\end{crly}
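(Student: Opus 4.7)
The plan is to invoke Proposition \ref{prop:correspond} directly, after identifying $\underline{\Rcal}$ as (an incarnation of) the category of supercompact objects of $\PSh(M)$. By Lemma \ref{lem:pshf}, $\PSh(M)$ is a supercompactly generated Grothendieck topos, so Proposition \ref{prop:correspond} gives an equivalence $\PSh(M) \simeq \Sh(\Ccal_s, J_r)$, where $\Ccal_s$ is the full subcategory of supercompact objects. It remains to show that $\underline{\Rcal} \simeq \Ccal_s$ as categories.

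First, I would identify $\Ccal_s$ explicitly: the representable $M$ is supercompact in $\PSh(M)$ (as recorded in the proof of Proposition \ref{prop:prince}), and by Lemma \ref{lem:screp} every supercompact object is a quotient of $M$, so $\Ccal_s$ is precisely the full subcategory of $\PSh(M)$ on principal $M$-sets, which is the category $\Ccal$ appearing in Lemma \ref{lem:factor}. Next, I would combine Lemma \ref{lem:prel} and Lemma \ref{lem:factor} to produce an equivalence $\underline{\Rcal} \to \Ccal$: on objects $r \mapsto M/r$ is essentially surjective by Lemma \ref{lem:prel}, and on morphisms the prescription $[m]\colon r_1 \to r_2$ (with $r_1 \subseteq m^*(r_2)$) corresponds via Lemma \ref{lem:factor} to the unique map $M/r_1 \to M/r_2$ sending $[1]$ to $[m]$; functoriality follows from composition being multiplication in $M$, and well-definedness from Lemma \ref{lem:rinrfrak}. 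The text already announces this identification, so most of the work is formal bookkeeping.

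Finally, I would conclude by transporting the reductive topology $J_r$ on $\Ccal_s$ along this equivalence to the reductive topology $J_r$ on $\underline{\Rcal}$ (using that strict epimorphisms, and hence $J_r$, are preserved by equivalences of categories), yielding $\PSh(M) \simeq \Sh(\Ccal_s, J_r) \simeq \Sh(\underline{\Rcal}, J_r)$.

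The main obstacle, which is really only a verification rather than a genuine difficulty, is checking that the assignment $[m] \mapsto $ (the morphism $M/r_1 \to M/r_2$ determined by $[1] \mapsto [m]$) is a well-defined, full and faithful functor $\underline{\Rcal} \to \Ccal$; everything else is a direct appeal to the results quoted above.
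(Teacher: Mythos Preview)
Your proposal is correct and matches the paper's own argument essentially line for line: the paper observes just before the corollary that $\underline{\Rcal}$ is isomorphic to the full subcategory $\Ccal$ of $\PSh(M)$ on objects of the form $M/r$ (via Lemma \ref{lem:factor}), that this is equivalent to $\Ccal_s$, and then invokes Proposition \ref{prop:correspond}. The only minor imprecision is your identification of $\Ccal_s$ with the category $\Ccal$ of Lemma \ref{lem:factor}; strictly speaking $\Ccal$ consists only of the particular quotients $M/r$ while $\Ccal_s$ contains all principal $M$-sets, but these are equivalent (not equal) categories, which is all you need.
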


It is worth noting that this expression for $\PSh(M)$ is essentially independent of the representing monoid $M$, since for any other choice of $M$, we obtain the same category $\underline{\Rcal}$ up to equivalence (being equivalent to the category $\Ccal_s$ of supercompact objects of $\PSh(M)$). We adapt the presentation of Corollary \ref{crly:Rsite} in order to obtain a canonical site for a topos $\Ecal$ admitting a hyperconnected morphism from $\PSh(M)$.

\begin{prop}
\label{prop:prince2}
Let $h : \PSh(M) \to \Ecal$ be a hyperconnected geometric morphism. Let $\underline{\Rcal}_h$ be the full subcategory of $\underline{\Rcal}$ on the right congruences $r$ such that the principal $M$-set $M/r$ lies in $\Ecal$ (up to isomorphism). Then $\underline{\Rcal}_h$ is non-empty and closed under subobjects and quotients, has the joint covering property, and $\Ecal \simeq \Sh(\underline{\Rcal}_h,J_r)$.

Conversely, non-empty subcategories of $\underline{\Rcal}$ which are closed under subobjects, quotients and joint covers correspond bijectively with the (equivalence classes of) hyperconnected morphisms under $\PSh(M)$.
\end{prop}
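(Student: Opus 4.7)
The plan is to translate the problem through the equivalence $\PSh(M)\simeq\Sh(\underline{\Rcal},J_r)$ of Corollary~\ref{crly:Rsite} together with the correspondence of Proposition~\ref{prop:correspond} between supercompactly generated toposes and effectual reductive categories. The crucial link is the observation (used in the proof of Proposition~\ref{prop:prince}) that the inverse image of a hyperconnected morphism preserves and reflects supercompact objects, so that the supercompact objects of $\Ecal$ embed as a full subcategory of those of $\PSh(M)$, namely $\underline{\Rcal}$.

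For the forward direction, I would identify $\underline{\Rcal}_h$ with the essential image of $h^*$ on principal $M$-sets, that is, with the category of supercompact objects of $\Ecal$. Non-emptiness is witnessed by the terminal $M$-set, which is principal (corresponding to the maximal congruence $M\times M$) and belongs to $\Ecal$ as $h^*(1_{\Ecal})$. Closure under subobjects and quotients in $\underline{\Rcal}$ translates, via Lemma~\ref{lem:factor}, to the statement that for $r\in\underline{\Rcal}_h$ every inclusion $M/m^*(r)\hookrightarrow M/r$ and every quotient $M/r\twoheadrightarrow M/r'$ remains in $\Ecal$, which is exactly the closure of $\Ecal\subseteq\PSh(M)$ under subobjects and quotients built into Definition~\ref{dfn:hype}. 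The joint covering property is Proposition~\ref{prop:hypejcp} applied to Lemma~\ref{lem:Mjcp}, and the equivalence $\Ecal\simeq\Sh(\underline{\Rcal}_h,J_r)$ follows from Proposition~\ref{prop:correspond}.

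For the converse, given a non-empty full $\Dcal\subseteq\underline{\Rcal}$ satisfying the three closure conditions, I would first verify that $\Dcal$ inherits an effectual reductive structure from $\underline{\Rcal}$: funneling colimits are computed in $\underline{\Rcal}$ as quotients of the weakly terminal apex (Definition~\ref{dfn:funnel}), so closure under quotients places them inside $\Dcal$; stability of strict epimorphisms is inherited from $\underline{\Rcal}$ via closure under subobjects (to keep the completing square inside $\Dcal$); effectuality transfers similarly, with the joint covering property of $\Dcal$ supplying the required strict epimorphism internally. Proposition~\ref{prop:correspond} then produces a supercompactly generated topos $\Ecal := \Sh(\Dcal,J_r)$ whose category of supercompact objects is $\Dcal$. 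To construct the geometric morphism I would proceed as in Proposition~\ref{prop:hyper}: realise $\Ecal$ as the full subcategory of $\PSh(M)$ on those $M$-sets whose principal sub-$M$-sets all lie in $\Dcal$, with the coreflection taking an arbitrary $M$-set to the union of its $\Dcal$-principal sub-$M$-sets. The closure conditions make this subcategory closed under subobjects and quotients, and the joint covering property ensures it is closed under binary products (every principal sub-$M$-set of a product is a quotient of a joint cover in $\Dcal$); hence the coreflection is cartesian and the resulting geometric morphism is hyperconnected. Mutual inverseness is then immediate, since recovering the supercompact objects of $\Sh(\Dcal,J_r)$ returns $\Dcal$.

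The main obstacle I anticipate lies in the converse direction, specifically in the construction and verification of the geometric morphism: one must check that the subcategory defined above is indeed closed under finite limits (this is where the joint covering property becomes essential, parallel to its role in ensuring that binary products of supercompact objects admit supercompact covers), and that the resulting coreflective inclusion is genuinely hyperconnected rather than, say, a localic inclusion. A related bookkeeping subtlety is that the same principal $M$-set admits several presentations $M/r$ for different congruences $r$, so that ``closure under subobjects'' in $\underline{\Rcal}$ must be interpreted in the sense made precise by Lemma~\ref{lem:factor} rather than as a strict closure at the level of underlying $M$-sets.
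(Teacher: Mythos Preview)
Your forward direction is essentially identical to the paper's.

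For the converse, the paper takes a more direct route that avoids the step you flag as the main obstacle. Rather than first verifying that $\Dcal$ is effectual reductive (so as to invoke Proposition~\ref{prop:correspond}) and only then identifying $\Sh(\Dcal,J_r)$ inside $\PSh(M)$, the paper simply \emph{defines} $\Ecal$ to be the full subcategory of $\PSh(M)$ on objects expressible as colimits of objects in $\underline{\Rcal}'$ (equivalently, your description: $M$-sets all of whose principal sub-$M$-sets lie in $\Dcal$), and then checks directly that $\Ecal$ is closed under finite limits---terminal object, products via the joint covering property, equalizers via closure under subobjects. This makes the inclusion a left exact coreflection, hence the inverse image of a hyperconnected morphism. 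The effectual reductive structure on $\Dcal$ is never verified by hand; it follows \emph{a posteriori} from the forward direction once the hyperconnected morphism exists. Your detour through effectuality is not wrong, but it is exactly the part of your sketch where you are vaguest (``effectuality transfers similarly''), and the paper's ordering sidesteps it entirely. Your explicit description of the coreflector as the union of $\Dcal$-principal sub-$M$-sets is correct and more concrete than what the paper writes down.
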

\begin{proof}
Clearly $\underline{\Rcal}_h$ always contains the maximal equivalence relation, since $\Ecal$ contains the terminal object, while being closed under quotients and subobjects is a consequence of the fact that this is true of $\Ecal$, and that monomorphisms and epimorphisms in $\underline{\Rcal}$ and $\underline{\Rcal}_h$ coincide with those in $\PSh(M)$ and $\Ecal$ by Theorem \ref{thm:Cs} (using the fact that $\underline{\Rcal}$ and $\underline{\Rcal}_h$ can be identified up to equivalence with the respective categories of supercompact objects).

Similarly, using the proof of Proposition \ref{prop:hypejcp} we conclude that $\underline{\Rcal}_h$ must inherit joint covers from $\underline{\Rcal}$.

By Proposition \ref{prop:correspond}, we recover $\Ecal \simeq \Sh(\underline{\Rcal}_h,J_r)$, as claimed.

Conversely, given a subcategory $\underline{\Rcal}'$ of $\underline{\Rcal}$ satisfying the given conditions, consider the full subcategory $\Ecal$ of $\PSh(M)$ on those objects which are colimits of objects in $\underline{\Rcal}'$.

$\underline{\Rcal}'$ contains the terminal object, so $\Ecal$ also does. A product of $M$-sets in $\Ecal$ has elements generating $M$-sets corresponding to (quotients of) joint covers in $\underline{\Rcal}'$, and an equalizer in $\PSh(M)$ of $M$-sets in $\Ecal$ is in particular a sub-$M$-set of one in $\Ecal$, so is covered by principal $M$-sets in $\underline{\Rcal}'$. As such, $\Ecal$ is closed under finite limits, the embedding of $\Ecal$ into $\PSh(M)$ is a left exact coreflection, and this embedding is the inverse image functor of a hyperconnected geometric morphism $\PSh(M) \to \Ecal$, as required.
\end{proof}

We may use the site resulting from Proposition \ref{prop:prince2} in the special case when $\Ecal = \Cont(M,\tau)$ to recover a small (rather than merely essentially small) site for $\Cont(M,\tau)$. Given a topological monoid $(M,\tau)$, we write $\underline{\Rcal}_{\tau}$ for $\underline{\Rcal}_h$, where $h:\PSh(M) \to \Cont(M,\tau)$ is the canonical hyperconnected morphism. We shall put this to use in Proposition \ref{prop:densegroup} below.
\begin{schl}
\label{schl:Morita2}
Given a topological monoid $(M,\tau)$, the category $\underline{\Rcal}_{\tau}$ is equivalent to the category $\Ccal_s$ of supercompact objects in $\Cont(M,\tau)$. In particular, another topological monoid $(M',\tau')$ is Morita equivalent to $(M,\tau)$ if and only if $\underline{\Rcal}_{\tau} \simeq \underline{\Rcal}_{\tau'}$.
\end{schl}
\begin{proof}
We see from the proof of Proposition \ref{prop:prince2} that the observations leading up to Corollary \ref{crly:Rsite} also apply in $\Cont(M,\tau)$. Thus we have the desired result, and the Morita equivalence statement follows by Corollary \ref{crly:Morita}.
\end{proof}

Note that we can restrict the ordering on $\Rcal$ to the objects of $\underline{\Rcal}_h$ or $\underline{\Rcal}'$, or equivalently recover that ordering by considering only morphisms indexed by $1 \in M$. We denote the resulting posets by $\Rcal_h$ and $\Rcal'$ respectively, extending this convention where needed.

\begin{rmk}
\label{rmk:Mequivariant}
In Proposition \ref{prop:prince2}, if we instead consider only the ordered sets, we can characterize the sub-poset $\Rcal_h$ as an \textit{$M$-equivariant filter} in $\Rcal$: a subset which is non-empty, upward closed, downward directed and closed under the inverse image action. These conditions are easier to verify in practice, so we use them occasionally in examples to follow.
\end{rmk}

\begin{lemma}
\label{lem:collect}
Let $M$ be a topological monoid, $\underline{\Rcal}$ the category of right congruences on $M$, and $\underline{\Rcal}'$ a subcategory of $\underline{\Rcal}$ satisfying the conditions of Proposition \ref{prop:prince2}. Given a topology $\tau$ on $M$, $M/r$ is continuous with respect to $\tau$ for every $r \in \underline{\Rcal}'$ if and only if every $r$ is open in $\tau \times \tau$.
\end{lemma}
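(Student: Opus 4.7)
The plan is to invoke Corollary \ref{crly:Rnx}, which reduces continuity of an $M$-set to openness of the equivalence relations $\rfrak_x$ at each element $x$. The key observation is that for the principal $M$-set $M/r$, these relations can be computed directly from $r$ itself: at the canonical generator $[1]$ we have $\rfrak_{[1]} = r$, while at a general element $[m]$ we have
\[\rfrak_{[m]} = \{(p,q) \in M \times M \mid [mp] = [mq] \text{ in } M/r\} = m^*(r),\]
using the notation for the inverse image action introduced in Section \ref{ssec:inverse}.

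For the forward direction, if every $M/r$ with $r \in \underline{\Rcal}'$ is continuous with respect to $\tau$, then applying Corollary \ref{crly:Rnx} at the canonical generator immediately gives that $r = \rfrak_{[1]}$ is open in $\tau \times \tau$.

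For the reverse direction, assume every $r \in \underline{\Rcal}'$ is open in $\tau \times \tau$. Fix $r \in \underline{\Rcal}'$; to show $M/r$ is continuous, by Corollary \ref{crly:Rnx} it suffices to show that $\rfrak_{[m]} = m^*(r)$ is open for each $m \in M$. By Lemma \ref{lem:relations}, $m^*(r)$ is itself a right congruence, and the map $M/m^*(r) \to M/r$ sending $[p] \mapsto [mp]$ is a well-defined monomorphism (its fibres over $[mp]$ are precisely the cosets of $m^*(r)$). Hence $M/m^*(r)$ is a subobject of $M/r$ in $\PSh(M)$, so by the closure of $\underline{\Rcal}'$ under subobjects (Proposition \ref{prop:prince2}), $m^*(r) \in \underline{\Rcal}'$, and is therefore open by hypothesis.

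There is no real obstacle here beyond correctly identifying $\rfrak_{[m]}$ with $m^*(r)$ and appealing to the subobject-closure clause of Proposition \ref{prop:prince2}; the argument is a direct unpacking of the relevant definitions.
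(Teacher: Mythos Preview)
Your proof is correct and follows essentially the same route as the paper's: both directions hinge on the identification $\rfrak_{[m]} = m^*(r)$, with the forward direction reading off $r = \rfrak_{[1]}$ and the converse using that $m^*(r) \in \underline{\Rcal}'$. You spell out the latter membership via the monomorphism $M/m^*(r) \hookrightarrow M/r$ and subobject-closure from Proposition~\ref{prop:prince2}, whereas the paper states it without comment, but this is the same argument.
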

\begin{proof}
For $M/r$ to be continuous with respect to $\tau$, it is certainly necessary that $r$ be open, since $r = \rfrak_{[1]}$. Conversely, given $[m] \in M/r$, $\rfrak_{[m]} = m^*(r) \in \underline{\Rcal}'$, so if all of the right congruences in $\underline{\Rcal}'$ are open then all elements of $M/r$ are continuous.
\end{proof}

In light of Lemma \ref{lem:collect}, we call the objects of $\underline{\Rcal}_h$ the \textbf{open congruences of $h$}. This name will take on further significance in Section \ref{ssec:complete} below.

\begin{xmpl}
\label{xmpl:grprln}
Suppose $M$ is a group. Then the right congruences on $M$ are precisely the partitions of $M$ into right cosets of a subgroup of $M$. As such, we can identify the objects of $\underline{\Rcal}$ with these subgroups, and form a category of \textit{open subgroups} corresponding to a given hyperconnected geometric morphism instead.
\end{xmpl}

We are now in a position to show that not every topos admitting a hyperconnected morphism from $\PSh(M)$ is of the form $\Cont(M,\tau)$ for some topology $\tau$ on $M$.

\begin{xmpl}
\label{xmpl:N+}
Consider the monoid $\Nbb$ of non-negative integers with addition. It is easily shown that the proper principal $\Nbb$-sets are indexed by pairs $(a,b)$ of non-negative integers with $b \geq 1$, consisting of the set $\{0, 1, \dotsc, a + b - 1\}$ acted on by addition such that sums greater than $a + b - 1$ are reduced modulo $b$ into the interval $[a, a + b - 1]$; we write $N_{a,b}$ for this $M$-set; the reader should recognize the $M$-sets $N_{a,1}$ of Example \ref{xmpl:idemclosed} amongst these.

We have an epimorphism $N_{a,b} \too N_{a',b'}$ if and only if $a' \leq a$ and $b'\mid b$, and a monomorphism $N_{a,b} \hookrightarrow N_{a',b'}$ if and only if $a \leq a'$ and $b = b'$ (so all monos split). The joint cover of $N_{a,b}$ and $N_{a',b'}$ is $N_{\max \{a,a'\},\mathrm{lcm}(b,b')}$. Note that $N_{a,b}$ is always finite, so that $\Nbb$ itself is the only infinite principal $\Nbb$-set.

By the above, the collection of right congruences corresponding to finite $\Nbb$-sets is therefore an upward-closed, downward-directed set closed under the inverse image action. By Remark \ref{rmk:Mequivariant}, there is a corresponding hyperconnected morphism $h : \PSh(\Nbb) \to \Ecal$, where the subcategory $\Ecal$ of $\PSh(\Nbb)$ consists of the $\Nbb$-sets all of whose elements generate finite subsets under the action. This could be compared to the adjunction between the category of abelian groups and the category of torsion abelian groups.

For $a \geq 0$, the necessary clopens of $N_{a,1}$ are $\{\{0\},\{1\}, \dotsc, \{a - 1\}, [a,1)\}$. Thus the only topology on $M$ making all of the $N_{a,1}$ continuous is the discrete topology, but $\Ecal \not\simeq \PSh(\Nbb)$: not only is the stated hyperconnected morphism not an equivalence, but it is clear that the subcategories of supercompact objects of these toposes cannot be equivalent, since no supercompact object of $\Ecal$ admits epimorphisms to all of the others. By computing the action topologies corresponding to coarser topologies on $\Nbb$, we can verify by a similar argument that $\Ecal \not\simeq \Cont(\Nbb,\tau)$ for any topology $\tau$, as claimed.

On the other hand, since $\Nbb$ is a commutative monoid, each $N_{a,b}$ is canonically a monoid, and it is easy to see that $\Ecal$ is the topos of continuous actions of the \textbf{profinite completion of $\Nbb$}, which is the profinite monoid obtained as the inverse limit of the $N_{a,b}$ along the epimorphic maps between them.
\end{xmpl}

The argument in Example \ref{xmpl:N+} is somewhat overzealous; even if we had found that $\Ecal \simeq \Cont(\Nbb,\tau)$ for some $\tau$, the important conclusion is that this hyperconnected morphism fails to \textit{express} $\Ecal$ as $\Cont(\Nbb,\tau)$. By extending the argument of Theorem \ref{thm:tau}, we shall show in Theorem \ref{thm:factor} that for any hyperconnected morphism out of $\PSh(M)$ there is a canonical coarsest topology $\tau$ such that this morphism factorizes through the hyperconnected morphism $\PSh(M) \to \Cont(M,\tau)$; in Example \ref{xmpl:N+}, this topology happens to be the discrete topology.

However, the fact that we were able to find a representing topological monoid for $\Ecal$ in the end turns out to be a general fact, as we might have hoped, and the latter part of Example \ref{xmpl:N+} suggests how it can be constructed. We perform this construction in general in Proposition \ref{prop:lim} and conclude the proof that it gives a representing monoid with Theorem \ref{thm:characterization}.

\subsection{Endomorphisms of the canonical point}
\label{ssec:complete}

Let $p$ be the point of the topos $\Ecal$ factorized in \eqref{eqn:p}. Consider the endomorphisms of $p$; that is, the monoid of natural transformations $\alpha: p^* \Rightarrow p^*$. Since we know $\Ecal$ is supercompactly generated, any such endomorphism is determined by its components on the subcategory $\underline{\Rcal}_h$ of $\Ecal$. That is, each $\alpha$ consists of a collection of endomorphisms $\alpha_r: p^*(M/r) \to p^*(M/r)$ for $r \in \underline{\Rcal}_h$ satisfying naturality conditions relative to morphisms in $\underline{\Rcal}_h$.

The following argument replicates the proof of \cite[Theorem 5.7]{ATGT}, but without the need for a subsequent restriction to automorphisms. For consistency, we consider the opposite of the endomorphism monoid, acting on the right, so that composition is from left to right (this precludes the need to dualize subsequently).

\begin{prop}
\label{prop:lim}
Let $h:\PSh(M) \to \Ecal$ be a hyperconnected geometric morphism, let $\underline{\Rcal}_{h}$ be the corresponding category of right congruences on $M$ described in Proposition \ref{prop:prince2}, and let $\Rcal_h$ be the underlying order. Then $\End(p^*)\op$ can be identified with the limit
\begin{equation}
\label{eq:limit}
L := \varprojlim_{r \in \Rcal_{h}}U(M/r)
\end{equation}
in $\Set$. Explicitly, the elements are tuples $\alpha = ([a_r])_{r \in \Rcal_h}$ with each $[a_r] \in M/r$, represented by $a_r \in M$, such that whenever $r \subseteq r'$, $[a_r] = [a_{r'}]$ in $M/r'$. The composite of $\alpha$ and $\beta = ([b_r])_{r \in \Rcal_h}$ is $\alpha\beta = ([a_r b_{a_r^*(r)}])_{r \in \Rcal_h}$. 
\end{prop}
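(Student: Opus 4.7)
The plan is to construct a bijection $\Phi: \End(p^*)\op \to L$ sending $\alpha$ to the tuple $([a_r])_{r \in \Rcal_h}$ with $[a_r] := \alpha_{M/r}([1]_r)$, and then verify it transports the monoid structure (noting that under the opposite convention, composition $\beta \circ \alpha$ of natural transformations corresponds to multiplication $\alpha\beta$ in $\End(p^*)\op$). Since $h$ is hyperconnected, $h^*$ is fully faithful, so $p^*(M/r) = U(M/r)$ for $r \in \Rcal_h$; as already noted in the paragraph before the proposition, any $\alpha \in \End(p^*)$ is determined by its components on $\underline{\Rcal}_h$, because $p^*$ is cocontinuous and $\underline{\Rcal}_h$ generates $\Ecal$.

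First I would show that $\Phi$ is well-defined and injective, and establish the pointwise formula $\alpha_{M/r}([m]_r) = [m \cdot a_{m^*(r)}]_r$. Well-definedness of the tuple as an element of $L$ follows from applying naturality of $\alpha$ to the quotient morphism $[1]: r \to r'$ (for $r \subseteq r'$ in $\Rcal_h$), which forces $[a_r]_{r'} = [a_{r'}]_{r'}$. For the pointwise formula, since $\Rcal_h$ is closed under the inverse image action (Proposition \ref{prop:prince2}), we have $m^*(r) \in \Rcal_h$ for every $m \in M$; by Lemma \ref{lem:factor}, $[m]_r$ is the image of $[1]_{m^*(r)}$ under the morphism $[m] : m^*(r) \to r$ in $\underline{\Rcal}_h$, and naturality then yields the formula. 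Injectivity is immediate from this.

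For surjectivity, given $([a_r])_{r \in \Rcal_h} \in L$, I would \emph{define} $\alpha_{M/r}([m]_r) := [m \cdot a_{m^*(r)}]_r$, and check: (i) well-definedness on $M/r$, which uses Lemma \ref{lem:rinrfrak} (if $(m,m') \in r$ then $m^*(r) = {m'}^*(r)$, so $a_{m^*(r)} = a_{{m'}^*(r)}$) together with right-equivariance of $r$; (ii) naturality against a general morphism $[n]: r_1 \to r_2$ (so $r_1 \subseteq n^*(r_2)$), which reduces to the identity $[nm \cdot a_{(nm)^*(r_2)}]_{r_2} = [nm \cdot a_{m^*(r_1)}]_{r_2}$ — this holds because $m^*(r_1) \subseteq (nm)^*(r_2)$, so the limit compatibility gives $(a_{m^*(r_1)}, a_{(nm)^*(r_2)}) \in (nm)^*(r_2)$, and then right-equivariance of $r_2$ after left-multiplying by $nm$ closes the gap; (iii) extension of the resulting natural transformation from $\underline{\Rcal}_h$ to all of $\Ecal$, which is automatic from the fact that $p^*$ is the colimit-preserving extension of its restriction along the dense inclusion of the generating subcategory.

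Finally I verify the monoid structure. The identity transformation corresponds to the tuple $([1]_r)_{r \in \Rcal_h}$, which lies in $L$ by reflexivity of each $r$. For composition, direct calculation on the distinguished generator gives
\[
(\beta \circ \alpha)_{M/r}([1]_r) = \beta_{M/r}([a_r]_r) = [a_r \cdot b_{a_r^*(r)}]_r,
\]
which matches the claimed formula for $\alpha\beta$ in $\End(p^*)\op$. The main obstacle I anticipate is the naturality bookkeeping in step (ii) of surjectivity, where the interplay of $m^*(n^*(r)) = (nm)^*(r)$ with the $L$-compatibility must be organized cleanly; once the correct chain of containments between the relevant congruences is identified, the remainder is a formal application of right-equivariance.
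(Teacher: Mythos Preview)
Your proposal is correct and follows essentially the same approach as the paper: both arguments hinge on the factorization from Lemma~\ref{lem:factor} to reduce the data of $\alpha$ to its values $[a_r] = \alpha_{M/r}([1]_r)$, derive the pointwise formula $\alpha_{M/r}([m]_r) = [m\,a_{m^*(r)}]_r$ from naturality along the monic factor $[m]:m^*(r)\hookrightarrow r$, and read off the limit compatibility from naturality along the epic factor $[1]:r\too r'$. The only difference is organizational: for the converse direction the paper simply observes that, having reduced naturality to these two types of factors, any tuple satisfying the limit condition automatically defines a natural transformation, whereas you verify naturality against a general morphism $[n]:r_1\to r_2$ directly via the containment $m^*(r_1)\subseteq (nm)^*(r_2)$; this is a little more explicit but amounts to re-deriving the factorization argument in coordinates.
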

\begin{proof}
Consider the factorization of morphisms in $\underline{\Rcal}_h$ from  Lemma \ref{lem:factor}, which corresponds to the factorization of a morphism $m:r_1 \to r_2$ as,
\[ \begin{tikzcd} 
r_1 \ar[r, "{[1]}", two heads] & m^*(r_2) \ar[r, "{[m]}", hook] & r_2.
\end{tikzcd} \]
The naturality conditions for a natural transformation $p^*|_{\underline{\Rcal}_h} \Rightarrow p^*|_{\underline{\Rcal}_h}$ can be reduced to naturality along factors of the form $[1]: r \too r'$ and $[m]: m^*(r) \hookrightarrow r$.

First, consider $[m]: m^*(r) \hookrightarrow r$, corresponding to the inclusion $M/m^*(r) \hookrightarrow M/r$. Naturality forces $([m])\alpha_{r}$ to be the image of $([1])\alpha_{m^*(r)}$ under this inclusion. Thus, any endomorphism $\alpha$ of $p^*$ is determined by its values on the equivalence classes represented by the identity of $M$. Write $[a_r] \in U(M/r)$ for $([1])\alpha_r$; then $([m])\alpha_r = [m a_{m^*(r)}]$.

Now consider $[1]: r \too r'$, corresponding to the quotient map $M/r \too M/r'$. This forces $[a_r] \mapsto [a_{r'}]$. Thus we may identify each endomorphism $\alpha$ with an element of the stated limit; these observations also determine the composition.

Conversely, any element of the limit defines an endomorphism, since we have shown that collections satisfying these conditions are guaranteed to be natural.
\end{proof}

Let $L$ be the monoid defined in Proposition \ref{prop:lim}. For each $r \in \underline{\Rcal}_h$, we write $\pi_{r}:L \to U(M/r)$ for the universal projection map. Being expressed as a limit of discrete sets, $L$ is canonically equipped with a prodiscrete topology $\rho$ making it a topological monoid. The basic opens for this topology form a genuine base of open sets:

\begin{lemma}
\label{lem:basics}
The collection of open sets of the form $\pi_r^{-1}(\{[m]\})$ generating $\rho$ are closed under (non-empty) finite intersection.
\end{lemma}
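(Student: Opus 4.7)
The plan is to show that the intersection of two basic opens is again a basic open (or empty), from which the statement for all non-empty finite intersections follows by induction. The key algebraic fact I need is that $\Rcal_h$ is closed under binary intersections of congruences.

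First, I would verify that for any $r_1, r_2 \in \Rcal_h$, the congruence $r_1 \cap r_2$ lies in $\Rcal_h$. Since $\Ecal$ is closed in $\PSh(M)$ under finite products, $M/r_1 \times M/r_2$ lies in $\Ecal$. The principal sub-$M$-set generated by the element $([1],[1])$ is a subobject of this product, hence lies in $\Ecal$ by the closure of $\Ecal$ under subobjects (Definition \ref{dfn:hype}). A direct computation of $\rfrak_{([1],[1])}$ identifies this principal sub-$M$-set with $M/(r_1 \cap r_2)$, so $r_1 \cap r_2 \in \Rcal_h$ as required. Equivalently, in the language of Proposition \ref{prop:prince2}, this is the image of a joint cover of $M/r_1$ and $M/r_2$ under the canonical map into their product.

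Next, given two basic opens $\pi_{r_1}^{-1}(\{[m_1]\})$ and $\pi_{r_2}^{-1}(\{[m_2]\})$ with non-empty intersection, I would pick $\alpha \in L$ in that intersection and set $[m] := \pi_{r_1 \cap r_2}(\alpha) \in M/(r_1 \cap r_2)$. The compatibility condition in the definition of $L$ applied to $r_1 \cap r_2 \subseteq r_i$ (for $i=1,2$) gives $[m]_{r_i} = [m_i]$. I then claim that
\[ \pi_{r_1 \cap r_2}^{-1}(\{[m]\}) \;=\; \pi_{r_1}^{-1}(\{[m_1]\}) \cap \pi_{r_2}^{-1}(\{[m_2]\}). \]
The inclusion $\subseteq$ is again just compatibility along $r_1 \cap r_2 \subseteq r_i$. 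For $\supseteq$, given $\beta$ in the right-hand side with $[m']_{r_1 \cap r_2} := \pi_{r_1 \cap r_2}(\beta)$, compatibility forces $m' \mathrel{r_i} m_i$ for $i=1,2$, whereas by construction $m \mathrel{r_i} m_i$ as well; hence $m \mathrel{(r_1 \cap r_2)} m'$, so $\pi_{r_1 \cap r_2}(\beta) = [m]$.

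Finally, a trivial induction on the number of sets handles arbitrary non-empty finite intersections. The main (minor) obstacle is the first step, verifying that $r_1 \cap r_2 \in \Rcal_h$, since this uses both closure under binary products and closure under subobjects from the hyperconnectedness of $h$; once that is in place, the rest is a straightforward bookkeeping exercise with the limit description of $L$.
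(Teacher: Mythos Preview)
Your proof is correct and follows essentially the same route as the paper's: intersect the two congruences, pick an element $\alpha$ in the intersection of the basic opens, and use its component at $r_1 \cap r_2$ to exhibit the intersection as a single basic open. The only difference is that you spell out explicitly both why $r_1 \cap r_2 \in \Rcal_h$ (via closure of $\Ecal$ under products and subobjects, which the paper has already packaged into Proposition \ref{prop:prince2} and Remark \ref{rmk:Mequivariant}) and both inclusions of the set equality, whereas the paper leaves these as implicit.
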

\begin{proof}
The empty intersection is all of $L = \pi^{-1}_{L \times L}(\{[1]\})$. Thus it suffices to consider binary intersections.

Given opens $\pi_{r_1}^{-1}(\{[m_1]\})$ and $\pi_{r_2}^{-1}(\{[m_2]\})$ having non-empty intersection, let $r = r_1 \cap r_2$. Since the intersection of the open sets in non-empty, there is some element $\alpha \in L$ whose component $[a_r]$ at $M/r$ maps under the canonical quotient maps to $[m_1], [m_2]$ respectively. Taking $[m]=[a_r]$ provides the desired expression for the intersection.
\end{proof}

Thus we need only concern ourselves with basic opens when checking continuity.

\begin{lemma}
\label{lem:Mdense}
Let $M$ and $L$ be as above. There is a canonical monoid homomorphism $u: M \to L$ sending $m$ to the endomorphism $\alpha_m$ represented at every principal $M$-set $M/r$ by $[m]$. Its image is dense in $(L,\rho)$.
\end{lemma}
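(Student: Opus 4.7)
The plan is to define $u$ explicitly, check it lands in the limit $L$ and respects composition, and then exploit the very concrete description of the basic opens of $\rho$ given by Lemma~\ref{lem:basics} to verify density almost tautologically.

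First I would set $u(m) := \alpha_m$, where $\alpha_m$ has $r$-component $[m] \in M/r$ for every $r \in \Rcal_h$. To see that $\alpha_m$ is a well-defined element of the limit \eqref{eq:limit}, I need only note that whenever $r \subseteq r'$, the quotient map $M/r \twoheadrightarrow M/r'$ carries $[m]_r$ to $[m]_{r'}$, which is the compatibility condition stated in Proposition~\ref{prop:lim}. So $\alpha_m \in L$.

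Next I would verify that $u$ is a homomorphism of monoids. The identity $\alpha_1$ has $r$-component $[1]$ for every $r$, which is the identity of $L$ under the composition described in Proposition~\ref{prop:lim}. For multiplicativity, applying the composition formula $\alpha\beta = ([a_r b_{a_r^*(r)}])_r$ with $\alpha = \alpha_m$, $\beta = \alpha_n$ gives $r$-component $[m \cdot n_{m^*(r)}] = [m\cdot n] = [mn]$, since every component of $\alpha_n$ is $[n]$. Thus $u(m)u(n) = u(mn)$.

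Finally, for density, by Lemma~\ref{lem:basics} the sets $\pi_r^{-1}(\{[m]\})$ form a base for $\rho$, so it suffices to show each non-empty such basic open meets $u(M)$. But if $\pi_r^{-1}(\{[m]\})$ is non-empty, then the class $[m]$ in $M/r$ is witnessed by some representative $m \in M$, and by construction $\pi_r(u(m)) = (\alpha_m)_r = [m]$, so $u(m)$ itself lies in the basic open. Hence $u(M)$ meets every non-empty open in $\rho$, which is the required density. The only mildly subtle point (but not really an obstacle) is the notational one that the class $[m]$ labelling a basic open is, by definition, represented by an element of $M$; the moment this is unpacked, density is immediate from the definition of $u$.
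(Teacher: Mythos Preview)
Your proof is correct and follows essentially the same route as the paper: check well-definedness of $\alpha_m$ via compatibility with the quotient maps, verify multiplicativity using the composition formula from Proposition~\ref{prop:lim}, and obtain density by observing that each basic open $\pi_r^{-1}(\{[m]\})$ contains $u(m)=\alpha_m$. The only cosmetic difference is that the paper phrases density by noting every basic open contains some $\alpha_m$ (hence is nonempty and meets $u(M)$), while you phrase it as every nonempty basic open meets $u(M)$; these come to the same thing.
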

\begin{proof}
The stated definition does give a well-defined endomorphism for each $m$, since the canonical quotient morphisms preserve representatives by definition. To see that this is a monoid homomorphism, simply observe from the expression for composition in Proposition \ref{prop:lim} that the component of $\alpha_m \cdot \alpha_{m'}$ at $M/r$ is always represented by $mm'$.

Now observe that for each $m \in M$, the basic opens of the form $\pi_r^{-1}(\{[m]\})$ contain $\alpha_m$, so the image of $M$ intersects every open set by Lemma \ref{lem:basics} and has dense image, as claimed.
\end{proof}

The monoid homomorphism $u$ is the key to demonstrating that we can in fact present $\Ecal$ as $\Cont(L,\rho)$. Indeed, note that it induces a geometric morphism $q: \PSh(M) \to \Cont(L,\rho)$, whose inverse image is the restriction of $L$-actions along the homomorphism $\alpha_{(-)}$. We will explore the geometric morphisms induced by general continuous (monoid and) semigroup homomorphisms in Section \ref{sec:homomorphism}.

\begin{prop}
\label{prop:representation}
Let $h:\PSh(M) \to \Ecal$ be hyperconnected, and let $(L,\rho)$ be the endomorphism monoid constructed above. The geometric morphism $q:\PSh(M) \to \Cont(L,\rho)$ induced by the continuous dense homomorphism $u: M \to L$ of Lemma \ref{lem:Mdense} is hyperconnected, and the principal $M$-sets lying in $\Cont(L,\rho)$ are precisely those lying in the topos $\Ecal$ from which $L$ was defined. That is, $\Ecal \simeq \Cont(L,\rho)$.
\end{prop}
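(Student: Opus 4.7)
The plan is to exploit Proposition \ref{prop:prince2}, which asserts that equivalence classes of hyperconnected morphisms under $\PSh(M)$ are in bijection with the subcategories of $\underline{\Rcal}$ closed under subobjects, quotients and joint covers. It therefore suffices to show that $q$ is hyperconnected and that its associated subcategory of $\underline{\Rcal}$ is exactly $\underline{\Rcal}_{h}$; the equivalence $\Ecal \simeq \Cont(L,\rho)$ then follows at once.

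For each $r \in \underline{\Rcal}_{h}$, I endow $M/r$ with the $L$-action $[m] \cdot \alpha := [m \cdot a_{m^*(r)}]$, where $\alpha$ is represented by $[a_{r'}]$ at each $r' \in \Rcal_h$. This is well-defined because closure of $\underline{\Rcal}_h$ under the inverse image action gives $m^*(r) \in \underline{\Rcal}_h$, independence of the choice of $[m]$ and of $a_{m^*(r)}$ follows from equivariance of $r$, and associativity is the composition formula of Proposition \ref{prop:lim}. Restricting along $u$ recovers the original $M$-action, since $u(m')$ is represented by $m'$ at every congruence. Continuity holds because a direct computation gives
\[\Ical_{[m]}^{\alpha} = \{\beta \in L \mid [m \cdot b_{m^*(r)}] = [m \cdot a_{m^*(r)}]\} = \pi_{m^*(r)}^{-1}(\{[a_{m^*(r)}]\}),\]
a basic open of $\rho$. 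Conversely, suppose $M/r$ carries some continuous $L$-action extending the $M$-action via $u$. Then the stabilizer $\Ical_{[1]}^{1_L}$ contains a basic open $\pi_{r'}^{-1}(\{[1]\})$ with $r' \in \underline{\Rcal}_h$; for any $m$ with $(m,1) \in r'$ we have $u(m) \in \pi_{r'}^{-1}(\{[1]\})$, forcing $[m] = [1]$ in $M/r$ and hence $r' \subseteq r$. Since $\underline{\Rcal}_h$ is closed under quotients, this yields $r \in \underline{\Rcal}_h$.

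To see that $q$ is hyperconnected, I invoke the density of $u(M)$ in $(L,\rho)$ from Lemma \ref{lem:Mdense}. For full faithfulness of $q^*$: given continuous $L$-sets $X, Y$ and an $M$-set homomorphism $f : q^*(X) \to q^*(Y)$, for each $x \in X$ and $\alpha \in L$ the open set $\Ical_x^{\alpha} \cap \Ical_{f(x)}^{\alpha}$ contains $\alpha$ and meets $u(M)$ at some $u(m)$, whence $f(x \cdot \alpha) = f(x \cdot u(m)) = f(x) \cdot u(m) = f(x) \cdot \alpha$. Analogous density arguments show that an $M$-subobject $Y$ of $q^*(X)$ is automatically closed under the $L$-action (picking $u(m) \in \Ical_y^{\alpha}$ gives $y \cdot \alpha = y \cdot m \in Y$) and that the $L$-action descends uniquely to any $M$-set quotient of $q^*(X)$; the resulting actions remain continuous since their stabilizers contain the stabilizers in the original $L$-set.

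Combining the two directions of the second paragraph identifies the subcategory of $\underline{\Rcal}$ associated to $q$ with $\underline{\Rcal}_h$; together with the hyperconnectedness established in the third paragraph and Proposition \ref{prop:prince2}, this yields the desired equivalence $\Ecal \simeq \Cont(L,\rho)$. The main technical obstacle is the continuity check in the second paragraph, which hinges on recognising the stabilizer $\Ical_{[m]}^{\alpha}$ as a basic open of the prodiscrete topology; the density arguments in the third paragraph are then essentially routine extensions of the standard fact that a continuous action of a topological group is determined by the action of any dense subgroup.
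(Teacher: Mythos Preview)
Your approach closely mirrors the paper's: the explicit $L$-action on $M/r$ for $r\in\underline{\Rcal}_h$, the identification $\Ical_{[m]}^{\alpha}=\pi_{m^*(r)}^{-1}(\{[a_{m^*(r)}]\})$ for continuity, and the density arguments for full faithfulness of $q^*$ and closure under subobjects are all as in the paper. The only organizational difference is that you route the conclusion through Proposition~\ref{prop:prince2}, whereas the paper argues directly that every principal $(L,\rho)$-set is an $L$-set quotient of some $M/r'$ with $r'\in\underline{\Rcal}_h$.

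There is, however, a genuine gap in your converse direction. From $\pi_{r'}^{-1}(\{[1]\})\subseteq \Ical_{[1]}^{1_L}$ you correctly deduce $(m,1)\in r'\Rightarrow (m,1)\in r$, and then assert ``hence $r'\subseteq r$''. That inference is invalid for right congruences on a general monoid: containment of the identity's equivalence class does not force containment of the congruences. Concretely, on $M=\Nbb$ let $r'$ be the congruence collapsing all $n\geq 2$ to a single class (so $M/r'\cong N_{2,1}$ in the notation of Example~\ref{xmpl:N+}) and let $r$ be the parity congruence (so $M/r\cong N_{0,2}$); then $[0]_{r'}=\{0\}\subseteq\{0,2,4,\dotsc\}=[0]_{r}$, yet $(2,3)\in r'\setminus r$. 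What must actually be shown is that $\ker(\pi_{r'})\subseteq \rfrak_{[1]}$ as right congruences on $L$, i.e.\ that \emph{every} basic class $\pi_{r'}^{-1}(\{[a_{r'}]\})=\Ical_{n}^{\alpha}$ is contained in the corresponding $\Ical_{[1]}^{\alpha}$, not merely the one through $1_L$. The paper's formulation (``for each $\alpha\in L$, $\Ical_k^{\alpha}$ \ldots\ must contain \ldots\ $\Ical_n^{\alpha}$'') points exactly to this partition-refinement statement; your argument uses only $\alpha=1_L$ and so stops one step short.
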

\begin{proof}
Let $\underline{\Rcal}_h$ be the category of right congruences defined in Section \ref{ssec:EqRel}. There is a canonical right action of $L$ on each $M/r$ in $\Rcal_h$: if $\alpha = ([a_r])_{r \in \Rcal_h}$, then $\alpha$ acts on $[m] \in M/r'$ by `projected multiplication', sending it to $[ma_{m^*(r')}]$.

We verify that this action is well-defined and continuous with respect to $\rho$. If $\beta = ([b_r])_{r \in \Rcal_h}$ is another element of $L$, then acting by $\alpha$ and then $\beta$ gives $[m] \mapsto [ma_{m^*(r')}] \mapsto [ma_{m^*(r')}b_{(ma_{m^*(r')})^*(r')}]$, which indeed is equal to the action of $\alpha\beta$, so $M/r'$ is a right $L$-set.

For continuity of the action on $M/r'$ (with canonical generator $n$, say), consider
\begin{align*}
\Ical_{n}^\alpha &= \{\beta = ([b_r])_{r \in \Rcal_h} \mid [a_{r'}] = [b_{r'}]\}\\
&= \pi_{r'}^{-1}(\{[a_{r'}]\}).
\end{align*}
By definition of the induced prodiscrete topology, this is open in $L$, as required. By Lemma \ref{lem:collect}, since $r'$ was arbitrary, this is sufficient to conclude that the actions of $L$ on the principal $M$-sets lying in $\Ecal$ are continuous with respect to $\rho$. Moreover, it is clear that these are principal $L$-sets, since we can obtain all of $M/r'$ by applying the elements $\alpha_m$ to the generator of $M/r'$.

Now $q^*$ returns the $L$-sets defined above to the principal $M$-sets they extended. Thus it remains only to show that $q^*$ is full and faithful and then that any principal $(L,\rho)$-set is a quotient of (and hence equal to) one of the principal $M$-sets, since arbitrary colimits are computed in both $\PSh(M)$ and $\Cont(L,\rho)$ at the level of underlying sets, whence considering the principal objects is sufficient.

It is immediate that $q^*$ is faithful since the underlying function of an $L$-set homomorphism is unaffected by applying $q^*$. To show that any $M$-set homomorphism $g:P \to Q$ between $(L,\rho)$-sets $P$ and $Q$ is an $(L,\rho)$-set homomorphism, consider an element $p \in P$ and $\alpha \in L$. We have that $\alpha_m \in \Ical_p^\alpha$ for some $m \in M$ by density of $M$. Thus $g(p \cdot \alpha) = g(p \cdot \alpha_m) = g(p) \cdot \alpha_m = g(p) \cdot \alpha$, where the final equality is by continuity of the action of $L$ on $Q$. Thus $q^*$ is full, as expected.

Given a quotient $L \too K$ in $\PSh(L)$ with canonical generator $k$, observe that to be continuous with respect to $\rho$ it must be that for each $\alpha \in L$, $\Ical_{k}^\alpha$ is open in $\rho$, so must contain an open set (and hence a basic open set) around $\alpha$, say $\pi_{r'}^{-1}(\{[a_{r'}]\})$. But by the equation above, this is precisely $\Ical_n^\alpha$, where $n$ is the canonical generator of $M/r'$. Thus $K$ must be a quotient of $M/r'$ as an $L$-set, and hence as an $M$-set, as required.
\end{proof}

We can summarize the results obtained so far in this section with the following theorem.
\begin{thm}
\label{thm:characterization}
A topos is equivalent to one of the form $\Cont(M,\tau)$ if and only if it has a surjective point which factors as an essential surjection followed by a hyperconnected geometric morphism. Moreover, every topological monoid is canonically Morita equivalent to a monoid endowed with a prodiscrete topology.
\end{thm}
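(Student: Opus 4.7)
The plan is to assemble this result from the machinery already developed in Propositions \ref{prop:lim} and \ref{prop:representation}, plus the characterization of essential surjective points of presheaf toposes from \cite{TDMA}.

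For the forward direction, I would observe that any topos $\Cont(M,\tau)$ comes equipped with the canonical point constructed in \eqref{eqn:p}. The canonical point of $\PSh(M)$ is essential and surjective (its inverse image is the forgetful functor $U$, which is faithful and has both adjoints $-\times M \dashv U \dashv \Hom_{\Set}(M,-)$), and the morphism $\PSh(M) \to \Cont(M,\tau)$ from Proposition \ref{prop:hyper} is hyperconnected, hence in particular surjective. Composing the two yields the desired factorization, and surjectivity is preserved under composition.

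For the converse, suppose $\Ecal$ admits a surjective point $p:\Set \to \Ecal$ that factors as $\Set \xrightarrow{e} \Fcal \xrightarrow{h} \Ecal$ with $e$ an essential surjection and $h$ hyperconnected. By the characterization of essential surjective points of presheaf toposes recalled at the start of Section \ref{sec:surjpt} (from \cite{TDMA}), the existence of $e$ forces $\Fcal \simeq \PSh(M)$ for some monoid $M$, with $e$ being the canonical point of this presheaf topos. We are then in exactly the setting of Proposition \ref{prop:representation}: we have a hyperconnected morphism $h:\PSh(M) \to \Ecal$, and the proposition constructs an endomorphism monoid $(L,\rho)$ equipped with its prodiscrete topology together with an equivalence $\Ecal \simeq \Cont(L,\rho)$. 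This completes the biconditional.

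For the ``moreover'' clause, given any topological monoid $(M,\tau)$, the canonical hyperconnected morphism $\PSh(M) \to \Cont(M,\tau)$ of Proposition \ref{prop:hyper} provides the input required by Propositions \ref{prop:lim} and \ref{prop:representation}; applying those constructions yields a monoid $L$ with its prodiscrete topology $\rho$ such that $\Cont(L,\rho) \simeq \Cont(M,\tau)$, witnessing the asserted Morita equivalence to a prodiscrete monoid. There is no substantial obstacle here, since the heavy lifting has already been done; the only subtlety worth being explicit about is verifying that the intermediate topos $\Fcal$ in the backward direction must genuinely be a presheaf topos on a monoid, which is where the appeal to \cite{TDMA} is essential (so that we may invoke the site-level machinery of Section \ref{ssec:EqRel} and the limit construction of Proposition \ref{prop:lim}).
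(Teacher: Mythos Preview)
Your proof is correct and follows precisely the approach the paper intends: the theorem is stated there as a summary of the preceding results (Propositions \ref{prop:lim} and \ref{prop:representation}, together with the \cite{TDMA} characterization of essential surjective points), and you have assembled these ingredients exactly as required. One small terminological slip worth correcting: in your final sentence you speak of Morita equivalence ``to a prodiscrete monoid'', but as the paper stresses in the Remark immediately following the theorem, $(L,\rho)$ is in general only a monoid equipped with a prodiscrete \emph{topology}, not a prodiscrete monoid (the $M/r$ need not carry monoid structure), so you should echo the theorem's more careful phrasing there.
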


\begin{rmk}
We have had to be careful with the language used in Theorem \ref{thm:characterization}: $(L,\rho)$ is \textit{not} in general a prodiscrete monoid, since the principal $M$-sets are not in general equipped with the structure of monoids. Even so, $\rho$ is always a `good' topology in the following sense.
\end{rmk}

\begin{prop}
\label{prop:Lpowder}
Given a monoid $M$ and a hyperconnected geometric morphism $h:\PSh(M) \to \Ecal$, the corresponding topological monoid $(L,\rho)$ is a powder monoid.
\end{prop}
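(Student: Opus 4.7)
The plan is to verify directly the three defining properties of a powder monoid listed in Definition \ref{dfn:powder}: that $(L,\rho)$ is a topological monoid, that it is $T_0$, and that $\rho$ admits a basis of clopen sets $U$ for which each $\Ical_U^{\alpha}$ lies in $\rho$. The obvious candidate basis is the collection of basic opens $U_{r,[m]} := \pi_r^{-1}(\{[m]\})$ from the proof of Proposition \ref{prop:representation} (which we already know is closed under finite intersections by Lemma \ref{lem:basics} and consists of clopen sets, since the complement of $U_{r,[m]}$ is the union of $U_{r,[m']}$ for $[m'] \ne [m]$ in $M/r$). The $T_0$ property is immediate: distinct $\alpha \ne \beta$ in $L$ must differ in some component $r$, and then $U_{r,\alpha_r}$ separates them.

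For continuity of multiplication, I would adapt the proof of Proposition \ref{prop:prodisc}, with the caveat that $L$ is not in general a prodiscrete monoid (the maps $\pi_r$ are not monoid homomorphisms), so the twisted composition formula $(\alpha\beta)_r = [a_r\, b_{a_r^*(r)}]$ from Proposition \ref{prop:lim} must be handled carefully. Given $(\alpha,\beta) \in \mu^{-1}(U_{r,[m]})$, take the basic opens $V_1 = U_{r,[a_r]}$ around $\alpha$ and $V_2 = U_{a_r^*(r),[b_{a_r^*(r)}]}$ around $\beta$. For any $\alpha' \in V_1$, we have $(a'_r,a_r) \in r$, whence $(a'_r)^*(r) = a_r^*(r)$ by Lemma \ref{lem:rinrfrak}, so the indexing of $\beta'$'s component matches. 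Then the equivariance of $r$ gives $(a'_r b'_{a_r^*(r)},\, a_r b'_{a_r^*(r)}) \in r$, while $\beta' \in V_2$ yields $(b'_{a_r^*(r)},\, b_{a_r^*(r)}) \in a_r^*(r)$ and hence $(a_r b'_{a_r^*(r)},\, a_r b_{a_r^*(r)}) \in r$; transitivity concludes $\alpha'\beta' \in U_{r,[m]}$.

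For the action topology condition, the key computation is that $\alpha^*(U_{r,[m]})$ is itself either empty or a basic open. Indeed, $\beta \in \alpha^*(U_{r,[m]})$ iff $(a_r\, b_{a_r^*(r)},\, m) \in r$, which depends only on $[b_{a_r^*(r)}] \in M/a_r^*(r)$; and if two classes $[b_1], [b_2] \in M/a_r^*(r)$ both satisfy this, then $(a_r b_1,\, a_r b_2) \in r$ by symmetry and transitivity, so $(b_1, b_2) \in a_r^*(r)$ and $[b_1] = [b_2]$. Hence $\alpha^*(U_{r,[m]})$ is either $\emptyset$ or a single $U_{a_r^*(r),[n]}$. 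To show $\Ical_{U_{r,[m]}}^{\alpha}$ is open, take any $\beta$ in it and observe that for every $\gamma \in U_{r,[b_r]}$ we have $(c_r, b_r) \in r$, so by Lemma \ref{lem:rinrfrak} $c_r^*(r) = b_r^*(r)$, and by equivariance $(c_r c,\, b_r c) \in r$ for every $c \in M$; combining with symmetry and transitivity of $r$ shows that the set $\{[c] \mid (c_r c, m) \in r\}$ coincides with the analogous set for $b_r$. Hence $\gamma^*(U_{r,[m]}) = \beta^*(U_{r,[m]}) = \alpha^*(U_{r,[m]})$, so $U_{r,[b_r]} \subseteq \Ical_{U_{r,[m]}}^{\alpha}$, yielding openness.

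The main obstacle is the mismatch flagged in the remark preceding the proposition: the twisted composition in $L$ means one cannot simply cite Proposition \ref{prop:prodisc}, and every estimate involving $\pi_r$ must carry the extra index shift $a_r^*(r)$. Once Lemma \ref{lem:rinrfrak} is exploited to collapse the shift along $r$-equivalent representatives, however, the equivariance, symmetry and transitivity of right congruences yield each of the three required conditions.
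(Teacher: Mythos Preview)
Your argument is correct and essentially the same as the paper's: both compute that $\alpha^*(\pi_r^{-1}(\{[m]\}))$ is either empty or a basic open of the form $\pi_{a_r^*(r)}^{-1}(\{[m']\})$, then observe that this expression depends only on $[a_r]$, so $\pi_r^{-1}(\{[a_r]\}) \subseteq \Ical_{\pi_r^{-1}(\{[m]\})}^{\alpha}$, and finally note that distinct elements of $L$ are separated by some basic open. The one substantive difference is that you give an explicit proof that multiplication is continuous with respect to $\rho$, whereas the paper had already asserted this (immediately after Proposition~\ref{prop:lim}) and in any case obtains it for free from Proposition~\ref{prop:ctsx} once $\rho$ is shown to coincide with its own action topology $\tilde{\rho}$; your direct verification is a fine alternative but not strictly needed.
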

\begin{proof}
Clearly the basic opens at fixed $r \in \Rcal_h$ partition $L$, so they are clopen; it therefore suffices to check that the basic open sets are continuous elements of $\Pcal(L)$ in $\PSh(L)$.

Using the established notation for the components of $\alpha$ and $\beta$,
\begin{align*}
\alpha^*(\pi_r^{-1}(\{[m]\}))
&= \{\beta \in L\op \mid [a_r b_{a_r^*(r)}] = [m] \in M/r \}\\
&= \{\beta \in L\op \mid b_{a_r^*(r)} \in a_r^*(\Ical_n^m) \}\\
&= \begin{cases}
\pi_{a_r^*(r)}^{-1}(\{[m']\}) & \text{if } \exists m' \in a_r^*(\Ical_n^m)\\
\emptyset & \text{otherwise.}
\end{cases}
\end{align*}
This shows that the inverse image action preserves basic opens, and consequently we need only consider:
\begin{align*}
\Ical_{\pi_r^{-1}(\{[m]\})}^{\alpha}
&= \{\beta \in L\op \mid \beta^*(\pi_r^{-1}(\{[m]\})) = \alpha^*(\pi_r^{-1}(\{[m]\})) \}\\
& \supseteq \pi_r^{-1}(\{[a_r]\}),
\end{align*}
by inspection of the fact that $\alpha^*(\pi_r^{-1}(\{[m]\}))$ depends only on $a_r$. Thus the basic opens are continuous elements; since an action topology has at most as many opens as the topology it is derived from, it follows that $\tilde{\rho}=\rho$ as claimed.

To see that $(L,\rho)$ is Hausdorff we simply note that two points are equal if and only if they are equal in every component (by the definition of $L$ as a limit) and points which differ in any component are separated by basic opens from the corresponding projection maps.
\end{proof}

\begin{schl}
\label{schl:completion}
The topological monoid $(L,\rho)$ constructed in Proposition \ref{prop:lim} from a hyperconnected geometric morphism $h: \PSh(M) \to \Ecal$ depends only on the point $\Set \to \PSh(M) \to \Ecal$ of $\Ecal$ which, after composing with the equivalence $\Ecal \simeq \Cont(L,\rho)$, is naturally isomorphic to the canonical point of the latter topos.
\end{schl}
\begin{proof}
While we can identify $L$ with the limit described in Proposition \ref{prop:lim}, it remains the opposite of the monoid of endomorphisms of the stated point, so is independent of $M$. Since $\rho$ is an action topology by Proposition \ref{prop:Lpowder}, it is uniquely determined by the topos $\Cont(L,\rho)$ after establishing $L$ by Theorem \ref{thm:tau}, which completes the proof.
\end{proof}

\begin{crly}
\label{crly:extend}
Suppose that $(M,\tau)$ is a powder monoid and let $(L,\rho)$ be the opposite of the topological monoid of endomorphisms of the corresponding canonical point of $\Cont(M,\tau)$. Then the monoid homomorphism $u: M \to L$ of Lemma \ref{lem:Mdense} is injective and continuous with respect to $\tau$ and $\rho$.
\end{crly}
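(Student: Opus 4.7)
The plan is to split the statement into the two separate claims and handle continuity first, since it essentially unwinds the definitions, and then injectivity, which is where the powder hypothesis does work.

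For continuity, I would use that $\rho$ has the basis of Lemma \ref{lem:basics} of sets of the form $\pi_r^{-1}(\{[m]\})$ with $r \in \underline{\Rcal}_\tau$ and $[m] \in M/r$. By the definition of $u$ as $n \mapsto \alpha_n$ with $\pi_r(\alpha_n) = [n]$, the preimage $u^{-1}(\pi_r^{-1}(\{[m]\}))$ is precisely the $r$-equivalence class of $m$, which coincides with $\Ical_{[1]}^{m}$ computed in the principal $M$-set $M/r$. Since $r$ is an open congruence, $M/r$ is a continuous $(M,\tau)$-set, so Lemma \ref{lem:Inx} gives $\Ical_{[1]}^{m} \in \tau$, establishing continuity of $u$.

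For injectivity, suppose $m \neq m'$ in $M$. Since $(M,\tau)$ is a powder monoid it is $T_0$ and has a base of clopens lying in $T = VR(\Pcal(M))$, so I can pick some $U \in T$ separating them, say $m \in U$ and $m' \notin U$. Then, using the inverse image action on $\Pcal(M)$, $1 \in m^*(U)$ while $1 \notin m'^*(U)$, so $m^*(U) \neq m'^*(U)$, i.e.\ $(m,m') \notin \rfrak_U$. It then remains to verify that $\rfrak_U$ is an open congruence, so that it corresponds to a component of $L$ on which $u(m)$ and $u(m')$ genuinely differ. Using Lemma \ref{lem:rinrfrak}, the relation $\rfrak_{[n]}$ in $M/\rfrak_U$ coincides with $\rfrak_{n^*(U)}$; because $T$ is closed under the right $M$-action on $\Pcal(M)$ we have $n^*(U) \in T$ for every $n$, and Corollary \ref{crly:Rnx} applied to the continuous $M$-set $T$ guarantees that each $\rfrak_{n^*(U)}$ is open in $\tau \times \tau$. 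Thus $M/\rfrak_U$ is continuous, $\rfrak_U \in \underline{\Rcal}_\tau$, and $\pi_{\rfrak_U}(u(m)) = [m] \neq [m'] = \pi_{\rfrak_U}(u(m'))$.

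I do not anticipate any substantive obstacle; the only piece of care is confirming that $\rfrak_U$ lies in $\underline{\Rcal}_\tau$, for which the stability of $T$ under the inverse image action on $\Pcal(M)$ recorded in Section \ref{ssec:action} supplies exactly what is needed.
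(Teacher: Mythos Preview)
Your proposal is correct and follows essentially the same route as the paper: for continuity you unwind the preimage of a basic open $\pi_r^{-1}(\{[m]\})$ to the necessary clopen $\Ical_{[1]}^m$ of the continuous $M$-set $M/r$, and for injectivity you use a separating clopen $U \in T$ and the principal sub-$M$-set of $T$ it generates (equivalently, the congruence $\rfrak_U$) to distinguish $u(m)$ from $u(m')$. The only minor point is that the identity $\rfrak_{[n]} = \rfrak_{n^*(U)}$ is not literally Lemma~\ref{lem:rinrfrak} (which is stated for $\Rcal$ rather than $\Pcal(M)$), but it follows by the identical one-line computation, so this is not a gap.
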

\begin{proof}
Let $(M,\tau)$ be a powder monoid and $m, m' \in M$. Suppose $\alpha_m = \alpha_{m'}$, and let $U \in T$ with $m \in U$, whence the same is true for $\Ical_U^m$. Consider the principal sub-$M$-set of $T$ generated by $U$, say $M/r$. Consider $\pi_r^{-1}(\{[m]\})$ and $\pi_r^{-1}(\{[m']\})$; since $\alpha_m = \alpha_{m'}$, these open sets must be equal in $L$, which is to say that $[m] = [m']$ in $M/r$, and hence $m' \in U$. As such, $m,m'$ are topologically indistinguishable and hence equal in $(M,\tau)$, and so $u$ is injective as claimed.

To demonstrate continuity, consider a basic open $U' := \pi_r^{-1}(\{[a]\})$ in $L$. Then
\[u^{-1}(U') = \{m \mid \alpha_m \in U' \} = \{m \mid [m] = [a] \text{ in } M/r\} \supseteq \Ical_n^a,\]
where $n$ is the canonical generator of $M/r$. Since $M/r$ is a continuous $M$-set by assumption, $\Ical_n^a$ is open in $(M,\tau)$, as required.
\end{proof}

\begin{xmpl}
\label{xmpl:Z+}
Consider the following example, to be contrasted with Example \ref{xmpl:N+}. Let $\Zbb$ be the group of integers under addition. Taking the topology $\tau$ on $\Zbb$ in which the subgroups $n\Zbb \subseteq \Zbb$ and their cosets are open for each $n\neq 0$, we find that the continuous principal $\Zbb$-sets are precisely the finite cyclic groups $\Zbb/n\Zbb$ with $n > 0$; we thus obtain the topos $\Cont(\Zbb,\tau)$ of torsion $\Zbb$-sets. The topology $\tau$ is not discrete since every neighbourhood of $0$ is infinite, but it is nearly discrete and hence $(\Zbb,\tau)$ is a powder group. However, the monoid obtained from Proposition \ref{prop:lim} is the \textit{profinite completion} of the integers. Thus even for a powder monoid, the comparison map may fail to be an isomorphism.
\end{xmpl}

\begin{dfn}
\label{dfn:cpltmon}
In light of Corollary \ref{crly:extend} and Example \ref{xmpl:Z+}, we say that a monoid is (right) \textbf{complete} if the comparison morphism $u:(M,\tau) \to (L,\rho)$ is an isomorphism of topological monoids.
\end{dfn}

We shall see in Section \ref{ssec:monads} that complete monoids form a reflective subcategory of the ($2$-)category of monoids and also of the ($2$-)category of powder monoids; the comparison homomorphism $u$ is the unit of these reflections.

\begin{crly}
\label{crly:Mlim}
Let $(M,\tau)$ be a powder monoid and $h:\PSh(M) \to \Cont(M,\tau)$ the canonical hyperconnected morphism. Consider the poset $\Rcal_h$ as a subcategory of $\PSh(M)$. Then $M$ is complete if and only if it is the limit of $\Rcal_h$ in $\PSh(M)$.
\end{crly}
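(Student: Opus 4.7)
The plan is to reduce the statement to the bijectivity of the comparison homomorphism $u:M\to L$ from Lemma \ref{lem:Mdense}, by identifying $L$ with the limit of $\Rcal_h$ in $\PSh(M)$.

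First I would compute the limit $\varprojlim_{r\in\Rcal_{h}}M/r$ in $\PSh(M)$. Since the forgetful functor $U:\PSh(M)\to\Set$ is a right adjoint it creates limits, so the underlying set of this limit is precisely the set $L$ of compatible tuples $([a_{r}])_{r\in\Rcal_{h}}$ constructed in Proposition \ref{prop:lim}; the $M$-action on $L$, being computed componentwise, is $([a_{r}])_{r}\cdot m = ([a_{r}m])_{r}$. Comparing with the composition formula $\alpha\beta = ([a_r b_{a_r^*(r)}])_r$ applied to $\beta=\alpha_{m}$, where $b_s=[m]$ for every $s$, one sees that this $M$-action is nothing but the restriction along $u:M\to L$ of the canonical right action of $L$ on itself. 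The canonical cone on $M$ consists of the quotient maps $M\too M/r$, and unwinding definitions shows that the unique mediating morphism into the limit is exactly $u$.

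Consequently, $M$ is the limit of $\Rcal_{h}$ in $\PSh(M)$ if and only if $u:M\to L$ is an isomorphism in $\PSh(M)$, i.e.\ a bijection. It remains to match this with completeness. By Corollary \ref{crly:extend}, because $(M,\tau)$ is a powder monoid, $u$ is always an injective continuous monoid homomorphism. So if $u$ is a homeomorphic isomorphism of topological monoids then it is a bijection; conversely, suppose $u$ is a bijection. Transporting $\rho$ across $u$ yields a topology $\tau':=u^{-1}(\rho)$ on $M$ making $u$ a homeomorphism; since restriction along $u$ identifies continuous $(L,\rho)$-sets with continuous $(M,\tau)$-sets (as these are just the same underlying $M$-sets lying in $\Ecal$), $\tau'$ and $\tau$ cut out the same subcategory $\Cont(M,\tau)\hookrightarrow\PSh(M)$. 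Both topologies are action topologies: $\tau$ by hypothesis and $\tau'$ because $\rho$ is one by Proposition \ref{prop:Lpowder}. The uniqueness clause of Theorem \ref{thm:tau} then forces $\tau'=\tau$, so $u$ is an isomorphism of topological monoids and $M$ is complete.

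The main obstacle is the topological half of the argument at the end: a priori, completeness is a statement about topological monoids while being the limit in $\PSh(M)$ is a purely algebraic statement, and one needs the uniqueness of action topologies (Theorem \ref{thm:tau}) together with the fact that $\rho$ is itself an action topology (Proposition \ref{prop:Lpowder}) to see that bijectivity of $u$ already upgrades to a homeomorphism. Everything else is a direct identification of the limit and its mediating morphism.
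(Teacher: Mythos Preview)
Your proof is correct and follows the same skeleton as the paper's: identify the limit $\varprojlim_{r\in\Rcal_h} M/r$ in $\PSh(M)$ with the underlying set of $L$ from Proposition~\ref{prop:lim}, check that the cone from $M$ induces precisely the comparison map $u$, and conclude that $M$ is the limit iff $u$ is bijective. The paper's own proof is a single sentence invoking Proposition~\ref{prop:lim} and Corollary~\ref{crly:extend} and leaves it at that.

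Where you go further is in the final step, showing that bijectivity of $u$ upgrades to a homeomorphism. The paper does not spell this out; you supply it by transporting $\rho$ along $u$ to a topology $\tau'$ on $M$, observing that $\Cont(M,\tau')=\Cont(M,\tau)$ as subcategories of $\PSh(M)$ (via Proposition~\ref{prop:representation}), and then invoking the uniqueness of action topologies (Theorem~\ref{thm:tau}) together with Proposition~\ref{prop:Lpowder} to conclude $\tau'=\tau$. This is a genuine addition: the paper's terse proof arguably leaves this point implicit, whereas your argument makes clear exactly why the purely algebraic limit condition in $\PSh(M)$ suffices to pin down the topology as well. Your version is the more complete of the two.
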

\begin{proof}
Since limits in $\PSh(M)$ are computed from their underlying sets, this follows from Proposition \ref{prop:lim} and Corollary \ref{crly:extend}, after observing that the action of $M$ expressed as the limit is respected by these morphisms by definition.
\end{proof}

\begin{rmk}
\label{rmk:completegrp}
It must be stressed that our notion of completeness for powder monoids does not quite coincide with that of completeness for groups described in Caramello's paper \cite[\S 2.3]{TGT}. This is clear from a comparison between our Proposition \ref{prop:lim} and Caramello and Lafforgue's construction in \cite[Proposition 5.7]{ATGT}, since they begin by constructing the topological monoid of endomorphisms of the canonical point as we do (which is complete in our sense), but then restrict to the subgroup of automorphisms in order to obtain the representing topological group (which is complete in their sense), and it is an important fact that this gives a genuinely different representation in general; see Example \ref{xmpl:notpro2} below.

For consistency, we say a powder group is \textbf{complete} if it is isomorphic to the topological subgroup of units of the corresponding complete monoid; this is true to Caramello and Lafforgue's terminology, and coincides with ours whenever the complete monoid happens to be a group.
\end{rmk}

\begin{xmpl}
\label{xmpl:notpro2}
The Schanuel topos is typically presented as the topos of sheaves for the atomic topology on the opposite of the category of finite sets and injective functions, $\Ecal := \Sh(\FinSet_{\mathrm{mono}}\op, J_{\mathrm{at}})$. It is the classifying topos for the theory of infinite decidable objects, so that in particular its category of $\Set$-valued points is equivalent to the category of infinite sets and injective functions. We employed it as an example of a supercompactly generated topos which is not a presheaf topos in \cite[Example 2.40]{SGT}.

Thanks to the work of Caramello, \cite[\S 6.3]{TGT}, we know that any infinite set $X$ provides, via its corresponding point, an equivalence $\Ecal \simeq \Cont(\Aut(X),\tau_{\mathrm{fin}})$, where $\tau_{\mathrm{fin}}$ is the topology generated from the base of subgroups stabilizing finite subsets. The topological group of automorphisms of $\Nbb$ considered in Example \ref{xmpl:notpro} is one such representation. In particular, \textit{all points of the Schanuel topos are of the form required by Theorem \ref{thm:characterization}}, and the corresponding complete monoids are simply $(\End_{\mathrm{mono}}(X),\tau_{\mathrm{fin}})$, where this time $\tau_{\mathrm{fin}}$ has basic open sets consisting of subsets of the form
\[\{f \in \End_{\mathrm{mono}}(X) \mid f(x_1) = y_1, \dotsc, f(x_k) = y_k\}\]
for each finite set of pairs of elements $(x_i,y_i) \in X \times X$. 

In computing these complete monoids, we have the advantage of being able to inspect the endomorphisms of objects in the category of models for the theory classified by the topos, since these can be identified with endomorphisms of the corresponding points. Computing one of these complete monoids with \eqref{eq:limit} and identifying the result with a monoid of injective endomorphisms is a more demanding, albeit instructive, exercise. Understanding the theories classified by toposes of topological monoid actions will thus lead to deeper insights into their completions (see Section \ref{ssec:TSGT} below).
\end{xmpl}

\subsection{Bases of congruences}
\label{ssec:base}

While we made abstract use of the limit expression \eqref{eq:limit} in proofs in the last subsection, in practice it will be more convenient to compute $L$ after re-indexing the limit over a smaller collection of right congruences. 

\begin{dfn}
\label{dfn:basecong}
Given a hyperconnected morphism $h:\PSh(M) \to \Ecal$, a collection of right congruences $\Rcal' \subseteq \Rcal_h$ is called a \textbf{base of open congruences for $h$} if for every $r \in \Rcal_h$ there is some $r' \in \Rcal'$ with $r' \subseteq r$. A base of open congruences is precisely an initial subcategory of the poset $\Rcal_h$, and as such we can replace \eqref{eq:limit} with
\begin{equation}
\label{eq:limit2}
L := \varprojlim_{r \in \Rcal'} U(M/r),
\end{equation}
where the morphisms are simply inclusions of congruences. Moreover, the expression for the prodiscrete topology on $L$ restricts to this re-indexing, thanks to the fact that the basic opens for this topology coming from the projection maps along the omitted relations are necessarily unions of opens coming from any initial collection of relations.
\end{dfn}

\begin{xmpl}
\label{xmpl:algbase}
Suppose $(M,\tau)$ is a topological group. Recall from \cite[following Lemma 2.1]{TGT} that an \textit{algebraic base} for $(M,\tau)$ is a neighbourhood base of the identity $\Bcal$, consisting of open subgroups, such that for any $H,K \in \Bcal$ there exists $P \in \Bcal$ with $P \subseteq H \cap K$ and for any $g \in M$, there exists $Q \in \Bcal$ with $Q \subseteq g^{-1}Hg$.

Suppose we are given an algebraic base $\Bcal$ of open subgroups of $(M,\tau)$. In accordance with Example \ref{xmpl:grprln}, we can identify the open congruences for the canonical hyperconnected morphism $h:\PSh(M) \to \Cont(M,\tau)$ with open subgroups. Every such open subgroup of $M$ must contain one belonging to $\Bcal$, whence the congruences corresponding to groups in $\Bcal$ form a base of open congruences on $(M,\tau)$. Conversely, any base of open congruences gives an algebraic base for $(M,\tau)$. 

The limit \eqref{eq:limit2} corresponds to the $\Bcal$-indexed limit expression for the monoid of endomorphisms presented in \cite[Proposition 5.7(i)]{ATGT}.
\end{xmpl}

\begin{prop}
\label{prop:algbase}
Let $h:\PSh(M) \to \Ecal$ be a hyperconnected morphism. Suppose $\Rcal' \subseteq \Rcal_h$ is a base of open congruences for $h$. Suppose further that we extend $\Rcal'$ to a subcategory of $\underline{\Rcal}' \subseteq \underline{\Rcal}_h$ such that given any $r,r' \in \underline{\Rcal}'$ with $r \subseteq m^*(r')$, $\underline{\Rcal}'$ contains a span of morphisms,
\begin{equation}
\label{eq:span}
\begin{tikzcd}
r & \ar[l, "{[1]}"', two heads] r'' \ar[r, "{[m]}"] & r'.
\end{tikzcd} 	
\end{equation}
For example, it suffices that $\underline{\Rcal}'$ be a full subcategory. Then the morphisms indexed by $[1]$ in $\underline{\Rcal}'$ form a stable class $\Tcal$ (in the sense of Definition \ref{dfn:stable}), and $(\underline{\Rcal}',J_{\Tcal})$ is a dense subsite of $(\underline{\Rcal}_h,J_r)$, which is to say that
\[ \Ecal \simeq \Sh(\underline{\Rcal}',J_{\Tcal}).\]
\end{prop}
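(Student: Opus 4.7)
My plan is to verify stability of $\Tcal$ directly from Definition \ref{dfn:stable}, and then use a Comparison Lemma for morphisms of sites to deduce the equivalence $\Ecal \simeq \Sh(\underline{\Rcal}',J_\Tcal)$ from the already-known presentation $\Ecal \simeq \Sh(\underline{\Rcal}_h,J_r)$ of Proposition \ref{prop:correspond} and Scholium \ref{schl:Morita2}.

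For stability, conditions (1) and (2) of Definition \ref{dfn:stable} are immediate: $\underline{\Rcal}'$ is a subcategory, so contains identities, and the composite of two $[1]$-morphisms is visibly again a $[1]$-morphism. For condition (3), given $f=[1]:r \to r'$ in $\Tcal$ and an arbitrary $g=[m]:r'' \to r'$ in $\underline{\Rcal}'$, I would first form the intersection $r'' \cap m^*(r)$, which lies in $\Rcal_h$ because $\Rcal_h$ is closed under intersection and inverse image (using the subobject- and quotient-closure of $\underline{\Rcal}_h$ recorded in Proposition \ref{prop:prince2}). The base hypothesis then supplies $A^\circ \in \Rcal' \subseteq \underline{\Rcal}'$ with $A^\circ \subseteq r'' \cap m^*(r)$, so that the morphisms $[1]:A^\circ \to r''$ and $[m]:A^\circ \to r$ exist in $\underline{\Rcal}_h$; to push them into $\underline{\Rcal}'$, I would apply the span hypothesis twice, first to $(A^\circ,r'')$ with ``$m=1$'' to refine to $A^{(1)} \in \underline{\Rcal}'$ with $[1]:A^{(1)} \to r''$ in $\underline{\Rcal}'$, and then to $(A^{(1)},r)$ with $m$ to refine further to $A^{(2)} \in \underline{\Rcal}'$ with $[m]:A^{(2)} \to r$ in $\underline{\Rcal}'$. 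Composition within $\underline{\Rcal}'$ yields $[1]:A^{(2)} \to r''$, and the square with $A=A^{(2)}$, $f'=[1]$, $g'=[m]$ commutes by the composition rule $[n]\circ[m]=[nm]$ in $\underline{\Rcal}$.

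For the site equivalence, consider the inclusion $\iota:\underline{\Rcal}' \hookrightarrow \underline{\Rcal}_h$. Cover-preservation is immediate because $\Tcal$-morphisms are strict epimorphisms in $\underline{\Rcal}_h$, and density of $\underline{\Rcal}'$ in $\underline{\Rcal}_h$ under $J_r$ follows from the base hypothesis: every $r \in \Rcal_h$ is $J_r$-covered by a strict epi $r^\bullet \too r$ with $r^\bullet \in \Rcal'$. The main obstacle is that $\iota$ need not be a \emph{full} subcategory inclusion, so the classical dense-full-subsite version of the Comparison Lemma does not directly apply; one must additionally verify a cover-lifting condition handling morphisms $\ell:\iota(r_1) \to \iota(r_2)$ in $\underline{\Rcal}_h$ that do not themselves belong to $\underline{\Rcal}'$. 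This is exactly what the span hypothesis delivers: a span $r_1 \twoheadleftarrow r_3 \to r_2$ in $\underline{\Rcal}'$ provides a singleton $J_\Tcal$-cover of $r_1$ through which $\ell$ factors via morphisms of $\underline{\Rcal}'$. Together with the stability argument above, which handles lifting of covering families along arbitrary morphisms, this is enough to invoke the appropriate Comparison Lemma for non-full subsites (as in \cite[Theorem C2.2.3]{Ele}) and conclude the equivalence. The technical heart of the proof is the careful checking that the iterated applications of the span hypothesis really produce morphisms in $\underline{\Rcal}'$ and that all intermediate squares commute.
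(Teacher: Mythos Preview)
Your proposal is correct and follows essentially the same route as the paper: verify stability of $\Tcal$ by pulling back inside $\underline{\Rcal}_h$, refining via the base hypothesis, and then using the span hypothesis to push the resulting morphisms into $\underline{\Rcal}'$; then invoke the Comparison Lemma. The paper phrases the stability step slightly differently (appealing to stability of strict epimorphisms in $\underline{\Rcal}_h$ to get the initial square, rather than explicitly forming the intersection $r'' \cap m^*(r)$), but these are equivalent, and your explicit iteration of the span hypothesis is exactly the unwinding of the paper's terse ``construct spans on the upper and left-hand sides''. Your care in flagging that $\iota$ need not be full, so that one needs the non-full version of the Comparison Lemma with cover-lifting supplied by the span hypothesis, is a point the paper leaves implicit.
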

\begin{proof}
Given $[1]:r_1 \too r$ and $[m]:r_2 \to r$ in $\underline{\Rcal}'$, stability of strict epimorphisms in $\underline{\Rcal}_h$ provides a square there,
\[ \begin{tikzcd}
r' \ar[d, "{[m']}"'] \ar[r, "{[1]}", two heads] &
r_2 \ar[d, "{[m]}"] \\
r_1 \ar[r, "{[1]}"', two heads] & r;
\end{tikzcd}\]
without loss of generality we may assume $r' \in \underline{\Rcal}'$ since any $r'$ is covered by a member of $\underline{\Rcal}'$. Then we may construct spans on the upper and left-hand sides using \eqref{eq:span} in order to produce a similar square all of whose morphisms lie in $\underline{\Rcal}'$. Moreover, the morphisms indexed by $1$ are precisely the morphisms inherited from $\underline{\Rcal}_h$ which generate covering families, whence we see that $\underline{\Rcal}'$ meets the definition of dense subsite required to apply the Comparison Lemma. This allows us to deduce the stated presentation of $\Ecal$.
\end{proof}

We can use open bases of congruences to address the question of when the completion $(L,\rho)$ of a powder monoid is (isomorphic to) a prodiscrete monoid.

\begin{crly}
\label{crly:prodisc}
Suppose that $h:\PSh(M) \to \Ecal$ is hyperconnected. The corresponding complete monoid $(L,\rho)$ is discrete if and only if there exists a base of open congruences $\Rcal' \subseteq \Rcal_h$ with $\Rcal'$ finite. More generally, $(L,\rho)$ is prodiscrete if and only if there exists a base of open congruences $\Rcal' \subseteq \Rcal_h$ where each $r \in \Rcal'$ is a two-sided congruence. In the latter case, if $M$ is also a group, or $M/r$ is a group for each $r \in \Rcal'$, then so is the resulting prodiscrete monoid.

In particular, if $M$ is finite or commutative, so that any right congruence $r$ on $M$ is also a left congruence, then the codomain of any hyperconnected morphism out of $\PSh(M)$ is equivalent to the topos of continuous actions of a finite discrete monoid or commutative prodiscrete monoid respectively.
\end{crly}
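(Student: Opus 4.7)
The plan is to characterize the discrete case via the existence of a minimum in $\Rcal_h$, the prodiscrete case via open two-sided congruences pulled back along $u:M\to L$, and then to derive the group case and the ``in particular'' statements directly from these two characterizations.

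For the discrete equivalence, I would first show $(\Leftarrow)$: a finite base $\Rcal'$ has an intersection $r_0 = \bigcap \Rcal' \in \Rcal_h$, since $\Rcal_h$ is closed under finite intersections ($M/(r\cap r')$ arises as the principal sub-$M$-set of the product $M/r \times M/r'$ generated by $([1],[1])$, so lies in $\Ecal$ by closure under subobjects per Proposition~\ref{prop:prince2}). Then $r_0$ is the minimum of $\Rcal_h$, collapsing~\eqref{eq:limit2} to $L \cong M/r_0$ with $\rho$ the discrete topology. For $(\Rightarrow)$, density of $u(M)$ in $L$ (Lemma~\ref{lem:Mdense}) together with discreteness of $\rho$ forces $u$ to be surjective, giving $L \cong M/\ker u$; since $L$ itself is a continuous $L$-set and hence lies in $\Ecal$, $\ker u$ is a two-sided congruence in $\Rcal_h$. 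Using that $u(m)$ acts on each $M/r$ as right multiplication by $m$, I would deduce that $u(m)=u(m')$ forces $[m]_r=[m']_r$ for every $r \in \Rcal_h$, so $\ker u \subseteq r$, making $\{\ker u\}$ a (singleton) finite base.

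For the prodiscrete equivalence, $(\Leftarrow)$ is immediate from~\eqref{eq:limit2}: a base of two-sided congruences makes each $M/r$ a discrete monoid, and the reindexed limit exhibits $L$ as a projective limit of such. For $(\Rightarrow)$, I would fix a presentation $L \cong \varprojlim_j N_j$ with $N_j$ discrete monoids and surjective $\pi_j$, and set $c_j := u^{-1}(\ker \pi_j)$. Each $c_j$ is a two-sided congruence on $M$ with $M/c_j \cong N_j$ as $M$-sets (density of $u(M)$ makes $M \to N_j$ surjective), so $c_j \in \Rcal_h$. To verify that $\{c_j\}$ is a base, for any $r \in \Rcal_h$ corresponding to an open right congruence $s$ on $L$, the orbit map $L \to L/s$ is continuous with discrete codomain, and the prodiscrete structure of $L$---that the $\ker \pi_j$ form a neighborhood basis of the diagonal $\Delta_L$ in $L \times L$---produces some $\ker \pi_j \subseteq s$, whence $c_j \subseteq r$.

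The remaining assertions follow quickly. If each $M/r$ for $r \in \Rcal'$ is a group, then $L$ inherits inverses componentwise from its projective limit presentation and is itself a group. When $M$ is finite, $\Rcal_h$ is a finite subset of the finitely many right congruences on $M$, reducing to the discrete case with $L$ finite; when $M$ is commutative, every right congruence is two-sided, so $\Rcal_h$ is its own base of two-sided congruences, and the prodiscrete case (applied to commutative $M/r$'s) yields $L$ commutative prodiscrete. The hardest step is the factorization claim in the prodiscrete $(\Rightarrow)$ direction, namely that every open right congruence on $L$ is refined by some $\ker \pi_j$; for topological groups this is classical (via cosets of the open submonoid $\mathrm{stab}([1]_s)$), but for general monoids one instead appeals to the topological fact that in a prodiscrete monoid the open two-sided congruences form a neighborhood basis of the diagonal $\Delta_L$, combined with the openness of $s$.
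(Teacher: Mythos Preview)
Your approach is essentially that of the paper: reduce the discrete case to a least element of $\Rcal_h$ via directedness, read off the $(\Leftarrow)$ direction of the prodiscrete case from the re-indexed limit \eqref{eq:limit2}, and for $(\Rightarrow)$ pull back open two-sided congruences on $L$ along $u$ to obtain two-sided congruences in $\Rcal_h$. Your discrete converse (density of $u(M)$ forces $u$ surjective, so $\{\ker u\}$ is a singleton base) is a minor variant of the paper's (identify $L$ as a generating object in $\Cont(L,\rho)=\PSh(L)$ and find the corresponding $r^*\in\Rcal_h$).

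One correction at the step you flag as hardest: the claim that in a prodiscrete monoid the $\ker\pi_j$ form a neighbourhood basis of the diagonal $\Delta_L$ is false for non-compact $L$. For example, in $L=\prod_{n\in\Nbb}\Zbb$ one can take $U=\bigcup_{\alpha}B_\alpha\times B_\alpha$ where $B_\alpha$ is the basic open around $\alpha$ of depth $|\alpha_0|$; then $U$ is an open neighbourhood of $\Delta_L$ containing no $\ker\pi_j$. What you actually need is the weaker statement that every open \emph{right congruence} $s$ on $L$ contains some open two-sided congruence, and treating $s$ merely as an open neighbourhood of $\Delta_L$ does not give this: from $\pi_j^{-1}(1)\subseteq[1]_s$ one cannot conclude $\ker\pi_j\subseteq s$ for monoids, since the fibre $\pi_j^{-1}(n)$ need not equal $\pi_j^{-1}(1)\cdot\alpha$. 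The paper is equally terse here, simply asserting that the pulled-back congruences ``give the desired base of open congruences''; so your proof matches the paper's level of detail, but the specific topological fact you invoke to close the gap does not hold as stated.
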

\begin{proof}
As observed in Definition \ref{dfn:basecong}, given any base of open congruences $\Rcal'$, we may express $L$ as the limit \eqref{eq:limit2} over congruences in $\Rcal'$. Note that this limit is directed. When $\Rcal'$ is finite, the induced topology is a finite product of discrete topologies, so it is discrete, and there must be an initial congruence in this case by directedness. Conversely, if $L$ is discrete, then $\Cont(L,\rho) = \PSh(L)$. By construction, $\underline{\Rcal}_h$ is equivalent to the category of quotients of $L$ in this topos, which contains a generating object, namely $L$ itself. As such, there is some relation $r^*$ in $\underline{\Rcal}_h$ such that the equivalence $\Ecal \simeq \PSh(L)$ identifies $M/r^*$ with $L$. Then $\{r^*\}$ is a finite base of open congruences for $h$, as required.  

Now suppose instead that each $r \in \Rcal'$ is also a left congruence. Then the quotients $M/r$ are naturally equipped with a multiplication operation compatible with the multiplication from $M$, and the topological monoid $(L,\rho)$ constructed in Proposition \ref{prop:lim} is their limit as discrete monoids in the category of topological monoids, hence is a prodiscrete monoid. Conversely, if $(L,\rho)$ is prodiscrete, it can be defined as a limit of its discrete quotients, which are quotients of $L$ by a two-sided congruences. The restriction of such a congruence along $u$ is also a two-sided congruence on $M$; the collection of such congruences gives the desired base of open congruences.

If $M$ is a group and $\Rcal'$ consists of two-sided congruences, then the quotients $M/r$ for $r \in \Rcal'$ are also groups, whence $(L,\rho)$ is a prodiscrete group, fulfilling the claim regarding groups.
\end{proof}

\begin{xmpl}
As an example application of Corollary \ref{crly:prodisc} on a monoid which is neither commutative nor finite, consider the (non-commutative) monoid $M$ obtained from the non-negative integers $\Nbb$ with addition by freely adjoining a left absorbing element $l$. The elements of $M$ are of the form $(1,n)$ or $(l,n)$ with $n \in \Nbb$, and multiplication is defined by $(1,m)(l,n) = (l,m)(l,n) = (l,n)$, $(l,m)(1,n) = (l,m+n)$ and $(1,m)(1,n) = (1,m+n)$.

We can define a right congruence $r$ on $M$ which identifies all elements of the form $(l,n)$ and has all other equivalence classes being singletons. Then $(1,m)^*(r) = r$ for every $m$ and $(l,m)^*(r) = M \times M$, whence the collection of all equivalence relations containing $r$ is an $M$-equivariant filter in $\Rcal$, and we have a corresponding hyperconnected morphism $\PSh(M) \to \Cont(L,\rho)$.

Since $\{r\}$ is initial in $\Rcal_h$, Corollary \ref{crly:prodisc} informs us that $\rho$ is the discrete topology. Indeed, we find that $L \cong \Nbb \cup \{\infty\}$ with extended addition, and $\Cont(L,\rho) = \PSh(L)$. It is interesting to note that, $L$ being commutative, any further hyperconnected geometric morphism lands in the topos of actions of a prodiscrete monoid, such as the topologization of $\Nbb \cup \{\infty\}$ seen in Example \ref{xmpl:idemclosed}.
\end{xmpl}

\subsection{Factorizing topologies}
\label{ssec:factor}

Having made it this far, we would be remiss not to initiate an investigation of when a hyperconnected geometric morphism $\PSh(M) \to \Ecal$ actually \textit{does} express $\Ecal$ as $\Cont(M,\tau)$ for some topology $\tau$ on $M$. Our first result in this direction is a strengthening of Theorem \ref{thm:tau}.
\begin{thm}
\label{thm:factor}
Let $h: \PSh(M)\to \Ecal$ be a hyperconnected geometric morphism. Consider $\Pcal(M) \in \PSh(M)$ with the inverse image action corresponding to left multiplication. Write $T := h^*h_*(\Pcal(M)) \hookrightarrow \Pcal(M)$. Then (the underlying set of) $T$ is a base of clopen sets for the coarsest topology $\tau_h$ on $M$ such that $h$ factors through the canonical morphism $\PSh(M) \to \Cont(M,\tau_h)$. That is, toposes constructed from topologies on $M$ are universal amongst toposes admitting a hyperconnected morphism from $\PSh(M)$.
\end{thm}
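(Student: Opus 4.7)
The plan is to generalise the proof of Theorem \ref{thm:tau}, with $T = h^*h_*(\Pcal(M))$ playing the role that $VR(\Pcal(M))$ played there. First, I would show that $T$ inherits the Boolean algebra structure on $\Pcal(M)$ from Lemma \ref{lem:Pcal}. Since $h$ is hyperconnected, the counit $\varepsilon_{\Pcal(M)} : T \hookrightarrow \Pcal(M)$ is monic (one factors any $\varepsilon_X$ through its image, observes the image lies in $\Ecal$ by closure under quotients, and then splits the epi part by the coreflection's universal property), and the comonad $h^*h_*$ preserves finite products because both $h^*$ and $h_*$ are left exact. Applying $h^*h_*$ to the intersection and complement maps on $\Pcal(M)$ and invoking naturality of $\varepsilon$ then shows that the underlying set of $T$ is closed under these operations as a subset of $\Pcal(M)$. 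Hence $T$ is a base for a topology $\tau_h$ on $M$, and each $A \in T$ is clopen because $M \setminus A \in T \subseteq \tau_h$.

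The heart of the proof is verifying that $h$ factors through $\PSh(M) \to \Cont(M,\tau_h)$; by Lemma \ref{lem:Inx}, this reduces to showing $\Ical_x^p \in \tau_h$ for every $X \in h^*(\Ecal)$, $x \in X$ and $p \in M$. The key device is the adjunction from Section \ref{ssec:inverse}: the singleton $\{xp\} \subseteq U(X)$ corresponds under $\Hom_{\PSh(M)}(X,\Pcal(M)) \cong \Pcal(U(X))$ to a morphism $\varphi_{xp}: X \to \Pcal(M)$ in $\PSh(M)$, given on elements by $y \mapsto \{m \mid ym = xp\}$ and in particular taking $x$ to $\Ical_x^p$. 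Factor $\varphi_{xp}$ as $X \twoheadrightarrow \im(\varphi_{xp}) \hookrightarrow \Pcal(M)$: the image is a quotient of $X \in \Ecal$, and so lies in $\Ecal$ by closure under quotients (Definition \ref{dfn:hype}). The universal property of $T$ as the coreflection of $\Pcal(M)$ into $\Ecal$ then factors $\im(\varphi_{xp}) \hookrightarrow \Pcal(M)$ through $T$, whence $\Ical_x^p \in T \subseteq \tau_h$, as required.

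For coarseness, suppose $\tau$ is any topology on $M$ such that $h$ factors as $\PSh(M) \xrightarrow{k} \Cont(M,\tau) \to \Ecal$. Then, identifying all toposes with their images in $\PSh(M)$, we have $\Ecal \subseteq \Cont(M,\tau)$, so $T \in \Cont(M,\tau)$ in particular. The universal property of $VR(\Pcal(M))$ as the coreflection of $\Pcal(M)$ into $\Cont(M,\tau)$ produces an inclusion $T \hookrightarrow VR(\Pcal(M))$ of sub-$M$-sets of $\Pcal(M)$; by Theorem \ref{thm:tau}, every element of $VR(\Pcal(M))$ is open in $\tau$, so $T \subseteq \tau$ and hence $\tau_h \subseteq \tau$. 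I expect the principal obstacle to be the second paragraph: recognising that an arbitrary necessary clopen $\Ical_x^p$ can be realised as the value of a single morphism $X \to \Pcal(M)$ exploits both roles of $\Pcal(M)$ (as $\Set$-powerset and as $\PSh(M)$-internal object), and concluding that its image lands in $T$ requires hyperconnectedness in an essential way via the closure of $\Ecal$ under quotients.
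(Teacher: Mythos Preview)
Your proposal is correct and follows essentially the same approach as the paper: both construct, for each element of an object in $\Ecal$, a morphism to $\Pcal(M)$ hitting the relevant necessary clopen, and then factor it through $T$ via the coreflection property. The paper first reduces to principal $M$-sets $M/r$ and defines the map $i^p$ by the explicit formula $[q] \mapsto q^*(\Ical_{[1]}^p)$, whereas you work with arbitrary $X \in \Ecal$ and obtain the same map more conceptually from the representability of $\Pcal^{\mathrm{op}} \circ U$; your detour through the image is unnecessary (the coreflection already factors $\varphi_{xp}$ through $T$ directly), but harmless.
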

\begin{proof}
By assumption, the counit at $\Pcal(M)$ is monic, so $T$ is indeed a subobject of $\Pcal(M)$. Further, $T$ must be a sub-Boolean-algebra of $\Pcal(M)$, so it is closed under complementation and finite intersections. Let $\tau_h$ be its closure in $\Pcal(M)$ under arbitrary unions. By Proposition \ref{prop:prince2}, to show that all $M$-sets lying in $\Ecal$ also lie in $\Cont(M,\tau_h)$, it suffices to show that the principal $M$-sets belonging to $\Ecal$ are continuous with respect to $\tau_h$.

Suppose $r \in \underline{\Rcal}_h$, and let $p \in M$. We must show that, for $[1] \in M/r$, $\Ical_{[1]}^p \in U(T)$, or equivalently that the corresponding morphism $\ulcorner \Ical_{[1]}^{p} \urcorner: M \to \Pcal(M)$ factors through the inclusion $T \hookrightarrow \Pcal(M)$. We define a morphism $i^p : M/r \to \Pcal(M)$ by $[q] \mapsto q^*(\Ical_{[1]}^p)$. To see that this is well-defined, note that if $(q, q') \in r$, then
\[q^*(\Ical_{[1]}^p)= \{m \in M \mid [qm] = [p]\} = \{m \in M \mid [q'm] = [p]\} = {q'}^*(\Ical_{[1]}^p).\]
Hence the following diagram commutes:
\begin{equation}
\begin{tikzcd}
M \ar[r, "\ulcorner \Ical_{[1]}^{p} \urcorner"] \ar[d, two heads] & \Pcal(M) \\
M/r \ar[ur,"i^p"] \ar[r,dotted] & T \ar[u,hook]
\end{tikzcd}
\label{eq:factorize}
\end{equation}
Since $\Ecal$ is coreflective and $M/r \in \Ecal$, $i^p$ must further factor through the inclusion $T \hookrightarrow \Pcal(M)$. Thus we are done: $\Ical_{[1]}^{p} \in T$, as required.

It follows that $h$ factors through the morphism $\PSh(M) \to \Cont(M, \tau_h)$ as claimed, and that $\tau_h$ is an action topology. Moreover, if $h$ factors through $\Cont(M,\tau')$ for any other topology $\tau'$, then $T$ must be continuous with respect to $\tau'$, and hence $\tau_h \subseteq \tau'$, as claimed.
\end{proof}

Note that the fact that $\Pcal(M)$ is an internal Boolean algebra ensures that for any object $N$, the set $\Hom_{\PSh(M)}(N,\Pcal(M))$ inherits the structure of a Boolean algebra. When $N = M/r$ for some $r \in \underline{\Rcal}_h$, a morphism $a: M/r \to \Pcal(M)$ is determined by the image of the generator $[1]$, and for any $(p,p') \in r$ must satisfy
\[ p^*(a([1])) = a([p]) = a([p']) = {p'}^*(a([1])).\]
In particular, by considering whether $1 \in p^*(a([1]))$, we see that $p \in a([1])$ if and only if $\Ical_{[1]}^p \subseteq a([1])$. Thus the morphisms $i^p$ in the proof above are actually \textit{atoms} in the Boolean algebra $\Hom_{\PSh(M)}(M/r,\Pcal(M))$, since they have precisely two lower bounds, themselves and the trivial map sending every element of $M/r$ to $\emptyset \in \Pcal(M)$.

\begin{schl}
\label{schl:factors}
A hyperconnected morphism $h:\PSh(M)\to \Ecal$ expresses the topos $\Ecal$ as $\Cont(M,\tau)$ for some topology $\tau$ on $M$ if and only if whenever the image of each atom in  $\Hom_{\PSh(M)}(M/r,\Pcal(M))$ lies in $\Ecal$, we have $M/r$ in $\Ecal$.
\end{schl}
\begin{proof}
Reconstruct the diagram \eqref{eq:factorize} for a right congruence $r$ which is open with respect to $\tau \times \tau$, and let $M/r_p$ be the image of $i^p$:
\begin{equation}
\label{eq:image}
\begin{tikzcd}
M \ar[r, "\ulcorner \Ical_{[1]}^{p} \urcorner"] \ar[d, two heads] & \Pcal(M) \\
M/r \ar[ur,"i^p"] \ar[r, two heads] & M/r_p \ar[u,hook].
\end{tikzcd}
\end{equation}
Since $\Ecal$ is closed under quotients, if $M/r$ is in $\Ecal$ then so are the $M/r_p$ for every $p \in M$.

Conversely, since $\Ecal$ is closed under subobjects, the inclusion $M/r_p \hookrightarrow \Pcal(M)$ factors through $T$ if and only if $M/r_p$ lies in $\Ecal$; that is, $\Ical_{[1]}^p$ is in the topology induced by $h$ if and only if $M/r_p$ lies in $\Ecal$. Thus if $M/r_p$ lies in $\Ecal$ for every $p \in M$, this forces $M/r$ to be continuous.
\end{proof}

Another way of interpreting Scholium \ref{schl:factors} is as a necessary and sufficient condition for $(M,\tau_h)$ to be Morita-equivalent to the complete monoid representing $\Ecal$.

\begin{xmpl}
\label{xmpl:surjtop}
Any surjective monoid homomorphism $\phi: M \to M'$ induces a hyperconnected geometric morphism $f: \PSh(M) \to \PSh(M')$; see Proposition \ref{prop:essgeom} below. The corresponding filter of $M$-equivariant relations is simply the collection of relations containing $r_{\phi} := \{(m,n) \mid \phi(m) = \phi(n)\}$. As such, it suffices to check the conditions of Scholium \ref{schl:factors} for $M/r_{\phi}$.

Suppose $M/r$ is such that for every $p \in M$, the relation $r_p$ from \eqref{eq:image} contains $r_{\phi}$. Then given $(m,n) \in r_{\phi}$, consider $r_m = \{(p,p') \mid p^*(\Ical_{[1]}^m) = p'{}^*(\Ical_{[1]}^m) \}$, where $[1]$ is the generator for $M/r$. By assumption, $(m,n) \in r_m$, whence $m^*(\Ical_{[1]}^m) = n^*(\Ical_{[1]}^m)$ and $n \in \Ical_{[1]}^m$, which is to say that $(m,n) \in r$, as required. So $T = f^*f_*(P(M))$ generates a topology $\tau_f$ on $M$ such that $\Cont(M,\tau_f) \simeq \PSh(M')$ via $f$.

Of course, we can calculate $T$ directly as the topology whose open sets are the equivalence classes of $r_{\phi}$, and this coincides with $\tau$: since $f$ is essential, $f^*f_*(\Pcal(M))$ is a complete Boolean algebra in $\Set$ (both $f^*$ and $f_*$ preserve small $\Set$-limits). 
\end{xmpl}

While Example \ref{xmpl:surjtop} illustrates that Scholium \ref{schl:factors} provides a workable necessary and sufficient condition, it does not illuminate precisely which toposes arise in this way. See Conjecture \ref{conj:characterization} below for a more detailed discussion of this issue.

\section{Semigroup homomorphisms}
\label{sec:homomorphism}

In this section we functorialize the results obtained so far by examining how homomorphisms lift to geometric morphisms. In light of the results in \cite{TDMA}, it is sensible to consider as morphisms between topological monoids not only (continuous) monoid homomorphisms, but also (continuous) semigroup homomorphisms, which correspond to essential geometric morphisms between the corresponding presheaf toposes.

\subsection{Restricting essential geometric morphisms}

Let $\phi : M \to M'$ be a semigroup homomorphism between monoids. Recall that $\phi$ induces a functor $\check{\phi}: \check{M} \to \check{M}'$ between the idempotent-completions of the monoids, and hence induces an essential geometric morphism $f: \PSh(M) \to \PSh(M')$.

Factorizing $f$, we have the following result, which is explored further in forthcoming work with Jens Hemelaer \cite{EDMA}:
\begin{prop}
\label{prop:essgeom}
Let $f: \PSh(M) \to \PSh(M')$ be an essential geometric morphism induced by a monoid homomorphism $\phi$, and let $e := \phi(1)$. Then the surjection--inclusion factorization of $f$ is canonically represented by the factorization of $\phi$ as a monoid homomorphism $M \to eM'e$ followed by an inclusion of subsemigroups $eM'e \hookrightarrow M'$. Meanwhile, the hyperconnected--localic factorization of $f$ is canonically represented by the factorization of $\phi$ as surjective monoid homomorphism followed by an injective semigroup homomorphism.
\end{prop}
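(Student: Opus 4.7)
The plan is to verify each factorization by exhibiting it explicitly, checking that each piece induces a geometric morphism of the claimed type, and then invoking uniqueness of the $(s,i)$- and $(h,l)$-factorizations.

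For the surjection--inclusion factorization, I would first note that $\phi(m) = \phi(1)\phi(m)\phi(1) = e\phi(m)e$, so $\phi$ corestricts to $\bar\phi : M \to eM'e$, which is a genuine monoid homomorphism (with $\bar\phi(1) = e$ the unit of $eM'e$). Its induced essential morphism has restriction of scalars as inverse image, which is faithful on underlying sets, whence the morphism is a surjection. For $\iota : eM'e \hookrightarrow M'$, the key observation is that $\check\iota$ is fully faithful: for idempotents $f, f' \in eM'e$, any $m \in M'$ with $f'mf = m$ automatically lies in $eM' \cdot M'e \subseteq eM'e$, so the hom-sets agree on both sides. A fully faithful functor between small categories induces an essential inclusion between presheaf toposes (the right Kan extension along such a functor is fully faithful), and uniqueness of the $(s,i)$-factorization completes this case.

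For the hyperconnected--localic factorization, decompose $\phi$ as $\phi_{\mathrm{surj}} : M \twoheadrightarrow \im(\phi)$ followed by $\iota_{\im} : \im(\phi) \hookrightarrow M'$. Since $e\phi(m) = \phi(m) = \phi(m)e$, $e$ is the identity of $\im(\phi)$, which is thus a monoid in its own right, and $\phi_{\mathrm{surj}}$ is a surjective monoid homomorphism. To verify $\phi_{\mathrm{surj}}$ induces a hyperconnected morphism in the sense of Definition \ref{dfn:hype}, I would check both conditions via surjectivity of $\phi_{\mathrm{surj}}$: restriction of scalars is full and faithful because any $M$-equivariant function is automatically $\im(\phi)$-equivariant when $\phi_{\mathrm{surj}}$ is surjective, and $M$-equivariant subobjects and quotients are automatically $\im(\phi)$-equivariant for the same reason.

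The main obstacle is the localic claim for $\iota_{\im}$, i.e., that every right $\im(\phi)$-set $Y$ is a subquotient of $\iota_{\im}^*(X) = Xe$ for some right $M'$-set $X$. My approach is to take $X = \coprod_{y \in Y} M'$ and present $Y$ as the canonical quotient of $\coprod_{y \in Y} \im(\phi)$; then the $\im(\phi)$-equivariant inclusion $\im(\phi) \hookrightarrow M'e$ (well-defined because $m''e = m''$ for $m'' \in \im(\phi)$) induces a monomorphism $\coprod_{y \in Y} \im(\phi) \hookrightarrow Xe$, exhibiting $Y$ as the required subquotient. Uniqueness of the $(h,l)$-factorization then identifies this decomposition with the hyperconnected--localic factorization of $f$.
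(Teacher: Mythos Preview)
Your argument is correct. The paper takes a different, more citation-driven route: rather than verifying each factor directly, it invokes the general characterizations from \cite[Examples A4.2.7(b), A4.2.12(b), A4.6.2(c), A4.6.9]{Ele} of the surjection--inclusion and hyperconnected--localic factorizations of an essential geometric morphism $\PSh(\Ccal) \to \PSh(\Dcal)$ in terms of the induced functor $\check{\phi}$ on idempotent completions, and then identifies the intermediate category in each case as the idempotent completion of the stated monoid. Your approach is more elementary and self-contained, avoiding the need to unpack those Elephant references; the price is that each of the four properties (surjection, inclusion, hyperconnected, localic) must be checked by hand, whereas the paper's approach handles both factorizations uniformly by a single appeal to known results. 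Your verification of the inclusion via full faithfulness of $\check\iota$ is essentially the same as what the Elephant characterization would give, so the two approaches converge there; the genuine difference is in your explicit subquotient construction for the localic part, which is a nice concrete alternative to the abstract criterion.
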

\begin{proof}
These results are proved by considering the factorization of $\check{\phi}$ corresponding to the surjection--inclusion and hyperconnected--localic factorizations of $f$, which can be found in \cite[Examples 4.2.7(b), 4.2.12(b), 4.6.2(c) and 4.6.9]{Ele}. We find in both cases that the intermediate category is the idempotent completion of the monoid indicated in the statement, whence these factors reduce to the stated semigroup homomorphisms.
\end{proof}

Now consider topologies $\tau$, $\tau'$ on $M$, $M'$ respectively. Then we may consider the square
\begin{equation}
\label{eq:ctshom}
\begin{tikzcd}
{\PSh(M)} \ar[r, shift right = 4, "f_*"'] \ar[r, shift right = 2, phantom, "\bot"] \ar[r, shift left = 2, phantom, "\bot"] \ar[r, shift left=4, "f_!"] \ar[d, shift left = 2, "R"] &
{\PSh(M')} \ar[l, "f^*"'{very near start, inner sep = 0pt}] \ar[d, shift left = 2, "R'"] \\
{\Cont(M{,}\tau)} \ar[u, shift left = 2, hook, "V"] \ar[u, phantom, "\dashv"]&
{\Cont(M'{,}\tau')}  \ar[l, "Rf^*V'"] \ar[u, shift left = 2, hook, "V'"] \ar[u, phantom, "\dashv"],
\end{tikzcd}
\end{equation}
where across the top we have the essential geometric morphism $f$ induced by $\phi$, whose inverse image is induced by tensoring with the left-$M'$-right-$M$-set $M'\phi(1)$ (which coincides with restriction along $\phi$ when $\phi$ is a monoid homomorphism). This situation bears a strong resemblance to that involved in describing morphisms or comorphisms of sites, where the vertical morphisms are inclusions directed upwards rather than hyperconnected morphisms directed downwards. Accordingly, under suitable hypotheses, the lower horizontal map becomes the inverse image functor of a geometric morphism.

\begin{lemma}
\label{lem:cts}
Let $\phi: M \to M'$, $\tau$, $\tau'$ and $f$ be as above. Then the following are equivalent:
\begin{enumerate}
	\item $f^*: \PSh(M') \to \PSh(M)$ maps every $(M',\tau')$-set to an $(M,\tau)$-set.
	\item $\phi$ is continuous with respect to $\tau$ and $\tilde{\tau}'$.
	\item The composite functor $Rf^*V'$ (is left exact and) has a right adjoint $G$ satisfying $GR \cong R'f_*$, which is to say that $f$ restricts along the functors $V,V'$ to a geometric morphism $(G \dashv Rf^*V'): \Cont(M,\tau) \to \Cont(M',\tau')$ making the square commute.
\end{enumerate}
\end{lemma}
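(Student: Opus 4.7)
The plan is to prove (3)$\Rightarrow$(1) immediately from commutativity, to establish (1)$\Leftrightarrow$(2) by comparing $\phi$-preimages of basic opens of $\tilde{\tau}'$ with necessary clopens of pulled-back $M$-sets, and to derive (3) from (1) by constructing the right adjoint explicitly. Write $e := \phi(1)$, an idempotent of $M'$ satisfying $e\phi(m) = \phi(m) = \phi(m)e$ for all $m \in M$ because $\phi$ is a semigroup homomorphism. The inverse image then admits the concrete description $f^*(X) = Xe := \{xe \mid x \in X\}$ with $M$-action $(xe)\cdot m := x\phi(m)$, since every element of $X \otimes_{M'} M'e$ reduces to a class of the form $[xe,e]$.

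The implication (3)$\Rightarrow$(1) is immediate: commutativity of the square in \eqref{eq:ctshom} gives $V \circ (Rf^*V') = f^* \circ V'$, so $f^*(V'X)$ lies in the essential image of $V$ for every $X$, which is precisely (1). For (1)$\Leftrightarrow$(2), the main tools are Scholium \ref{schl:Tsuff} (every necessary clopen $\Ical^{p'}_x$ of an $(M',\tau')$-set lies in $T' \subseteq \tilde{\tau}'$) and Scholium \ref{schl:base} (the necessary clopens of all $(M',\tau')$-sets form a base for $\tilde{\tau}'$). For (2)$\Rightarrow$(1), given an $(M',\tau')$-set $X$, the necessary clopen of $f^*(X)$ at $xe$ with parameter $p \in M$ computes to
\[\{m \in M \mid x\phi(m) = x\phi(p)\} = \phi^{-1}(\Ical^{\phi(p)}_x),\]
which is open in $\tau$ by continuity of $\phi$ together with $\Ical^{\phi(p)}_x \in \tilde{\tau}'$, making $f^*(X)$ an $(M,\tau)$-set via Lemma \ref{lem:Inx}. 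For (1)$\Rightarrow$(2), a basic open $\Ical^{p'}_x$ of $\tilde{\tau}'$ (with $X$ any $(M',\tau')$-set) has preimage $\phi^{-1}(\Ical^{p'}_x) = \{m \in M \mid x\phi(m) = xp'\}$; if this is non-empty, pick $q \in M$ with $x\phi(q) = xp'$ and note that it equals the necessary clopen of $xe$ in the $(M,\tau)$-set $f^*(X)$ at $q$, hence is open.

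For (1)$\Rightarrow$(3), assuming (1), $f^*V'$ factors through $V$, and since $RV \cong \id$ the factorization is $f^*V' = V \circ (Rf^*V')$; in particular $Rf^*V'$ is left exact as a composite of left exact functors ($V'$, $f^*$ and $R$ all being left exact). Setting $G := R'f_*V$, the chain of natural isomorphisms
\[\Hom_{\Cont(M,\tau)}(Rf^*V' X,\, Y) \cong \Hom_{\PSh(M)}(f^*V' X,\, VY) \cong \Hom_{\PSh(M')}(V' X,\, f_*VY) \cong \Hom_{\Cont(M',\tau')}(X,\, GY)\]
exhibits $G$ as a right adjoint; an analogous chain using $f^*V' X = V(Rf^*V' X)$ and $V \dashv R$ gives a natural bijection $\Hom(X, GR(Y)) \cong \Hom(X, R'f_*(Y))$ in $X \in \Cont(M',\tau')$ and $Y \in \PSh(M)$, so the Yoneda lemma yields $GR \cong R'f_*$. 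The main obstacle is the bookkeeping around $\phi$ being a semigroup rather than monoid homomorphism, so that $f^*(X) = Xe$ and not simply $X$ with restricted action; once the identity $e\phi(m) = \phi(m)$ is in hand, all the necessary-clopen computations above proceed uniformly.
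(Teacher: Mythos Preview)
Your proof is correct and follows essentially the same route as the paper's for the equivalence (1)$\Leftrightarrow$(2): both compute the necessary clopens of $f^*(X)$ as $\phi$-preimages of necessary clopens of $X$, and both handle the (1)$\Rightarrow$(2) direction by observing that a basic open of $\tilde{\tau}'$ either has empty $\phi$-preimage or has preimage equal to such a necessary clopen. Your treatment is slightly more explicit about the semigroup-homomorphism case, writing $f^*(X) = Xe$ concretely, whereas the paper suppresses this.

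The one substantive methodological difference is in (1)$\Rightarrow$(3). The paper argues that under (1) the functor $Rf^*V'$ preserves finite limits and arbitrary colimits (since these are computed at the level of underlying sets in both $\Cont(M,\tau)$ and $\PSh(M)$), and then invokes the special adjoint functor theorem to obtain the right adjoint $G$ abstractly, deducing $GR \cong R'f_*$ from commutativity of the square of left adjoints. You instead give $G$ explicitly as $R'f_*V$ and verify the adjunction and the identity $GR \cong R'f_*$ by direct chains of hom-set isomorphisms. Your argument is more elementary and more informative (one sees at once what $G$ is), while the paper's is quicker and highlights that $Rf^*V'$ has the full structure of an inverse image functor. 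Both are entirely valid.
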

\begin{proof}
($1 \Leftrightarrow 2$) The precomposition functor $f^*$ maps every $(M',\tau')$-set to an $(M,\tau)$-set if and only if for each $X \in \Cont(M',\tau')$, we have $\Ical_x^{p} \in \tau$ for every $x \in f^*(X)$, $p \in M$. By definition of the action of $M$ on $f^*(X)$, we have $\Ical_x^{p} = \{m \in M \mid x \phi(p) = x \phi(m)\} = \phi^{-1}(\Ical_x^{\phi(p)})$; thus (1) is equivalent to $\phi^{-1}$ preserving the openness of the necessary clopens, which lie in $\tilde{\tau}'$.

Given any $U' \in \tilde{\tau}'$, we may express $U'$ as a union of necessary clopens $\Ical_x^{p'}$, and $\phi^{-1}(U')$ is the corresponding union of $\phi^{-1}(\Ical_x^{p'})$. Each such clopen $\Ical_x^{p'}$ either intersects with the image of $\phi$ and so is of the form $\Ical_x^{\phi(p)}$, or does not and so has empty inverse image. It follows that $\phi$ reflecting openness of the $\Ical_x^{\phi(p)}$ is equivalent to $\phi$ being continuous with respect to $\tilde{\tau}'$ and $\tau$, as required.

($1 \Leftrightarrow 3$) If $f^*$ maps every $(M',\tau')$-set to an $(M,\tau)$-set, then composing $R$ with $f^*V'$ does not affect the underlying set of the image. That is, $Rf^*V'$ preserves finite limits and arbitrary colimits since $f^*V'$ does and these are computed in $\Cont(M,\tau)$ just as in $\PSh(M)$, making $Rf^*V'$ the inverse image of a geometric morphism by the special adjoint functor theorem. Write $G$ for the direct image. It is immediate that $V(Rf^*V') \cong f^*V'$, which means that the corresponding square of right adjoints commutes up to isomorphism and we have $GR' \cong R'f_*$.

Conversely, given a right adjoint $G$ to $Rf^*V'$ satisfying the given identity we must have $f^*V' \cong VRf^*V'$, which ensures that $f^*$ sends every $(M',\tau')$-set to an $(M,\tau)$-set.
\end{proof}

Thus, since $\phi$ being continuous with respect to $\tau$ and $\tau'$ is in general strictly stronger than condition (2) of Lemma \ref{lem:cts}, we obtain a functorialization of the $\Cont(-)$ construction from the ($1$-)category of topological monoids and continuous semigroup homomorphisms to the ($1$-)category of Grothendieck toposes and geometric morphisms. Let us reintroduce the $2$-morphisms between semigroup homomorphisms.

\begin{dfn}
Recall from \cite[Definition 6.2]{TDMA} that a \textbf{conjugation} $\alpha:\phi \Rightarrow \psi$ between semigroup homomorphisms $\phi,\psi:M \to M'$ is an element $\alpha \in M'$ such that $\alpha \phi(1) = \alpha = \psi(1) \alpha$ and for every $m \in M$, $\alpha \phi(m) = \psi(m) \alpha$.
\end{dfn}

Conjugations correspond bijectively and contravariantly with the natural transformations between the essential geometric morphisms corresponding to $\phi$ and $\psi$, by \cite[Theorem 6.5]{TDMA}. Since $\Cont(M,\tau)$ is a full subcategory of $\PSh(M)$, any natural transformation $\alpha: f^* \Rightarrow g^*$ restricts along $R$ to give a natural transformation $Rf^*V' \Rightarrow Rg^*V'$; this is shown to be a general property of any connected geometric morphism in Proposition \ref{prop:conncoff}. Thus, we conclude:

\begin{thm}
\label{thm:Cont}
The construction $\Cont(-)$ is a $2$-functor from the $2$-category of topological monoids, continuous semigroup homomorphisms and conjugations to the $2$-category of Grothendieck toposes, geometric morphisms and natural transformations. We may restrict the codomain of this $2$-functor to those Grothendieck toposes satisfying the condition of Theorem \ref{thm:characterization} to make it essentially surjective on objects. Since every such topos has a representative which is a complete monoid, we may also restrict the domain to the class of complete monoids without changing this fact.
\end{thm}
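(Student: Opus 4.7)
The plan is to verify the $2$-functoriality in stages, relying heavily on what has already been established.

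First, I would confirm the object assignment: by Corollary \ref{crly:topos}, every $\Cont(M,\tau)$ is indeed a Grothendieck topos (in fact a supercompactly generated, two-valued one by Corollary \ref{crly:conttop}). For the $1$-cell assignment, given a continuous semigroup homomorphism $\phi: (M,\tau) \to (M',\tau')$, I would observe that continuity with respect to $\tau'$ is at least as strong as continuity with respect to $\tilde\tau' \subseteq \tau'$, so condition (2) of Lemma \ref{lem:cts} applies and we obtain a geometric morphism $\Cont(\phi) := (G \dashv Rf^*V') : \Cont(M,\tau) \to \Cont(M',\tau')$ whose inverse image is the restriction of $f^*$.

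Next, for the $2$-cell assignment I would argue as follows: a conjugation $\alpha:\phi\Rightarrow\psi$ corresponds by \cite[Theorem 6.5]{TDMA} to a natural transformation $\alpha^* : g^* \Rightarrow f^*$ between the corresponding essential inverse images on the presheaf toposes. Whiskering with $V'$ on the right and $R$ on the left produces a natural transformation $R g^* V' \Rightarrow R f^* V'$ between the inverse images of the restricted geometric morphisms, which is the desired $2$-cell. (The forthcoming Proposition \ref{prop:conncoff} verifies more generally that this whiskering process along a connected geometric morphism behaves well with respect to composition and identities; I would appeal to it, since $R \dashv V$ is hyperconnected hence connected.) Functoriality in both dimensions is then essentially formal: composition of continuous semigroup homomorphisms induces composition of the underlying essential geometric morphisms by \cite{TDMA}, and restriction along $V,V'$ preserves this; likewise the identity conjugation yields the identity natural transformation, and vertical/horizontal composition of conjugations is preserved under whiskering.

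For essential surjectivity onto the subcategory determined by the condition in Theorem \ref{thm:characterization}, I would simply invoke that theorem: any topos satisfying it is, up to equivalence, of the form $\Cont(M,\tau)$, and conversely every topos in the image satisfies the condition because it comes equipped with the canonical hyperconnected morphism from $\PSh(M)$ composed with the canonical essential point of $\PSh(M)$. Finally, to restrict the domain to complete monoids without losing essential surjectivity, I would appeal to Proposition \ref{prop:representation} together with Schol.\ \ref{schl:completion}: every topos in the image is equivalent (via the canonical point) to $\Cont(L,\rho)$ for the complete monoid $(L,\rho)$ extracted from the hyperconnected morphism, so restricting objects to complete monoids still hits every isomorphism class. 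The only subtle step I would expect is the careful verification that the whiskering construction for $2$-cells really preserves horizontal and vertical composition (so that we obtain an honest $2$-functor and not merely a pseudofunctor); this is where the cleanest approach is to defer to the general connected-morphism lemma referenced above rather than check naturality squares by hand.
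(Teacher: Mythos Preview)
Your proposal is correct and follows essentially the same route as the paper: the paper's argument for Theorem \ref{thm:Cont} is precisely the discussion preceding its statement, which uses Lemma \ref{lem:cts} for the $1$-cell assignment, the restriction/whiskering of natural transformations (justified via Proposition \ref{prop:conncoff}) for the $2$-cell assignment, and Theorem \ref{thm:characterization} for essential surjectivity. You supply more explicit detail on the functoriality checks than the paper does, but the ingredients and their roles are identical.
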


We make some further comments about this $2$-functor before Example \ref{xmpl:Schanuel}.

One aim of the remainder of this section is to investigate the surjection--inclusion and hyperconnected--localic factorizations of a geometric morphism corresponding to a continuous semigroup homomorphism, to extend Proposition \ref{prop:essgeom} to topological monoids. We ultimately show in Theorems \ref{thm:surjinc} and \ref{thm:hypeloc} that these factorizations restrict along $\Cont(-)$.

\subsection{Intrinsic properties of geometric morphisms}
\label{ssec:intrinsic}

For reference in the rest of the section, we shall use the following notation for the square of geometric morphisms induced by a continuous semigroup homomorphism $\phi: (M,\tau) \to (M',\tau')$ thanks to Lemma \ref{lem:cts}:
\begin{equation}
\label{eq:square}
\begin{tikzcd}
{\PSh(M)} \ar[r, "f"] \ar[d, "h"'] &
{\PSh(M')} \ar[d, "h'"] \\
{\Cont(M{,}\tau)} \ar[r, "g"'] &
{\Cont(M'{,}\tau')},
\end{tikzcd}	
\end{equation}
where $h$ and $h'$ are hyperconnected and $f$ is essential; we could alternatively have denoted $g$ by $\Cont(\phi)$ in accordance with Theorem \ref{thm:Cont}, but the shorter notation will make some of the results below clearer. In order to understand the relationships between $f$ and $g$, we shall exploit intrinsic properties of the geometric morphisms $h$ and $h'$ as $1$-morphisms in the $2$-category $\TOP$ of Grothendieck toposes\footnote{Some of these results apply more generally, but for the purposes of the present paper we only concern ourselves with Grothendieck toposes over $\Set$.}, in the special cases that $(M,\tau)$ and/or $(M',\tau')$ are powder monoids or complete monoids. 

Given (Grothendieck toposes) $\Ecal$ and $\Fcal$, we shall write $\Geom(\Ecal,\Fcal)$ for the category of geometric morphisms $\Ecal \to \Fcal$, where a morphism $f \Rightarrow g$ is as usual a natural transformation $f^* \Rightarrow g^*$. We shall also write $\EssGeom(\Ecal,\Fcal)$ for the full subcategory of essential geometric morphisms $\Ecal \to \Fcal$.

First, we can use Corollary \ref{crly:Mlim} to give an intrinsic characterization of the hyperconnected morphism presenting a complete monoid.

\begin{prop}
\label{prop:intrinsic}
A topological monoid $(M,\tau)$ is complete if and only if the geometric morphism $h: \PSh(M) \to \Cont(M,\tau)$ is representably full and faithful on essential geometric morphisms in $\TOP$, in the sense that for any topos $\Fcal$, the functor
\[h \circ -: \EssGeom(\Fcal,\PSh(M)) \to \Geom(\Fcal,\Cont(M,\tau))\] 
is full and faithful.

Thus, if $\Ecal$ admits a hyperconnected morphism $h:\PSh(M) \to \Ecal$ which is representably full and faithful on essential geometric morphisms, the corresponding topological monoid $(L,\rho)$ representing $\Ecal$ has $L \cong M$.
\end{prop}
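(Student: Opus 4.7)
The plan is to prove the \emph{iff} by combining Proposition \ref{prop:lim} (which identifies $L\op$ with $\End(p^*)$ for the composite point $p$) with the fact that $\PSh(M)$ is supercompactly generated by the principal $M$-sets $M/r$ with $r \in \Rcal_h$, and with the limit description of a complete $M$ in Corollary \ref{crly:Mlim}.

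For the forward direction (complete $\Rightarrow$ representably ff), suppose $(M,\tau)$ is complete, so that $M \cong \varprojlim_{r \in \Rcal_h} M/r$ in $\PSh(M)$ by Corollary \ref{crly:Mlim}, with each $M/r$ lying in $h^*(\Cont(M,\tau))$. Any essential geometric morphism $f:\Fcal \to \PSh(M)$ has an inverse image $f^*$ that preserves this limit, being a right adjoint, and is determined on all of $\PSh(M)$ by its restriction to the representable $M$ via cocontinuity. Hence any natural transformation $\eta:f^* \Rightarrow g^*$ is determined by its components $\eta_{M/r}$, each of which is precisely a component of the whiskering $\eta \cdot h^*$; this yields faithfulness. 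For fullness, given $\alpha:f^* h^* \Rightarrow g^* h^*$, I would define $\eta_{M/r} := \alpha_{M/r}$ for each $r \in \Rcal_h$, assemble $\eta_M$ as the induced map on limits, and extend cocontinuously to all of $\PSh(M)$; the resulting assignment is natural because any morphism in $\underline{\Rcal}_h$ factors through a strict epimorphism $[1]$ and a subobject inclusion $[m]$ (Lemma \ref{lem:factor}), both of which are preserved by $f^*$ and $g^*$.

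For the reverse direction (representably ff $\Rightarrow$ complete), and the consequent statement $L \cong M$, it suffices to specialise to $\Fcal = \Set$. The canonical essential point of $\PSh(M)$ has endomorphism monoid $M$ under the conjugation correspondence of \cite[Theorem 6.5]{TDMA}, while its composition with $h$ is the canonical point of $\Cont(M,\tau)$, with endomorphism monoid $L\op$ by Proposition \ref{prop:lim}. The functor $h \circ -$ acts on these endomorphisms as the comparison homomorphism $u$ of Lemma \ref{lem:Mdense} (up to passage to opposites), so faithfulness and fullness give that $u$ is a bijective monoid homomorphism, hence $L \cong M$ as monoids, which is the statement of the second paragraph. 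To upgrade to an isomorphism of \emph{topological} monoids, one observes that $u$ is continuous (Corollary \ref{crly:extend}) and that the basic opens of $\rho$ are $\pi_r^{-1}([m])$ for $r \in \Rcal_h$, whose $u$-preimages are precisely the $r$-equivalence classes of $m$, which form a base for $\tilde{\tau}$ by Scholium \ref{schl:base}; so $u$ is a homeomorphism $(M,\tilde{\tau}) \to (L,\rho)$, forcing $\tau = \tilde{\tau}$ (since $h$ does not distinguish $\tau$ from $\tilde{\tau}$ and completeness requires a topological iso) and completing the proof.

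The main obstacle is the fullness half of the forward direction: one must check that the transformation $\eta$ reconstructed componentwise from $\alpha$ on the $M/r$'s really is natural on all of $\PSh(M)$, not merely on the subcategory $h^*(\Cont(M,\tau))$. This reduces to verifying compatibility with morphisms $M \to M/r$ indexed by arbitrary $m \in M$, which follows from the factorisation in Lemma \ref{lem:factor} together with cocontinuity of $f^*,g^*$; a subsidiary delicate point in the reverse direction is the implicit assumption that $\tau$ is an action topology, without which $h$ cannot see the difference between $\tau$ and $\tilde{\tau}$, and which must be read into the hypothesis or recovered from the conclusion that $u$ is a homeomorphism.
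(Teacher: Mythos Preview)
Your approach is essentially the same as the paper's. In both directions the paper proceeds exactly as you do: for ``complete $\Rightarrow$ representably ff'' it uses the limit expression $M = \varprojlim_{r \in \Rcal_\tau} M/r$ from Corollary~\ref{crly:Mlim} together with limit-preservation by inverse images of essential morphisms to show that a transformation $\alpha:f^* \Rightarrow g^*$ is determined by (and recoverable from) its components at the $M/r$; for ``representably ff $\Rightarrow$ complete'' it specialises to $\Fcal = \Set$ and the canonical point, observing that the endomorphism monoid of the composite point is forced to be $M\op$.

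Two remarks. First, the paper's treatment of fullness in the forward direction is terse (one sentence), whereas you explicitly describe reconstructing $\eta$ from $\alpha$ and flag the naturality check as the main obstacle; your decomposition via Lemma~\ref{lem:factor} is a reasonable way to fill that in. Second, you are right to flag the topological subtlety in the reverse direction: the paper's argument only establishes $L \cong M$ as monoids, matching the second paragraph of the statement, and does not spell out why $u$ is a homeomorphism (which is what ``complete'' in Definition~\ref{dfn:cpltmon} literally requires). Your observation that $h$ cannot distinguish $\tau$ from $\tilde{\tau}$ is correct, and the cleanest resolution is the one you sketch: once $u$ is a bijection, $u^*\rho$ is the action topology $\tilde{\tau}$ by Theorem~\ref{thm:tau}, so the statement should be read as characterising when $(M,\tilde\tau)$ is complete, or equivalently under the standing assumption that $\tau$ is an action topology.
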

\begin{proof}
By taking $\Fcal = \Set$ and considering the canonical point of $\PSh(M)$, we see that the given condition is sufficient, since it forces the monoid of endomorphisms of the canonical point of $\Cont(M,\tau)$ to be isomorphic to that of $\PSh(M)$, which is precisely $M\op$. The same holds when we are given such a morphism $\PSh(M) \to \Ecal$.

Conversely, suppose we are given essential geometric morphisms $h,k: \Fcal \to \PSh(M)$ and a natural transformation $\alpha:h^* \Rightarrow k^*$. Any such natural transformation is determined by its component $\alpha_M:h^*(M) \to k^*(M)$. But since $(M,\tau)$ is complete, by Corollary \ref{crly:Mlim}, $M = \lim_{r \in \Rcal_{\tau}} M/r$, and $h^*$ and $k^*$ preserve all limits, whence $\alpha_M$ is determined uniquely by the components $\alpha_{M/r}$. The functor induced by $g$ sends $\alpha$ to $\alpha_{g^*}$; considering the components at the principal $(M,\tau)$-sets, we conclude that this functor is full and faithful, as claimed.
\end{proof}

Proposition \ref{prop:intrinsic} should be compared with the following two propositions:
\begin{prop}
\label{prop:inclff}
Inclusions of toposes are representably full and faithful in the $2$-category $\TOP$, in the sense that given an inclusion $g:\Fcal \to \Ecal$ and any topos $\Gcal$, the functor $g \circ -: \Geom(\Gcal,\Fcal) \to \Geom(\Gcal,\Ecal)$ is fully faithful.

In particular, $g$ is full and faithful on essential geometric morphisms in the sense of Proposition \ref{prop:intrinsic}, and when $g$ is an essential inclusion, we may restrict the codomain to deduce that $i \circ -: \EssGeom(\Gcal,\Fcal) \to \EssGeom(\Gcal,\Ecal)$ is full and faithful.
\end{prop}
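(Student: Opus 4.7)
The plan is to exploit the defining feature of an inclusion: since the direct image $g_*$ is fully faithful, the counit $\epsilon: g^*g_* \Rightarrow 1_{\Fcal}$ of the adjunction $g^* \dashv g_*$ is a natural isomorphism, and by the triangle identity $g^*(\eta_A) = \epsilon_{g^* A}^{-1}$ for every $A \in \Ecal$.

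Given geometric morphisms $p, q: \Gcal \to \Fcal$, the functor $g \circ -$ acts on $2$-cells by whiskering $\alpha: p^* \Rightarrow q^*$ to $\alpha \ast g^*: p^*g^* \Rightarrow q^*g^*$. I would exhibit a two-sided inverse to this assignment. For each $\beta: p^*g^* \Rightarrow q^*g^*$, define $\tilde{\beta}: p^* \Rightarrow q^*$ with components $\tilde{\beta}_X := q^*(\epsilon_X) \circ \beta_{g_* X} \circ p^*(\epsilon_X)^{-1}$ for $X \in \Fcal$; naturality of $\tilde{\beta}$ follows by combining naturality of $\beta$ applied to $g_*(f): g_* X \to g_* Y$ with naturality of $\epsilon$.

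I would then verify the two round-trips. Starting with $\alpha: p^* \Rightarrow q^*$, the expression $(\widetilde{\alpha \ast g^*})_X = q^*(\epsilon_X) \circ \alpha_{g^*g_* X} \circ p^*(\epsilon_X)^{-1}$ collapses to $\alpha_X$ by naturality of $\alpha$ at the isomorphism $\epsilon_X$. In the other direction, naturality of $\beta$ at the unit $\eta_A: A \to g_*g^* A$, together with the triangle identity $g^*(\eta_A) = \epsilon_{g^* A}^{-1}$, yields $(\tilde{\beta} \ast g^*)_A = q^*(\epsilon_{g^* A}) \circ \beta_{g_*g^* A} \circ p^*(\epsilon_{g^* A})^{-1} = \beta_A$.

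For the essential-inclusion addendum, the composite of two essential geometric morphisms is essential (left adjoints compose), so $g \circ -$ restricts to a functor $\EssGeom(\Gcal, \Fcal) \to \EssGeom(\Gcal, \Ecal)$ between full subcategories, which automatically inherits full faithfulness from the first part. The main obstacle is purely the bookkeeping of naturality squares and triangle identities; conceptually everything reduces to the invertibility of $\epsilon$, which is immediate from the definition of an inclusion.
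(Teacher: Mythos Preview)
Your proof is correct and follows essentially the same approach as the paper: both arguments exploit the invertibility of the counit $\epsilon$ of $g^* \dashv g_*$ to conjugate components of natural transformations through $g_*$, using naturality of $\epsilon$ (and the triangle identity for the fullness direction). The only cosmetic difference is that you package the argument as constructing an explicit two-sided inverse to whiskering, whereas the paper verifies faithfulness and fullness separately with the same underlying computations.
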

\begin{proof}
Let $g:\Fcal \to \Ecal$ be a geometric inclusion, and let $h,k: \Gcal \rightrightarrows \Fcal$.

A geometric transformation $h \Rightarrow k$ consists of a natural transformation $h^* \Rightarrow k^*$. Let $\alpha$, $\beta$ be two such transformations. If $g \circ \beta = g \circ \alpha$, then for any object $C$ of $\Gcal$, letting $\epsilon_C$ denote the counit of $(i^* \dashv i_*)$ at $C$, which is an isomorphism, we have:
\begin{align*}
\alpha_C & = k^*\epsilon_{C} \circ \alpha_{g^*g_*(C)} \circ h^*\epsilon_C^{-1} = k^*\epsilon_{C} \circ (g \circ \alpha)_{g_*(C)} \circ h^*\epsilon_{C}^{-1}\\
& = k^*\epsilon_{C} \circ (g \circ \beta)_{g_*(C)} \circ h^*\epsilon_{C}^{-1} = k^*\epsilon_{C} \circ \beta_{g^*g_*(C)} \circ h^*\epsilon_{C}^{-1} = \beta_C,
\end{align*}
so $g \circ -$ is faithful.

Similarly, given $\alpha': h^*g^* \Rightarrow k^*g^*$, define $\alpha: h^* \Rightarrow k^*$ by letting its component at $C$ in $\Gcal$ be $k^*\epsilon_{C}^{-1} \circ \alpha'_{g_*(C)} \circ h^*\epsilon_{C}$. Then for each object $D$ in $\Fcal$ we have
\[(g \circ \alpha)_{D} = \alpha_{g^*(D)} = k^*\epsilon_{g^*(D)}^{-1} \circ \alpha'_{g_*g^*(D)} \circ h^*\epsilon_{g^*(D)} = \alpha'_D,\]
by naturality. So $g \circ -$ is full, as required.
\end{proof}

\begin{prop}
\label{prop:conncoff}
Connected geometric morphisms are representably cofull and cofaithful in the $2$-category $\TOP$, in the sense that given a connected morphism $c:\Fcal \to \Ecal$ and any topos $\Gcal$, the functor $- \circ c: \Geom(\Ecal,\Gcal) \to \Geom(\Fcal,\Gcal)$ is fully faithful.

This applies in particular to hyperconnected geometric morphisms.
\end{prop}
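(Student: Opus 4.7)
The plan is to mimic the argument of Proposition \ref{prop:inclff}, exchanging the counit for the unit. The key input is that a geometric morphism $c:\Fcal \to \Ecal$ is connected precisely when its inverse image $c^*$ is full and faithful, equivalently when the unit $\eta: \id_{\Ecal} \Rightarrow c_* c^*$ of the adjunction $(c^* \dashv c_*)$ is an isomorphism. Since hyperconnected morphisms have full and faithful inverse image by Definition \ref{dfn:hype}, the final sentence of the proposition follows once the main claim is established.

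Fix toposes $\Ecal, \Gcal$ and geometric morphisms $h,k: \Ecal \rightrightarrows \Gcal$. Because inverse images compose contravariantly, a $2$-cell $hc \Rightarrow kc$ is a natural transformation $c^*h^* \Rightarrow c^*k^*$, and the functor $-\circ c$ sends $\alpha: h^* \Rightarrow k^*$ to the whiskered transformation $c^*\alpha$.

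For faithfulness I would apply $c_*$ to the hypothesis $c^*\alpha = c^*\beta$ and then use naturality of $\eta$ to recover each component as $\alpha_X = \eta_{k^*X}^{-1} \circ c_*(c^*\alpha_X) \circ \eta_{h^*X}$ (and similarly for $\beta_X$), forcing $\alpha = \beta$. For fullness, given any $\gamma: c^*h^* \Rightarrow c^*k^*$, I would define a candidate $\alpha$ componentwise by the analogous formula $\alpha_X := \eta_{k^*X}^{-1} \circ c_*(\gamma_X) \circ \eta_{h^*X}$. Its naturality is a routine diagram chase using naturality of $\eta$ and of $\gamma$, and to check that $c^*\alpha = \gamma$ I would invoke the triangle identity $\epsilon_{c^*(-)} \circ c^*\eta = \id_{c^*}$: since $\eta$ is an iso, so is $c^*\eta$, with inverse $\epsilon c^*$, and naturality of $\epsilon$ then collapses the defining expression for $c^*\alpha_X$ back to $\gamma_X$.

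There is no substantive obstacle here; the argument is purely formal and dual in spirit to the proof of Proposition \ref{prop:inclff}, swapping the roles of the unit and the counit. The only thing to be careful about is bookkeeping around the paper's convention of identifying $2$-cells between geometric morphisms with natural transformations of the corresponding inverse image functors, so that the directions of whiskering and composition are written the correct way around.
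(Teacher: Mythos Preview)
Your argument is correct, but it is more elaborate than necessary. The paper's proof is a one-liner: the action of $-\circ c$ on $2$-cells is precisely whiskering by $c^*$, i.e.\ applying the functor $c^*$ to each component of a natural transformation $h^* \Rightarrow k^*$; since $c^*$ is full and faithful, this operation is automatically full and faithful on natural transformations. Your explicit formulas via $\eta$ and $c_*$ unpack exactly this fact (full faithfulness of $c^*$ being equivalent to invertibility of $\eta$), so the two arguments are equivalent in content, but the paper avoids the bookkeeping with units, triangle identities, and naturality chases that you carry over from the dual Proposition~\ref{prop:inclff}. The asymmetry is that in Proposition~\ref{prop:inclff} the whiskering is on the other side, so one cannot simply appeal to full faithfulness of a single functor and the counit manipulation is genuinely needed there; here the situation is simpler.
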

\begin{proof}
Let $c:\Fcal \to \Ecal$ be a connected geometric morphism. Then concretely, $- \circ c$ is simply the application of $c^*$ to the components of any given natural transformation. As such, since $c^*$ is full and faithful, $- \circ c$ is full and faithful.
\end{proof}

\begin{crly}
\label{crly:Cont}
The restriction of $\Cont(-)$ to the category of complete monoids, semigroup homomorphisms and conjugations (with direction reversed) is full and faithful on 2-cells.
\end{crly}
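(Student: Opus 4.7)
The plan is to string together three natural bijections relating $2$-cells across the commutative square \eqref{eq:square} and then invoke \cite[Theorem 6.5]{TDMA}. Fix continuous semigroup homomorphisms $\phi,\psi:(M,\tau)\to(M',\tau')$ between complete monoids; write $f_\phi,f_\psi:\PSh(M)\to\PSh(M')$ for the induced essential geometric morphisms and $g_\phi,g_\psi:\Cont(M,\tau)\to\Cont(M',\tau')$ for the induced geometric morphisms, and let $h,h'$ be the hyperconnected morphisms of \eqref{eq:square}. The aim is to exhibit a bijection between conjugations $\phi\Rightarrow\psi$ and $2$-cells $g_\psi\Rightarrow g_\phi$.

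First I would invoke Proposition \ref{prop:conncoff}, applied to the hyperconnected (hence connected) morphism $h$: precomposition with $h$ is fully faithful on $2$-cells, so $2$-cells $g_\psi\Rightarrow g_\phi$ correspond bijectively with $2$-cells $g_\psi h \Rightarrow g_\phi h$ in $\Geom(\PSh(M),\Cont(M',\tau'))$. Next, the commutativity of \eqref{eq:square} from Lemma \ref{lem:cts} supplies canonical isomorphisms $g_\psi h \cong h' f_\psi$ and $g_\phi h \cong h' f_\phi$, turning these into $2$-cells $h' f_\psi \Rightarrow h' f_\phi$. Completeness of $(M',\tau')$ then enters via Proposition \ref{prop:intrinsic}: postcomposition with $h'$ is fully faithful on essential geometric morphisms into $\Cont(M',\tau')$, so—since both $f_\phi$ and $f_\psi$ are essential—this produces a bijection with $2$-cells $f_\psi \Rightarrow f_\phi$. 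Finally, \cite[Theorem 6.5]{TDMA} equates these (contravariantly) with conjugations $\phi \Rightarrow \psi$, completing the chain.

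The argument is essentially formal given the infrastructure built up in the preceding sections, so there is no single dramatic obstacle. The one step requiring real care is bookkeeping of directions: the correspondence of \cite[Theorem 6.5]{TDMA} reverses direction, which is precisely where the phrase ``with direction reversed'' in the statement originates, and the identifications $g_\phi h \cong h' f_\phi$ coming from the non-strict commutativity of \eqref{eq:square} must be checked to transport $2$-cells coherently (which amounts to the standard compatibility of whiskering with horizontal composition). It is worth noting that completeness of the domain $(M,\tau)$ is never actually used—only Proposition \ref{prop:intrinsic} applied to $h'$ is invoked—so the hypothesis is stronger than strictly required, but matches the natural domain of the restricted $2$-functor identified in Theorem \ref{thm:Cont}.
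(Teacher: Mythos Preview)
Your proposal is correct and follows essentially the same route as the paper's proof: both chain together the bijection from \cite[Theorem 6.5]{TDMA}, the full faithfulness of $h'\circ -$ on essential morphisms from Proposition \ref{prop:intrinsic}, and the cofull cofaithfulness of $-\circ h$ from Proposition \ref{prop:conncoff}, differing only in the order of exposition. Your closing observation that completeness of the domain $(M,\tau)$ is never invoked is accurate and not made explicit in the paper's version.
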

\begin{proof}
As mentioned above, the $2$-equivalence of \cite[Theorem 6.5]{TDMA} mapping a discrete monoid to its presheaf topos is (contravariantly) full and faithful on $2$-cells, so for essential geometric morphisms $f,f': \PSh(M) \to \PSh(M')$ induced by $\phi$, $\phi'$ respectively, each geometric transformation $\alpha: f \Rightarrow f'$ corresponds to a unique conjugation $\phi \Rightarrow \phi'$. Since $h'$ is full and faithful on essential geometric morphisms, the same is true of $2$-cells $h' \circ f \Rightarrow h' \circ f'$, by Proposition \ref{prop:intrinsic}. Passing across the square \eqref{eq:square}, since $h$ is cofull and cofaithful by Proposition \ref{prop:conncoff}, we obtain a further identification with the geometric transformations $g \Rightarrow g'$ (where $g$, $g'$ are also induced by $\phi$, $\phi'$ respectively), as required.
\end{proof}

\begin{rmk}
Explicitly, the conjugation corresponding to a transformation $\beta: g \Rightarrow g'$ is obtained as follows. First, compose with $h$ and take the limit of the components at the principal $(M',\tau')$-sets to obtain an $M$-set homomorphism $\alpha_{M'}: f^*(M') \to f'{}^*(M')$ (using the limit expression from Corollary \ref{crly:Mlim} and essentialness of $f,f'$ again), and by extension a natural transformation $\alpha: f^* \Rightarrow f'{}^*$. Then we take the mate $\overline{\alpha}:f'_! \Rightarrow f_!$, whose component at $M$ is the desired conjugation.
\end{rmk}

Corollary \ref{crly:Cont} can be understood as a strengthening of Proposition \ref{prop:lim}, since taking the domain monoid $M$ to be the trivial monoid and $\phi = \psi$ to be the unique monoid homomorphism to $(M',\tau')$, the conjugations are precisely the elements of $M'$.

Returning to properties of geometric morphisms, Proposition \ref{prop:inclff} will allow us to constrain the interactions between hyperconnected geometric morphisms and geometric inclusions in Section \ref{ssec:id}, but we also need to consider localic geometric morphisms.
\begin{prop}
\label{prop:locfaith}
Localic geometric morphisms are representably faithful in the $2$-category $\TOP$ of toposes, in the sense that given a localic geometric morphism $f:\Fcal \to \Ecal$ and any topos $\Gcal$, the functor $f \circ -: \Geom(\Gcal,\Fcal) \to \Geom(\Gcal,\Ecal)$ is faithful.
\end{prop}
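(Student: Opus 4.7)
The plan is to unwind what faithfulness of the functor $f \circ -$ amounts to concretely and then exploit the defining property of localic morphisms, namely that every object of $\Fcal$ is a subquotient of an object of the form $f^*(E)$ for some $E \in \Ecal$ (see \cite[A4.6.1]{Ele}). A geometric transformation $h \Rightarrow k$ is a natural transformation $\alpha: h^* \Rightarrow k^*$, and $f \circ -$ sends $\alpha$ to the horizontal composite whose component at $E$ is $\alpha_{f^*(E)}$. So the task reduces to showing that if two natural transformations $\alpha,\beta: h^* \Rightarrow k^*$ satisfy $\alpha_{f^*(E)} = \beta_{f^*(E)}$ for every $E \in \Ecal$, then $\alpha_F = \beta_F$ for every $F \in \Fcal$.

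First I would fix $F \in \Fcal$ and invoke localicness to obtain $E \in \Ecal$, a subobject $m: X \hookrightarrow f^*(E)$, and an epimorphism $e: X \twoheadrightarrow F$. Then I would propagate the equality $\alpha_{f^*(E)} = \beta_{f^*(E)}$ down to $X$ by naturality: the squares
\[
\begin{tikzcd}
h^*(X) \ar[r, "h^*(m)"] \ar[d, "\alpha_X"', shift right] \ar[d, "\beta_X", shift left] & h^*(f^*(E)) \ar[d, "\alpha_{f^*(E)} = \beta_{f^*(E)}"] \\
k^*(X) \ar[r, "k^*(m)"'] & k^*(f^*(E))
\end{tikzcd}
\]
commute for both $\alpha$ and $\beta$. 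Since $k^*$ is left exact, $k^*(m)$ is still monic, and therefore cancellable on the left, forcing $\alpha_X = \beta_X$.

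Next I would push the equality back up to $F$ using naturality against $e$: the squares
\[
\begin{tikzcd}
h^*(X) \ar[r, "h^*(e)"] \ar[d, "\alpha_X = \beta_X"'] & h^*(F) \ar[d, "\alpha_F"', shift right] \ar[d, "\beta_F", shift left] \\
k^*(X) \ar[r, "k^*(e)"'] & k^*(F)
\end{tikzcd}
\]
commute. Because $h^*$ is a left adjoint it preserves epimorphisms, so $h^*(e)$ is epic and cancellable on the right; hence $\alpha_F = \beta_F$, as required. Since $F$ was arbitrary, $\alpha = \beta$, proving that $f \circ -$ is faithful.

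There is no real obstacle here: the proof is entirely formal once one has the subquotient characterization of localicness and the basic exactness properties of the inverse images $h^*, k^*$ (left exactness giving preservation of monos, and being a left adjoint giving preservation of epis). The only mild care required is to make sure that the notion of ``subquotient'' is applied in the correct direction so that monos get pushed forward along $f^*$ (where the hypothesis lives) and epis get used on the quotient side.
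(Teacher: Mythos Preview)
Your proof is correct and is essentially identical to the paper's own argument: both invoke the subquotient characterisation of localic morphisms, use that $k^*$ preserves monomorphisms to deduce equality on the subobject, and that $h^*$ preserves epimorphisms to propagate this to the quotient. The only cosmetic difference is that you spell out the reasons for these preservation properties (left exactness, being a left adjoint) where the paper simply asserts them.
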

\begin{proof}
A geometric morphism $g:\Fcal \to \Ecal$ is localic if and only if every object $Y$ of $\Fcal$ is a subquotient of one of the form $g^*(X)$ for $X$ in $\Ecal$, so there exists a diagram
\[\begin{tikzcd}
Y & \ar[l, two heads, "e"'] Z \ar[r, hook, "m"] & g^*(X).
\end{tikzcd}\]
Given geometric morphisms $h,k: \Gcal \rightrightarrows \Fcal$ and geometric transformations $\alpha,\beta: h \rightarrow k$ with $g \circ \alpha = g \circ \beta$, this is equivalent to the condition that $\alpha_{g^*(X)} = \beta_{g^*(X)}$ for all objects $X$ of $\Ecal$. Then considering naturality across the diagram above, since $h^*$ and $k^*$ both preserve epimorphisms and monomorphisms, we have:
\[\begin{tikzcd}
h^*g^*(X) \ar[r,"\alpha_{g^*(X)}", shift left] \ar[r,"\beta_{g^*(X)}"', shift right] &
k^*g^*(X) \\
h^*(Z) \ar[r,"\alpha_{Z}", shift left] \ar[r,"\beta_{Z}"', shift right] \ar[d, "h^*e"', two heads] \ar[u, "h^*m", hook] &
k^*g^*(X) \ar[d, "k^*e", two heads] \ar[u, "k^*m"', hook] \\
h^*(Y) \ar[r,"\alpha_{Y}", shift left] \ar[r,"\beta_{Y}"', shift right] & k^*(Y),
\end{tikzcd}\]
whence we see that $\alpha_{Z} = \beta_{Z}$ and then $\alpha_Y = \beta_Y$. Since $Y$ was a generic object of $\Fcal$, $\alpha = \beta$, as required.
\end{proof}

\begin{crly}
\label{crly:powderfaith}
Let $\tau$ be an action topology on $M$. Then $(M,\tau)$ is a powder monoid (equivalently, $\tau$ is a $T_0$ topology on $M$) if and only if the hyperconnected geometric morphism $\PSh(M) \to \Cont(M,\tau)$ is representably faithful on essential geometric morphisms.
\end{crly}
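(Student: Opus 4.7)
The plan is to argue both directions by reducing faithfulness of $h \circ -$ on essential geometric morphisms to injectivity of the comparison homomorphism $u : M \to L$ of Lemma \ref{lem:Mdense}, which by Corollary \ref{crly:extend} is exactly what characterizes powder monoids among monoids with an action topology. The key technical ingredient is that an essential geometric morphism $h_i : \Fcal \to \PSh(M)$ has inverse image $h_i^*$ which is both a left and a right adjoint, hence preserves all limits and all colimits, so natural transformations $h_1^* \Rightarrow h_2^*$ between such inverse image functors are determined by their single component at the colimit-dense representable $M \in \PSh(M)$.

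For the forward direction I assume $(M,\tau)$ is a powder monoid and let $\alpha, \beta : h_1^* \Rightarrow h_2^*$ satisfy $h \circ \alpha = h \circ \beta$, which amounts to the condition $\alpha_{M/r} = \beta_{M/r}$ for every open congruence $r \in \Rcal_{\tau}$. It suffices to show $\alpha_M = \beta_M$. Corollary \ref{crly:extend} supplies a monomorphism $u : M \hookrightarrow L$ which, since $V$ preserves limits and $L \cong \varprojlim_{r \in \Rcal_\tau} M/r$, gives a monomorphism $M \hookrightarrow \varprojlim_{r \in \Rcal_\tau} M/r$ in $\PSh(M)$. Applying the limit-preserving $h_1^*, h_2^*$ and invoking naturality of $\alpha$ (resp.\ $\beta$) at this monomorphism yields a commutative square whose bottom arrow is $\varprojlim_{r} \alpha_{M/r}$ (resp.\ $\varprojlim_{r} \beta_{M/r}$); these bottom arrows coincide by hypothesis, and the right-hand vertical map $h_2^*(M) \hookrightarrow \varprojlim_{r} h_2^*(M/r)$ is monic, forcing $\alpha_M = \beta_M$.

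For the reverse direction I specialize faithfulness to $\Fcal = \Set$ with $h_1 = h_2 = p$ the canonical essential point of $\PSh(M)$, whose endomorphism monoid $\End(p)$ is naturally $M\op$ (an endo-natural-transformation of the forgetful functor is determined by its value at $1 \in M$, and composition reverses multiplication), while $\End(h \circ p) = L\op$ by Proposition \ref{prop:lim}. A short calculation using $(h \circ \alpha)_Y = \alpha_{h^*(Y)}$ identifies the induced map $\End(p) \to \End(h \circ p)$ with $u$ (up to opposite), so faithfulness forces $u$ to be injective. To conclude that $\tau$ is $T_0$, I take topologically indistinguishable $m, m' \in M$: for every continuous $M$-set $X$ and $x \in X$ the necessary clopen $\Ical_x^m$ is open (Lemma \ref{lem:Inx}) and contains $m$, so indistinguishability gives $m' \in \Ical_x^m$, i.e.\ $xm' = xm$; specializing to each principal continuous $M$-set $M/r$ with generator $[1]$ yields $[m] = [m']$ in $M/r$ for every $r \in \Rcal_\tau$, equivalently $u(m) = u(m')$, whence $m = m'$ by injectivity.

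The main subtleties I expect to navigate are (i) justifying that a natural transformation between essential inverse image functors out of $\PSh(M)$ is determined by its component at $M$, which relies on both adjoints to $h_i^*$ existing in order to commute past arbitrary limits and colimits, and (ii) correctly tracking the opposite-monoid conventions when identifying the endomorphism-monoid map with $u$; everything else is a clean combination of Corollary \ref{crly:extend}, limit- and mono-preservation of $h_i^*$, and the base of necessary clopens guaranteed by the action-topology hypothesis.
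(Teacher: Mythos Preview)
Your proof is correct. Both directions land on the same key fact---injectivity of the comparison map $u:M\to L$---but the packaging differs from the paper's.

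For the forward direction, the paper routes through the commuting square \eqref{eq:square} induced by $u$: since $u$ is injective, $f:\PSh(M)\to\PSh(L)$ is localic and hence representably faithful (Proposition~\ref{prop:locfaith}), while $h':\PSh(L)\to\Cont(L,\rho)$ is representably full and faithful on essential morphisms because $L$ is complete (Proposition~\ref{prop:intrinsic}); composing and using that $g$ is an equivalence yields faithfulness of $h$. Your argument instead unwinds this directly inside $\PSh(M)$: you exploit the monomorphism $M\hookrightarrow\varprojlim_r M/r$ together with limit-preservation by the essential inverse images $h_i^*$ to pin down $\alpha_M$ from the $\alpha_{M/r}$, then use colimit-density of $M$ to recover all of $\alpha$. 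This is more self-contained and avoids invoking the abstract 2-categorical lemmas, at the cost of reproving in place what amounts to a special case of Proposition~\ref{prop:locfaith}. Either way the essential content is the same observation from the proof of Proposition~\ref{prop:intrinsic}, that $\alpha$ is determined by $\alpha_M$.

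For the reverse direction the two arguments are nearly identical: both specialize to endomorphisms of the canonical point to extract injectivity of $u$. The paper then finishes in one line by noting that a subspace of a $T_0$ space is $T_0$; your explicit computation with necessary clopens achieves the same thing and is a fine alternative.
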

\begin{proof}
Let $(L,\rho)$ be the completion of $(M,\tau)$ and consider the continuous, dense monoid homomorphism $u:(M,\tau) \to (L,\rho)$. By Corollary \ref{crly:extend}, when $(M,\tau)$ is a powder monoid, $u$ is injective, so the induced geometric morphism $f:\PSh(M) \to \PSh(L)$ is a localic surjection. Considering the square \eqref{eq:square}, since $h'$ is full and faithful on essential geometric morphisms by Proposition \ref{prop:intrinsic} and $f$ is faithful on these, it follows that $gh$ and hence $h$ are both faithful on essential geometric morphisms, as claimed.

Conversely, the morphism $g$ induced by $u$ is an equivalence, so if $h$ is faithful on essential geometric morphisms, then so is $f$, since $h'$ is full and faithful on such. Thus, $u:M \to L$ must be injective, since we can recover it as the restriction of the functor $f \circ -: \EssGeom(\Set,\PSh(M)) \to \EssGeom(\Set,\PSh(L))$ to the endomorphisms of the canonical point of $\PSh(M)$. But $(L,\tau)$ is $T_0$, and any submonoid/subspace of a $T_0$ monoid must also be $T_0$, as required.
\end{proof}

Note that unlike in Proposition \ref{prop:intrinsic}, we cannot deduce that an arbitrary hyperconnected morphism $\PSh(M) \to \Ecal$ which is faithful on essential geometric morphisms expresses $\Ecal$ as $\Cont(M,\tau)$; the non-topological factor of Theorem \ref{thm:factor} may be non-trivial.

Recall from \cite[Section 6]{TDMA} that we can factorize any semigroup homomorphism $\phi:M \to M'$ into a monoid homomorphism followed by the inclusion of a subsemigroup of the form $\phi(1)M'\phi(1)$ into $M'$, and that this lifts to (a canonical representation of) the surjection-inclusion factorization of the essential geometric morphism corresponding to $\phi$. Accordingly, we separate the analysis of geometric morphisms coming from continuous semigroup homomorphisms into the analysis of inclusions of subsemigroups and continuous monoid homomorphisms, in Sections \ref{ssec:id} and \ref{ssec:monhom} respectively.

\subsection{Subsemigroups}
\label{ssec:id}

Throughout this section, $(M',\tau')$ is a right powder monoid and $e \in M'$ an idempotent. Consider the following topological observations about the ideals generated by idempotents.

\begin{lemma}
\label{lem:ideal}
The principal left ideal $M'e$ and the principal right ideal $eM'$ of $M'$ are closed in $(M',\tau')$.
\end{lemma}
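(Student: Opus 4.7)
The plan is to identify both $M'e$ and $eM'$ as equalizers of suitable pairs of continuous self-maps of $M'$ and then invoke Hausdorffness of powder monoids.

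First I would use idempotency of $e$ to rewrite the ideals. Since $e^2 = e$, if $m = m'e$ then $me = m'e^2 = m'e = m$, and conversely $m = me$ immediately exhibits $m$ as a member of $M'e$. Hence
\[
M'e = \{m \in M' \mid me = m\},
\]
which is precisely the equalizer of the two maps $\mu(-,e), \id_{M'}: M' \rightrightarrows M'$. A symmetric argument shows $eM' = \{m \in M' \mid em = m\}$, the equalizer of $\mu(e,-)$ and $\id_{M'}$.

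Next I would assemble the two topological ingredients. By Definition \ref{dfn:powder}, the topology $\tau'$ of a powder monoid is an action topology, so by Proposition \ref{prop:ctsx} the multiplication on $M'$ is continuous; in particular both $\mu(-,e)$ and $\mu(e,-)$ are continuous self-maps of $(M',\tau')$. By Lemma \ref{lem:T0T2}, the $T_0$ condition in the definition of powder monoid combined with zero-dimensionality forces $(M',\tau')$ to be Hausdorff.

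Finally, the equalizer of two continuous maps into a Hausdorff space is always closed, so $M'e$ and $eM'$ are closed in $(M',\tau')$. There is no real obstacle here beyond having all three facts (idempotency rewrites the ideal as an equalizer, multiplication is continuous, the space is Hausdorff) lined up; the slightly subtle point is simply recognising that the powder condition packages exactly the right topological data to make the standard equalizer argument go through.
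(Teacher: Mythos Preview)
Your proof is correct, and in fact cleaner than the paper's. Both arguments start by rewriting the ideals as $M'e = \{m \mid me = m\}$ and $eM' = \{m \mid em = m\}$, but then diverge.

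You invoke the standard fact that the equalizer of two continuous maps into a Hausdorff space is closed, having first checked via Proposition~\ref{prop:ctsx} and Lemma~\ref{lem:T0T2} that right/left multiplication by $e$ is continuous and that $(M',\tau')$ is Hausdorff. The paper instead works directly with the action-topology machinery: given $x \notin M'e$, it separates $x$ from $xe$ by a basic clopen $U$ and then shows that the necessary clopen $\Ical_U^x$ is an open neighbourhood of $x$ missing $M'e$; a similar but slightly more involved construction (intersecting $U$ with $e^*(M'\setminus U)$) handles $eM'$. Your route is shorter and more conceptual, exploiting only the packaged consequences of the powder condition; the paper's route is more explicit about \emph{which} opens do the separating and keeps the $\Ical_U^p$ bookkeeping visible, in line with how the surrounding sections argue. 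Either is fine here.
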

\begin{proof}
We can characterize $M'e$ and $eM'$ as the subsets of $M'$ on those elements $p$ such that $pe = p$ and $p = ep$ respectively. 

Suppose $x$ is outside $M'e$. Since $(M',\tau')$ is zero-dimensional Hausdorff, we can find a basic clopen set $U$ with $x \in U$ and $xe$ in the complement of $U$. Then $\Ical_U^x$ is an open set containing $x$; if $p \in \Ical_U^x$ then since $e \notin x^*(U)$, we have $e \notin p^*(U)$ so $pe \neq p$. Thus we conclude that $M'e$ is contained in the complement of $\Ical_U^x$, and $M'e$ is closed.

Similarly, if $x$ is outside $eM'$, let $U$ be a basic clopen set containing $x$ but not $ex$. Then $e^*(M \backslash U)$ contains $x$, so we may consider the smaller neighbourhood $U \cap e^*(M \backslash U)$ of $x$. This excludes any element $p$ with $p = ep$, so that in particular $eM'$ is contained in the complement. Thus $eM'$ is closed.
\end{proof}

\begin{prop}
\label{prop:eMe}
Let $(M',\tau')$ be a powder monoid, $e$ an idempotent of $M'$ and $M := eM'e$. Let $\phi: M \hookrightarrow M'$ be the corresponding inclusion of semigroups, and let $\tau$ be the topology on $eM'e$ obtained by restricting $\tau'$. Then $(M,\tau)$ is a powder monoid, and any subsemigroup of this form is closed in $M'$.
\end{prop}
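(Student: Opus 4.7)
The plan is to handle the two assertions separately, with the closedness being essentially immediate. By Lemma \ref{lem:ideal}, both $M'e$ and $eM'$ are closed in $(M',\tau')$, so $eM'e = eM' \cap M'e$ is closed as an intersection of closed sets.

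For the powder-monoid property of $(M,\tau)$, I first note that $M = eM'e$ is indeed a monoid with identity $e$ (since $e \cdot ere = ere = ere \cdot e$), and that $T_0$-ness is inherited from the subspace inclusion $M \hookrightarrow M'$. It remains to exhibit a basis of clopens for $\tau$ satisfying the action-topology condition.

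My proposed basis is $\{U' \cap M \mid U' \in T'\}$, where $T'$ denotes the Boolean algebra of Scholium \ref{schl:Tseparator} for $(M',\tau')$. Each such $U = U' \cap M$ is clopen in $\tau$, and these sets form a basis for $\tau$ because $T'$ is a base for $\tau'$. The key computation is that for $q, m \in M$, the condition $qm \in U$ is equivalent to $qm \in U'$ (since $qm \in M$ automatically, as $M$ is closed under multiplication in $M'$), so the inverse image $q^*(U)$ computed in $M$ equals $q^*(U') \cap M$. Fixing $p \in M$ and any $q \in \Ical_U^p$, the set $\Ical_{U'}^q \cap M$ is an open neighbourhood of $q$ in $\tau$; any $r$ lying in it satisfies $r^*(U') = q^*(U')$, and intersecting with $M$ gives $r^*(U) = q^*(U) = p^*(U)$, whence $r \in \Ical_U^p$. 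Hence $\Ical_U^p \in \tau$, as required.

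The only real subtlety is keeping track of the compatibility between the inverse-image actions on $\Pcal(M')$ and on $\Pcal(M)$; once one pins down the identity $q^*(U' \cap M) = q^*(U') \cap M$ for $q \in M$, the verification that restrictions of necessary clopens remain necessary clopens is a direct transport argument, and the rest of the proposition follows.
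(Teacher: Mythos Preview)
Your argument is correct and reaches the same conclusion as the paper, but by a different route. The paper handles closedness exactly as you do (via Lemma \ref{lem:ideal}), and deduces the $T_0$/Hausdorff property from the subspace inclusion just as you do. Where you diverge is in showing that $\tau$ is an action topology: the paper observes that $\tau$, being the subspace topology, is the coarsest topology making $\phi$ continuous, and then invokes Theorem \ref{thm:factor} to conclude that $\tau$ is the action topology induced by the hyperconnected part of $h' \circ f$. You instead verify the characterization in Definition \ref{dfn:powder} directly, by showing that the restricted clopens $U' \cap M$ form a basis and computing that $\Ical_{U'\cap M}^p$ is open via the identity $q^*(U' \cap M) = q^*(U') \cap M$ for $q \in M$. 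Your approach is more self-contained and elementary, avoiding the machinery of Section \ref{ssec:factor}; the paper's approach is shorter and ties the result into the broader framework of induced action topologies. One minor omission: you implicitly use that $(M,\tau)$ is a topological monoid (needed both for Definition \ref{dfn:powder} and for the simplified action-topology criterion), which follows immediately since multiplication on $M$ is the restriction of the continuous multiplication on $M'$.
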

\begin{proof}
Being the coarsest topology on $M$ such that $\phi:M \to M'$ is continuous, $\tau$ is the coarsest topology such that (the hyperconnected part of) the morphism $h' \circ f$ in the square \eqref{eq:square} induced by $\phi$ factors through $\PSh(M) \to \Cont(M,\tau)$, by Theorem \ref{thm:factor}. Thus $\tau$ is an action topology on $M$. As a subspace of a Hausdorff space, $(M,\tau)$ is Hausdorff (we could alternatively have used Corollary \ref{crly:powderfaith} to deduce this). 

To see that $M$ is closed in $M'$, observe that it is the intersection of the ideals $eM'$ and $M'e$ which we showed to be closed in Lemma \ref{lem:ideal}.
\end{proof}

\begin{xmpl}
\label{xmpl:nopen}
On the other hand, $eM'e$ is not always open in $M'$. Indeed, consider the prodiscrete monoid constructed in Example \ref{xmpl:idemclosed}. The idempotent element $e = \infty$ is a zero element, so that the corresponding subsemigroup is simply $\{\infty\}$, which from the description of the topology on this monoid clearly fails to be open.
\end{xmpl}

Intuitively, we might expect the geometric morphism induced by the subsemigroup inclusion $M \hookrightarrow M'$ in Proposition \ref{prop:eMe} to be a geometric inclusion. To explain why this is the case in Example \ref{xmpl:nopen}, in the sense that the inclusion of $\{\infty\}$ induces a geometric inclusion, we show that this intuition is at least valid for complete monoids\footnote{We have not been able to demonstrate it for powder monoids more generally.}.

\begin{thm}
\label{thm:inccomplete}
Let $(M',\tau')$ be a complete monoid, let $M = eM'e$ for some idempotent $e \in M'$, and let $\phi:M \to M'$ be the subsemigroup inclusion. Then the restricted topology $\tau := \tau'|_M$ makes $(M,\tau)$ a complete topological monoid, and hence the induced geometric morphism $\Cont(M,\tau) \to \Cont(M',\tau')$ is a geometric inclusion.
\end{thm}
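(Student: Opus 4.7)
The proof splits into two parts: showing $(M,\tau)$ is complete, then deducing that the induced $g$ is a geometric inclusion.

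For Part 1, my plan is a retraction argument. By Proposition \ref{prop:eMe}, $(M,\tau)$ is already a powder monoid, so the canonical comparison $u:(M,\tau)\to(L,\rho)$ from Corollary \ref{crly:extend} is a continuous injection with dense image. Since $(M',\tau')$ is complete, I plan to build a continuous semigroup homomorphism $\tilde\phi:L\to M'$ extending $\phi$ by assembling the compatible family $L=\varprojlim_{r\in\Rcal_\tau}M/r\to M/\phi^*(r')\to M'/r'$ for $r'\in\Rcal_{\tau'}$, where $\phi^*(r'):=\{(m_1,m_2)\in M\times M\mid(\phi(m_1),\phi(m_2))\in r'\}$ is continuous-by-continuity of $\phi$ and lies in $\Rcal_\tau$; the target identification uses $M'=\varprojlim_{r'}M'/r'$ (Corollary \ref{crly:Mlim} applied to the complete $M'$). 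This gives $\tilde\phi\circ u=\phi$ and $\tilde\phi(1_L)=e$. Since $\tilde\phi$ is a semigroup homomorphism, $\tilde\phi(\ell)=\tilde\phi(1_L\cdot\ell\cdot 1_L)=e\tilde\phi(\ell)e\in eM'e=M$ for every $\ell\in L$, so $\tilde\phi$ factors through $M$ as a continuous retraction of $u$. Then $u\circ\tilde\phi:L\to L$ is continuous and agrees with $\mathrm{id}_L$ on the dense subset $u(M)$; as $L$ is Hausdorff (Proposition \ref{prop:Lpowder}), we conclude $u\circ\tilde\phi=\mathrm{id}_L$, so $u$ is an isomorphism of topological monoids.

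For Part 2, my plan is to exploit the commutative square \eqref{eq:square} together with the discrete case. By Proposition \ref{prop:essgeom}, the essential morphism $f:\PSh(M)\to\PSh(M')$ is a geometric inclusion, with concrete counit data: $f_*(Z)=\mathrm{Hom}_M(M'e,Z)$ (with $M'$ acting on $M'e$ by left multiplication), $f^*f_*(Z)=\{s\mid s\cdot e=s\}$, and the counit bijection $s\mapsto s(e)$ has inverse $z\mapsto s_z$ given by $s_z(qe):=z\cdot eqe$. Using Lemma \ref{lem:cts}(3) together with $h_*h^*\cong\mathrm{id}$ (from hyperconnectedness of $h$), we have $g^*\cong h_*f^*(h')^*$ and $g_*\cong(h')_*f_*h^*$. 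Unfolding, $g^*g_*(Y)=h_*(f^*((h')^*(h')_*(f_*h^*Y)))$ identifies with the continuous $(M,\tau)$-subset $R'(f_*h^*Y)\cdot e$ of $f^*f_*(h^*Y)\cong Y$. Therefore the counit $g^*g_*\to\mathrm{id}$ is an isomorphism precisely when, for each $Y\in\Cont(M,\tau)$ and each $y\in Y$, the element $s_y\in f_*(h^*Y)$ lies in the continuous part $(h')_*(f_*h^*Y)$.

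The main technical step is thus to verify that $\Ical_{s_y}^{p'}=\{m'\in M'\mid(em'qe,ep'qe)\in\rfrak_y\text{ for all }q\in M'\}$ is open in $\tau'$ for every $p'\in M'$, where $\rfrak_y$ is the open right congruence on $M$ classifying the orbit of $y$. For each fixed $q$, the condition on $m'$ is open (as the preimage of the clopen $\rfrak_y\in\tau\times\tau$ under the continuous map $m'\mapsto(em'qe,ep'qe)$ into $M\times M$, using that multiplication in the topological monoid $M'$ is continuous). The plan for handling the intersection over $q\in M'$ is to use the prodiscrete structure on the complete monoid $M'=\varprojlim_{r'}M'/r'$: given $m'_0\in\Ical_{s_y}^{p'}$, I select a sufficiently fine open right congruence $r'\in\Rcal_{\tau'}$ such that the constraint $(em'qe,ep'qe)\in\rfrak_y$ propagates uniformly across $r'$-classes of $m'$ via the right-congruence property of $\rfrak_y$ transported through the continuous multiplication on $M'$, producing a basic open neighbourhood $\pi_{r'}^{-1}([m'_0])\subseteq\Ical_{s_y}^{p'}$. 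Finding this $r'$---so that the universal-over-$q$ open condition reduces to an open condition verifiable on $r'$-representatives---is the crux, and is the obstacle I expect to require the most care; once accomplished, $s_y$ is continuous and so $g^*g_*(Y)\cong Y$ naturally, whence $g$ is a geometric inclusion.
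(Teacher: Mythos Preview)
Your Part 1 is correct and takes a genuinely different route from the paper. The paper argues entirely at the level of the $2$-category $\TOP$: it takes the hyperconnected--localic factorization $\PSh(M)\xrightarrow{h}\Ecal\xrightarrow{g}\Cont(M',\tau')$ of $h'\circ f$, invokes \cite[A4.6.10]{Ele} to see that $g$ is an inclusion (since the composite is an inclusion followed by a hyperconnected morphism), and then uses that $h'\circ f$ is representably full and faithful on essential geometric morphisms (combining Propositions~\ref{prop:intrinsic} and~\ref{prop:inclff}) to conclude that the hyperconnected factor $h$ is too, whence $\Ecal\simeq\Cont(M,\tau)$ with $(M,\tau)$ complete. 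Your retraction argument is more concrete and avoids the $2$-categorical machinery; it is essentially the construction later carried out in Theorem~\ref{thm:locextend} and Scholium~\ref{schl:G4}, specialised so that the image lands in $eM'e$. You should make explicit the verification that $\tilde\phi$ is a semigroup homomorphism (the identity $a^*(\phi^*(r'))=\phi^*(\phi(a)^*(r'))$, exactly as in the proof of Theorem~\ref{thm:locextend}), but otherwise Part~1 is complete.

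Part 2, however, has a real gap. You correctly identify that the counit of $g$ is an isomorphism iff each $s_y$ lies in $R'(f_*h^*Y)$, and that the obstruction is showing the infinite intersection $\bigcap_{q}\{m'\mid (em'qe,ep'qe)\in\rfrak_y\}$ is open. Your plan to ``select a sufficiently fine $r'$'' so that the condition becomes uniform in $q$ is not an argument: there is no evident reason why the open congruences $\phi^*(r')$ should form a base of open congruences on $(M,\tau)$, which is effectively what you would need (a right congruence on a monoid is not determined by the class of the identity, so containment of identity-classes does not give containment of congruences). I do not see how to complete this direct computation, and you flag it yourself as the unresolved crux.

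Note that the paper's order is the reverse of yours: it proves that $g$ is an inclusion \emph{first} via the abstract factorization lemma, and deduces completeness from that. Given that your Part~1 already establishes completeness, the cleanest fix is to discard your Part~2 computation and argue as the paper does: take the hyperconnected--localic factorization of $h'\circ f$; by \cite[A4.6.10]{Ele} its localic part is an inclusion, and since $h:\PSh(M)\to\Cont(M,\tau)$ is hyperconnected, the factorization $g\circ h=h'\circ f$ together with your Part~1 (which identifies $\Cont(M,\tau)$ with the intermediate topos via the characterisation of Proposition~\ref{prop:intrinsic}) shows that $g$ coincides with that localic part.
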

\begin{proof}
As usual, let $f:\PSh(M) \to \PSh(M')$ be the essential inclusion induced by $\phi$. Consider the hyperconnected-localic factorization of the following composite morphism $\PSh(M) \to \Cont(M',\tau')$:
\[\begin{tikzcd}
{\PSh(M)} \ar[r, "f"] \ar[d, "h"'] &
{\PSh(M')} \ar[d, "h'"] \\
{\Ecal} \ar[r, "g"] &
\Cont(M'{,}\tau').
\end{tikzcd}\]
Since the upper composite is an inclusion followed by a hyperconnected morphism, by \cite[Proposition A4.6.10]{Ele} the lower geometric morphism is an inclusion: the surjection-inclusion and hyperconnected-localic factorizations of the composite coincide.

Moreover, combining Proposition \ref{prop:intrinsic} with Proposition \ref{prop:inclff}, we have that the composite is full and faithful on essential geometric morphisms, and hence the hyperconnected part $\PSh(M) \to \Ecal$ also is. Thus the complete monoid representing $\Ecal$ has $M$ as its underlying monoid. That the corresponding topology is the restriction topology follows from Theorem \ref{thm:factor}, just as in the proof of Proposition \ref{prop:eMe}.
\end{proof}

\subsection{Monoid homomorphisms}
\label{ssec:monhom}

Now suppose $\phi:(M,\tau) \to (M',\tau')$ is a continuous \textit{monoid} homomorphism, so that the essential geometric morphism $f:\PSh(M) \to \PSh(M')$ it induces is a surjection, and hence examining the resulting square \eqref{eq:square}, so is the induced morphism $g: \Cont(M,\tau) \to \Cont(M',\tau')$. Combining this observation with Theorem \ref{thm:inccomplete}, we have that:
\begin{thm}
\label{thm:surjinc}
Let $\phi:(M,\tau) \to (M',\tau')$ be a continuous semigroup homomorphism between complete monoids inducing $g: \Cont(M,\tau)\to \Cont(M',\tau')$, and let $e:= \phi(1)$. Then the surjection--inclusion factorization of $g$ is canonically represented by the factorization of $\phi$ into a monoid homomorphism $M \to eM'e$ followed by an inclusion of subsemigroups $eM'e \hookrightarrow M'$, where $eM'e$ is equipped with the subspace topology.
\end{thm}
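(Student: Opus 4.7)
The plan is to factor $\phi$ as $M \xrightarrow{\phi_1} eM'e \xrightarrow{\phi_2} M'$, where $\phi_1$ is the corestriction of $\phi$ and $\phi_2$ is the inclusion of the subsemigroup. Since $\phi_1(1_M)=\phi(1_M)=e$, which is precisely the identity element of $eM'e$, the map $\phi_1$ is a genuine monoid homomorphism. Endowing $eM'e$ with the subspace topology $\tau'|_{eM'e}$, both factors are continuous: $\phi_2$ by definition of the subspace topology, and $\phi_1$ as the corestriction of a continuous map to a subspace. By Theorem \ref{thm:inccomplete}, $(eM'e,\tau'|_{eM'e})$ is itself a complete monoid, so it is a legitimate object for the $2$-functor $\Cont(-)$ restricted to complete monoids. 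By Theorem \ref{thm:Cont}, we conclude $g = \Cont(\phi_2)\circ\Cont(\phi_1)$.

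The factor $\Cont(\phi_2)$ is a geometric inclusion, directly by Theorem \ref{thm:inccomplete}. To establish that $\Cont(\phi_1)$ is a geometric surjection, I would use the naturality square
\[\begin{tikzcd}
\PSh(M) \ar[r,"\PSh(\phi_1)"] \ar[d,"h"'] & \PSh(eM'e) \ar[d,"h''"] \\
\Cont(M,\tau) \ar[r,"\Cont(\phi_1)"'] & \Cont(eM'e,\tau'|_{eM'e}),
\end{tikzcd}\]
in which $h$ and $h''$ are the canonical hyperconnected morphisms (and in particular surjections). Since $\phi_1$ is a monoid homomorphism, Proposition \ref{prop:essgeom} guarantees that $\PSh(\phi_1)$ is a geometric surjection, so the upper-right composite $h''\circ\PSh(\phi_1) = \Cont(\phi_1)\circ h$ is a composition of surjections and hence itself a surjection. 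Taking inverse images, $h^* \circ \Cont(\phi_1)^*$ is a faithful functor; since faithfulness of a composite forces faithfulness of the right-hand factor, $\Cont(\phi_1)^*$ is faithful, so $\Cont(\phi_1)$ is surjective.

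By uniqueness of the surjection--inclusion factorization in $\TOP$ up to canonical equivalence, the factorization of $g$ through $\Cont(eM'e,\tau'|_{eM'e})$ constructed above is the surjection--inclusion factorization. The main obstacle I anticipate is the surjectivity argument for $\Cont(\phi_1)$: one must stitch together Proposition \ref{prop:essgeom} (which lives at the level of presheaf toposes) with the functoriality of $\Cont(-)$ and the surjectivity of the hyperconnected morphisms, while also verifying that $(eM'e,\tau'|_{eM'e})$ is complete (needed to invoke Theorem \ref{thm:Cont} on complete monoids) and that $\phi_1$ is continuous with respect to the subspace topology. Once these prerequisites are in place, the faithfulness manipulation is formal.
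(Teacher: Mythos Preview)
Your proof is correct and follows essentially the same approach as the paper: the paper's argument is precisely that $\Cont(\phi_2)$ is an inclusion by Theorem~\ref{thm:inccomplete}, while for a continuous monoid homomorphism $\phi_1$ the square~\eqref{eq:square} forces $\Cont(\phi_1)$ to be a surjection since $\PSh(\phi_1)$ is. One minor remark: you do not need completeness of $eM'e$ to invoke the functoriality of $\Cont(-)$, since Theorem~\ref{thm:Cont} applies to all topological monoids; completeness is only needed so that the intermediate topos is canonically represented by a complete monoid, and that is indeed supplied by Theorem~\ref{thm:inccomplete}.
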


We can characterize which morphisms arise from continuous monoid homomorphisms, up to fixing sober representing monoids. Since sobriety has not been a focus of our account of monoids in this paper, we restrict further to powder monoids.

\begin{prop}
\label{prop:phisom}
Let $(M,\tau)$ and $(M',\tau')$ be powder monoids; let $T$, $T'$ be their respective Boolean algebras of clopen sets. A surjective geometric morphism  of the form ${g:\Cont(M,\tau)\to \Cont(M',\tau')}$ is induced by a continuous monoid homomorphism $\phi:(M,\tau) \to (M',\tau')$ if and only if $T' \cong g_*(T)$ in $\Cont(M',\tau')$.
\end{prop}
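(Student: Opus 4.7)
For the forward direction, assume $g$ is induced by a continuous monoid homomorphism $\phi:(M,\tau) \to (M',\tau')$, so the square \eqref{eq:square} commutes with an essential $f: \PSh(M) \to \PSh(M')$. The plan is to reduce to the identity $f_*(\Pcal(M)) \cong \Pcal(M')$, from which
\[g_*(T) = g_*h_*(\Pcal(M)) \cong h'_*f_*(\Pcal(M)) \cong h'_*(\Pcal(M')) = T'\]
follows immediately. This identity is verified by Yoneda: for any $Y \in \PSh(M')$, the adjunction $f^* \dashv f_*$ together with Lemma \ref{lem:represent} gives
\[\Hom_{\PSh(M')}(Y, f_*(\Pcal(M))) \cong \Pcal(U(f^*(Y))) = \Pcal(U(Y)) \cong \Hom_{\PSh(M')}(Y,\Pcal(M')),\]
where the middle equality uses that, since $\phi$ is a monoid homomorphism, $f^*$ is restriction of scalars along $\phi$ and hence preserves underlying sets.

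For the reverse direction, assume $g$ is surjective with an isomorphism $\alpha: T' \xrightarrow{\sim} g_*(T)$ in $\Cont(M',\tau')$. The central observation (extracted from Scholium \ref{schl:Tseparator}) is that $T \cong p_*(2)$ and $T' \cong p'_*(2)$, where $p, p'$ are the canonical points; so $\alpha$ expresses $(g \circ p)_*(2) \cong p'_*(2)$. My plan is first to upgrade $\alpha$ to a natural isomorphism $g \circ p \cong p'$ of points. Translating $\alpha$ through the representing properties yields a natural isomorphism $\Pcal \circ UV \circ g^* \cong \Pcal \circ UV'$ of contravariant functors $\Cont(M',\tau')\op \to \Set$. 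To extract the underlying natural isomorphism $UV' \cong UV g^*$ (equivalent to $g \circ p \cong p'$) one applies a Stone-duality-style argument: the canonical complete-atomic Boolean algebra structures on $T$ and $T'$ are uniquely pinned down by their representing properties, so $\alpha$ is compatible with them, and the atoms of these Boolean algebras correspond to elements of the underlying sets.

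Given $\beta: p'^* \xrightarrow{\sim} p^*g^*$, whiskering the canonical right $M$-action on $p^*$ and transferring along $\beta$ defines, for each $m \in M$, a natural endomorphism $\phi(m)$ of $p'^*$, yielding a continuous monoid homomorphism $\phi: M \to L'$ into the complete endomorphism monoid of Proposition \ref{prop:lim}. The main obstacle is verifying that $\phi$ factors through the canonical inclusion $u': M' \hookrightarrow L'$ (Corollary \ref{crly:extend}), so that $\phi$ actually lands in $M'$. I expect this step to use the surjectivity of $g$ combined with the powder structure of $(M',\tau')$: that $T'$ generates $\tau'$ by Scholium \ref{schl:base} and that the action of $M'$ on $T'$ is faithful in a way that elements of $L' \setminus u'(M')$ fail to respect, so that the compatibility of $\phi(m)$ with $\alpha$ forces $\phi(m)$ to be represented by an actual element of $M'$ rather than only a coherent family in the limit defining $L'$. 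Once $\phi: M \to M'$ is obtained, continuity with respect to $\tau, \tau'$ is immediate from the construction via Lemma \ref{lem:cts}, and $\phi$ inducing $g$ follows from the commutativity of \eqref{eq:square} combined with the functoriality statement of Theorem \ref{thm:Cont}.
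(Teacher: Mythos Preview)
Your forward direction is correct and is essentially the paper's argument made explicit: both amount to the observation that a monoid homomorphism $\phi$ makes $f$ (and hence $g$) commute with the canonical points, so that $g_*(T) \cong g_*p_*(2) \cong p'_*(2) \cong T'$.

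In the reverse direction your overall strategy of upgrading $\alpha$ to an isomorphism $g \circ p \cong p'$ of points matches the paper. But your construction of $\phi$ thereafter diverges and contains a genuine gap. You produce, for each $m \in M$, an endomorphism $\phi(m)$ of $p'^*$, i.e.\ an element of the completion $L'$. The problem is that $(M',\tau')$ is only assumed to be a \emph{powder} monoid, not a complete one, so $u':M' \hookrightarrow L'$ can be a proper inclusion (Example~\ref{xmpl:Z+} exhibits exactly this, with $\Zbb$ sitting properly inside its profinite completion). Your proposed reason for $\phi(m)$ to land in $M'$ --- that elements of $L' \setminus u'(M')$ ``fail to respect'' the action on $T'$ --- does not hold: the action of $M'$ on $T'$ extends canonically to the $L'$-action coming from Proposition~\ref{prop:representation}, so every element of $L'$ acts perfectly compatibly on $T'$. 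Surjectivity of $g$ does not obviously help here either.

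The paper avoids this detour through $L'$ entirely. Once $g \circ p \cong p'$ is established, $g^*(T')$ has underlying set canonically identified with $T'$, and the \emph{counit} $g^*g_*(T) \to T$, transported along $\alpha$, yields a Boolean algebra homomorphism $\phi^{-1}: g^*(T') \to T$ in $\PSh(M)$. At the level of underlying sets this is a frame map between the bases of clopens of $(M',\tau')$ and $(M,\tau)$; since powder monoids are Hausdorff and hence sober, Stone-type duality converts this directly into a continuous map $\phi: M \to M'$. Equivariance of $\phi^{-1}$ under the $M$-action then forces $\phi$ to be a monoid homomorphism. This route never leaves the monoids $M$ and $M'$ themselves, so the factoring-through-$u'$ obstacle simply does not arise.
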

\begin{proof}
First suppose that $g$ is induced by some continuous monoid homomorphism $\phi$. Consider the internal Boolean algebras $T$ and $T'$ in the respective toposes. Since $\phi$ is a monoid homomorphism, $g$ commutes with the canonical points of $\Cont(M,\tau)$ and $\Cont(M',\tau')$ (which are induced by the unique monoid homomorphisms $1 \to M$ and $1 \to M'$ respectively). In particular, we have a canonical isomorphism $T' \cong g_*(T)$, as required.

Conversely, given such an isomorphism, both $T'$ and $g_*(T)$ represent $\Pcal\op \circ U \circ V$ by Scholium \ref{schl:Tseparator}, from which it follows that $g$ commutes with the canonical points. As such, $g^*(T')$ has an underlying set which can be identified with that of $T'$, and the counit of $g$ at $T$ provides a homomorphism of Boolean algebras $\phi^{-1}: g^*(T') \to T$ in $\PSh(M)$ which, since powder monoids are sober as spaces, uniquely defines a map $M \to M'$. The fact that $\phi^{-1}$ is an $M$-set homomorphism ultimately ensures that $\phi$ is a monoid homomorphism. Moreover, when we generate $g$ from a monoid homomorphism $\phi$, we find that this is the morphism we recover from $\phi^{-1}$.
\end{proof}

For an injective monoid homomorphism, we have a partial analogue of Theorem \ref{thm:inccomplete}:
\begin{lemma}
\label{lem:surjective}
Suppose $(M',\tau')$ is a powder monoid and $\phi:M \to M'$ is an injective monoid homomorphism. Then the subspace topology $\tau := \tau'|_M$ on $M$ makes $(M,\tau)$ a powder monoid.
\end{lemma}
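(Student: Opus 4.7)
The plan is to verify the two defining conditions of a (right) powder monoid for $(M,\tau)$: that it is a topological monoid satisfying $T_0$, and that $\tau$ admits a basis of clopen sets $U$ satisfying $\Ical_U^p \in \tau$ for every $p \in M$.

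The first part is routine. Since $\phi$ is an injective monoid homomorphism, $M$ embeds as a subset of $M'$, and the multiplication on $M$ is the restriction of that on $M'$. Continuity of the multiplication on $(M,\tau)$ therefore follows from continuity of the multiplication on $(M',\tau')$ by restriction, and $(M,\tau)$ is Hausdorff (in particular $T_0$) as a subspace of the Hausdorff space $(M',\tau')$ guaranteed by Lemma \ref{lem:T0T2}.

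For the action topology condition, let $T'$ denote the Boolean algebra of necessary clopens of $(M',\tau')$, which by assumption forms a basis for $\tau'$. Then the preimages $U := \phi^{-1}(U')$ with $U' \in T'$ form a basis of clopens for the subspace topology $\tau$ on $M$. The key computation is that for $m \in M$, the equation
\[ m^*(U) = \{q \in M \mid \phi(m)\phi(q) \in U'\} = \phi^{-1}(\phi(m)^*(U')) \]
holds, from which one reads off the inclusion $\phi^{-1}(\Ical_{U'}^{\phi(m)}) \subseteq \Ical_U^m$. Given any $p \in M$ and any $m \in \Ical_U^p$, we have $\Ical_U^m = \Ical_U^p$, so this inclusion exhibits an open $\tau$-neighbourhood of $m$ contained in $\Ical_U^p$ (using that $\Ical_{U'}^{\phi(m)} \in \tau'$ by the powder monoid hypothesis on $(M',\tau')$). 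Taking the union over all such $m$ shows that $\Ical_U^p$ is open in $\tau$, as required.

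I expect no serious obstacle: the inclusion $\phi^{-1}(\Ical_{U'}^{\phi(m)}) \subseteq \Ical_U^m$ is only one-way in general (since distinct elements of $M'$ may have preimages under $\phi^{-1}$ that agree on $\phi(M)$), but this is all that is needed because we only seek an open neighbourhood \emph{inside} $\Ical_U^p$ around each of its points. The argument parallels the proof that the clopens of the form $\Ical_A^p$ form a basis in Scholium \ref{schl:base}, transported along $\phi^{-1}$.
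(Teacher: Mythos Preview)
Your proof is correct, but it takes a genuinely different route from the paper's.

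The paper argues abstractly via the $2$-categorical characterization of powder monoids established in Corollary~\ref{crly:powderfaith}: since $\phi$ is an injective monoid homomorphism, the induced essential geometric morphism $f:\PSh(M)\to\PSh(M')$ is a localic surjection, hence representably faithful on essential geometric morphisms by Proposition~\ref{prop:locfaith}; combining this with faithfulness of $h'$ (from the powder hypothesis on $(M',\tau')$) in the square~\eqref{eq:square} forces $h$ to be faithful on essential geometric morphisms, whence $(M,\tau)$ is a powder monoid. Your argument instead verifies the defining basis condition of Definition~\ref{dfn:powder} by a direct computation, and in fact this computation is precisely the one carried out in the proof of Lemma~\ref{lem:G3} (the reflection into powder monoids), specialized to an injective $\phi$. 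Your approach is more elementary and self-contained, requiring none of the $2$-categorical machinery of Section~\ref{ssec:intrinsic}; the paper's approach, on the other hand, illustrates how that machinery can be leveraged to avoid repeating such computations, and fits the lemma into the broader pattern of deducing properties of $h$ from those of $f$ and $h'$.
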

\begin{proof}
Consider the square \eqref{eq:square} induced by $\phi$. By Proposition \ref{prop:locfaith}, the localic surjection $f$ is faithful on essential geometric morphisms, and by Corollary \ref{crly:powderfaith}, so is $h'$. It follows that $h$ must also be faithful on essential geometric morphisms, and hence that $(M,\tau)$ also is.
\end{proof}

As in Corollary \ref{crly:powderfaith}, we encounter the problem that we have no guarantee that the morphism $g:\Cont(M,\tau) \to \Cont(M',\tau')$ in Lemma \ref{lem:surjective} induced by an injective monoid homomorphism $\phi$ will have a trivial hyperconnected part. However, we can use whatever hyperconnected part there may be to produce a complete monoid, and when the codomain is also a complete monoid, this gives us a canonical factorization of $\phi$, as follows.

\begin{thm}
\label{thm:locextend}
Suppose $(M',\tau')$ is a complete monoid, and $\phi: M \to M'$ is an injective monoid homomorphism. Let $\tau$ be the restriction of $\tau'$ as in Lemma \ref{lem:surjective}. Then there is a complete monoid $(L,\rho)$ and a dense, continuous, injective monoid homomorphism $(M,\tau) \to (L,\rho)$ such that $\phi$ extends to a continuous injection $\psi: (L,\rho) \to (M',\tau')$ inducing a localic surjection $\Cont(L,\rho) \to \Cont(M',\tau')$.
\end{thm}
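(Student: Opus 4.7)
The idea is to hyperconnected--localically factor the composite $h'\circ f = g\circ h$ from the square \eqref{eq:square} induced by $\phi$, and identify the intermediate topos using Theorem \ref{thm:characterization}. First I would use Proposition \ref{prop:essgeom}: since $\phi$ is an injective monoid homomorphism, $f:\PSh(M)\to\PSh(M')$ is a localic surjection. Form the hyperconnected--localic factorization
\[h'\circ f \;=\; \ell\circ k,\]
with $k:\PSh(M)\to\Ecal$ hyperconnected and $\ell:\Ecal\to\Cont(M',\tau')$ localic. Since $h'$ and $f$ are both surjections, so is $\ell\circ k$; because $k^*$ is fully faithful, $\ell$ must then also be a surjection.

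Next I would identify $\Ecal$. As $k$ is a hyperconnected morphism out of $\PSh(M)$, Theorem \ref{thm:characterization} together with Proposition \ref{prop:Lpowder} and Scholium \ref{schl:completion} yield $\Ecal\simeq\Cont(L,\rho)$ for a complete monoid $(L,\rho)$, and $k$ corresponds by Lemma \ref{lem:Mdense} to a continuous monoid homomorphism $u:(M,\tau)\to(L,\rho)$ with dense image. The topology $\tau$ being the restriction of $\tau'$ makes $(M,\tau)$ a powder monoid by Lemma \ref{lem:surjective}, so Corollary \ref{crly:extend} ensures $u$ is injective. To produce $\psi$, I would apply Proposition \ref{prop:phisom} to the localic surjection $\ell$ between categories of continuous actions of powder monoids; what must be verified is that $\ell$ commutes with canonical points. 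But $f$ and $h'$ both preserve canonical points (the former since it is induced by a monoid homomorphism, the latter by construction), and so does $k$ by the construction of $u$ in Proposition \ref{prop:lim}, which forces $\ell$ to do so as well. Proposition \ref{prop:phisom} then delivers a continuous monoid homomorphism $\psi:L\to M'$ inducing $\ell$, and $\psi\circ u=\phi$ follows from uniqueness of the monoid homomorphism representing $h'\circ f$ under this correspondence.

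The main obstacle is establishing that $\psi$ is injective; the idea is to promote faithfulness of $\ell$ at the level of $2$-cells into an injectivity statement on monoids. Using Proposition \ref{prop:lim}, identify the elements of $L$ with endomorphisms of the canonical point $p_L$ of $\Cont(L,\rho)$, and similarly those of $M'$ with endomorphisms of the canonical point $p_{M'}$ of $\Cont(M',\tau')$; the functor $\ell\circ-$ on $2$-cells sends the endomorphism associated to $l\in L$ to the one associated to $\psi(l)\in M'$. If $\psi(l_1)=\psi(l_2)$, the two endomorphisms of $p_L$ have equal images under $\ell\circ-$, so Proposition \ref{prop:locfaith} (representable faithfulness of localic geometric morphisms) gives $l_1=l_2$. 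Hence $\psi$ is injective, and by construction it induces the localic surjection $\ell:\Cont(L,\rho)\to\Cont(M',\tau')$, completing the argument.
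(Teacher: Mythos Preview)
Your proof is correct and takes a genuinely different route from the paper's. The paper constructs $\psi$ \emph{explicitly}: it defines a pullback map $t:\Rcal_{\tau'}\to\Rcal_{\tau}$ on right congruences, obtains component maps $L\twoheadrightarrow M/t(r')\hookrightarrow f^*(M'/r')$, assembles these into $\psi:L\to f^*(M')$ via the limit formula for $M'$, and then verifies by hand that $\psi$ is a monoid homomorphism and is continuous. Injectivity of $\psi$ is not argued separately there; it is subsumed in the later Corollary~\ref{crly:closure}, which identifies the image of $\psi$ with the closure of $M$ in $M'$.

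Your approach is more abstract: existence of $\psi$ comes from Proposition~\ref{prop:phisom}, and injectivity from the representable faithfulness of localic morphisms (Proposition~\ref{prop:locfaith}) applied at the canonical point. This is cleaner conceptually and avoids the explicit limit manipulations, at the cost of relying on those two propositions (which are available at this point in the paper). Two small points deserve a sentence more of care. First, Proposition~\ref{prop:phisom} is stated with the hypothesis $T'\cong \ell_*(T_L)$, not ``$\ell$ commutes with canonical points''; you should note (via Scholium~\ref{schl:Tseparator} and Yoneda) that the latter implies the former. Second, your claim $\psi\circ u=\phi$ needs the observation that both homomorphisms induce the same action on $\End(p_{M'})\op\cong M'$ (completeness of $M'$) under the \emph{same} identification of canonical points, which follows from the coherence of the point-isomorphisms you set up; as written it is slightly quick. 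Neither is a genuine gap.
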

\begin{proof}
Of course, we define $(L,\rho)$ to be the complete monoid obtained from the hyperconnected part of the composite $h' \circ f$ in the square \eqref{eq:square} induced by $\phi$. Thus we have a diagram of geometric morphisms:
\begin{equation}
\label{eq:factors}
\begin{tikzcd}
{\PSh(M)} \ar[rr, "f"] \ar[dr, "u"'] \ar[dd, "h"'] & &
{\PSh(M')} \ar[dd, "h'"] \\
& {\PSh(L)} \ar[dd, "k", near start] \ar[ur, dashed] & \\
{\Cont(M{,}\tau)} \ar[rr, "g", near start] \ar[dr, "v"'] & &
{\Cont(M'{,}\tau')}, \\
& {\Cont(L{,}\rho)} \ar[ur, "w"'] &
\end{tikzcd}	
\end{equation}
in which $h$, $h'$, $k$ and $v$ are hyperconnected, while $f$ and $w$ are localic surjections and $f$ and $u$ are essential. It suffices for us to construct the inclusion $\psi: L \to M'$ to provide the (dashed) essential geometric morphism which restricts to $w$.

The homomorphism $\phi$ induces a morphism $M \to f^*(L') = f^*f_!(M)$, the unit of the adjunction $(f_! \dashv f^*)$, whose element-wise action coincides with $\phi$; we abuse notation and call this unit map $\phi$, too.

Define a mapping $t: \Rcal_{\tau'} \to \Rcal_{\tau}$ by pullback: for each $r' \in \Rcal_{\tau'}$, let $t(r')$ be the pullback of $f^*(r')$ along $\phi$. The resulting relation is such that the intermediate principal $M$-set in the epi-mono factorization of $M \to f^*(M') \too f^*(M'/r')$ is precisely $M/t(r')$, so we have $M \too M/t(r') \hookrightarrow f^*(M'/r')$. By construction, $f^*$ sends $(M',\tau')$-sets to $(M,\tau)$-sets lying in $\Cont(L,\rho)$ and this subcategory is closed under subobjects, so $M/t(r')$ is naturally an $(L,\rho)$-set. As such, we obtain maps $L \too M/t(r') \hookrightarrow f^*(M'/r')$ by factoring each $M \too M/t(r')$ through $L = \varprojlim_{r \in \Rcal_{v \circ h}} M/r$. These assemble into an $M$-set homomorphism
\[\psi: L \to f^*(M') = \varprojlim_{r \in \Rcal_{\tau'}} f^*(M'/r'),\]
which explicitly sends $\alpha \in L$ to $\psi(\alpha) = ([\phi(a_{t(r')})])_{r' \in \Rcal_{\tau'}}$. It remains to show that this map underlies a monoid homomorphism. As such, observe that for $a \in M$ and $r' \in \Rcal_{\tau'}$:
\begin{align*}
t(\phi(a)^*(r')) & =
t\left( \{(x,y) \in M' \times M' \mid (\phi(a)x,\phi(a)y) \in r' \} \right) \\ & =
\{(m,n) \in M \times M \mid (\phi(a)\phi(m),\phi(a)\phi(n)) \in r' \} \\ & =
a^* \left( \{(m,n) \in M \times M \mid (\phi(m),\phi(n)) \in r' \} \right) \\ & =
a^*(t(r')),
\end{align*}
where we have omitted each instance of $f^*$. Therefore, given $\alpha,\beta \in L$, we have
\begin{align*}
\psi(\alpha\beta) & =
([\phi(a_{t(r')}b_{a_{t(r')}^*(t(r'))})]) \\ & =
([\phi(a_{t(r')})\phi(b_{t(\phi(a_{t(r')})^*(r'))})]) \\ & =
\psi(\alpha)\psi(\beta),
\end{align*}
as required. Preservation of the unit follows from our assumption that $\phi$ was a monoid homomorphism. To demonstrate continuity, observe that if $U = \pi_{r'}^{-1}(\{[m']\})$ is a basic open set in $M'$, then $\psi^{-1}(U)$ is exactly the basic open set $\pi_{t(r')}^{-1}(\{[m]\})$ in $L$ if there is some $m \in M$ with $(\phi(m),m') \in r'$, and is empty otherwise.

Having shown that it exists, we immediately have that $\psi$ is the unique continuous monoid homomorphism making the desired triangle commute, since dense inclusions of Hausdorff spaces are epimorphisms in the category of such spaces.
\end{proof}

The construction of the factoring map $\psi$ in the above relies on the expression of $L$ as a limit. It is useful to have a more topological characterization, for which we need some further preliminary results.

\begin{lemma}
\label{lem:densects}
Let $\phi: (M,\tau) \to (M',\tau')$ be a continuous monoid homomorphism whose image is dense. Then the geometric morphism $g: \Cont(M,\tau) \to \Cont(M',\tau')$ induced by $\phi$ is hyperconnected.
\end{lemma}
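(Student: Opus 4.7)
My plan is to check directly that $g^*$ is full and faithful and that its essential image in $\Cont(M,\tau)$ is closed under subobjects and quotients. Recall that, concretely, $g^*$ sends an $(M',\tau')$-set $X$ to the same underlying set equipped with the $M$-action $x \cdot m := x\phi(m)$ (and restricts morphisms of underlying sets). In particular $g^*$ is injective on morphisms, so faithfulness is immediate.

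For fullness, suppose $t: g^*(X) \to g^*(Y)$ is an $(M,\tau)$-set homomorphism; I want to show $t(xm') = t(x)m'$ for every $x \in X$ and $m' \in M'$. The key observation is that $\Ical_x^{m'} \cap \Ical_{t(x)}^{m'}$ is an open neighbourhood of $m'$ in $(M',\tau')$ (by Lemma \ref{lem:Inx} applied to the $(M',\tau')$-sets $X$ and $Y$), so by density of $\phi(M)$ there exists $m \in M$ with $\phi(m)$ in this intersection. Then $x\phi(m) = xm'$, $t(x)\phi(m) = t(x)m'$, and $t(x\phi(m)) = t(x)\phi(m)$ because $t$ respects the $M$-action; combining gives $t(xm') = t(x)m'$.

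For closure under subobjects, take a sub-$(M,\tau)$-set $Y \hookrightarrow g^*(X)$; I must show $Y$ is closed under the $M'$-action on $X$. Given $y \in Y$ and $m' \in M'$, density again yields $m \in M$ with $\phi(m) \in \Ical_y^{m'}$, hence $ym' = y\phi(m) \in Y$. Continuity of the resulting $(M',\tau')$-action on $Y$ is inherited from that on $X$, since the necessary clopens at points of $Y$ coincide with those computed in $X$.

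The quotient case is the most delicate step, and I expect it to be the main obstacle in terms of bookkeeping. Let $q: g^*(X) \too Z$ be a quotient in $\Cont(M,\tau)$, with equivalence relation $\sim$. I will define $q(x) \cdot m' := q(xm')$ and verify three things: (i) the operation is well-defined, i.e., $x \sim x'$ implies $xm' \sim x'm'$; (ii) it defines an $M'$-action; (iii) it is continuous. For (i), given $x \sim x'$, density produces $m \in M$ with $\phi(m) \in \Ical_x^{m'} \cap \Ical_{x'}^{m'}$, so $xm' = x\phi(m)$ and $x'm' = x'\phi(m)$; applying $q$ and using that it respects the $M$-action gives $q(xm') = q(x)\phi(m) = q(x')\phi(m) = q(x'm')$. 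Assertion (ii) follows from associativity in $X$ together with (i). For (iii), given $z = q(x)$, the inclusion $\Ical_x^{m'} \subseteq \Ical_z^{m'}$ holds because $q$ respects actions, which shows $\Ical_z^{m'}$ is a neighbourhood of every one of its points (apply the same argument at $n \in \Ical_z^{m'}$ using any representative of $z$), hence open. Finally, $g^*(Z) \cong Z$ as $(M,\tau)$-sets by construction, confirming that $Z$ lies in the essential image of $g^*$.
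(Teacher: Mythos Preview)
Your proof is correct and follows essentially the same density argument as the paper: for fullness and closure under subobjects, you and the paper both pick an $m \in M$ with $\phi(m)$ in the relevant necessary clopen(s) and transfer the $M'$-action through the $M$-action. The paper first factors $\phi$ as a surjection followed by a dense inclusion and treats only the latter, whereas you work directly with $\phi$; this is a cosmetic difference.

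The one substantive difference is that you go on to verify closure under \emph{quotients}, which the paper omits. This extra step is unnecessary: once $g^*$ is full and faithful with image closed under subobjects, hyperconnectedness is automatic (this is the characterization via the counit being monic, cf.\ \cite[A4.6.6]{Ele}, already invoked in the introduction of the paper). Your quotient argument is correct as written, but you could save yourself the bookkeeping by appealing to that result.
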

\begin{proof}
First, observe that without loss of generality we may assume $\phi$ is a dense inclusion of monoids. Indeed, $\phi$ always factors as a surjective monoid homomorphism followed by a dense inclusion of monoids, and the former factor induces a hyperconnected essential morphism $f$ at the level of the presheaf toposes, whence by consideration of the square \eqref{eq:square}, $g$ must also be hyperconnected. As such, we identify $M$ with its image in $M'$.

Given an $M$-set homomorphism $s: g^*(X) \to g^*(Y)$, $x \in g^*(X)$ and $m' \in M'$, let $m \in M \cap \Ical_x^{m'} \cap \Ical_{s(x)}^{m'}$. Then we have $s(x \cdot m') = s(x \cdot m) = s(x) \cdot m = s(x) \cdot m'$, whence $s$ is an $M'$-set homomorphism, and so $g^*$ is full; it is always faithful when $\phi$ is a monoid homomorphism.

Moreover, the image of $g^*$ is closed under subobjects: given a sub-$M$-set $A \hookrightarrow g^*(Y)$ and $m' \in M'$, for each $y \in A$ we have some $m \in M \cap \Ical_y^{m'}$, whence $y \cdot m = y \cdot m' \in A$, and hence $A$ is a sub-$M'$-set. Altogether, this ensures that $g$ is hyperconnected, as claimed.
\end{proof}

\begin{prop}
\label{prop:denseisom}
Suppose $\phi:(M,\tau) \to (M',\tau')$ is a monoid homomorphism between topological monoids inducing an equivalence $\Cont(M,\tau) \simeq \Cont(M',\tau')$, that $(M,\tau)$ is a complete monoid and that $(M',\tau')$ is a powder monoid. Then $\phi$ is an isomorphism.
\end{prop}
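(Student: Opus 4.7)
The plan is to deduce that $\phi$ is an isomorphism by comparing the two complete-monoid presentations arising from a pair of hyperconnected morphisms out of $\PSh(M)$. First, I would form the square \eqref{eq:square} induced by $\phi$. Since $\phi$ is a monoid homomorphism, the equivalence $g$ commutes (up to natural isomorphism) with the canonical points of $\Cont(M,\tau)$ and $\Cont(M',\tau')$, and an equivalence is in particular hyperconnected, so the composite $h' \circ f = g \circ h$ is a hyperconnected morphism from $\PSh(M)$.

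Next, I would apply Proposition \ref{prop:lim} and Scholium \ref{schl:completion} to both hyperconnected morphisms $h$ and $h' \circ f$, which yield the complete monoid representing the canonical point of their (equivalent) codomains. For $h$, since $(M,\tau)$ is complete by assumption, this complete monoid is $M$ itself and the comparison map of Lemma \ref{lem:Mdense} is $u_h = \mathrm{id}_M$. For $h' \circ f$, the complete monoid is the completion $(L,\rho)$ of $(M',\tau')$, which by Corollary \ref{crly:extend} carries a dense, continuous, injective comparison $u_{M'}: (M',\tau') \to (L,\rho)$ since $(M',\tau')$ is a powder monoid. Unfolding Lemma \ref{lem:Mdense}, the comparison map $M \to L$ associated to $h' \circ f$ sends $m$ to the endomorphism of the canonical point of $\Cont(M',\tau')$ whose component at an $(M',\tau')$-set $X$ is right multiplication by $\phi(m)$; this endomorphism is exactly $u_{M'}(\phi(m))$, so this comparison is $u_{M'} \circ \phi$.

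Finally, identifying these two presentations via the equivalence $g$ yields the equation $u_{M'} \circ \phi = u_h$ (up to the canonical iso of endomorphism monoids), making $u_{M'} \circ \phi$ an isomorphism of topological monoids. Injectivity of $u_{M'}$ forces $\phi$ to be injective; conversely, surjectivity of $u_{M'} \circ \phi$ together with injectivity of $u_{M'}$ forces both $u_{M'}$ and $\phi$ to be surjective. The inverse $\phi^{-1} = (u_{M'} \circ \phi)^{-1} \circ u_{M'}$ is then continuous as a composite of continuous maps, so $\phi$ is an isomorphism of topological monoids. The main technical point to verify carefully will be the naturality step identifying $u_h$ with $u_{M'} \circ \phi$ under the isomorphism of endomorphism monoids induced by $g$; this amounts to tracking how an endomorphism of the canonical point of $\Cont(M,\tau)$ transforms under restriction along the equivalence $g^*$, using that $g^*$ is fully faithful and sends the canonical point of $\Cont(M',\tau')$ to that of $\Cont(M,\tau)$.
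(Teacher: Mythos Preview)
Your approach is correct and offers a more concrete alternative to the paper's argument. The paper works at the $2$-categorical level: since $(M,\tau)$ is complete, $h$ is representably full and faithful on essential geometric morphisms (Proposition \ref{prop:intrinsic}), and since $g$ is an equivalence, the composite $h'\circ f = g\circ h$ is too; combining this with the fact that $h'$ is faithful on essential geometric morphisms (Corollary \ref{crly:powderfaith}, using that $(M',\tau')$ is powder) and that the canonical point of $\PSh(M')$ lies in the image of $f\circ -$, one deduces that $h'$ is full and faithful on essential morphisms, hence $(M',\tau')$ is complete, hence $f$ is full and faithful, hence $\phi$ is an isomorphism. You instead compare the completion constructions directly via Scholium \ref{schl:completion}: the hyperconnected morphisms $h$ and $g\circ h$ determine the \emph{same} poset $\Rcal_h = \Rcal_{g\circ h}$ of open congruences on $M$ (since $g^*$ is an equivalence), so the associated complete monoids are literally the same topological monoid, and the two comparison maps $u_h$ and $u_{M'}\circ\phi$ coincide. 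This last point is what makes the identification you flag a homeomorphism rather than a mere bijection, and is what justifies the continuity of $(u_{M'}\circ\phi)^{-1}$ in your final step. Your route has the advantage of yielding explicitly that $u_{M'}$ is an isomorphism (so $(M',\tau')$ is itself complete), while the paper's route integrates more seamlessly with the representability framework of Section \ref{ssec:intrinsic}.
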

\begin{proof}
Since an equivalence is full and faithful on geometric morphisms, in the square \eqref{eq:square} induced by $\phi$ is follows that $f$ is faithful on essential geometric morphisms and $h$ is full on those in the image of $f$. Since $\phi$ is a monoid homomorphism, it commutes with the canonical points of $\PSh(M)$ and $\PSh(M')$, whence the latter point is in the image of $f$, and hence ($h$ being faithful on essential geometric morphisms by assumption) $M'$ is a complete monoid, and $h$ is full and faithful on essential geometric morphisms. It follows that $f$ is also full and faithful on essential geometric morphisms, and so $\phi$ (the restriction of $f \circ -$ to the canonical point of $\PSh(M)$) is an isomorphism, as claimed.
\end{proof}

\begin{crly}
\label{crly:closure}
The monoid $(L,\rho)$ constructed in Theorem \ref{thm:locextend} is the closure of $(M,\tau)$ in $(M',\tau')$.
\end{crly}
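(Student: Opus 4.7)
The plan is to prove both inclusions of $\psi(L) = C$, where $C := \overline{\phi(M)}^{M'}$ denotes the closure of $\phi(M)$ in $M'$. The containment $\psi(L) \subseteq C$ is immediate: by Lemma \ref{lem:Mdense} the image of $M$ is dense in $L$, and $\psi$ is continuous, so $\psi(L)$ lies in the closure of $\psi(M) = \phi(M)$ in $M'$, which is exactly $C$.

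For the reverse inclusion, my first task is to exhibit $\{t(r') = \phi^{-1}(r') \mid r' \in \Rcal_{\tau'}\}$ as a base of open congruences for the hyperconnected morphism $v \circ h: \PSh(M) \to \Cont(L,\rho)$, in the sense of Definition \ref{dfn:basecong}. Given $r \in \Rcal_{v \circ h}$, localicity of $w$ in the diagram \eqref{eq:factors} presents $M/r$ as a subquotient (in $\Cont(L,\rho)$) of some $w^*(X)$. Using that $M/r$ is principal, that $\Cont(L,\rho)$ is closed under subobjects in $\Cont(M,\tau)$ (hyperconnectedness of $v$), and that every object of $\Cont(M',\tau')$ is a colimit of principal objects, I may reduce to a principal sub-$M$-set of $g^*(M'/r') = M'/r'$ generated by some $[m'] \in M'/r'$. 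A direct computation identifies this sub-$M$-set with $M/t(m'^*(r'))$; since $\Rcal_{\tau'}$ is closed under the inverse image action, $m'^*(r') \in \Rcal_{\tau'}$, and the surjection onto $M/r$ yields the required inclusion $t(m'^*(r')) \subseteq r$. By Definition \ref{dfn:basecong}, $L \cong \varprojlim_{r' \in \Rcal_{\tau'}} M/t(r')$.

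Now for $c \in C$: for each $r' \in \Rcal_{\tau'}$, the $r'$-equivalence class of $c$ is an open neighbourhood of $c$ in $M'$ and so meets $\phi(M)$; choose $m_{r'} \in M$ with $(\phi(m_{r'}), c) \in r'$. Symmetry and transitivity of each $r'$ show both that $[m_{r'}]_{t(r')} \in M/t(r')$ is independent of the chosen representative and that the resulting family is compatible along inclusions $r_1' \subseteq r_2'$, producing $\ell \in L$. Using the explicit formula $\psi(\alpha) = ([\phi(a_{t(r')})])_{r' \in \Rcal_{\tau'}}$ from the proof of Theorem \ref{thm:locextend} together with completeness of $M' \cong \varprojlim_{r' \in \Rcal_{\tau'}} M'/r'$, I deduce $\psi(\ell) = c$.

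The main obstacle is the base-of-congruences claim, which packages the localicity of $w$ together with the reductive structure of $\Cont(L,\rho)$ into a concrete identification of principal objects as quotients of the $M/t(r'')$ for $r'' \in \Rcal_{\tau'}$. After that, constructing a preimage for each $c \in C$ is the canonical completion argument, extracting compatible coordinates from approximants of $c$ in $\phi(M)$.
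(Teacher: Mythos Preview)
Your proof is correct but takes a genuinely different route from the paper's. The paper argues indirectly via the dense--closed factorization of $\psi:(L,\rho)\to(M',\tau')$: the dense factor induces a hyperconnected geometric morphism by Lemma~\ref{lem:densects}, and since $w$ is localic this hyperconnected factor must be an equivalence; the intermediate monoid is a powder monoid by Lemma~\ref{lem:surjective}, so Proposition~\ref{prop:denseisom} forces the dense factor to be an isomorphism of topological monoids. This is short and conceptual, cashing in on the machinery (Lemmas~\ref{lem:densects}, \ref{lem:surjective} and Proposition~\ref{prop:denseisom}) assembled immediately beforehand.

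Your approach instead proves both set-theoretic inclusions $\psi(L)=C$ by hand. The substantive step is your base-of-congruences claim, which unpacks localicity of $w$ concretely to show that every principal $(L,\rho)$-set is a quotient of some $M/t(r'')$; after that, the reverse inclusion $C\subseteq\psi(L)$ is the classical completion argument, extracting a compatible family of approximants in $M$ from the open congruence classes of $c$. This avoids Proposition~\ref{prop:denseisom} and Lemma~\ref{lem:densects} entirely and produces an explicit preimage for each $c\in C$, at the cost of verifying several compatibility details (well-definedness over the fibres of $t$, downward directedness of $\{t(r')\}$ so that it really is initial in $\Rcal_{v\circ h}$, and coherence of the chosen approximants). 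Both arguments are sound; the paper's is more in keeping with its topos-theoretic narrative, while yours is a self-contained element-level verification that does not depend on the intervening results about equivalences between complete and powder monoids.
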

\begin{proof}
Certainly $(M,\tau)$ is dense in $(L,\rho)$. Consider the dense-closed factorization of $\psi: (L,\rho) \to (M',\tau')$. If the dense part is non-trivial, by Lemma \ref{lem:densects} it produces a hyperconnected factor of the geometric morphism induced by $\psi$, and hence must be an equivalence. The intermediate monoid is a powder monoid by Lemma \ref{lem:surjective}, and thus the dense part is an isomorphism by Proposition \ref{prop:denseisom}.
\end{proof}

\begin{crly}
\label{crly:closed}
Any monoid which is a closed subsemigroup of a complete monoid is complete.
\end{crly}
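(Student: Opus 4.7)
The plan is to reduce to the case of a closed submonoid via Theorem \ref{thm:inccomplete}, then apply Theorem \ref{thm:locextend} together with Corollary \ref{crly:closure} to see that the completion of $M$ coincides with $M$ itself.

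In detail, let $\phi: M \hookrightarrow M'$ be the inclusion of $M$ as a closed subsemigroup of a complete monoid $(M',\tau')$, and let $e := \phi(1_M)$, which is necessarily an idempotent of $M'$. The image of $\phi$ is contained in the subsemigroup $eM'e$, and the factorization $M \to eM'e \hookrightarrow M'$ makes $M$ into a submonoid of $eM'e$. By Theorem \ref{thm:inccomplete}, $eM'e$ equipped with the subspace topology from $\tau'$ is a complete monoid; moreover, by Lemma \ref{lem:ideal}, $eM'e$ is closed in $(M',\tau')$ (being the intersection of the two closed principal ideals $eM'$ and $M'e$), so $M = M \cap eM'e$ is also closed in $(eM'e, \tau'|_{eM'e})$. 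This reduces the problem to the case in which $\phi$ is a closed injective monoid homomorphism into a complete monoid.

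Under this assumption, Theorem \ref{thm:locextend} yields a complete monoid $(L,\rho)$, a dense continuous injective monoid homomorphism $u: (M,\tau) \hookrightarrow (L,\rho)$, and a continuous injection $\psi: (L,\rho) \to (M',\tau')$ with $\psi \circ u = \phi$. By Corollary \ref{crly:closure}, $\psi(L)$ is precisely the closure of $\phi(M)$ in $(M',\tau')$. Since $\phi(M)$ is closed by hypothesis, this forces $\psi(L) = \phi(M)$; combined with injectivity of $\psi$ and $\phi$, we conclude that $u$ is a homeomorphism of monoids, and therefore $(M,\tau) \cong (L,\rho)$ is complete.

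The routine part is the reduction step; the potentially subtle point is ensuring that the notion of ``closedness'' transfers cleanly between $M'$ and $eM'e$, but this is immediate from Lemma \ref{lem:ideal}. No further obstacle arises, since Theorem \ref{thm:locextend} and Corollary \ref{crly:closure} have already done the heavy lifting, packaging the identification of the completion of $(M,\tau)$ as a topological closure inside any powder monoid extension.
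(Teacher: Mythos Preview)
Your proof is correct and follows essentially the same approach as the paper: reduce a closed subsemigroup to a closed submonoid via the factorization through $eM'e$, then apply Theorem~\ref{thm:locextend} together with Corollary~\ref{crly:closure} to identify the completion of $M$ with its closure in the ambient complete monoid. If anything, your referencing is slightly more precise than the paper's, since you invoke Theorem~\ref{thm:inccomplete} to get completeness of $eM'e$ (the paper cites only Proposition~\ref{prop:eMe}, which gives closedness and the powder property but not completeness directly).
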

\begin{proof}
Applying the factorization of Theorem \ref{thm:locextend} to the inclusion of a closed submonoid, the submonoid is dense in the complete intermediate monoid and so, being closed, must coincide with it, whence it is complete by Corollary \ref{crly:closure}. Combining this with Proposition \ref{prop:eMe}, the result follows.
\end{proof}

To summarize, we have that:
\begin{thm}
\label{thm:hypeloc}
Let $\phi:(M,\tau) \to (M',\tau')$ be a continuous semigroup homomorphism between complete monoids inducing $g: \Cont(M,\tau)\to \Cont(M',\tau')$. Then the hyperconnected--localic factorization of $g$ is canonically represented by the dense-closed factorization of $\phi$.
\end{thm}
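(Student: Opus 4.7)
The plan is to exhibit the dense-closed factorization of $\phi$ as the hyperconnected-localic factorization of $g$ and appeal to uniqueness. Set $N := \overline{\phi(M)} \subseteq M'$; since multiplication on $M'$ is continuous, $N$ is a submonoid, and being closed in the complete monoid $(M',\tau')$, by Corollary \ref{crly:closed} it is itself complete. Write $\phi = \iota \circ \phi_1$ where $\phi_1 : M \to N$ has dense image and $\iota : N \hookrightarrow M'$ is the closed inclusion. Setting $e := \phi(1)$, Proposition \ref{prop:eMe} tells us that $eM'e$ is closed in $M'$, so $N \subseteq eM'e$ and $e$ serves as the identity of $N$; in particular $\phi_1$ is a genuine monoid homomorphism, even when $\phi$ is merely a semigroup homomorphism.

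For the dense factor, Lemma \ref{lem:densects} directly yields that $g_1 := \Cont(\phi_1) : \Cont(M,\tau) \to \Cont(N,\tau'|_N)$ is hyperconnected.

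For the closed factor, I split $\iota$ as $N \hookrightarrow eM'e \hookrightarrow M'$. The outer subsemigroup inclusion induces a geometric inclusion (hence a localic morphism) by Theorem \ref{thm:inccomplete}, which also tells me that $(eM'e,\tau'|_{eM'e})$ is complete. The inner inclusion is an injective monoid homomorphism between complete monoids, to which I apply Theorem \ref{thm:locextend}: this produces an intermediate complete monoid $L$ which, by Corollary \ref{crly:closure}, is the closure of $N$ in $eM'e$; but $N$ is already closed inside $eM'e$ (being closed in $M'$ and contained in $eM'e$), so $L = N$ and the induced morphism $\Cont(N,\tau'|_N) \to \Cont(eM'e,\tau'|_{eM'e})$ is a localic surjection. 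Composing with the outer inclusion, $g_2 := \Cont(\iota)$ is localic.

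Thus $g = g_2 \circ g_1$ realises a factorization into a hyperconnected morphism followed by a localic one, which by uniqueness of the hyperconnected-localic factorization must be the canonical one. The main technical obstacle is the bookkeeping across the semigroup/monoid distinction and the two nested inclusions making up the closed factor; once one verifies that $N$ is closed inside $eM'e$ and not merely inside $M'$, Theorem \ref{thm:locextend} applies cleanly and the argument goes through. A sanity check is provided by the two extreme cases: if $\phi$ already has dense image, then $N = M'$ and $g$ is hyperconnected by Lemma \ref{lem:densects}, while if $\phi$ is already a closed inclusion then $\phi_1$ is the identity and the above chain collapses to the statement that closed inclusions of complete monoids induce localic morphisms.
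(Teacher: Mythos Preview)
Your proof is correct and is essentially the argument the paper intends: the theorem is stated there as a summary (``To summarize, we have that:'') of the preceding results, and you have assembled exactly those results---Lemma \ref{lem:densects} for the dense factor, and Theorems \ref{thm:inccomplete}, \ref{thm:locextend} together with Corollaries \ref{crly:closure}, \ref{crly:closed} for the closed factor---in the right order. Your explicit routing through $eM'e$ to handle the semigroup case, and your check that $N$ is closed in $eM'e$ (not just in $M'$) so that Theorem \ref{thm:locextend} collapses to give $L=N$, fill in precisely the bookkeeping the paper leaves to the reader.
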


\subsection{Morita equivalence}
\label{ssec:Morita}

In Sections \ref{sec:montop} and \ref{sec:surjpt} we saw how an arbitrary monoid can be reduced to a powder monoid and then extended to a complete monoid without changing its topos of actions (up to canonical equivalence). Proposition \ref{prop:denseisom} above demonstrates that a continuous monoid homomorphism $\phi$ between complete monoids induces an equivalence if and only if $\phi$ is an isomorphism, just as in the discrete case. Thus, as far as Morita equivalence for complete monoids \textit{via semigroup homomorphisms} goes, we are reduced to considering subsemigroups of the form $eM'e \hookrightarrow M'$.

First, observe that Morita equivalences of discrete monoids descend to complete topologies on those monoids. See also Conjecture \ref{conj:inclusion} below.
\begin{schl}
\label{schl:Morita3}
Let $(M',\tau')$ be a complete monoid, and suppose $e \in M'$ is an idempotent such that the inclusion $\iota: M := eM'e \hookrightarrow M'$ induces an equivalence $\PSh(M) \simeq \PSh(M')$. Let $\tau$ be the restriction topology on $M$ from Theorem \ref{thm:inccomplete}. Then $\iota$ induces an equivalence $\Cont(M,\tau) \simeq \Cont(M',\tau')$.
\end{schl}
\begin{proof}
As above, we use the notation of \eqref{eq:square}. We know from the proof of Theorem \ref{thm:inccomplete} that in this situation, $g: \Cont(M',\tau') \to \Cont(M,\tau)$ is the inclusion part of the hyperconnected-inclusion factorization of $h' \circ f$, so when $f$ is an equivalence, the inclusion part must be trivial. That is, $g$ is itself an equivalence.
\end{proof} 

In summary, we have:
\begin{thm}
\label{thm:Moritaphi}
Let $\phi:(M,\tau) \to (M',\tau')$ be a continuous semigroup homomorphism between complete monoids. Then if $\phi$ induces an equivalence $g:\Cont(M,\tau) \to \Cont(M',\tau')$, then $M \cong eM'e$ for some idempotent $e \in M'$, $\phi$ is the canonical inclusion of subsemigroups, and $\tau$ is the restriction of $\tau'$ along this inclusion.
\end{thm}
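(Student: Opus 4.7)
The plan is to combine the factorization of Theorem \ref{thm:surjinc} with the rigidity result Proposition \ref{prop:denseisom}. First I will set $e := \phi(1)$ and invoke Theorem \ref{thm:surjinc} to write $\phi = \iota \circ \phi_0$, where $\phi_0 : (M,\tau) \to (eM'e, \tau'|_{eM'e})$ is a continuous monoid homomorphism and $\iota : eM'e \hookrightarrow M'$ is the subsemigroup inclusion. This factorization canonically represents the surjection--inclusion factorization of $g$.

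Since $g$ is an equivalence and the surjection--inclusion factorization is essentially unique (a consequence of it being an orthogonal factorization system on $\TOP$), both the induced morphisms $g_0 : \Cont(M,\tau) \to \Cont(eM'e, \tau'|_{eM'e})$ and $\Cont(eM'e, \tau'|_{eM'e}) \to \Cont(M',\tau')$ are equivalences. Then Theorem \ref{thm:inccomplete} applied to $\iota$ tells me that $(eM'e, \tau'|_{eM'e})$ is itself a complete monoid, hence by Proposition \ref{prop:Lpowder} a powder monoid. This sets up the hypotheses required to apply Proposition \ref{prop:denseisom}.

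Finally, I will apply Proposition \ref{prop:denseisom} to $\phi_0$: its domain is complete by hypothesis, its codomain is a powder monoid by the previous step, and it induces the equivalence $g_0$. The proposition yields that $\phi_0$ is an isomorphism of topological monoids. The three conclusions of the theorem then follow immediately: $M \cong eM'e$ under $\phi_0$, the composite $\phi = \iota \circ \phi_0$ is the canonical subsemigroup inclusion after this identification, and the topology $\tau$ corresponds to $\tau'|_{eM'e}$ under $\phi_0$.

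The only mildly subtle step is confirming that an equivalence factors as (equivalence, equivalence) in the surjection--inclusion factorization system, but this is immediate from the $2$-categorical uniqueness of the factorization. No serious obstacle is anticipated; the theorem is essentially an assembly of Theorem \ref{thm:surjinc}, Theorem \ref{thm:inccomplete}, and Proposition \ref{prop:denseisom}.
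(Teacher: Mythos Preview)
Your proposal is correct and follows essentially the same route as the paper: factor $\phi$ via Theorem~\ref{thm:surjinc}, observe both factors induce equivalences, and conclude that the monoid-homomorphism factor is an isomorphism. The paper's proof cites Proposition~\ref{prop:phisom} for this last step, which appears to be a slip for Proposition~\ref{prop:denseisom}; your citation of Proposition~\ref{prop:denseisom} (together with Theorem~\ref{thm:inccomplete} to verify the intermediate monoid is complete, hence powder) is the right one.
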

\begin{proof}
Factoring $\phi$ as a monoid homomorphism followed by an inclusion of semigroups, the induced geometric morphisms between toposes of continuous actions must also be equivalences, whence the former must be an isomorphism by Proposition \ref{prop:phisom}. The topology on the intermediate monoid is the restriction topology by \ref{thm:surjinc}.
\end{proof}

We would have liked to extend the $2$-equivalence between the $2$-category of discrete monoids and the $2$-category of their toposes of actions from \cite[Theorem 6.5]{TDMA}
by characterizing the geometric morphisms arising from continuous semigroup homomorphisms. However, this is not possible because, unlike in the discrete case, \textit{not all equivalences of toposes of topological monoid actions lie in the image of $\Cont(-)$}. Since all equivalences have indistinguishable categorical properties, but only some such equivalences are induced by semigroup homomorphisms, we cannot hope for an intrinsic characterization of geometric morphisms lying in the image of $\Cont(-)$.

\begin{xmpl}
\label{xmpl:Schanuel}
Consider once again the Schanuel topos of Example \ref{xmpl:notpro2}. Let $X$ be $\mathbb{N}$ or $\mathbb{R}$. In either case, the monoid $(\End_{\mathrm{mono}}(X),\tau_{\mathrm{fin}})$ representing the Schanuel topos has no non-identity idempotents, since $e^2(x) = e(x)$ implies $e(x) = x$ by injectivity, so any semigroup homomorphism in either direction must be a monoid homomorphism. Since the two representing monoids are complete and non-isomorphic, no such homomorphism can induce the equivalence of toposes
\[ (\End_{\mathrm{mono}}(\Nbb),\tau_{\mathrm{fin}}) \simeq
(\End_{\mathrm{mono}}(\Rbb),\tau_{\mathrm{fin}}). \]
\end{xmpl}

\begin{rmk}
Recall from \cite[Propositions 1.5 and 1.8]{MPaTP} that general geometric morphisms between toposes of discrete monoid actions $\PSh(M) \to \PSh(M')$ correspond to left-$M'$-right-$M$-sets whose $M'$-action is flat. The conclusion of the above is that, to fully understand Morita equivalences of toposes of topological monoid actions, a necessary next step is to investigate the class of biactions for topological monoids which produce geometric morphisms between the associated toposes. We leave this effort to future work.
\end{rmk}

\subsection{Reflective subcategories of the category of topological monoids}
\label{ssec:monads}

In spite of continuous semigroup homomorphisms not capturing the full richness of geometric morphisms, they do nonetheless produce well-behaved ($2$-)categories of monoids. We show in this section how the classes of monoids we have discussed so far form reflective subcategories of the category of monoids with topologies from which we began.

Let $\mathrm{MonT}_s$, $\mathrm{TMon}_s$, $T_0\mathrm{Mon}_s$, $\mathrm{PMon}_s$, $\mathrm{CMon}_s$ respectively be the $2$-categories of monoids equipped with topologies, topological monoids, $T_0$ topological monoids, \textit{right} powder monoids, and complete monoids, all equipped with continuous semigroup homomorphisms as their $1$-morphisms and conjugations as their $2$-morphisms. We have $2$-functors:
\begin{equation}
\label{eq:monadic}
\begin{tikzcd}
\mathrm{MonT}_s & \ar[l, "G_1"] \mathrm{TMon}_s & \ar[l, "G_2"] \mathrm{T_0Mon}_s & \ar[l, "G_3"] \mathrm{PMon}_s & \ar[l, "G_4"] \mathrm{CMon}_s;
\end{tikzcd}
\end{equation}
all of  these subcategories are full on $1$- and $2$-morphisms. In the following results, we demonstrate that all of  these functors have adjoints, which makes them reflective  as ($2$-)subcategories. Recall that for strict $2$-functors $F:\Ccal \to \Dcal$ and $G:\Dcal \to \Ccal$ to form a strict $2$-adjunction $(F \dashv G)$, we require there to be isomorphisms of categories
\[ \Hom_{\Dcal}(FX,Y) \cong \Hom_{\Ccal}(X,GY),\]
natural in $X$ and $Y$. Since the data of a conjugation consists of an element of the codomain monoid, which is not affected by any of the $G_i$, it suffices to prove that the $G_i$ have left adjoints as $1$-functors; preservation of the $2$-morphisms will then be automatic. As such, the functors constructed in the results below are informally referred to as adjoints.

Note that all of the following results also hold when we restrict to the subcategories of monoid homomorphisms, since all of the units of the adjunctions we construct are continuous monoid homomorphisms.

\begin{lemma}
\label{lem:G1}
The functor $G_1: \mathrm{TMon}_s \to \mathrm{MonT}_s$ has a left adjoint.
\end{lemma}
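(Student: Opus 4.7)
The plan is to construct $F_1$ on objects by setting $F_1(M,\tau) := (M, \tau_1)$, where $\tau_1 \subseteq \tau$ is the join (in the lattice of topologies on $M$) of all topologies $\tau' \subseteq \tau$ for which $(M, \tau')$ is a topological monoid. The unit at $(M,\tau)$ will be the identity function $\mathrm{id}_M : (M,\tau) \to (M, \tau_1)$, which is continuous precisely because $\tau_1 \subseteq \tau$.

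The key technical step is to verify that $\tau_1$ itself makes the multiplication $\mu$ jointly continuous. I would argue directly: a basic open of $\tau_1$ can be written as a finite intersection $W = V_1 \cap \cdots \cap V_n$ with each $V_j$ open in some topological-monoid topology $\tau_{i_j} \subseteq \tau_1$, and then
\[
\mu^{-1}(W) = \bigcap_{j=1}^{n} \mu^{-1}(V_j) \in \tau_1 \times \tau_1,
\]
since each $\mu^{-1}(V_j)$ lies in $\tau_{i_j} \times \tau_{i_j} \subseteq \tau_1 \times \tau_1$. Arbitrary unions pass through $\mu^{-1}$, so $(M,\tau_1)$ is a genuine topological monoid.

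For the universal property, given a continuous semigroup homomorphism $\phi : (M, \tau) \to (N, \sigma)$ with $(N, \sigma) \in \mathrm{TMon}_s$, I would introduce the initial (pullback) topology $\phi^{*}\sigma := \{\phi^{-1}(V) \mid V \in \sigma\}$ on $M$. Continuity of $\phi$ forces $\phi^{*}\sigma \subseteq \tau$, and the fact that $\phi$ preserves multiplication yields
\[
\mu_M^{-1}(\phi^{-1}(V)) = (\phi \times \phi)^{-1}(\mu_N^{-1}(V)) \in \phi^{*}\sigma \times \phi^{*}\sigma,
\]
because $\mu_N^{-1}(V) \in \sigma \times \sigma$. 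Thus $(M, \phi^{*}\sigma)$ is itself a topological monoid, so $\phi^{*}\sigma$ contributes to the join defining $\tau_1$, giving $\phi^{*}\sigma \subseteq \tau_1$, and consequently $\phi : (M, \tau_1) \to (N, \sigma)$ remains continuous. Uniqueness of the factorization through the unit is immediate because the unit is the identity on underlying monoids, and the $2$-functorial enhancement is automatic since conjugations depend only on an element of the codomain monoid, as noted before the lemma.

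The main obstacle I anticipate is the preservation statement in the second paragraph: joint continuity is not always stable under topological constructions, so the success of this adjunction hinges on the pleasant fact that the preimage of a basic open of $\tau_1$ already lives in a product of the contributing topologies and therefore in $\tau_1 \times \tau_1$. Once that is in place, the rest of the verification is formal.
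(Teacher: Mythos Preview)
Your proof is correct and takes a genuinely different route from the paper's. The paper constructs the reflection \emph{from above}: starting with $\tau_0 = \tau$, it iteratively defines $\tau_{i+1}$ to consist of those $U \in \tau_i$ with $\mu^{-1}(U) \in \tau_i \times \tau_i$, then sets $\tau_\infty = \bigcap_i \tau_i$ (equivalently, the collection of $U \in \tau$ with $\mu^{-k}(U)$ open in $\tau^{k+1}$ for every $k$). For the universal property, the paper checks directly that $\mu^{-k}(\phi^{-1}(U')) = \phi^{-1}(\mu'^{-k}(U'))$ is open for each $k$, so $\phi^{-1}(U') \in \tau_\infty$. Your approach instead works \emph{from below}: you take the join of all sub-topologies of $\tau$ that already make $\mu$ continuous, verify that this join is again such a topology (your finite-intersection argument for basic opens is exactly what is needed here), and then handle the universal property elegantly by observing that the pullback topology $\phi^*\sigma$ is itself one of the topologies contributing to the join. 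Both constructions produce the same topology, namely the finest one contained in $\tau$ making multiplication continuous. Your argument has the advantage of sidestepping any delicate comparison between $\bigcap_i(\tau_i \times \tau_i)$ and $\tau_\infty \times \tau_\infty$; the paper's iterative approach, on the other hand, gives a more explicit description of which opens survive.
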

\begin{proof}
Given a monoid with a topology $(M,\tau)$, we can inductively define sub-topologies $\tau_i$ by letting $\tau_0 = \tau$ and $\tau_{i+1}$ consisting of those open subsets $U \in \tau_i$ such that $\mu^{-1}(U)$ is open in $\tau_i \times \tau_i$. Then letting $\tau_{\infty} := \bigcap_{i=0}^\infty \tau_i$, we claim that $(M,\tau_{\infty})$ is a topological monoid. Indeed, given $U \in \tau_{\infty}$, $\mu^{-1}(U) \in \tau_{\infty} \times \tau_{\infty}$ by construction. This is clearly the finest topology contained in $\tau$ with respect to which the multiplication on $M$ is continuous. We could alternatively define $\tau_{\infty}$ as the collection of $U \in \tau$ such that $\mu^{-k}(U)$ (which is a well-defined subset of $M^{k+1}$ by associativity of multiplication) is open in $\tau \times \cdots \times \tau$ ($k+1$ times) for every positive integer $k$.

The identity homomorphism $(M,\tau) \to (M,\tau_{\infty})$ is automatically continuous. Given any topological monoid $(M',\tau')$ and continuous semigroup homomorphism $\phi: (M,\tau) \to (M',\tau')$, since $\phi$ commutes with multiplication we have that for each $U' \in \tau'$,
\[\mu^{-k}(\phi^{-1}(U')) = \phi^{-1}(\mu'{}^{-k}(U'))\]
is open in $\tau$ by continuity of $\phi$, whence $\phi^{-1}(U')$ is a member of $\tau_{\infty}$. Thus $\phi$ factors (uniquely) through $(M,\tau) \to (M,\tau_{\infty})$, as required to make this map the unit of an adjunction.
\end{proof}

\begin{lemma}
\label{lem:G2}
The functor $G_2: T_0\mathrm{Mon}_s \to \mathrm{TMon}_s$ has a left adjoint.
\end{lemma}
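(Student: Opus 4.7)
The plan is to mimic the classical Kolmogorov ($T_0$) quotient of a topological space, interpreted inside the category of monoids. Given a topological monoid $(M,\tau)$, declare $m \sim m'$ precisely when $m$ and $m'$ are topologically indistinguishable with respect to $\tau$, and set $\tilde{M} := M/{\sim}$ with the quotient topology $\tilde{\tau}$. The quotient map $q : (M,\tau) \to (\tilde{M},\tilde{\tau})$ will serve as the unit of the adjunction.

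First I would verify that $\sim$ is a two-sided congruence, copying the first half of the argument from Theorem \ref{thm:Hausd}: if $m_1 \sim m_1'$ and $m_2 \sim m_2'$ and $U$ is an open neighbourhood of $m_1 m_2$, then continuity of $\mu$ produces opens $V_1 \ni m_1$, $V_2 \ni m_2$ with $V_1 \times V_2 \subseteq \mu^{-1}(U)$, so indistinguishability of the two pairs places $m_1' m_2'$ in $U$; the reverse inclusion is symmetric. Hence $\tilde{M}$ inherits a monoid structure for which $q$ is a monoid homomorphism. Crucially, every open of $(M,\tau)$ is automatically saturated under $\sim$ (if $U \in \tau$, $m \in U$ and $m \sim m'$, then $m' \in U$ by definition of indistinguishability), so the opens of $\tilde{\tau}$ are in order-isomorphic bijection with those of $\tau$ via $q^{-1}$. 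This immediately gives continuity of the induced multiplication on $\tilde{M}$ and the $T_0$ separation axiom, since distinct classes of $\sim$ are by construction separated by some saturated open, which descends.

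The universal property is then essentially forced. Given any continuous semigroup homomorphism $\phi : (M,\tau) \to (M',\tau')$ into a $T_0$ topological monoid, $m \sim m'$ implies $\phi(m)$ and $\phi(m')$ are topologically indistinguishable in $(M',\tau')$ (pull back any open containing one), and by the $T_0$ hypothesis this forces $\phi(m) = \phi(m')$. Hence $\phi$ descends uniquely through $q$ as a set map; the resulting map is automatically a semigroup homomorphism, and is continuous by the universal property of the quotient topology. Naturality in both variables is immediate.

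I do not anticipate a serious obstacle here: the construction is a direct categorical transplant of the classical $T_0$-reflection for topological spaces, and the one subtle ingredient, namely that topological indistinguishability is respected by multiplication, is precisely where the topological monoid hypothesis is used and has already appeared in Theorem \ref{thm:Hausd}. As flagged in the prose above Lemma \ref{lem:G1}, it suffices to exhibit the left adjoint on $1$-cells, since conjugations are data in the codomain monoid and so are transported without incident.
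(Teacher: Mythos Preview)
Your proposal is correct and essentially identical to the paper's proof: both define $\sim$ as topological indistinguishability, verify it is a two-sided congruence using continuity of multiplication exactly as in Theorem~\ref{thm:Hausd}, and then check that any continuous semigroup homomorphism into a $T_0$ monoid collapses $\sim$-classes. Your observation that every open is $\sim$-saturated (so that the quotient carries ``the same'' topology, which the paper writes simply as $(M/{\sim},\tau)$) is the only place you are more explicit than the paper.
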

\begin{proof}
If $(M,\tau)$ is a topological monoid, then the equivalence relation $\sim$ on $M$ identifying topologically indistinguishable elements is necessarily a two-sided congruence, since given $m_1 \sim m'_1$, $m_2 \sim m'_2$ and a neighbourhood $U$ of $m_1m_2$, we have that $\mu^{-1}(U)$ contains an open rectangle $U_1 \times U_2$ with $m_i \in U_i$, whence $(m'_1,m'_2) \in U_1 \times U_2$ and hence $m'_1m'_2 \in U$. Thus the quotient map $(M,\tau) \to (M/{\sim}, \tau)$ is a continuous semigroup homomorphism.

Given a continuous semigroup homomorphism $\phi$ from $(M,\tau)$ to a $T_0$ topological monoid $(M',\tau')$ and given $m \sim m'$, observe that $\phi(m)$ and $\phi(m')$ must be topologically indistinguishable and hence equal in $(M',\tau')$ by continuity. Thus $\phi$ factors through the quotient map above, as required.
\end{proof}

\begin{lemma}
\label{lem:G3}
The functor $G_3: \mathrm{PMon}_s \to T_0\mathrm{Mon}_s$ has a left adjoint. 
\end{lemma}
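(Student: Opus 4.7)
The plan is to define the left adjoint on objects by the powder-monoid construction of Theorem \ref{thm:Hausd}, sending $(M,\tau) \in T_0\mathrm{Mon}_s$ to $(\tilde M, \tilde\tau) \in \mathrm{PMon}_s$, with unit the canonical continuous quotient $\eta_{(M,\tau)}: (M,\tau) \to (\tilde M, \tilde\tau)$; this is a continuous semigroup (indeed monoid) homomorphism by Proposition \ref{prop:ctsx} together with the construction in the proof of Theorem \ref{thm:Hausd}.

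For the universal property, given any continuous semigroup homomorphism $\phi: (M,\tau) \to (M',\tau')$ with $(M',\tau')$ a powder monoid, I would first show that $\phi$ remains continuous when we refine the domain topology to $\tilde\tau$. Since $(M',\tau')$ is a powder monoid, $\tilde{\tau}' = \tau'$; by Lemma \ref{lem:cts}, the assumed continuity of $\phi$ is equivalent to the induced functor $f^*$ carrying $(M',\tau')$-sets to $(M,\tau)$-sets, and since $\Cont(M,\tau) = \Cont(M,\tilde\tau)$ as subcategories of $\PSh(M)$ by Theorem \ref{thm:tau}, a second application of Lemma \ref{lem:cts} yields continuity of $\phi$ as a map $(M, \tilde\tau) \to (M', \tau')$. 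Next, I would verify that $\phi$ collapses $\tilde\tau$-indistinguishable points: for any $U' \in \tau'$, the preimage $\phi^{-1}(U') \in \tilde\tau$ cannot separate such a pair, and the $T_0$ hypothesis on $(M',\tau')$ then forces their $\phi$-images to coincide. Consequently $\phi$ descends uniquely along the surjection $\eta_{(M,\tau)}$ to a continuous semigroup homomorphism $\tilde\phi: (\tilde M, \tilde\tau) \to (M',\tau')$, and naturality of the resulting bijection in both arguments is immediate from the universal characterization.

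The one step requiring attention is the double invocation of Lemma \ref{lem:cts} used to transport continuity across the reduction $\tau \leadsto \tilde\tau$; this argument hinges crucially on the identity $\tilde{\tau}' = \tau'$ satisfied by the codomain powder monoid, without which $\phi$ need not factor through $(\tilde M, \tilde\tau)$. Otherwise the proof is largely bookkeeping built on top of Theorem \ref{thm:Hausd}, so I do not anticipate a significant obstacle.
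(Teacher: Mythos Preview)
Your proposal is correct and follows essentially the same outline as the paper: both define the left adjoint via the powder-monoid construction of Theorem~\ref{thm:Hausd} with the quotient map as unit, and both reduce the universal property to showing that a continuous $\phi:(M,\tau)\to(M',\tau')$ with powder codomain remains continuous when the domain topology is coarsened to $\tilde\tau$, after which the descent to $\tilde M$ is the $T_0$-quotient argument of Lemma~\ref{lem:G2}.

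Where you diverge from the paper is in how you establish that key continuity step. The paper argues directly at the level of open sets: for $U'\in T'$ and $p\in M$ it exhibits the containment $\phi^{-1}(\Ical_{U'}^{\phi(p)}) \subseteq \Ical_{\phi^{-1}(U')}^{p}$, and since the left-hand side is open in $\tau$ this forces $\phi^{-1}(U')\in T$, whence $\phi$ is continuous for $\tilde\tau$. Your route is instead to invoke Lemma~\ref{lem:cts} twice, passing through the identity $\Cont(M,\tau)=\Cont(M,\tilde\tau)$ of Theorem~\ref{thm:tau}: continuity of $\phi$ (using $\tilde{\tau}'=\tau'$) gives that $f^*$ lands in $\Cont(M,\tau)=\Cont(M,\tilde\tau)$, and Lemma~\ref{lem:cts} applied in reverse then yields continuity with respect to $\tilde\tau$. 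This is a legitimate and rather elegant shortcut that leverages machinery already in place; the paper's computation is more elementary and self-contained but re-derives an inclusion that your categorical argument gets for free.
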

\begin{proof}
The construction of the powder monoid $(\tilde{M},\tilde{\tau})$ associated to $(M,\tau)$ in Theorem \ref{thm:Hausd} (via the construction of the action topology in Theorem \ref{thm:tau}) defines the value of the adjoint functor on objects and provides a candidate for the unit in the quotient homomorphism $(M,\tau) \to (\tilde{M},\tilde{\tau})$.

Let $\phi:(M,\tau) \to (M',\tau')$ be a continuous semigroup homomorphism with $(M',\tau')$ a powder monoid. Given $U' \in T'$ and $p \in M$, consider
\[\phi^{-1}(\Ical_{U'}^{\phi(p)}) = \{q \in M \mid \phi(q)^*(U') = \phi(p)^*(U')\},\]
which is clearly contained in
\begin{align*}
\Ical_{\phi^{-1}(U')}^p &= \{q \in M \mid q^*(\phi^{-1}(U')) = p^*(\phi^{-1}(U'))\}\\
&= \{q \in M \mid \phi^{-1}(\phi(q)^*(U')) = \phi^{-1}(\phi(p)^*(U'))\}.
\end{align*}
Since the former is open in $\tau$, so is the latter, and hence $\phi^{-1}(U') \in T$. Since any open in $\tau'$ is a union of basic opens in $T'$, we conclude that $\phi$ factors through $(M,\tilde{\tau})$, and hence through $(\tilde{M},\tilde{\tau})$ by the proof of Lemma \ref{lem:G2}, as required.
\end{proof}

For the fourth result, we recycle the proof of Theorem \ref{thm:locextend}.
\begin{schl}
\label{schl:G4}
The functor $G_4: \mathrm{CMon}_s \to \mathrm{PMon}_s$ has a left adjoint. 
\end{schl}
\begin{proof}
Given a powder monoid $(M,\tau)$, a complete monoid $(L',\rho')$ and a semigroup homomorphism $g:(M,\tau) \to (L',\rho')$, we must show that $g$ factors uniquely through the canonical monoid homomorphism $u: (M,\tau) \to (L,\rho)$, where the latter is the completion of $(M,\tau)$.

As in the proof of Theorem \ref{thm:locextend}, we may assume $\phi: M \to L'$ is a monoid homomorphism, since we may factor $\phi$ as a monoid homomorphism followed by an inclusion of subsemigroups, and the intermediate monoid is canonically a complete monoid with the restriction topology by Theorem \ref{thm:inccomplete}. We then construct the factoring $M$-set homomorphism $\psi: L \to f^*(L')$ just as in the proof of Theorem \ref{thm:locextend}, which did not depend on injectivity of $\phi$.
\end{proof}

Our results from previous sections demonstrate that the units of these four adjunctions all induce equivalences at the level of toposes of continuous actions of monoids.

Recalling the asymetry in the definition of powder monoids (see the comments after Definition \ref{dfn:powder}), we briefly consider the ($2$-)categories of left powder monoids and left complete monoids.

For this purpose, we employ the dual of the notation introduced in Section \ref{ssec:necessary}, writing ${}_x^p\Ical$ for the necessary clopen associated to an element $x$ in a left $M$-set $X$ and $p \in M$. Then we obtain a complementary result to Lemma \ref{lem:InA2}.
\begin{lemma}
\label{lem:lrclopen}
Let $U$ be a subset of $M$. Let $A = \Ical_U^p$ and $B = {}_U^p\Ical$. Then we have $\Ical_B^p = {}_A^p\Ical$. In particular, a two-sided powder monoid has a base of clopens expressible in this coincident form.
\end{lemma}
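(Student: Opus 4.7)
The approach I would take is to unfold both sides using the bimodule structure of $\Pcal(M)$: recall that $\Pcal(M)$ carries a right $M$-action $U \cdot m := \{x : mx \in U\}$ (used to define $\Ical_{(-)}^{p}$) and a left $M$-action $m \cdot U := \{x : xm \in U\}$ (used to define ${}_{(-)}^{p}\Ical$), and that these two actions commute, so that $\Pcal(M)$ is an $M$-bimodule. In this notation, $A = \{m : U \cdot m = U \cdot p\}$ and $B = \{m : m \cdot U = p \cdot U\}$.

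I would first spell out what $m \in \Ical_B^p$ means: the condition $B \cdot m = B \cdot p$ says that for every $x \in M$, $mx \in B \Leftrightarrow px \in B$, which (using the definition of $B$ and the composition of the left action) unfolds to the requirement that for every $x$, the equality $m \cdot (x \cdot U) = p \cdot U$ holds if and only if $p \cdot (x \cdot U) = p \cdot U$. A parallel unfolding of $m \in {}_A^p\Ical$, using the corresponding right-action composition $(U \cdot x) \cdot m = U \cdot (xm)$, yields the dual condition that for every $x$, $(U \cdot x) \cdot m = U \cdot p$ holds if and only if $(U \cdot x) \cdot p = U \cdot p$.

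The main task is then to see that these two characterisations really pick out the same subset of $M$. Here I would invoke Lemma \ref{lem:InA2} applied to $A$ and its dual applied to $B$ (both $p \in A$ and $p \in B$ trivially, so each subset stabilises itself under its associated action), and combine these with the commutation $L_m R_n = R_n L_m$ from Section \ref{ssec:inverse} to transpose the ``right-action'' unfolding into the ``left-action'' one. I expect the bookkeeping to be the principal obstacle: the two unfolded conditions display a visible asymmetry, one iterating left actions and the other right actions, and the non-trivial content of the lemma is precisely that bimodule commutativity erases this asymmetry, reducing both conditions to a single assertion about the behaviour of $m$ on iterated translates $L_m R_x U$ of $U$ relative to the benchmark $L_p U = U \cdot p$ (equivalently $R_p U = p \cdot U$).

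Finally, the ``In particular'' clause about two-sided powder monoids follows quickly: in such a monoid the right action topology admits a base $\mathcal{B}_r$ of clopens $U$ with each $\Ical_U^p$ open, and the left action topology analogously admits a base $\mathcal{B}_\ell$. The coincidence $\Ical_B^p = {}_A^p\Ical$ shows that, starting from a single $U$, this common clopen is simultaneously a witness in both bases, and in combination with Scholium \ref{schl:base} and its left-handed analogue it produces a base of clopens adapted to both the left and the right action topology at once.
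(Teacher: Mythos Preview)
Your setup is sound: you correctly identify the commuting bimodule structure on $\Pcal(M)$ and you correctly unfold $\Ical_B^p$ and ${}_A^p\Ical$ into nested biconditional conditions of the form
\[
\forall x:\;\bigl[\forall y,\,yqx\in U \Leftrightarrow yp\in U\bigr]\;\Leftrightarrow\;\bigl[\forall y,\,ypx\in U \Leftrightarrow yp\in U\bigr]
\]
and its left--right mirror. The gap is that you stop exactly at the point of content. The tools you propose do not obviously bridge it: Lemma~\ref{lem:InA2} gives $\Ical_A^p = A$ and, dually, ${}_B^p\Ical = B$, which are the \emph{diagonal} identities; the lemma concerns the \emph{cross} terms $\Ical_B^p$ and ${}_A^p\Ical$, and you do not explain how the diagonal identities combined with $L_mR_n = R_nL_m$ convert one nested biconditional into the other. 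Your parenthetical ``$L_pU = U\cdot p$ (equivalently $R_pU = p\cdot U$)'' is also a slip: in your own notation $L_pU = p\cdot U$ and $R_pU = U\cdot p$, and these are distinct subsets, so the benchmark you are aiming to reduce to is not well specified.

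The paper's proof supplies the missing idea directly: rather than transforming one nested condition into the other, it expands both and observes that each equals the \emph{same} symmetric set
\[
\{\,q\in M : \forall w,z\in M,\; wqz\in U \;\Leftrightarrow\; wpz\in U\,\},
\]
i.e.\ $q$ and $p$ lie in exactly the same two-sided translates $w^*(U){}^*z$ of $U$. This ``double-coset'' description is manifestly left--right symmetric, so the equality $\Ical_B^p = {}_A^p\Ical$ is immediate once one reaches it. Your sketch gestures toward ``a single assertion about the behaviour of $m$ on iterated translates,'' which is morally this, but you never write the symmetric form down, and without it the argument is incomplete. Your treatment of the ``In particular'' clause is fine once the main equality is in hand.
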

\begin{proof}
After expanding the definitions, we find that both subsets are equal to the set of $q \in M$ such that for all $w,z \in M$, $q \in w^*(U){}^*z$ if and only if $p \in w^*(U){}^*z$ (this is reminiscent of a double-coset construction).
\end{proof}

\begin{schl}
\label{schl:G3'}
The inclusion $G'_3$ of the sub-$2$-category $\mathrm{P'Mon}_s$ of \textbf{left} powder monoids into $T_0\mathrm{Mon}_s$ has a left adjoint. The idempotent monads $P$ and $P': T_0\mathrm{Mon}_s \to T_0\mathrm{Mon}_s$ induced by $G_3$ and $G'_3$ respectively commute, in the sense that $PP' = P'P$, whence the ($2$-)category $\mathrm{P''Mon}_s$ of two-sided powder monoids is also a reflective subcategory of $T_0\mathrm{Mon}_s$.
\end{schl}
\begin{proof}
The first part is clear by inspection of the proof of Lemma \ref{lem:G3}, which can be dualized without difficulty. To see that $PP' = P'P$, fix a ($T_0$) monoid $(M,\tau)$ and consider the left action topology associated to the right action topology associated to $\tau$. This is generated by those $U \in \tilde{\tau}$ such that for every $p \in M$, the subset ${}_U^p\Ical$ is in $\tilde{\tau}$ (and hence, being clopen, in $T$). It is necessary and sufficient to verify that for each fixed $p$ that $B := {}_U^p\Ical$ has $\Ical_B^p \in \tau$. By Lemma \ref{lem:lrclopen}, letting $A := \Ical_U^p$, we have that $\Ical_B^p = {}_A^p\Ical$, whence we have $A$ in the left action topology associated to $\tau$ and hence $U$ is in the right action topology associated to this. In summary, the topologies obtained by applying $P$ and $P'$ in either order are the same.
\end{proof}

\begin{schl}
\label{schl:G4'}
The inclusion $G'_4$ of the sub-$2$-category $\mathrm{C'Mon}_s$ of \textbf{left} complete monoids into $\mathrm{P'Mon}_s$ has a left adjoint.
\end{schl}
\begin{proof}
The proof proceeds exactly as for Scholium \ref{schl:G4}.
\end{proof}

With Scholiums \ref{schl:G3'} and \ref{schl:G4'} we may extend the diagram of monadic functors \eqref{eq:monadic} as follows,
\begin{equation}
\label{eq:monadic2}
\begin{tikzcd}
& \ar[dl, "G_3"] \mathrm{PMon}_s & & \ar[ll, "G_4"'] \mathrm{CMon}_s \\
\mathrm{T_0Mon}_s & & \ar[dl, "G_3"'] \ar[ul, "G'_3"] \mathrm{P''Mon}_s \\
& \ar[ul, "G'_3"'] \mathrm{P'Mon}_s & & \ar[ll, "G'_4"] \mathrm{C'Mon}_s. \\
\end{tikzcd}
\end{equation}
Clearly, there is more of this picture to fill in; see Conjecture \ref{conj:complete} below.

We could have included further reflective subcategories of $\mathrm{TMon}_s$ in this section, such as the category of zero-dimensional monoids, but we hope the examples we have included are sufficiently illustrative.

\begin{rmk}
One might wonder whether the underlying $1$-categories of these classes of monoids are monadic over $\Top$, given that this is the case for the category of topological monoids. In the first instance, we can observe that the subcategory of $T_0$ spaces is reflective in $\Top$, and that the category of $T_0$ monoids is `crudely' monadic (see \cite[Proposition 3.5.1]{TTT}) over the category of $T_0$ spaces, so this category is monadic over $\Top$. The same argument works for the zero-dimensional $T_0$ monoids.

On the other hand, powder monoids are not monadic over $\Top$. Indeed, one can check that a free zero-dimensional $T_0$ monoid (equivalently, a free monoid on a zero-dimensional $T_0$ space $X$) is a powder monoid, since the underlying space of such a monoid is the coproduct $\coprod_{n \in \Nbb} X^n$, and the topology on this space is generated by the basic clopen sets in the components $X^n$ which are products of clopens in $X$; it is easily checked that these are necessary clopens. As such, the `free powder monoid monad' coincides with the `free zero-dimensional $T_0$ monoid monad' on $\Top$, but we saw in Remark \ref{rmk:Q} that zero-dimensional $T_0$ monoids do not coincide with powder monoids, so powder monoids are not the algebras for this monad. It is unclear whether the category of complete monoids is monadic over $\Top$, but we anticipate that this will not be the case.

We thank the anonymous reviewer who pointed out the mistaken claim of monadicity in an earlier version of the present paper.
\end{rmk}

\section{Conclusion}
\label{sec:conclusion}

While we have gone a long way in establishing the properties of toposes of the form $\Cont(M,\tau)$ and their canonical representatives in this paper, it is clear that there are multiple avenues for future exploration of the subject.\footnote{Aspirationally, the author wishes to systematically explore them all, but realistically this will only be possible through a collaborative effort; offers for collaboration are welcomed.}

\subsection{Pathological powder monoids}
\label{ssec:moremonoid}

The reader may have noticed that we did not exhibit any examples illustrating the asymmetry in the definition of powder monoids. This is because our main classes of examples, prodiscrete monoids and powder groups, are both blind to this distinction, since their definitions are stable under dualizing. Similarly, any commutative right powder monoid is also a left powder monoid. These cases make constructing examples of right powder monoids which are not left powder monoids difficult. Nonetheless, we posit that:
\begin{conj}
\label{conj:powdery}
There exists a right powder monoid which is not a left powder monoid.
\end{conj}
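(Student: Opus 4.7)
Take as candidate $(M, \tau_{\mathrm{fin}})$ with $M := \End_{\mathrm{mono}}(\mathbb{N})$ under composition $fg := g \circ f$, so that $\mathbb{N}$ is a right $M$-set via $n \cdot f := f(n)$, and $\tau_{\mathrm{fin}}$ the topology of pointwise convergence with basic opens $\{f \mid f(x_i) = y_i,\ i = 1, \ldots, k\}$. By Example \ref{xmpl:Schanuel} together with Proposition \ref{prop:Lpowder}, $(M, \tau_{\mathrm{fin}})$ is a (right) complete monoid, hence a right powder monoid, representing the Schanuel topos. The plan is to show it is not a left powder monoid, i.e.\ that $\tau_{\mathrm{fin}}$ is not a right action topology on $M^{\mathrm{op}}$.

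Write $q_*(A) := \{x \in M \mid xq \in A\}$ for the left action of $M$ on $\Pcal(M)$ arising from right multiplication, so that the left necessary clopen from Lemma \ref{lem:lrclopen} is ${}_U^p\Ical = \{q \mid q_*(U) = p_*(U)\}$. Consider the basic clopen $U := \{f \mid f(0) = 0\}$ and the successor $s(n) := n+1$. Since $xq = q \circ x$ in our convention and $0 \notin s(\mathbb{N})$, we compute $s_*(U) = \{x \mid x(0) \in s^{-1}(0)\} = \emptyset$, so
\[{}_U^s\Ical = \{q \mid q_*(U) = \emptyset\} = \{q \mid 0 \notin q(\mathbb{N})\}.\]
This set is not open at $s$: any basic neighbourhood $W := \{q \mid q(a_j) = a_j + 1,\ j = 1, \ldots, n\}$ of $s$ admits injections $q \in W$ with $0 \in q(\mathbb{N})$, simply by specifying $q(z) := 0$ for some fixed $z \notin \{a_1, \ldots, a_n\}$ and extending injectively.

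The main step is to promote this to: for every $\tau_{\mathrm{fin}}$-open neighbourhood $A$ of $\mathrm{id}$ contained in $U$, the set ${}_A^s\Ical$ fails to be open. Such an $A$ contains a basic open $V := \{f \mid f(b_1) = b_1, \ldots, f(b_k) = b_k\}$ around $\mathrm{id}$, and necessarily $0 \in \{b_1, \ldots, b_k\}$; the argument above gives $s_*(A) = \emptyset$. Given any basic neighbourhood $W$ of $s$ as before, I construct $q \in W$ with $q_*(A) \neq \emptyset$ by setting $q(a_j) := a_j + 1$; for each $i$, taking $c_i := a_j$ whenever $b_i = a_j + 1$ and otherwise picking $c_i$ fresh in $\mathbb{N} \setminus \{a_1, \ldots, a_n\}$ with all $c_i$ mutually distinct; declaring $q(c_i) := b_i$; and extending $q$ to an injection $\mathbb{N} \to \mathbb{N}$. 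Any injection $x$ with $x(b_i) = c_i$ then satisfies $(q \circ x)(b_i) = b_i$ for every $i$, so $q \circ x \in V \subseteq A$, giving $x \in q_*(A)$. Hence $q \in W \setminus {}_A^s\Ical$, so ${}_A^s\Ical$ is not a neighbourhood of $s$.

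Consequently, no open neighbourhood of $\mathrm{id}$ contained in $U$ lies in the canonical base for the left action topology described in Scholium \ref{schl:G3'}, so $U$ itself cannot belong to that topology. Thus $U \in \tau_{\mathrm{fin}}$ but $U$ sits outside the left action topology, making $\tau_{\mathrm{fin}}$ strictly finer and not a right action topology on $M^{\mathrm{op}}$. The main obstacle is the combinatorial coordination when the constraints of $W$ and $V$ overlap (the case $a_j + 1 = b_i$), which is handled by reusing $a_j$ as $c_i$; once that is addressed, the infinitude of $\mathbb{N}$ makes the extension step routine.
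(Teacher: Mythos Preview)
The paper does not prove this statement: it is explicitly left as an open conjecture, with the surrounding discussion noting that the author's main sources of examples (prodiscrete monoids, powder groups, commutative monoids) are insensitive to the left/right distinction. So there is no proof in the paper to compare against.

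Your argument, however, appears to actually resolve the conjecture, and the strategy is sound. The monoid $(\End_{\mathrm{mono}}(\Nbb),\tau_{\mathrm{fin}})$ is indeed a right powder monoid by Proposition~\ref{prop:Lpowder} and Example~\ref{xmpl:notpro2}. To show it is not a left powder monoid, it suffices to exhibit some $U \in \tau_{\mathrm{fin}}$ not lying in the left action topology, and for that it suffices to show that no $\tau_{\mathrm{fin}}$-open $A$ with $\id \in A \subseteq U$ is a continuous element for the left inverse-image action (the dual of Theorem~\ref{thm:tau} ensures such continuous elements are automatically $\tau_{\mathrm{fin}}$-open, so restricting to open $A$ loses nothing). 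Your choice $U = \{f \mid f(0)=0\}$ and the witness $p = s$ work: since $A \subseteq U$ forces $s_*(A) = \emptyset$, the set ${}_A^s\Ical$ consists of those $q$ with $q_*(A) = \emptyset$, and you correctly show that any basic neighbourhood $W$ of $s$ contains some $q$ whose image covers $\{b_1,\dotsc,b_k\}$, making $q_*(V) \neq \emptyset$ and hence $q_*(A) \neq \emptyset$. The combinatorial bookkeeping for the overlap case $b_i = a_j + 1$ (reusing $c_i = a_j$) is handled correctly, and injectivity of both $q$ and the witness $x$ is guaranteed by the distinctness of the finitely many prescribed values together with the infinitude of $\Nbb$.

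One minor point: your reference to Scholium~\ref{schl:G3'} for the base of the left action topology is slightly off; the base is really the dual of the construction in Theorem~\ref{thm:tau}, with the simplification from Proposition~\ref{prop:hyper} available because multiplication is continuous on both sides. This does not affect the validity of the argument.
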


Scholium \ref{schl:G3'} puts some limits on Conjecture \ref{conj:powdery}, since it says that any right powder monoid is at most one step away from also being a left powder monoid. In particular, we never get an infinite nested sequence of topologies on a monoid by repeatedly computing the associated right and left action topologies. We have not demonstrated the same results for complete monoids, but we expect them to hold:
\begin{conj}
\label{conj:complete}
The right completion of a left powder monoid or left complete monoid retains the respective property, and dually for left completions of right powder monoids or right complete monoids. However, we expect that there exists a right complete monoid which is not a left powder monoid.
\end{conj}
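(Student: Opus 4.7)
The plan is to address each part of the conjecture separately, relying on the explicit limit description of the right completion from Proposition \ref{prop:lim} and careful tracking of how left-side structure interacts with it.

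For preservation of the left powder property, let $(M,\tau)$ be left powder and form its right completion $(L,\rho)$. Since $\rho$ is already a right action topology (Proposition \ref{prop:Lpowder}) and $L$ is Hausdorff, it suffices to verify the left powder condition on the basis $\{\pi_s^{-1}(\{[m]\}) \mid s \in \Rcal_\tau,\ m \in M\}$ of $\rho$. For $U = \pi_s^{-1}(\{[m]\})$ and $\alpha = ([a_r])_r \in L$, the set $\{\gamma \in L : \alpha\gamma \in U\}$ equals $\pi_{a_s^*(s)}^{-1}(a_s^{-1}([m])/a_s^*(s))$, where $a_s^{-1}([m]) = \{n \in M : a_s n \sim_s m\}$ is a union of $a_s^*(s)$-classes by the right-congruence property of $s$. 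Using density of $u(M)$ in $L$ (Lemma \ref{lem:Mdense}) together with the fact that the sets in question are clopen, equality $\{\gamma : \alpha\gamma \in U\} = \{\gamma : \beta\gamma \in U\}$ reduces to $a_s^{-1}([m]) = b_s^{-1}([m])$ as subsets of $M$. Since $s$ is a right congruence, $b \sim_s b'$ forces $bn \sim_s b'n$ for every $n$, so the condition on $b_s$ depends only on $[b_s]$; hence ${}_U^\alpha\Ical = \pi_s^{-1}(S/s)$ for an $s$-saturated $S \subseteq M$, which is open in $\rho$.

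For preservation of left completeness, the strategy is dual: express $(L,\rho)$ as an inverse limit over continuous left congruences, mirroring $M$'s expression via the left-sided version of Corollary \ref{crly:Mlim}. One would extend each continuous left congruence $l$ on $M$ to $\tilde{l}$ on $L$ as the smallest left congruence containing $(u \times u)(l)$, verify continuity of $\tilde{l}$ with respect to $\rho$, and show that these extensions form a cofinal family among continuous left congruences on $L$ (by restricting along $u$). The inverse limit then recovers $L$ using left completeness of $M$ together with the universality of $u$. The main obstacle here is ensuring that no continuous left congruence on $L$ fails to arise as an extension from $M$, which requires a density-plus-continuity argument at the level of left-$L$-set quotients.

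The principal obstacle is Part C. The argument sketched above for preservation of left powder does not appear to invoke the hypothesis that $(M,\tau)$ is itself left powder---only the right-congruence property of $s$ is used. If this analysis is correct, every right complete monoid is automatically a two-sided powder monoid, refuting the ``However'' clause and, by a subspace argument paralleling Lemma \ref{lem:surjective} (every right powder monoid embedding topologically into its completion), also refuting Conjecture \ref{conj:powdery}. In that case the plan would be to replace the ``However'' clause with the stronger assertion that right completion always yields a two-sided powder monoid. If instead a subtle gap exists---most plausibly when $a_s^*(s) \neq b_s^*(s)$, so that the clopens $A_\alpha$ and $A_\beta$ live at different indices and the density identification requires more care---then a counterexample would have to be built from a right complete monoid whose multiplication is asymmetric enough to break the symmetric reasoning, most likely by topologising a carefully designed non-commutative semigroup with distinguished one-sided absorbing structure.
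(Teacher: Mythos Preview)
First, note that the statement in question is a \emph{conjecture}: the paper offers no proof, only heuristic expectations. So there is no ``paper's own proof'' to compare against; what matters is whether your proposed argument is sound.

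Your argument for preservation of the left powder property contains a left/right confusion that invalidates it. The left powder condition on $(L,\rho)$ concerns the inverse-image action coming from \emph{right} multiplication: for a basic open $U$ and $\alpha\in L$ one must show that
\[
{}_U^{\alpha}\Ical \;=\; \{\beta\in L \mid U{}^*\beta = U{}^*\alpha\}
\quad\text{is open, where}\quad
U{}^*\alpha \;=\; \{\gamma\in L \mid \gamma\alpha \in U\}.
\]
You instead compute $\{\gamma\in L \mid \alpha\gamma\in U\}=\alpha^*(U)$, which is the inverse image under \emph{left} multiplication by $\alpha$. That set is exactly what enters the \emph{right} powder condition, and its openness is precisely what Proposition~\ref{prop:Lpowder} already establishes. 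This is why your argument appears not to use the hypothesis that $(M,\tau)$ is left powder: you are re-deriving the right powder property of $(L,\rho)$, not the left one.

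The correct object $U{}^*\alpha$ is substantially harder to analyse. With $U=\pi_s^{-1}(\{[m]\})$ and $\gamma=([c_r])_r$, the $s$-component of $\gamma\alpha$ is $[c_s\,a_{c_s^*(s)}]$, so the condition $\gamma\alpha\in U$ probes $\alpha$ at the index $c_s^*(s)$, which \emph{varies with $\gamma$}. Thus $U{}^*\alpha$ is not governed by a single coordinate of $\alpha$, and there is no evident reason for $\{\beta : U{}^*\beta = U{}^*\alpha\}$ to be $\rho$-open without further input---plausibly exactly the left-side hypothesis on $M$ that you found superfluous. Your own diagnosis of ``a subtle gap\ldots when $a_s^*(s)\neq b_s^*(s)$'' is in fact pointing at the genuine difficulty, but located on the wrong side of the product. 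Consequently your proposed refutation of the ``However'' clause (and of Conjecture~\ref{conj:powdery}) does not go through, and the conjecture remains open as stated.
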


Another way of expressing Conjectures \ref{conj:powdery} and \ref{conj:complete} is to say that we expect the diagram of monadic full and faithful functors \eqref{eq:monadic2} to extend as follows:

\[\begin{tikzcd}
& & \ar[dl] \mathrm{CMon}_s & & \\
& \ar[dl] \mathrm{PMon}_s & &
\ar[ul] \ar[dl] \mathrm{CP'Mon}_s & \\
\mathrm{T_0Mon}_s & &
\ar[dl] \ar[ul] \mathrm{P''Mon}_s & &
\ar[ul] \ar[dl] \mathrm{C''Mon}_s,\\
& \ar[ul] \mathrm{P'Mon}_s & &
\ar[dl] \ar[ul] \mathrm{C'PMon}_s \\
& & \ar[ul] \mathrm{C'Mon}_s & & 
\end{tikzcd}\]
where the notation is the intuitive extension of that employed in \eqref{eq:monadic2} and each inclusion represented is non-trivial.

\subsection{Finitely generated complete monoids}
\label{ssec:fgcompmon}

Besides these conjectures characterizing pathological examples, there is plenty of ground still to cover in understanding these classes of monoids. What does a `generic' complete monoid look like, beyond what has been shown in this article? Is it possible to classify them?

For example, given an element $x$ in a complete monoid, we may consider the closure of the submonoid generated by $x$, which by Corollary \ref{crly:closed} is a complete monoid. One might consider this an instance of a `complete monoid generated by one element'\footnote{We include the quotation marks to emphasise that this submonoid is not generated by $x$ in an algebraic sense.}. We can identify such monoids as the canonical representatives of toposes admitting a hyperconnected morphism from $\PSh(\Nbb)$. By Corollary \ref{crly:prodisc} these are commutative prodiscrete monoids. Analogously, `finitely generated complete monoids' would correspond to complete monoids representing toposes admitting hyperconnected geometric morphisms from $\PSh(F_{n})$ for $F_n$ the free (discrete) monoid on some number $n$ of elements. By Proposition \ref{prop:prince2} they correspond to filters of right congruences on $F_n$, which we expect to have a tame classification. Is it possible to identify the `finitely presented complete monoids' amongst these? One could go on to investigate the properties of various ($2$-)categories of such monoids, taking advantage of results such as such as those in Section \ref{ssec:monads}. Future applications of this theory may rely on understanding these questions.

\subsection{Invariant properties}
\label{ssec:mitopmon}

In investigating complete monoids, it will be desirable to extend the results we obtained with Jens Hemelaer in \cite{MPaTP}, where we explored how properties of discrete monoids extend to properties of their toposes of actions and \textit{vice versa}. In this regard, we can already glean some positive results. Whilst we saw in Example \ref{xmpl:notpro2} that a complete monoid Morita equivalent topological group need not be a group, we have the next best result.
\begin{prop}
\label{prop:densegroup}
Let $(M,\tau)$ be a topological monoid. The following are equivalent:
\begin{enumerate}
	\item $\Cont(M,\tau)$ is an atomic topos;
	\item The completion of $(M,\tau)$ has a dense subgroup;
	\item The group of units in the completion of $(M,\tau)$ is dense;
	\item For each open relation $r \in \underline{\Rcal}_{\tau}$ and $m \in M$, there exists $m' \in M$ with $(mm',1) \in r$.
\end{enumerate}
\end{prop}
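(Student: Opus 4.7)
The plan has four parts: the trivial (2)$\Leftrightarrow$(3), a direct (1)$\Leftrightarrow$(4) using atoms, and two closing implications (3)$\Rightarrow$(1) and (1)$\Rightarrow$(3).

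\textbf{(2)$\Leftrightarrow$(3).} Any subgroup of a monoid is contained in its group of units, so density of a subgroup implies density of $U(L)$; conversely, $U(L)$ is itself a subgroup.

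\textbf{(1)$\Leftrightarrow$(4).} Each $M/r$ is supercompact (Proposition \ref{prop:prince}) and generated by $[1]$. In an atomic Grothendieck topos, such a singly-generated object must be an atom: in any decomposition as a coproduct of atoms, the summand containing $[1]$ coincides with the sub-$M$-set generated by $[1]$, which is the whole object. Assuming (1), given $[m]\in M/r$ the sub-$M$-set it generates is a non-zero subobject of the atom $M/r$, hence equals $M/r$, so contains $[1]$, giving $m'$ with $(mm',1)\in r$. Conversely, assuming (4), any non-zero sub-$M$-set $A\subseteq M/r$ contains some $[m]$; then $[mm']=[1]\in A$, and since $[1]$ generates $M/r$ we get $A=M/r$. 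Hence every $M/r$ is an atom, and since the principal $(M,\tau)$-sets separate $\Cont(M,\tau)$ by Corollary \ref{crly:conttop} and Proposition \ref{prop:prince}, the topos is atomic.

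\textbf{(3)$\Rightarrow$(1).} Equip $U(L)$ with the subspace topology $\rho':=\rho|_{U(L)}$; the inclusion $\iota:(U(L),\rho')\to(L,\rho)$ is then a continuous monoid homomorphism with dense image, so by Lemma \ref{lem:densects} it induces a hyperconnected morphism $\Cont(U(L),\rho')\to\Cont(L,\rho)\simeq\Cont(M,\tau)$. Since $U(L)$ is a group, condition (4) holds trivially for $(U(L),\rho')$: for any open right congruence $r$ and $u\in U(L)$, take $u'=u^{-1}$, so $(uu',1)=(1,1)\in r$. By the (4)$\Rightarrow$(1) direction just established (applied to $(U(L),\rho')$), the domain $\Cont(U(L),\rho')$ is atomic; atomicity then descends along $\iota$ by Remark \ref{rmk:atom}, so $\Cont(M,\tau)$ is atomic.

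\textbf{(1)$\Rightarrow$(3).} Invoke Caramello's characterization from \cite{TGT}: an atomic two-valued Grothendieck topos with a point is equivalent to $\Cont(G,\sigma)$ for some (powder) topological group $(G,\sigma)$, with the same canonical point. By Scholium \ref{schl:completion} the complete monoid associated to the canonical point depends only on the point, so applying Corollary \ref{crly:extend} to $(G,\sigma)$ yields a continuous, injective, dense monoid homomorphism $G\to L$. Its image is a subgroup of $L$ and therefore lies in $U(L)$, forcing $U(L)$ to be dense.

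The main obstacle is (3)$\Rightarrow$(1), where I must justify that $U(L)$ with the subspace topology fits the hypotheses of Lemma \ref{lem:densects}, and then leverage the fact that condition (4) is vacuous for groups to recycle the already-established (4)$\Rightarrow$(1). The other steps either reduce to tautological observations about subgroups and units, or to straightforward manipulations of sub-$M$-sets combined with invocation of existing results (Corollary \ref{crly:extend}, Remark \ref{rmk:atom}, Scholium \ref{schl:completion}, and Caramello's theorem).
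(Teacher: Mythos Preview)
Your argument is correct and follows essentially the same route as the paper: the implications (2)$\Leftrightarrow$(3), (2)/(3)$\Rightarrow$(1) via a hyperconnected morphism from a topos of group actions and Remark~\ref{rmk:atom}, (1)$\Rightarrow$(3) via Caramello's representation theorem, and an independent (1)$\Leftrightarrow$(4).

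A few differences worth noting. For (1)$\Leftrightarrow$(4) you argue directly at the level of sub-$M$-sets of $M/r$, whereas the paper phrases the same content as ``every morphism in $\underline{\Rcal}_\tau$ is a strict epimorphism, so the reductive topology coincides with the atomic topology''; your version is more elementary and just as clean. For (3)$\Rightarrow$(1) you route through $\Cont(U(L),\rho')$ and recycle (4)$\Rightarrow$(1); the paper instead composes the hyperconnected morphisms $\PSh(G)\to\Cont(G,\tau|_G)\to\Cont(M,\tau)$ and uses that $\PSh(G)$ is atomic for any group $G$, which is marginally shorter. For (1)$\Rightarrow$(3) the paper does a little more than you do: it explicitly verifies that $\underline{\Rcal}_\tau\op$ has the amalgamation and joint embedding properties and that the completion is the required universal ultrahomogeneous object before invoking \cite[Theorem~3.5]{TGT} and \cite[Proposition~5.7]{ATGT}, and it identifies the resulting group with $U(L)$ itself rather than merely a dense subgroup of it. Your abbreviated invocation of Caramello's theorem is fine for a proposal, but in a final version you should either check those hypotheses or cite the result in the form that applies directly to a surjective point of an atomic two-valued topos.
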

\begin{proof}
($3 \Rightarrow 2 \Rightarrow 1$) If the group of units of (the completion of) $(M,\tau)$ is dense, then clearly this provides a dense subgroup. If the $(G,\tau|_G)$ is a dense subgroup of (the completion of) $(M,\tau)$, then $\Cont(M,\tau)$ admits a hyperconnected morphism from $\PSh(G)$, whence the former is an atomic topos, by Remark \ref{rmk:atom}.

($1 \Rightarrow 3$) If $\Cont(M,\tau)$ is atomic, all of the supercompact objects are necessarily atoms. The opposite of $\underline{\Rcal}_{\tau}$ is easily verified to satisfy the amalgamation property and joint embedding property of \cite[Definition 3.3]{TGT}, and the canonical point of $(M,\tau)$ provides an $\underline{\Rcal}_{\tau}\op$-universal, $\underline{\Rcal}_{\tau}\op$-ultrahomogeneous object in $\mathrm{Ind}$-$\underline{\Rcal}_{\tau}\op$, namely the completion of $(M,\tau)$ itself. Thus by \cite[Theorem 3.5]{TGT}, there is a topological group $(G,\sigma)$ representing the topos (and having the same canonical point), and by the more detailed description of this construction in \cite[Proposition 5.7]{ATGT}, the group so constructed is precisely the group of units of $(M,\tau)$, and this group is dense in $(M,\tau)$.

($1 \Leftrightarrow 4$) Consider $\underline{\Rcal}_{\tau}$ when $\Cont(M,\tau)$ is atomic. Since all of the $M/r$ are atoms, all of the morphisms in this category are (strict) epimorphisms, which means that in particular the canonical monomorphisms $[m]: m^*(r) \to r$ are isomorphisms, providing $[m']: r \to m^*(r)$ such that $(mm',1) \in r$ and $(m'm,1) \in m^*(r)$, but the latter is implied by the former, so the former suffices. Conversely, if $4$ holds, then all of the morphisms of $\underline{\Rcal}_{\tau}$ are strict epimorphisms, whence the reductive topology coincides with the atomic topology, so $\Cont(M,\tau)$ is atomic as required.
\end{proof}

We anticipate a plethora of results of this nature, where a complete monoid generates a topos having a property $Q$ if and only if it has a dense submonoid having property $P$, where $P$ is the property corresponding to $Q$ for toposes of discrete monoid actions; the above is the case where $Q$ is the property of being atomic and $P$ is the property of being a group; see \cite[Theorem 2.4]{MPaTP}. In order to attain these results, some preliminary work is needed to accumulate the relevant factorization results for properties of geometric morphisms along hyperconnected geometric morphisms.

On the subject of geometric morphisms, we have two further conjectures. In the hope of improving Scholium \ref{schl:factors} to a more elegant result, we begin with the following:
\begin{conj}
\label{conj:characterization}
There exists an intrinsic characterization, independent of the representing monoid $M$, of those hyperconnected geometric morphisms with domain $\PSh(M)$ identifying toposes of the form $\Cont(M,\tau)$.
\end{conj}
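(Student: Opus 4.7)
The plan is to build on the canonical factorization of Theorem \ref{thm:factor}, which writes any hyperconnected $h: \PSh(M) \to \Ecal$ as $\PSh(M) \xrightarrow{h_1} \Cont(M,\tau_h) \xrightarrow{h_2} \Ecal$ with both factors hyperconnected. The conjecture reduces to finding an intrinsic condition characterizing when $h_2$ is an equivalence. Since $\tau_h$ is itself constructed intrinsically from $h$ via the object $T := h^*h_*(\Pcal(M))$, and Scholium \ref{schl:factors} already provides a necessary and sufficient condition via closure of $\Ecal$ under images of atoms in $\Hom_{\PSh(M)}(M/r,\Pcal(M))$, the task amounts to recasting this condition in a form that refers only to the abstract categorical properties of $h^*$, $h_*$, and their compatibility with the subobject classifier.

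My candidate intrinsic characterization is the following: $h$ expresses $\Ecal$ as $\Cont(M,\tau)$ if and only if $\Ecal$ is closed in $\PSh(M)$ under kernel pairs of all morphisms into $\Pcal(M) = p_*(\Omega_{\Set})$, where $p$ denotes the canonical point of $\PSh(M)$. Equivalently, the subcategory $\underline{\Rcal}_h$ of open congruences should coincide with the collection $\{\rfrak_A \mid A \in \Ecal \cap \Sub_{\PSh(M)}(\Pcal(M))\}$ constructed via Lemma \ref{lem:rinrfrak}. The intuition is that for toposes of the form $\Cont(M,\tau)$, the topology and the family of open congruences mutually determine each other, a rigidity absent from the Example \ref{xmpl:N+} situation.

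To carry out the proof, necessity should follow quickly: if $\Ecal = \Cont(M,\tau)$, then every open congruence is the kernel of the evident classifying morphism into $T \subseteq \Pcal(M)$, and $T$ together with its sub-objects lie in $\Ecal$ by Scholium \ref{schl:Tseparator}. For sufficiency, I would apply Scholium \ref{schl:factors}: given that $M/r$ satisfies the atom condition there, each atom $i^p : M/r \to \Pcal(M)$ factors through some $M/r_p \in \Ecal$ as in diagram \eqref{eq:image}, and the proposed closure condition then forces the kernel pair congruence $\rfrak_{r_p}$ to lie in $\underline{\Rcal}_h$. Using the expression $\Ical_{[1]}^p = \bigcap_p \Ical_{A}^{p}$ for suitable $A \in T$ from Lemma \ref{lem:InA2}, one would recover $r$ itself as an intersection of open congruences, hence in $\underline{\Rcal}_h$.

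The hard part will be verifying genuine intrinsicness: since the domain $\PSh(M)$ inevitably depends on $M$, the candidate condition must be expressible without invoking elements or the multiplication of $M$, relying only on the adjunction $(h^* \dashv h_*)$ and the internal logic of $\PSh(M)$. A critical test is Example \ref{xmpl:N+}: the failure of the condition must manifest categorically, through the existence of subobjects of $\Pcal(\Nbb)$ lying in $\Ecal$ whose kernel congruences (corresponding to infinite cyclic quotients) escape $\Ecal$. A secondary difficulty is dealing with subtle cases like Example \ref{xmpl:Z+}, where the dense embedding $\Zbb \hookrightarrow \hat{\Zbb}$ is not surjective despite inducing a hyperconnected morphism after passing to topologies; here one must confirm that the candidate condition distinguishes toposes presentable over $M = \Zbb$ from those only presentable over the completion. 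Resolving these subtleties may well require replacing the candidate above with a strictly stronger closure condition involving the full Boolean-algebra structure of $\Pcal(M)$ rather than merely its relational structure.
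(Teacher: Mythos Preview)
The statement you are attempting to prove is a \emph{conjecture} in the paper; the paper offers no proof, only the remark that Proposition~\ref{prop:intrinsic} gives a sufficient but not necessary intrinsic condition and that refining Scholium~\ref{schl:factors} to something genuinely intrinsic remains open. So there is no paper proof to compare against, and your proposal should be read as an attempt at an open problem.

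There are two concrete gaps.

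First, the proposed condition is not intrinsic in the required sense. You identify $\Pcal(M)$ with $p_*(\Omega_{\Set})$ for ``the canonical point'' $p$, but Lemma~\ref{lem:represent} makes explicit that this object is determined by the choice of representing monoid. Morita-equivalent monoids $M$ and $eMe$ give genuinely different essential points of the same topos and hence non-isomorphic objects $p_*(\Omega_{\Set})$ (they even have different cardinalities). You acknowledge this as ``the hard part'' but never address it; as written, your condition is exactly as $M$-dependent as Scholium~\ref{schl:factors}. Moreover, the phrase ``$\Ecal$ is closed under kernel pairs of morphisms into $\Pcal(M)$'' is either trivially satisfied (kernel pairs are subobjects, and $\Ecal$ is always closed under subobjects since $h$ is hyperconnected) or ill-specified; your ``equivalent'' reformulation conflates elements $A \in \Pcal(M)$ with subobjects of $\Pcal(M)$.

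Second, the sufficiency argument breaks at the final step. You propose to recover $r$ as an intersection $\bigcap_p r_p$ of congruences in $\underline{\Rcal}_h$ and conclude $r \in \underline{\Rcal}_h$. But $\Rcal_h$ is only an $M$-equivariant \emph{filter} (Remark~\ref{rmk:Mequivariant}): it is downward directed, hence closed under finite intersections, but not under arbitrary ones. Example~\ref{xmpl:N+} is precisely a counterexample: the congruences corresponding to the $N_{a,1}$ lie in $\Rcal_h$ and their intersection is the diagonal, yet the diagonal is not in $\Rcal_h$. So the step ``intersection of open congruences, hence in $\underline{\Rcal}_h$'' fails in exactly the situation your characterization is meant to exclude.
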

To be more specific, observe that Proposition \ref{prop:intrinsic} provides an intrinsic sufficient condition for a hyperconnected morphism to express its codomain topos in terms of a topology on any monoid representing its domain topos; Conjecture \ref{conj:characterization} posits that it should be possible to refine this to a necessary and sufficient condition.

We also record our expectation that the converse of Scholium \ref{schl:Morita2} fails.
\begin{conj}
\label{conj:inclusion}
There exists a complete monoid $(M',\tau')$ and an idempotent $e \in M'$ such that the semigroup inclusion $M:= eM'e \hookrightarrow M'$ is \textit{not} a Morita equivalence, but the induced geometric inclusion $\Cont(M,\tau'|_{M}) \hookrightarrow \Cont(M',\tau')$ is an equivalence.
\end{conj}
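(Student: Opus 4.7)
The plan is in two stages: first reformulating the equivalence condition at the continuous level as a density condition on open congruences, and then exhibiting a complete monoid and idempotent satisfying this density condition while not being full.

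For the reformulation, I would use Proposition \ref{prop:prince2} together with Scholium \ref{schl:Morita2} to identify the supercompact objects of $\Cont(M',\tau')$ with the principal $M'$-sets $M'/r$ for $r$ running over open congruences on $M'$. I would then trace the adjunctions induced on continuous actions by the essential inclusion $f: \PSh(eM'e) \hookrightarrow \PSh(M')$ from Proposition \ref{prop:essgeom}, whose inverse image is the restriction $X \mapsto Xe$ and whose left adjoint is $Y \mapsto Y \otimes_{eM'e} eM'$. The expected outcome is: the continuous inclusion $g: \Cont(eM'e, \tau'|_{eM'e}) \hookrightarrow \Cont(M',\tau')$ is an equivalence if and only if for every open congruence $r$ on $M'$, the two-sided ideal $M'eM'$ meets the $r$-class of $1$ in $M'$, i.e.\ there exist $m_1, m_2 \in M'$ with $(m_1 e m_2, 1) \in r$. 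Since by Proposition \ref{prop:essgeom} the discrete inclusion is an equivalence iff $1 \in M'eM'$, the conjecture then reduces to producing a complete monoid with a non-full idempotent satisfying the former condition but not the latter.

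For the construction, observe that completeness forces $(M',\tau')$ to be a powder monoid (Proposition \ref{prop:Lpowder}) and hence Hausdorff (Lemma \ref{lem:T0T2}), so $M'eM'$ must be infinite with $1$ in its closure with respect to the topology generated by open-congruence classes, which is a base for $\tau'$ by Scholium \ref{schl:base}. I would search for such an example among topological endomorphism monoids of infinite combinatorial structures under pointwise convergence, in the spirit of Examples \ref{xmpl:notpro2} and \ref{xmpl:Schanuel}, but dropping the injectivity hypothesis so that genuine non-full idempotents $e$ (retractions onto proper substructures) exist. A natural candidate is the monoid of all endomorphisms of $\Nbb$ with the pointwise topology, paired with a retraction $e$ onto a proper subset whose conjugates $m_1 e m_2$ pointwise approximate the identity.

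The main obstacle is the simultaneous verification of completeness and the density condition, which stand in natural tension: completeness requires $M'$ to be recoverable as the limit of the quotients $M'/r$ along open congruences (Corollary \ref{crly:Mlim}), which tends to yield sharp topologies with rich open-congruence lattices, while the density condition requires those same open congruences to be coarse enough around $1$ that $M'eM'$ always meets $[1]_r$. Once a candidate $(M',\tau')$ is identified, this verification reduces to a careful combinatorial analysis of its open congruences, requiring additional care to track non-injective endomorphisms and the ideal structure generated by $e$.
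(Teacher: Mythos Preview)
The statement you are attempting to prove is a \emph{conjecture} in the paper, not a theorem. It appears in the concluding section among open problems, introduced with the words ``We also record our expectation that the converse of Scholium \ref{schl:Morita3} fails.'' The paper contains no proof, nor any claimed construction of such a monoid and idempotent; it is left entirely open.

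Your proposal is therefore not comparable to a proof in the paper, since none exists. As an attack on the open problem, your plan is structurally reasonable: the reformulation in terms of whether $M'eM'$ meets every open congruence class of $1$ is the right kind of criterion (it parallels the discrete condition $1 \in M'eM'$ from \cite{TDMA}), and endomorphism monoids under pointwise convergence are the natural place to look for non-full idempotents near the identity. However, you have correctly identified the genuine obstacle yourself: verifying completeness of your candidate $(M',\tau')$ via Corollary \ref{crly:Mlim} while simultaneously ensuring the density condition is nontrivial, and your proposal stops short of resolving it. In particular, the full endomorphism monoid of $\Nbb$ under pointwise convergence would need its open congruences computed explicitly, and you have not carried this out. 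What you have written is a coherent research plan, not a proof; the conjecture remains open.
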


\subsection{Topological semi-Galois theory}
\label{ssec:TSGT}

While we obtained many necessary conditions for a topos to be of the form $\Cont(M,\tau)$ in Section \ref{sec:properties}, and expounded many properties of the canonical site of principal actions for such a topos, we did not arrive at \textit{sufficient} conditions for a site to yield such toposes. Having such conditions will eventually be important for identifying when a topological monoid can be used to present a topos constructed by other means.

In a similar vein, we saw in Example \ref{xmpl:notpro2} how having a theory classified by the topos of actions of a given topological monoid to hand makes computing its completion much easier. We characterized toposes of the form $\Cont(M,\tau)$ in terms of their canonical points, which correspond to $\Set$-models of theories classified by $\Cont(M,\tau)$. As such, it is clear that another useful set of sufficient conditions to have to hand would be those specifying when a (geometric) theory $\Tbb$ admits a $\Set$-model corresponding to a point of $\Set[\Tbb]$ of the form required by Theorem \ref{thm:characterization}. In other words, which theories are classified by toposes of actions of topological monoids?

Combining these goals, the natural direction to proceed is to extend and generalize the main results in Caramello's Topological Galois Theory \cite{TGT}. The resulting \textit{topological semi-Galois theory} will also be an extension of the theory of semi-Galois categories studied in \cite{SGC}.

\begin{rmk}
Since the original version of this article was written, the author has completed their PhD thesis in which much progress was made towards the construction of topological semi-Galois theory \cite[Chapter 6]{Thesis}. For a site $(\Ccal,J)$ with the right properties to produce a supercompactly generated topos (a so-called `principal site') where $\Ccal$ carries a compatible factorization system, the existence of a point of $\Sh(\Ccal,J)$ of the form required by Theorem \ref{thm:characterization} is equivalent to the existence of an object with certain properties in $\Ind(\Ccal\op)$. This yields several sufficient conditions for the existence of a presentation of $\Sh(\Ccal,J)$ as a topos of continuous actions of a topological monoid. However, the question of which theories are classified by toposes of monoid actions remains open.
\end{rmk}

\subsection{Actions on topological spaces}

A proof of, or counterexample to, Conjecture \ref{conj:powdery} will establish the extent of the symmetry in the type of Morita equivalence studied in this paper. Whichever way this result falls, however, we have shown in this article that the category of right actions of a topological monoid on discrete spaces is a very coarse invariant of such a monoid. Moreover, anyone interested in actions of topological monoids is more likely to wish to examine their actions on more general classes of topological space. A solution to this, which is viable in any Grothendieck topos $\Ecal$, is to first consider the topos $[M\op,\Ecal]$, which is constructed as a pullback (the lower square) in $\TOP$,
\begin{equation}
\label{eq:pbEM}
\begin{tikzcd}
\Ecal \ar[r] \ar[d] \ar[dr, phantom, "\lrcorner", very near start] & \Set \ar[d] \\
{[M\op{,}\Ecal]} \ar[r] \ar[d] \ar[dr, phantom, "\lrcorner", very near start] & {[M\op{,}\Set]} \ar[d] \\
\Ecal \ar[r] & \Set,
\end{tikzcd}	
\end{equation}
since $M$ induces an internal monoid in $\Ecal$ by its image under the inverse image functor of the global sections morphism of $\Ecal$. Taking $\Ecal$ to be the topos of sheaves on a space $X$, we can view the objects of $[M\op,\Ecal]$ as right actions of $M$ on spaces which are discrete fibrations over $X$; taking $\Ecal$ to be a more general topos of spaces, we similarly get actions of $M$ on such spaces.

In each case, we can construct the subcategory of $[M\op,\Ecal]$ on the actions which are continuous with respect to a topology $\tau$ on $M$. In the best cases, this will produce a topos hyperconnected under $[M\op,\Ecal]$, and the analysis can proceed analogously to that of the present article, taking advantage of the $\Ecal$-valued point in \eqref{eq:pbEM}. If this can be done with sufficient generality, one will be able to address a host of interesting Morita-equivalence problems in this way.

\subsection{Space-enriched categories}
\label{ssec:topcat}

Another direction to generalize is to consider topologies on small categories with more than one object. Let $\Ccal$ be a category with set of objects $C_0$, set of morphisms $C_1$, identity map $i:C_0 \to C_1$, domain and codomain maps $d,c: C_1 \to C_0$, and composition $m:C_2\to C_1$, where $C_2$ is the pullback:
\[\begin{tikzcd}
C_2 \ar[r] \ar[d] \ar[dr, phantom, "\lrcorner", very near start] & C_1 \ar[d, "d"] \\
C_1 \ar[r, "c"'] & C_0.
\end{tikzcd}\]
A presheaf on $\Ccal$ can be expressed as an object $a:F_0 \to C_0$ of $\Set/C_0$, equipped with a morphism $b:F_1 \to F_0$ where $F_1$ is the pullback,
\[\begin{tikzcd}
F_1 \ar[r, "\pi_2"] \ar[d, "\pi_1"'] \ar[dr, phantom, "\lrcorner", very near start] & F_0 \ar[d, "a"] \\
C_1 \ar[r, "c"'] & C_0,
\end{tikzcd}\]
satisfying $a \circ b = d \circ \pi_2$, $b \circ (\id_{F_0} \times_{C_0} i) = \id_{F_0}$ and $b \circ (b \times_{C_0} \id_{C_1}) = b \circ (\id_{F_0} \times_{C_0} m)$.

Suppose we equip $C_0$ with the discrete topology and $C_1$ with a topology $\tau$ making the structure maps continuous; we could for instance equip the hom-sets of $C$ with topologies compatible with composition. We might then say that a presheaf is \textit{continuous} with respect to $\tau$ if $b:F_1 \to F_0$ is a continuous map when $F_0$ is equipped with the discrete topology and $F_1$ is equipped with the pullback of the topology on $C_1$. Yet again, we can consider the full subcategory of $\PSh(\Ccal)$ on these presheaves, and we expect it to be coreflective. In good cases, we will have the analogue of Proposition \ref{prop:hyper}, and the analysis can proceed as in this paper, leading to a class of genuine topological categories representing these toposes.

A special case of this construction which may be of interest to readers of the present paper is when the category in question is taken to be the left-cancellative category constructed from an inverse semigroup (denoted $L(S)$ in \cite{MEiS}). This category inherits a topology from the inverse semigroup, and the category of continuous presheaves for this topological category forms an invariant of such a semigroup which is typically different from that obtained by considering actions of the associated monoid.

\subsection{Localic monoids and constructiveness}
\label{ssec:localic}

Topos theorists tend to try to work constructively wherever possible, since doing so ensures that all results can be applied over an arbitrary topos. In this light, our frequent reliance on complementation in the underlying sets of our monoids is quite restrictive, since \textit{a priori} it means our results are applicable only over Boolean toposes, and we have not formally demonstrated here that they apply even to this level of generality.

From a constructive perspective, more suitable objects of study than topological monoids would be \textbf{localic monoids}, which are monoids in the category of locales over a given base topos, typically $\Set$. Early on in the research for this paper, Steve Vickers suggested that we consider pursuing this direction. However, while the category of actions of a localic monoid on sets (again viewed as discrete spaces) is easy to define, it is much harder to show that such a category is a topos. In Johnstone \cite[Example B3.4.14(b)]{Ele}, we see that the more powerful results of \textit{descent theory} are required to show that categories of actions of localic groups are toposes. While descent theory is an important tool, it is far more abstract than the comonadicity theorem we used in Corollary \ref{crly:topos}, making concrete characterization results for these toposes more challenging to prove.

While we did not treat localic monoids in this article, the present work is a valuable tool in that analysis. Indeed, the functor sending a locale to its topological space of points preserves limits, so that it provides a canonical `forgetful' functor from a category of actions of a localic monoid to a category of actions of a topological monoid. We anticipate that, just as in Section \ref{sec:properties}, this functor can be used to constrain the properties of a category of actions of a localic monoid. 

We should mention that another obstacle in our study of localic monoids is again a lack of easily tractable examples, especially examples of localic monoids (or even localic groups) which one can show are \textit{not} Morita equivalent to topological monoids in their actions on discrete spaces. While the construction of the localic group $\mathrm{Perm}(A)$ of permutations of a locale $A$, described by Wraith in \cite{LocGrp}, is used as a basis for the Localic Galois Theory of Dubuc \cite{LGT}, the latter author provides no specific examples of instances of these. We expect that the construction of such examples will further illuminate the appropriate approach to studying categories of actions of localic monoids.

\bibliographystyle{plainnat}
\bibliography{classificationbibDOI}

\end{document}